\newtheorem{thm}{Theorem}[section]
\newtheorem{lem}[thm]{Lemma}
\newtheorem{prop}[thm]{Proposition}
\newtheorem{cor}[thm]{Corollary}
\theoremstyle{definition}
\newtheorem{dfn}[thm]{Definition}
\newtheorem{pro}{Problem}
\newtheorem{alg}[thm]{Algorithm}
\newtheorem{obs}[thm]{Observation}
\newcommand{\tr}{\operatorname{tr}}
\newcommand{\id}{\mathbbm{1}}
\newcommand{\supp}{\operatorname{supp}}
\newcommand{\kernel}{\operatorname{ker}}
\newcommand{\ran}{\operatorname{ran}}
\newcommand{\diag}{\operatorname{diag}}
\newcommand{\rank}{\operatorname{rank}}
\newcommand{\detm}{\operatorname{det}}
\newcommand{\R}{\mathbb{R}}
\newcommand{\C}{\mathbb{C}}
\newcommand{\Q}{\mathbb{Q}}
\newcommand{\N}{\mathbb{N}}
\definecolor{linkblue}{rgb}{0.1,0.2,.7}
\definecolor{citegreen}{rgb}{0.1,0.7,.2}
\title{\vspace{-15mm}%
	A review of matrix scaling and Sinkhorn's normal form for matrices and positive maps}
\author{%
	\large
	\textsc{Martin Idel\footnote{martin.idel@tum.de}} \\[2mm]
	\normalsize	Zentrum Mathematik, M5, Technische Universit\"{a}t M\"{u}nchen, 85748 Garching \\
	\vspace{-5mm}
	}
\date{}
\begin{document}

\maketitle

\begin{abstract}
Given a nonnegative matrix $A$, can you find diagonal matrices $D_1,~D_2$ such that $D_1AD_2$ is doubly stochastic? The answer to this question is known as Sinkhorn's theorem. It has been proved with a wide variety of methods, each presenting a variety of possible generalisations. Recently, generalisations such as to positive maps between matrix algebras have become more and more interesting for applications. This text gives a review of over 70 years of matrix scaling. The focus lies on the mathematical landscape surrounding the problem and its solution as well as the generalisation to positive maps and contains hardly any nontrivial unpublished results. 
\end{abstract}

\tableofcontents

\section{Introduction}
It is very common for important and accessible results in mathematics to be discovered several times. Different communities adhere to different notations and rarely read papers in other communities also because the reward does not justify the effort. In addition, even within the same community, people might not be aware of important results - either because they are published in obscure journals, they are poorly presented in written or oral form or simply because the mathematician did not notice them in the surrounding sea of information. This is a problem not unique to mathematics but instead inherent in all disciplines with epistemological goals. 

The scaling of matrices is such a problem that has constantly attracted attention in various fields of pure and applied mathematics\footnote{The term ``Babylonian confusion'' to describe the history of this problem was first used in \cite{kru79}}. Recently, generalisations have been studied also in physics to explore possibilities in quantum mechanics where it turns out that a good knowledge of the vast literature on the problem can help a lot in formulating approaches. This review tries to tell the mathematical story of matrix scaling, including algorithms and pointers to applications.

As a motivation, consider the following problem: Imagine you take a poll, where you ask a subset of the population of your country what version (if any) of a certain product they buy. You distinguish several groups in the population (for instance by age, gender, etc.) and you distinguish several types of product (for instance different brands of toothbrushs). From the sales statistics, you know the number of each product sold in the country and from the country statistics you know the number of people in different groups. Given the answers of a random sample of the population, how can you extrapolate results?

Central to a solution is the following innocuous theorem:
\begin{thm}[Sinkhorn's theorem, weak form \cite{sin64}] \label{thm:introweak}
Given a matrix $A$ with positive entries, one can find matrices $D_1,D_2$ such that $D_1AD_2$ is doubly stochastic.
\end{thm}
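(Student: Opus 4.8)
Since ``doubly stochastic'' forces the numbers of rows and columns to agree, write $A=(A_{ij})_{i,j=1}^n$ with all $A_{ij}>0$, and parametrise the (positive diagonal) scalings as $D_1=\diag(e^{x_1},\dots,e^{x_n})$ and $D_2=\diag(e^{y_1},\dots,e^{y_n})$ with $x,y\in\R^n$, so that $(D_1AD_2)_{ij}=A_{ij}\,e^{x_i+y_j}$. The plan is to reduce the problem to finding a critical point of
\[
  F(x,y)\;=\;\sum_{i,j=1}^n A_{ij}\,e^{x_i+y_j}\;-\;\sum_{i=1}^n x_i\;-\;\sum_{j=1}^n y_j ,
\]
since a direct computation gives $\partial F/\partial x_i=\big(\sum_j A_{ij}e^{x_i+y_j}\big)-1$, the $i$-th row sum of $D_1AD_2$ minus one, and likewise $\partial F/\partial y_j$ is the $j$-th column sum minus one. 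Hence every stationary point of $F$ yields a doubly stochastic scaling.

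The function $F$ is convex --- a sum of exponentials of affine functions minus an affine function --- but not strictly so: it is invariant under the one-parameter group $(x,y)\mapsto(x+t\mathbf{1},\,y-t\mathbf{1})$, where $\mathbf{1}$ is the all-ones vector. I would remove this freedom by minimising $F$ over the hyperplane $H=\{(x,y):\sum_i x_i=0\}$, and the main point is to show that $F|_H$ is coercive, so that a minimiser exists. This is exactly where positivity of $A$ is used: with $\alpha:=\min_{i,j}A_{ij}>0$ one has
\[
  \sum_{i,j}A_{ij}e^{x_i+y_j}\;\ge\;\alpha\Big(\sum_i e^{x_i}\Big)\Big(\sum_j e^{y_j}\Big),
\]
and combining this with elementary estimates --- $e^t\ge 1+t$, the AM--GM bound $\sum_j e^{y_j}\ge n\,e^{(\sum_j y_j)/n}$, which on $H$ also gives $\sum_i e^{x_i}\ge n$ --- one checks in the three (overlapping) cases ``$\max_j y_j\to\infty$'', ``$\sum_j y_j\to-\infty$'', and ``$y$ bounded, $\max_i x_i\to\infty$'' that $F\to+\infty$ whenever $\|(x,y)\|\to\infty$ inside $H$. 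I expect this coercivity bound to be the only genuinely technical step; the rest is soft.

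Granting coercivity, the continuous convex function $F|_H$ attains its minimum at some $(x^\ast,y^\ast)$, necessarily an interior point of the affine space $H$ (which has no boundary). The first-order conditions along $H$ say that $\partial F/\partial y_j=0$ for all $j$ --- every column sum of $B:=D_1^\ast A D_2^\ast$ equals $1$ --- and that $\nabla_x F$ is orthogonal to the tangent space $\{\sum_i\delta x_i=0\}$, hence $\nabla_x F=\lambda\mathbf{1}$, so every row sum of $B$ equals the common value $1+\lambda$. Summing all entries of $B$ in two ways gives $n(1+\lambda)=\sum_{i,j}B_{ij}=n$, so $\lambda=0$ and $B$ is doubly stochastic; $D_1=D_1^\ast$ and $D_2=D_2^\ast$ are the required matrices.

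Two remarks. The Hessian of $F$ in a direction $(\delta x,\delta y)$ equals $\sum_{i,j}A_{ij}e^{x_i+y_j}(\delta x_i+\delta y_j)^2$, which vanishes only when $\delta x=c\mathbf{1}=-\delta y$; thus $F$ is strictly convex modulo its symmetry, and the scaling is unique up to $D_1\mapsto tD_1$, $D_2\mapsto t^{-1}D_2$. A different and historically original route is Sinkhorn's iteration --- alternately rescale all rows to sum $1$, then all columns, and repeat --- whose convergence to a doubly stochastic limit follows, for instance, from non-expansiveness in Hilbert's projective metric (Birkhoff's theorem) or from monotonicity of a suitable functional along the iteration; the variational argument sketched here is, however, shorter and essentially self-contained.
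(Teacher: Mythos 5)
Your argument is correct, and it is in essence the convex-programming route that the paper records in Section \ref{sec:convexprog}: your $F$ is exactly the function of Lemma \ref{lem:convprog} (equation (\ref{eqn:gormanconvex})) specialised to $r=c=e$, obtained from the logarithmic barrier function of Section \ref{sec:logbarrier} by the substitution $x_i=e^{\xi_i}$, $y_j=e^{\eta_j}$. The paper states that lemma with a pointer to the literature rather than proving it; the proof it writes out in detail (the ``potential version'' in Section \ref{sec:logbarrier}, following the London--Djokovic approach of equation (\ref{eqn:london})) instead minimises the single-variable quotient $\prod_i\bigl(\sum_j A_{ij}x_j\bigr)^{r_i}/\prod_i x_i^{c_i}$ over the compact simplex $\{x_i>0,\ \sum_i x_i=1\}$ and gets existence of a minimiser from blow-up of the objective at the boundary. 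The trade-off is exactly the one the paper notes at the end of Section \ref{sec:convexprog}: on the simplex compactness is free but one must analyse the boundary, whereas in your exponential coordinates there is no boundary and every critical point is automatically a global minimum, at the price of the coercivity estimate on the hyperplane $H$ --- which you correctly single out as the only real work, and which your case analysis (using $\sum_i e^{x_i}\ge n$ on $H$ together with the lower bound $\alpha\bigl(\sum_i e^{x_i}\bigr)\bigl(\sum_j e^{y_j}\bigr)$) does close. Your Lagrange-multiplier step recovering $\lambda=0$ by double counting $\sum_{ij}B_{ij}$ is clean, and your Hessian computation reproduces the uniqueness statement (scaling unique up to $D_1\mapsto tD_1$, $D_2\mapsto t^{-1}D_2$) that for positive $A$ is part of Theorem \ref{thm:sink}.
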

The literature on Sinkhorn's theorem and its generalisations is vast. As we will see, there are some natural ways to attack this problem, which further explains why the different communities were often not aware of the efforts of their peers in other fields.

One of the main motivations for this review was a generalisation of Sinkhorn's theorem to the noncommutative setting of positive maps on matrix algebras:
\begin{thm}[Weak form of \cite{gur03}'s generalisation to positive maps] \label{thm:introgur}
Given a map $\mathcal{E}:\C^{n\times n}\to \C^{n\times n}$ which maps positive semidefinite matrices to positive definite matrices one can find invertible matrices $X,Y$ such that the map $\mathcal{E}^{\prime}(\cdot):=Y\mathcal{E}(X\cdot X^{\dagger})Y^{\dagger}$ is doubly stochastic, i.e.
\begin{align*}
	\mathcal{E}^{\prime}(\id)=\id, \qquad \mathcal{E}^{\prime\,*}(\id)=\id
\end{align*}
with the adjoint matrices $X^{\dagger}$ and the adjoint map $\mathcal{E}^*$. 
\end{thm}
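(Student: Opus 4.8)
\emph{Proof strategy.} The plan is to collapse the two normalisation conditions into a single fixed-point equation on the open cone of positive definite matrices and to solve it by a contraction argument. Suppose $X,Y$ are invertible and put $P:=XX^{\dagger}>0$, $Q:=Y^{\dagger}Y>0$. Since $\mathcal{E}^{\prime}(\cdot)=Y\mathcal{E}(X\cdot X^{\dagger})Y^{\dagger}$ has adjoint $\mathcal{E}^{\prime\,*}(\cdot)=X^{\dagger}\mathcal{E}^{*}(Y^{\dagger}\cdot Y)X$, the requirements $\mathcal{E}^{\prime}(\id)=\id$ and $\mathcal{E}^{\prime\,*}(\id)=\id$ read $Y\mathcal{E}(P)Y^{\dagger}=\id$ and $X^{\dagger}\mathcal{E}^{*}(Q)X=\id$; as $X,Y$ are invertible these are equivalent to
\begin{align*}
 Q=\mathcal{E}(P)^{-1},\qquad P=\mathcal{E}^{*}(Q)^{-1}.
\end{align*}
Eliminating $Q$, it suffices to find $P>0$ with $F(P)=P$, where $F(P):=\mathcal{E}^{*}\!\bigl(\mathcal{E}(P)^{-1}\bigr)^{-1}$; one then sets $Q:=\mathcal{E}(P)^{-1}$, $X:=P^{1/2}$, $Y:=Q^{1/2}$. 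The map $F$ is a well-defined self-map of the positive cone: the hypothesis sends every nonzero positive semidefinite matrix, in particular $P$, into the \emph{open} cone, and the one-line duality bound $\tr\bigl(A\,\mathcal{E}(B)\bigr)>0$ for nonzero $A,B\ge0$ shows the same for $\mathcal{E}^{*}$, so all inverses above exist.

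\textbf{The contraction.} Equip the open positive cone with Hilbert's projective metric $d_{H}$ and recall three classical facts: a positive linear map is non-expansive for $d_{H}$; by Birkhoff's contraction theorem it is \emph{strictly} contractive as soon as the projective diameter of its image is finite; and inversion $A\mapsto A^{-1}$ is a $d_{H}$-isometry. By hypothesis $\mathcal{E}$ maps the compact slice $\{A\ge0:\tr A=1\}$ into a compact subset of the open cone, so its image has finite projective diameter and $\mathcal{E}$ is a strict $d_{H}$-contraction. Hence $F=(\cdot)^{-1}\circ\mathcal{E}^{*}\circ(\cdot)^{-1}\circ\mathcal{E}$, being (isometry)$\circ$(non-expansion)$\circ$(isometry)$\circ$(strict contraction), is a strict contraction on the space of rays of the positive cone. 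Since the range of $F$ lies (projectively) in a fixed compact subset of the open cone, the iterates of $F$ applied to $\id$ form a $d_{H}$-Cauchy sequence inside that compact set and converge to some $P>0$; by the Banach fixed-point theorem $P$ spans the unique fixed ray, so $F(P)=\beta P$ for some $\beta>0$.

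\textbf{Fixing the constant and concluding.} With $Q:=\mathcal{E}(P)^{-1}$, $X:=P^{1/2}$, $Y:=Q^{1/2}$ (all invertible, $Q$ positive definite since $P\ne0$), a direct check gives $\mathcal{E}^{\prime}(\id)=Y\mathcal{E}(P)Y^{\dagger}=\id$, while $\mathcal{E}^{\prime\,*}(\id)=X^{\dagger}\mathcal{E}^{*}(Q)X=\beta^{-1}\id$. Taking traces and using $\tr\mathcal{E}^{\prime\,*}(\id)=\tr\mathcal{E}^{\prime}(\id)$ (legitimate because $\mathcal{E}^{\prime}(\id)=\id$) forces $\beta=1$; thus $\mathcal{E}^{\prime}$ is doubly stochastic and the theorem follows. (Uniqueness of the fixed ray even yields $X,Y$ unique up to unitaries and the scaling $X\mapsto tX$, $Y\mapsto t^{-1}Y$.)

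\textbf{Main obstacle.} Everything above is formal once one has the single nontrivial input: a strictly positive linear map is a strict contraction in Hilbert's metric. This is exactly where the hypothesis ``maps positive semidefinite to positive \emph{definite}'' is spent, since it is what makes the image cone have finite projective diameter. Under weaker nondegeneracy assumptions the contraction disappears and one must instead push through a compactness argument for the alternating Sinkhorn iteration $\mathcal{E}\mapsto\mathcal{E}(\id)^{-1/2}\mathcal{E}(\cdot)\mathcal{E}(\id)^{-1/2}$ together with its input counterpart, or minimise the capacity $\inf\{\det\mathcal{E}(P):\det P=1\}$, whose stationarity condition is once more $F(P)=cP$; these are the routes that generalise and that the following sections pursue.
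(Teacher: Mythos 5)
Your proof is correct, and it rests on exactly the same reduction the paper uses: the two normalisation conditions collapse to the fixed-point equation $P=\mathcal{E}^{*}\bigl(\mathcal{E}(P)^{-1}\bigr)^{-1}$ for the Menon-type operator, which is Lemma \ref{lem:sinkhornderivative} verbatim (including the final trace argument that forces the eigenvalue $\beta$ to be $1$). Where you diverge is in how the fixed point is produced. The paper's existence proof for this weak form (Proposition \ref{thm:normalform} and its corollary) normalises the Menon operator to the trace-one slice and invokes Brouwer's fixed-point theorem on the compact convex set $\{\rho>0\,|\,\tr\rho=1,\ \rho\geq\varepsilon\id/n\}$; you instead observe that the positivity-improving hypothesis gives the image of $\mathcal{E}$ finite projective diameter, so Birkhoff's theorem (Proposition \ref{prop:birkhoff}) makes $\mathcal{E}$ a strict Hilbert-metric contraction, inversion is a $d_H$-isometry, $\mathcal{E}^{*}$ is non-expansive, and Banach's fixed-point theorem applies. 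Your route buys more: uniqueness of the fixed ray (hence of the scaling up to the obvious $X\mapsto tX$, $Y\mapsto t^{-1}Y$ freedom) and geometric convergence of the iteration, facts the paper only establishes separately in Section \ref{sec:convstab} by essentially the same Birkhoff estimate. The Brouwer route, on the other hand, is the one that survives when the contraction ratio degenerates and is what the paper leans on for the generalisation to arbitrary marginals (Theorem \ref{thm:arbscale}). Two small points to tighten: the identity $\tr\mathcal{E}^{\prime\,*}(\id)=\tr\mathcal{E}^{\prime}(\id)$ holds unconditionally from the definition of the adjoint (take $A=B=\id$ in $\tr(A\mathcal{E}^{\prime}(B))=\tr(\mathcal{E}^{\prime\,*}(A)B)$), so the parenthetical justification is unnecessary; and you should state explicitly that completeness of $d_H$ on the relevant set is supplied by your compactness remark, since Hilbert's metric on the full open cone slice requires a word of care.
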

Some results and approaches can be translated to this noncommutative setting, but many questions remain open and the noncommutativity of the problem makes progress difficult. 

Very recently, a new generalisation of Sinkhorn's theorem to a noncommutative setting has appeared in \cite{ben16}.

The goal of this review is therefore threefold:
\begin{itemize}
	\item Trace the historical developments of the problem and give credit to the many people who contributed to the problem.
	\item Illuminate the many approaches and connections between the approaches to prove Sinkhorn's theorem and its generalisations.
	\item Sketch the generalisation to positive maps and its history and highlight the questions that are yet unanswered and might be attacked using the knowledge from the classical version.
\end{itemize}
In addition, I will try to give a sketch of the algorithmic developments and pointers to the literature for applications. I will probably have forgotten and/or misrepresented contributions; comments to improve the review are therefore very welcome.

\section{Notation and Preliminaries}
Most of the concepts and notations discussed in this short section are well-known and can be found in many books. I encourage the reader to refer to this section only if some notation seems unclear.

We will mostly consider matrices $A\in \R^{n\times m}$. Such matrices are called \emph{nonnegative} (\emph{positive}) if they have only nonnegative (positive) entries. We denote by $\R^n_+$ ($\R^n_{+0}$) all vectors with only positive entries (nonnegative entries) and for any such $x\in \R^n_+$, $\diag(x)$ defines the diagonal matrix with $x$ on its main diagonal, while $1/x\in \R^n_+$ defines the vector with entries $1/x_i$ for all $i$.

An important concept for nonnegative matrices is the pattern. The \emph{support} or \emph{pattern} of a matrix $A$ is the set of entries where $A_{ij}>0$. A subpattern of the pattern of $A$ is then a pattern with fewer entries than the pattern of $A$. We write $B\prec A$ if $B$ is a subpattern of $A$, i.e.~for every $B_{ij}>0$ we have $A_{ij}>0$. 

Finally, let us introduce irreducibility and decomposability. Details and connections to other notions for nonnegative matrices are explained in Appendix \ref{app:prelimmatrices}. If $A$ is nonnegative, then $A$ is \emph{fully indecomposable} if and only if there do not exist permutations $P,Q$ such that 
\begin{align}
	PAQ=\begin{pmatrix}{}A_1 & 0 \\ A_3 & A_2\end{pmatrix} \label{eqn:fullyindecdef}
\end{align}
where neither $A_1$ nor $A_2$ contain a zero row or column and $A_3\neq 0$. The matrix is \emph{irreducible}, if no permutation $P$ can be found such that already $PAP^T$ is of form (\ref{eqn:fullyindecdef}). In particular, this implies that all fully indecomposable matrices are at least irreducible. 

For positive vectors, we will not use the notation $x>0$ to avoid confusion with the positive definite case: Especially in the second part of this review, we will be dealing mostly with \emph{positive (semi)definite} matrices $A\in \R^{n\times n}$, which are symmetric matrices with only positive (nonnegative) eigenvalues and should not be confused with positive matrices. We also introduce the partial order $\geq$ for positive semidefinite matrices, where $A\geq B$ if and only if $A-B$ is positive semidefinite and $A>B$ if $A-B$ is positive definite.

When talking about positive maps, we will also adopt the notation that $\mathcal{M}_{n,m}$ denotes the complex $n\times m$ matrices, while the shorter $\mathcal{M}_n$ is used for complex $n\times n$ square matrices. 

\section{Different approaches to equivalence scaling} \label{sec:approaches}
This section explores the historical development and current form of the mathematical landscape surrounding the following extension to Theorem \ref{thm:introweak}:
\begin{thm} \label{thm:sink}
Let $A\in \R^{m\times n}$ be a matrix with nonnegative entries. Then for any vectors $r\in \R^m$ and $c\in \R^n$ with nonnegative numbers there exist diagonal matrices $D_1$ and $D_2$ such that 
\begin{align*}
	D_1AD_2e&=r \\
	D_2A^TD_1e&=c
\end{align*}
if and only if there exists a matrix $B$ with $Be=r$ and $B^Te=c$ and the same pattern as $A$. Here, $e=(1,\ldots,1)^T$ which means that $r$ contains the row sums of the scaled matrix and $c$ contains the column sums.

Furthermore, if the matrix has only positive entries, $D_1$ and $D_2$ are unique up to a constant factor.
\end{thm}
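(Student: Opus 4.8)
The plan is to prove necessity directly and sufficiency by a variational argument, reading the uniqueness clause off from strict convexity of the potential involved. Necessity is immediate: assuming $D_1,D_2$ have positive diagonal entries (a vanishing diagonal entry would force a zero row or column of $A$ and a matching zero in $r$ or $c$, which one strips off in advance), the matrix $B:=D_1AD_2$ has exactly the same pattern as $A$ and satisfies $Be=D_1AD_2e=r$ and $B^Te=c$. For the converse one first observes that, since $B$ has the same pattern as $A$ and nonnegative entries, $r_i=0$ forces row $i$ of both to vanish and $c_j=0$ forces column $j$ of both to vanish; iterating this reduction, I may assume $r$ and $c$ are strictly positive and $A$ has no zero row or column.

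For the scaling itself, set $D_1(x):=\diag(e^{x_1},\dots,e^{x_m})$ and $D_2(y):=\diag(e^{y_1},\dots,e^{y_n})$ for $x\in\R^m$, $y\in\R^n$, and introduce the convex potential
\begin{align*}
\Phi(x,y):=\sum_{i,j}A_{ij}\,e^{x_i+y_j}-\langle r,x\rangle-\langle c,y\rangle .
\end{align*}
Differentiating gives $\nabla_x\Phi(x,y)=D_1(x)AD_2(y)e-r$ and $\nabla_y\Phi(x,y)=D_2(y)A^TD_1(x)e-c$, so any critical point of $\Phi$ produces diagonal matrices realising the prescribed row and column sums. As $\Phi$ is invariant under $(x,y)\mapsto(x+te,\,y-te)$ --- which is precisely the constant-factor ambiguity in $(D_1,D_2)$ --- I would minimise $\Phi$ over a linear complement $W$ of its lineality space $L=\{(u,v):u_i+v_j=0\text{ whenever }A_{ij}>0\}$, and the whole argument then rests on showing that this infimum is attained.

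Establishing this coercivity is the main obstacle, and it is the only point where the hypothesis is used. For a direction $(u,v)$ one checks that the limit $\lim_{t\to\infty}\bigl(\Phi((x,y)+t(u,v))-\Phi(x,y)\bigr)/t$ equals $+\infty$ as soon as $u_i+v_j>0$ for some $(i,j)$ in the pattern of $A$ (an exponential term then dominates the linear part), and equals $-\langle r,u\rangle-\langle c,v\rangle$ when $u_i+v_j\le 0$ throughout the pattern. Feeding in the matrix $B$, which by hypothesis has the same pattern as $A$, one gets
\begin{align*}
\langle r,u\rangle+\langle c,v\rangle=\sum_i\Bigl(\sum_jB_{ij}\Bigr)u_i+\sum_j\Bigl(\sum_iB_{ij}\Bigr)v_j=\sum_{i,j}B_{ij}(u_i+v_j)\le 0
\end{align*}
in the second case, with equality forcing $u_i+v_j=0$ wherever $B_{ij}>0$, i.e.\ wherever $A_{ij}>0$, i.e.\ forcing $(u,v)\in L$. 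Hence the recession function of $\Phi|_W$ is strictly positive in every nonzero direction, so $\Phi|_W$ has bounded sublevel sets, attains its minimum, and the minimiser yields $(D_1,D_2)$. Finally, the Hessian of $\Phi$ is positive semidefinite with kernel exactly $L$, so $\Phi$ is strictly convex on $W$ and its minimiser there is unique; when $A$ has only positive entries $L=\R(e,-e)$, which is exactly the assertion that $(D_1,D_2)$ is unique up to a constant factor.

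For completeness the review would set this beside the two other classical routes to sufficiency: proving convergence of Sinkhorn's alternating row-and-column normalisation (a compactness argument that the zero pattern makes delicate) and a topological fixed-point or degree argument for the associated normalised scaling map. The convex-analytic argument above is the most economical of the three and is the one that hands back the uniqueness clause for free.
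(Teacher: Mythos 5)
Your proof is correct, and it is a fully worked-out version of a route the review only names. The function $\Phi$ you minimise is exactly the convex program of Lemma \ref{lem:convprog} (equation (\ref{eqn:gormanconvex})), i.e.\ the logarithmic barrier function of Section \ref{sec:logbarrier} after the substitution $x=e^{\xi}$, $y=e^{\eta}$; the paper records that formulation and points to \cite{bac79} for a proof, but the argument it actually sketches for Theorem \ref{thm:sink} is the Djokovic--London single-variable potential $f(x)=\prod_i\bigl(\sum_j A_{ij}x_j\bigr)^{r_i}/\prod_i x_i^{c_i}$ on the simplex, where attainment of the minimum is argued by blow-up at the boundary --- and that argument is only carried out for strictly positive $A$, with the general pattern case explicitly left out of the sketch. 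What your version buys is precisely that missing case: the recession-function computation together with the identity $\langle r,u\rangle+\langle c,v\rangle=\sum_{ij}B_{ij}(u_i+v_j)$ for the hypothesised matrix $B$ gives coercivity of $\Phi$ transverse to the lineality space $L=\{(u,v):u_i+v_j=0\text{ on the pattern}\}$ for an arbitrary nonnegative pattern, the same identity shows $\Phi$ is constant along $L$ (note this uses $\sum_i r_i=\sum_j c_j$, which the existence of $B$ supplies), and strict convexity of $\Phi$ off $L$ hands back uniqueness, with $L=\R(e,-e)$ for positive $A$ giving the constant-factor clause; the paper instead delegates uniqueness and the degenerate patterns to separate arguments (full indecomposability, the Perron--Frobenius and entropic proofs). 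One small point of hygiene: in the necessity direction a vanishing diagonal entry of $D_1$ only forces $r_i=0$, not a zero row of $A$, so the theorem should be read with $D_1,D_2$ positive diagonal (as in the maximal formulation of Section \ref{sec:equivscaling}); your preliminary stripping of zero rows and columns is the right fix and should be applied in both directions of the equivalence.
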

In Section \ref{sec:equivscaling}, we give maximal formulations of this theorem. Some immediate questions emerge, such as: How to compute $D_1,D_2$ and the scaled matrix? Can this be generalised to arrays of higher dimension? All of these questions and many more have been answered in the literature.

Given a matrix $A\in \R^{n\times n}$ with positive entries and the task to scale $A$ to given row sums $r$ and column sums $c$, one is very naturally lead to the following approximation algorithm:
\begin{alg}[RAS method] \label{alg:ras} Given $A\in\R^{m\times n}$, do:
\begin{itemize}
	\item Multiply each row $j$ of $A$ with $r_j/\left(\sum_{i} A_{ij}\right)$ to obtain $A^{(1)}$ with row sums $r$.
	\item Multiply each column $j$ of $A^{(1)}$ with $c_j/\left(\sum_i A_{ji}\right)$ to obtain $A^{(2)}$ with column sums $c$.
	\item If the row sums of $A^{(2)}$ are very far from $r$, repeat steps one and two.
\end{itemize}
\end{alg}
If the algorithm converges, the limit $B$ will be the scaled matrix. However, there is a priori no guarantee that $D_1,D_2$ exist, in which case we can only ask for \emph{approximate scaling}, i.e.~matrices $D_1,D_2$ such that $D_1AD_2\approx B$. 

\subsection{Historical remarks}
The iterative algorithm \ref{alg:ras} is extremely natural and it is therefore not surprising that it was rediscovered several times. It is at least known as \emph{Kruithof's projection method} (\cite{kru79}) or \emph{Kruithof double-factor model} (especially in the transportation community; \cite{vis80}), the \emph{Furness (iteration) procedure} (\cite{rob74}), \emph{iterative proportional fitting procedure (IPFP)} (\cite{rus95}), the \emph{Sinkhorn-Knopp algorithm} (\cite{kni08}), the \emph{biproportional fitting procedure} (in the case of $r=c=e$; \cite{bac70}) or the \emph{RAS method} (especially in economics and accounting; \cite{fof02}). Sometimes, it is also referred to simply as \emph{matrix scaling} (\cite{rot07}), which is mostly used as the term for scalings of the form $DAD^{-1}$, or \emph{matrix balancing}, which is mostly used for scalings to equal row and column sums. The algorithm is a special case of a number of other algorithms such as Bregman's balancing method (cf. \cite{lam81}) as we will see later on. 

When was interest sparked in the RAS method and diagonal equivalence? The earliest claimed appearance of the model dates back to at least the 30s and Kruithof's use of the method in telephone forecasting (\cite{kru37}). At a similar time, according to \cite{bre67a}, the Soviet architect Sheleikhovskii considered the method. \cite{sin64} claims that when he started to evaluate the method, it had already been proposed and in use. His example is the unpublished report \cite{welsh}. \cite{bac70} acknowledges \cite{dem40} in transportation science, who popularised the RAS method in the English speaking communities. 

None of these approaches seem to have been thoroughly justified. Bacharach notes that Deming and Stephan only propose an ad-hoc justification for using their method to study their problem, which turned out to be wrong (cf. \cite{ste42}). He further claims that the first well-founded approach to use the RAS - this time in economics - was given by Richard Stone, who also coined the name ``RAS model'' (Bacharach cites \cite{sto62}, although the name RAS must have occurred earlier as it already occurs in \cite{thi61} without attribution and explanation). However, one can argue that the first justified approach occurred earlier: \cite{sch31} had already posed a question regarding models of Brownian motion when given a priori estimates, which led to a similar problem. His approach was justified, albeit the ultimate justification in terms of large deviation theory needed to wait for the development of modern probability theory (cf. \cite{geo15}). The problem boils down to solving a continuous analogue of Sinkhorn's theorem, which leads to the matrix problem using discrete distributions (essentially similar to \cite{hob65}) and was first attacked in \cite{for41} using a fixed point approach similar to Algorithm \ref{alg:perfro}.

However, none of the original papers provided a convergence proof with the possible exception of \cite{for41}\footnote{The notation and writing is very difficult to read today, so I am not entirely sure whether the proof is correct and captures the case we are interested in.}. As noted by \cite{fie70}, after Deming and Stephan provided their account, their community started to develop the ideas, but a proof was still lacking (\cite{smi47,elb55,fri61}).

Summarising the last paragraphs, the RAS method was discovered independently for different reasons in the 30s to 40s, although none of the authors provided a proof (with the possible exception of Fortet). A more theoretical analysis developed in the 60s after Stone's results in economics (e.g.~\cite{sto62}) and \cite{sin64} in statistics and algebra. Since then, a large number of papers has been published analysing or applying Theorem \ref{thm:sink}. Every decade since the sixties contains papers where proving the theorem or an extension thereof is among the main results (examples are \cite{sin64,mac77,pre80,bor98,puk09,geo15}). 

Many authors are aware of at least some other attempts, but only a few try to give an overview.\footnote{This suggests once again that the problem had a very complicated history which also makes it difficult to find out whether a problem has already been solved in the past. Several authors have attempted more complete historical overviews such as \cite{fie70, mac77, sch90b, bro93, kal96a, kal08, puk09}. In \cite{rot89a}, the authors claims that a colleague collected more than 400 papers on the topic of matrix scaling.} The situation is further complicated by a the fact that the technical answer to the question of scalability is tightly linked with the question of patterns, which has a rich history in itself, probably starting with Fr{\'e}chet (overview of a long line of work in \cite{fre60}). 

The last point is particularly interesting: In fact, one could summarise matrix scaling matrix scaling as follows: Given a nonnegative matrix $A$ it is scalable to a matrix $B$ fulfiling some constraints (mostly linear but some nonlinear constraints are allowed), a matrix is scalable with diagonal matrices (in different ways, mostly $D_1AD_2$ where $D_1$ and $D_2$ need not be independent) if and only if there exists a matrix $C$ with the same pattern as $A$ fulfiling the constraints.

Today, proofs and generalisations of Theorem \ref{thm:sink} and similar questions about scaling matrices in the form $DAD$ or $DAD^{-1}$ form a knot of largely interconnected techniques. We will now try to give an overview of these results and highlight their connections. A graphical overview is presented in Figure \ref{fig:overview}.

\begin{figure}[htbp] \label{fig:overview}
\resizebox{.95\textwidth}{!}{%
\begin{tikzpicture}[font=\sffamily]
  \node[text width=3.3cm,align=center] at (6.5,13) (A) {log-barrier function \\ {\color{red} coordinate descent}};
  \node[text width=3cm] at (3.5,11) (B) {logarithmic single-variable potential};
  \node[text width=3cm] at (3.5,9) (C) {single-variable potential};
  \node[text width=3cm] at (10.5,12) (D) {homogeneous \\ potential \\ {\color{red} total gradient}};
	
	\node[text width=2.6cm,align=center] at (0,4) (E) {relative entropy \\ {\color{red} iterative I-projection}};
	\node at (-1.3,4) (Einv) {};
	\node[text width=3cm,align=center] at (2,6)(L) {log-linear models};
	
	\node[text width=3cm,align=center] at (10.5,6) (F) {Nonlinear Perron-Frobenius Theory \\ {\color{red} f.p. iteration}};
	\node at (9,5.5) (Finv) {};
	\node at (11.5,4) (G) {combined};
	\node[text width=2cm] at (10.5,2.3) (H) {Kakutani fixed point theorem};
	\node[text width=2.3cm] at (12.5,2) (I) {Kronecker \\ index theorem};
	
	\node[text width=3cm,align=center] at (4,0) (J) { {\color{red} convex opt.} \\convex log-barrier \\ {\color{red} coordinate ascent}};
	\node at (2.5,-0.4) (Jinv) {};
	\node[text width=3cm] at (7,1.5) (K) {single-variable convex potential};

  \begin{pgfonlayer}{background}
    \foreach \nodename in {A,B,C,D,E,F,G,H,I,J,K,L} {
      \coordinate (\nodename') at (\nodename);
    }
		\node[color=blue!30] at (10,14.5) (nlino) {Potential optimisation};
		\node[color=red!30,text width=2cm] at (0,0) (conv) {Convex \\ optimisation};
		\node[color=green!30,text width=3cm] at (11.5,8.5) (top) {Fixed point \\ approaches};
		\node[color=orange!40] at (-0.7,6.6) (entr) {Entropy};
    \path[fill=blue!15,draw=blue!15,line width=3.4cm, line cap=round, line join=round] 
    (A') to (B') 
         to (C') 
         to (D') 
         to (A') -- cycle;
		\path[fill=red!15,draw=red!15,line width=3.4cm, line cap=round, line join=round]
		(E') to (J')
		     to (K')
				 to (E') -- cycle;
		\path[fill=green!15,draw=green!15,line width=3.4cm, line cap=round, line join=round]
		(F') to (G')
		     to (I')
				 to (H')
				 to (F') -- cycle;
		\path[fill=yellow!15,draw=yellow!15,line width=3cm, line cap=round, line join=round]
		(E') to (L')
				 to (E') -- cycle;
  \end{pgfonlayer}
	\path[<->] (A) edge node[right]{\ref{obs:logperfro}} (F);
	\path[<->,dotted] (A) edge (B);
	\path[<->] (B) edge node[right]{\ref{obs:djok}} (C);
	\path[<->] (C) edge node[above]{\ref{lem:london}} (D);
	\path[<->] (D) edge node[above]{\ref{obs:logtohom}} (A);
	\path[<->] (A) edge node[right]{\ref{lem:convprog}} (J);	
	\path[->] (H) edge node[right]{} (G);
	\path[->] (I) edge node[right]{} (G);
	\path[<->,dotted] (H) edge node[above, near start]{\ref{obs:topentrop}} (E);
	\path[->] (E) edge node[right]{\ref{obs:wolfedual}} (J);
	\path[<->] (J) edge node[below, near end]{\hspace{.2cm} \ref{lem:london}} (K);
	\path[->] (C) edge node[right, very near start]{\ref{obs:topentrop}} (K);
	\path[->] (B) edge[bend right] node[right]{\ref{obs:logentrop}} (E);
	\path[<->] (E) edge node[right]{\hspace{.1cm} \ref{lem:loglinentrop}} (L);
	\path[<-] (L) edge node[above]{applies to} (G);
	
	\path[<->] (A) edge[bend right=30,red] node[right]{\ref{obs:rasperfro}} (Finv); 
	\path[<->] (A) edge[bend right=50,red] node[right]{\ref{obs:rasentrop}} (Einv);
	\path[<->] (E) edge[bend right=10,red] node[right]{\ref{obs:ascent}} (Jinv);
	\path[->,dotted] (J) edge[red] node[right]{\color{red} similar} (D);
\end{tikzpicture}
}
\caption{Connected approaches to prove Theorem \ref{thm:sink} and their relationships. Red arrows and text denote natural algorithms and their connections.}
\end{figure}

\subsection{The logarithmic barrier function} \label{sec:logbarrier}
Potentials and barrier functions have been important in the study of matrix scaling since at least the unpublished results of \cite{gor63}. Here, we largely follow \cite{kal96a}, who give a very lucid account about the interconnections between different barrier function formulations for $g$.

Let $A\in \R^{n\times n}$ be a matrix with nonnegative entries and $r,c\in\R^n_+$. Define the \emph{logarithmic barrier function}
\begin{align}
	g(x,y)=y^{T}Ax-\sum_{i=1}^n c_i\ln x_i - \sum_{i=1}^n r_i\ln y_i \label{eqn:logbarrier}
\end{align}
If we take partial derivatives, we obtain
\begin{align}
	\begin{split}
		\partial_{y_i} g(x,y)=Ax- r_i/y_i \\
		\partial_{x_i} g(x,y)=y^TA- c_i/x_i \label{eqn:partialderiv}
	\end{split}
\end{align}
which implies that for any stationary point we have
\begin{align*}
	\sum_{j} A_{ij}x_jy_i=r_i \qquad \sum_{j} A_{ji}x_iy_j=c_i
\end{align*}
and setting $D_1=\diag(y)$ and $D_2=\diag(x)$ solves the scaling problem. Conversely, any scaling gives a stationary point of the logarithmic barrier function. In summary:
\begin{lem} \label{lem:logbar}
Given $A\in \R^{n\times n}$ nonnegative and two vectors $r,c\in \R^n_+$, then the matrix can be diagonally scaled to a matrix $B$ with row sums $r$ and column sums $c$ if and only if the corresponding logarithmic barrier function (\ref{eqn:logbarrier}) has a stationary point.
\end{lem}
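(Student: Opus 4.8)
The plan is to read the statement directly off the first–order optimality conditions already displayed in (\ref{eqn:partialderiv}): the content of the lemma is precisely that a stationary point of $g$ and a positive diagonal scaling encode the same system of equations. Throughout I take the scaling matrices $D_1,D_2$ to be \emph{positive} diagonal matrices, which is the natural convention here: since $B=D_1AD_2$ is required to be nonnegative with the same pattern as $A$ and to have strictly positive row and column sums $r,c\in\R^n_+$, no diagonal entry of $D_1$ or $D_2$ may vanish, and by absorbing signs one may assume all of them positive. Correspondingly the domain of $g$ is the open set $\R^n_+\times\R^n_+$, so a ``stationary point'' is by definition a point of this open domain at which the gradient of $g$ vanishes.

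For the implication ``scaling $\Rightarrow$ stationary point'', write $D_1=\diag(y)$ and $D_2=\diag(x)$ with $x,y\in\R^n_+$. The scaling equations $D_1AD_2e=r$ and $D_2A^TD_1e=c$ read coordinatewise as $y_i\sum_j A_{ij}x_j=r_i$ and $x_i\sum_j A_{ji}y_j=c_i$ for all $i$, i.e.\ $(Ax)_i=r_i/y_i$ and $(A^Ty)_i=c_i/x_i$. By (\ref{eqn:partialderiv}) this says exactly $\partial_{y_i}g(x,y)=0$ and $\partial_{x_i}g(x,y)=0$, so $(x,y)$ is a stationary point of $g$.

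For the converse, let $(x,y)\in\R^n_+\times\R^n_+$ be a stationary point. Reading (\ref{eqn:partialderiv}) the other way, vanishing of the partial derivatives gives $y_i\sum_j A_{ij}x_j=r_i$ and $x_i\sum_j A_{ji}y_j=c_i$, which is precisely the statement that $B:=\diag(y)\,A\,\diag(x)$ has row sums $r$ and column sums $c$; since $x,y$ have positive entries, $D_1:=\diag(y)$ and $D_2:=\diag(x)$ are admissible scaling matrices and $B$ has the same pattern as $A$. This establishes the equivalence.

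The computation itself is routine, so there is essentially no obstacle; the only point that deserves a sentence is the bookkeeping about domains and signs, namely verifying that a stationary point of $g$ automatically lies in the strictly positive orthant (that being where $g$ is defined and differentiable), and, conversely, that the diagonal matrices coming from a solvable scaling problem may be taken with strictly positive diagonal entries. This is what makes the two notions live on the same set $\R^n_+\times\R^n_+$, so that the chain of equivalences above is genuinely biconditional rather than merely one implication plus a converse up to sign.
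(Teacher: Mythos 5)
Your proof is correct and follows exactly the paper's route: the paper derives the lemma by computing the partial derivatives in (\ref{eqn:partialderiv}) and observing that the stationarity conditions $\sum_j A_{ij}x_jy_i=r_i$, $\sum_j A_{ji}x_iy_j=c_i$ are literally the scaling equations for $D_1=\diag(y)$, $D_2=\diag(x)$, in both directions. Your added remarks on the domain $\R^n_+\times\R^n_+$ and the positivity of the diagonal entries only make explicit what the paper leaves implicit.
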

According to \cite{mac77}, this observation was first made by \cite{gor63} who also gave the first complete and correct proof. However, the paper only circulated privately. Gorman apparently did not consider this scaling function directly but used an approach similar or identical to the ones considered in convex geometry described in Section \ref{sec:convexprog}. 

The potential barrier function can also be seen from the perspective of Lagrangian multipliers:
\begin{lem}[\cite{mar68}] \label{lem:marolk}
Given $A\in \R^{n\times n}$ nonnegative and two vectors $r,c\in \R^n_+$, then the matrix can be diagonally scaled to a matrix $B$ with row sums $r$ and column sums $c$ if and only if on the region
\begin{align}
 \Omega:=\left\{(x,y)\middle|\prod_{i=1}^m x_i^{c_i}=\prod_{i=1}^m y_i^{r_i}=1, x_i>0,y_i>0\right\}
\end{align}
the function $y^TAx$ is bounded away from zero and is unbounded whenever $\|x\|_{\infty}+\|y\|_{\infty}\to \infty$. The function $y^TAx$ then attains a minimum defining $D_1$ and $D_2$.
\end{lem}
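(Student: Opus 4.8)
The plan is to connect Lemma~\ref{lem:marolk} with the already-established Lemma~\ref{lem:logbar} by showing that the optimization problem on the constrained region $\Omega$ is essentially the same as finding a stationary point of the logarithmic barrier function $g(x,y)$ of \eqref{eqn:logbarrier}. First I would observe that $g$ is invariant under the two-parameter rescaling $(x,y)\mapsto(\lambda x,\mu y)$ in the following sense: replacing $x$ by $\lambda x$ changes $y^TAx$ by a factor $\lambda$ and changes $-\sum_i c_i\ln x_i$ by $-(\sum_i c_i)\ln\lambda$, so the two effects are not literally a symmetry, but the gradient equations \eqref{eqn:partialderiv} are homogeneous enough that any stationary point can be rescaled onto $\Omega$. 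Concretely, I would parametrize points of $\R^n_+\times\R^n_+$ by a point of $\Omega$ together with two positive scalars, and write $g$ in these coordinates; minimizing over the two scalar directions explicitly (a one-variable calculus exercise, since $t\mapsto at - b\ln t$ is convex with a unique minimum) reduces $g$ to a function on $\Omega$ whose infimum is, up to an additive constant and a logarithm, $\inf_{\Omega} y^TAx$.

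The key steps, in order: (1) Show that $g$ is bounded below on $\R^n_+\times\R^n_+$ if and only if $y^TAx$ is bounded away from $0$ on $\Omega$ — the "$\Leftarrow$" direction uses the explicit minimization over the two scalar directions to get $g \geq \text{const} + (\sum c_i + \sum r_i)\ln(\inf_\Omega y^TAx) > -\infty$ after absorbing constants; the "$\Rightarrow$" direction is the contrapositive, noting that if $y^TAx$ gets arbitrarily close to $0$ on $\Omega$ one can drive $g\to-\infty$ by an appropriate choice of the scalar directions. (2) Show that a minimizer of $g$ on $\R^n_+\times\R^n_+$ exists if and only if a minimizer of $y^TAx$ on $\Omega$ exists, which is where the coercivity hypothesis "$y^TAx$ unbounded whenever $\|x\|_\infty+\|y\|_\infty\to\infty$" enters: it guarantees the infimum over the compact-after-normalization set $\Omega$ is attained in the interior, and conversely a minimizer of $g$ projects to one on $\Omega$. (3) Invoke Lemma~\ref{lem:logbar}: a stationary point of $g$ — which, $g$ being smooth on the open set $\R^n_+\times\R^n_+$, is produced by any interior minimizer — is equivalent to the existence of the scaling, and reading off $D_1=\diag(y)$, $D_2=\diag(x)$ from the minimizer of $y^TAx$ on $\Omega$ finishes it.

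One subtlety I would want to handle carefully: a priori the two conditions in the lemma — "$y^TAx$ bounded away from zero on $\Omega$" and "$y^TAx \to\infty$ as $\|x\|_\infty+\|y\|_\infty\to\infty$" — together are what force $g$ to have an interior minimum rather than merely an infimum, so I need both to rule out minimizing sequences escaping to the boundary of $\R^n_+\times\R^n_+$ (some coordinate $\to 0$) or to infinity. The boundary-escape is controlled by the first condition (together with the $-\ln$ terms blowing up); the escape-to-infinity by the second. The main obstacle is bookkeeping the change of variables in step (1)–(2) cleanly: one must check that the two scalar directions and the $2n-2$ "internal" directions of $\Omega$ together span everything, that the explicit scalar minimization is uniformly well-behaved, and that no minimizing sequence can degenerate in a mixed way (one scalar $\to 0$ while another $\to\infty$). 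Once the coordinates are set up, each individual verification is routine convex-analysis, and the equivalence with scalability then follows immediately from Lemma~\ref{lem:logbar}.
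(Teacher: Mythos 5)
Your proposal is correct and follows essentially the same route as the paper, which disposes of this lemma by the single observation that the logarithmic barrier function $g$ of (\ref{eqn:logbarrier}) is the Lagrange function of the constrained problem on $\Omega$, so that Lemma \ref{lem:marolk} and Lemma \ref{lem:logbar} are equivalent; your explicit fibration of $\R^n_+\times\R^n_+$ over $\Omega$ by the two scaling directions together with the partial minimisation over them is exactly the mechanism behind that observation (and behind Observations \ref{obs:djok} and \ref{obs:logtohom}). The only point worth making explicit is that the joint minimisation over the two scalars is finite only when $\sum_i r_i=\sum_i c_i$ (otherwise $\lambda\mu\,y^TAx-\sum_ic_i\ln\lambda-\sum_ir_i\ln\mu$ is linear and unbounded below along $\lambda\mapsto t\lambda$, $\mu\mapsto \mu/t$), a normalisation the lemma tacitly assumes.
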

This was used to prove our Theorem \ref{thm:sink} in \cite{mar68}. We observe:
\begin{obs}
Lemma \ref{lem:marolk} and \ref{lem:logbar} are equivalent: The logarithmic barrier function is the Lagrange function of the optimisation problem in Lemma \ref{lem:marolk}.
\end{obs}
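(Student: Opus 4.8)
The plan is to make explicit the Lagrangian duality that sits behind both lemmas. First I would rewrite the region $\Omega$ of Lemma~\ref{lem:marolk} in additive form: since all coordinates are positive, the defining equations $\prod_i x_i^{c_i}=1$ and $\prod_i y_i^{r_i}=1$ are equivalent to the two smooth equality constraints $\sum_i c_i\ln x_i=0$ and $\sum_i r_i\ln y_i=0$ on the open orthant $\R^n_+\times\R^n_+$. The optimisation problem of Lemma~\ref{lem:marolk} is thus ``minimise $y^TAx$ subject to $\sum_i c_i\ln x_i=0$ and $\sum_i r_i\ln y_i=0$'', whose Lagrange function is
\begin{align*}
  L(x,y;\lambda,\mu)=y^TAx-\lambda\sum_{i=1}^n c_i\ln x_i-\mu\sum_{i=1}^n r_i\ln y_i .
\end{align*}
Specialising to $\lambda=\mu=1$ recovers precisely the logarithmic barrier function $g$ of~(\ref{eqn:logbarrier}); this is the literal content of the observation, and the remaining work is to verify that pinning the multipliers to $1$ costs nothing.

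For that I would argue as follows. If $(x^\ast,y^\ast)\in\Omega$ is a critical point of $y^TAx$ on $\Omega$ with multipliers $(\lambda,\mu)$, the first-order conditions read $(A^Ty^\ast)_i=\lambda c_i/x^\ast_i$ and $(Ax^\ast)_i=\mu r_i/y^\ast_i$. Contracting the first family with $x^\ast_i$ and summing, and likewise the second with $y^\ast_i$, gives $(y^\ast)^TAx^\ast=\lambda\sum_i c_i=\mu\sum_i r_i$. Because a matrix scalable to row sums $r$ and column sums $c$ necessarily satisfies $\sum_i r_i=\sum_i c_i$ (the same summation identity already used on the stationarity equations of $g$), the two multipliers coincide, $\lambda=\mu=:\kappa$, and $\kappa>0$ since $y^TAx$ is bounded away from zero on $\Omega$ by hypothesis. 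Replacing $x^\ast$ by $x^\ast/\kappa$ then turns those conditions into $(A^Ty^\ast)_i(x^\ast/\kappa)_i=c_i$ and $(Ax^\ast/\kappa)_iy^\ast_i=r_i$, i.e.\ $(x^\ast/\kappa,y^\ast)$ is a stationary point of $g$ and, via $D_1=\diag(y^\ast)$, $D_2=\diag(x^\ast/\kappa)$, a solution of the scaling problem. Conversely, any stationary point of $g$ already solves the scaling problem by the computation preceding Lemma~\ref{lem:logbar}. Hence ``$g$ has a stationary point'' and ``$y^TAx$ has a critical point on $\Omega$'' single out the same set of scalings, so the two biconditionals have a common left-hand side and are equivalent, with the Lagrangian identity $L(\,\cdot\,,\,\cdot\,;1,1)=g$ as the mechanism linking them. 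To close the loop I would add that the hypotheses ``$y^TAx$ bounded away from zero and coercive on $\Omega$'' in Lemma~\ref{lem:marolk} are exactly what forces the constrained infimum to be attained: $\Omega$ is closed but unbounded, and along any sequence in $\Omega$ with some $x_i\to0$ the constraint $\prod_jx_j^{c_j}=1$ drives another coordinate to infinity, so coercivity in $\|x\|_\infty+\|y\|_\infty$ does control the relevant ``boundary'' of $\Omega$, after which the minimiser is a critical point and the previous paragraph applies.

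I expect the only genuinely delicate point to be this bookkeeping around the scaling symmetry: $\Omega$ imposes two constraints, whereas the map $(x,y)\mapsto\diag(y)A\diag(x)$ has only the one-parameter rescaling freedom $(x,y)\mapsto(tx,y/t)$, so one must check carefully — as in the multiplier computation above — that fixing both Lagrange multipliers to $1$ is legitimate (equivalently, that the normalising constant $\kappa$ is \emph{forced} by $\sum_ir_i=\sum_ic_i$, not freely chosen), and that the non-compactness of $\Omega$ is genuinely absorbed by the coercivity assumption rather than quietly ignored. Everything else is the routine translation between multiplicative constraints, their logarithms, and the associated Lagrange/KKT conditions.
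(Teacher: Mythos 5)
Your proposal is correct and follows exactly the route the paper intends: the paper offers no proof beyond the observation itself, whose entire content is the identification of $g$ as the Lagrange function of the constrained problem on $\Omega$, which is precisely your starting point. The extra work you do --- checking via the contraction identity $(y^\ast)^TAx^\ast=\lambda\sum_i c_i=\mu\sum_i r_i$ and the standing consistency condition $\sum_i r_i=\sum_j c_j$ that both multipliers may be normalised to $1$ after rescaling $x^\ast$ --- is a legitimate filling-in of detail the paper leaves implicit, not a different argument.
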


Now consider $g(x,y)$ for a fixed $x$. Since $(-\ln)$ is a convex function and $x^TAy$ is linear in $y$, $g$ is convex in $y$. The same holds for a fixed $y$, i.e.~$g$ is convex in both direction. It is then natural to consider the \emph{coordinate descent algorithm} (for an introduction and overview see \cite{wri15}):
\begin{alg} \label{alg:coorddesc}
Given a nonnegative matrix $A$, take a starting point for $g$, e.g.~$x_0=y_0=e$ and iterate:
\begin{enumerate}
	\item For fixed $y_n$, find $x_{n+1}$ by searching for the minimum of $g(x,y_n)$.
	\item For fixed $x_{n+1}$, find $y_{n+1}$ by searching for the minimum of $g(x_{n+1},y)$. 
	\item Repeat until convergence.
\end{enumerate}
\end{alg}
It is possible to solve $\min_x g(x,y)$ or $\min_y g(x,y)$ analytically:
\begin{align*}
	x_{n+1}=p/(Ay_n), \qquad y_{n+1}=q/(Ax_{n+1}).
\end{align*}
This leads to the following observation:
\begin{obs}[\cite{kal96a}]
Algorithm \ref{alg:coorddesc} and \ref{alg:ras} are the same.
\end{obs}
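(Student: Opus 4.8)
The plan is to show that one analytic half-step of Algorithm~\ref{alg:coorddesc} --- the exact minimisation over one of the two variables --- is literally one scaling step of Algorithm~\ref{alg:ras}, and then to match the two iterations by induction.

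\textbf{Step 1: the explicit one-variable minimiser.} Fix $y\in\R^n_+$ and assume $A$ has no zero column (automatic if $A$ is positive), so that $A^Ty\in\R^n_+$ and $g(\cdot,y)$ is finite on $\R^n_+$. Since $-\ln$ is convex and $y^TAx$ is linear in $x$, the map $x\mapsto g(x,y)$ is strictly convex, so its unique stationary point is its global minimum; by \eqref{eqn:partialderiv} this minimiser is $x^\star$ with $x^\star_i=c_i/(A^Ty)_i$. Symmetrically, for fixed $x$ with $Ax\in\R^n_+$ the minimiser of $g(x,\cdot)$ is $y^\star$ with $y^\star_i=r_i/(Ax)_i$.

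\textbf{Step 2: reading the update as a rescaling.} Next I would check that every iterate of Algorithm~\ref{alg:coorddesc} is encoded by the matrix $M:=\diag(y)A\diag(x)$, starting from $x=y=e$ and hence $M=A$. The step ``minimise over $x$'' replaces $x$ by $x^\star$ and thus $M$ by $M\,\diag(x^\star/x)$ (entrywise ratio). Since the $j$-th column sum of $M$ equals $x_j(A^Ty)_j$, the ratio $(x^\star/x)_j=c_j/\big(x_j(A^Ty)_j\big)$ is exactly ``$c_j$ divided by the current $j$-th column sum''; so this half-step is precisely the column-scaling step of Algorithm~\ref{alg:ras}, producing column sums $c$. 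Symmetrically, ``minimise over $y$'' is the row-scaling step of Algorithm~\ref{alg:ras}, producing row sums $r$. Crucially the result is again of the form $\diag(\tilde y)A\diag(\tilde x)$, so the description reproduces itself and the induction closes.

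\textbf{Step 3: conclusion.} Concatenating the half-steps, the iteration of Algorithm~\ref{alg:coorddesc} alternately rescales columns to $c$ and rows to $r$, which is exactly the iteration of Algorithm~\ref{alg:ras}, up to the purely cosmetic choice of whether the very first scaling acts on rows or on columns --- a half-period shift that changes neither the family of matrices produced nor its limit. The only point I expect to need genuine care is the domain condition: one must keep $Ax$ and $A^Ty$ strictly positive so that $g$ is finite and the stationary point is an actual global minimiser, which is precisely why no zero rows or columns may be allowed; everything else is the bookkeeping sketched above.
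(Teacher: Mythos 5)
Your proposal is correct and follows essentially the same route as the paper: compute the exact coordinate-wise minimisers $x^\star=c/(A^Ty)$ and $y^\star=r/(Ax)$ from the stationarity conditions \eqref{eqn:partialderiv}, identify the iterate with $\diag(y)A\diag(x)$, and observe that each half-step multiplies the columns (resp.\ rows) by the target sum divided by the current sum, i.e.\ is exactly one RAS normalisation. You are merely more explicit than the paper about strict convexity, the no-zero-column caveat, and the half-period offset in which variable is updated first, all of which are harmless refinements of the same argument.
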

\begin{proof}
Define $D^{(1)}_n:=\diag(y_n)$ and $D^{(2)}_n:=\diag(x_n)$. Then we have $D^{(1)}_{n+1}AD^{(2)}_ne=r$ and $e^TD^{(1)}_nAD^{(2)}_n=c^T$, which implies that we perform successive row- and column normalisations as in the RAS method.
\end{proof}
Using the fact that the algorithm is a coordinate descend method, one can obtain a convergence proof including a discussion of convergence speed of this algorithm and a dual algorithm (\cite{luo92}). See also Observation \ref{obs:rasentrop} for a discussion of coordinate ascent methods.

However, $g$ is not jointly convex. For a purely (jointly) convex reformulation, consider the minimum for $t$ along any line $g(tx,ty)$, where $g$ is convex. If we define
\begin{align}
	k(x,y):=\min_{t>0} g(tx,ty)
\end{align}
minimising $k(x,y)$ is still equivalent to minimising $g(x,y)$. The corresponding $k$ will be homogeneous and the domain for minimisation will in fact be compact. 
\begin{obs}[\cite{kal96a}] \label{obs:logtohom}
We obtain:
\begin{align}
	k(x,y)&= \min_{t>0} \left( t^2 y^TAx -2n\ln t-\sum_{i=1}^n c_i\ln x_i-\sum_{i=1}^n r_i\ln y_i\right) \\
		&= \ln \left(\frac{(y^TAx)^n}{\prod_{i=1}^n x_i^{c_i}\prod_{j=1}^n y_j^{r_j}}\right)+n-n\ln(n)
\end{align}
hence minimising $g$ is equivalent to minimising $k$.
\end{obs}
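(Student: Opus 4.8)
The plan is to reduce the whole statement to a one-variable calculus computation together with a short homogeneity argument. First I would substitute $tx$ for $x$ and $ty$ for $y$ in the definition (\ref{eqn:logbarrier}) of $g$: the bilinear term $y^TAx$ picks up a factor $t^2$, while $\ln(tx_i)=\ln t+\ln x_i$ and $\ln(ty_i)=\ln t+\ln y_i$ contribute $-(e^Tc)\ln t$ and $-(e^Tr)\ln t$ respectively. Under the standing normalisation $e^Tr=e^Tc=n$ (which may be assumed without loss: scalability forces $e^Tr=e^Tc$, and one may then rescale $r,c$, changing $D_1,D_2$ only by a constant), this gives exactly $g(tx,ty)=t^2\,y^TAx-2n\ln t-\sum_i c_i\ln x_i-\sum_i r_i\ln y_i$, which is the first displayed line of the Observation.

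Next I would minimise the scalar function $\phi(t):=\alpha t^2-2n\ln t+\beta$ over $t>0$, writing $\alpha:=y^TAx$ and $\beta:=-\sum_i c_i\ln x_i-\sum_i r_i\ln y_i$. For $x,y\in\R^n_+$ one has $\alpha>0$ as soon as $A\neq 0$ (the only case of interest), $\phi$ is a sum of strictly convex functions on $(0,\infty)$ with $\phi(t)\to\infty$ as $t\to 0^+$ and as $t\to\infty$, so the unique solution of $\phi'(t)=2\alpha t-2n/t=0$, namely $t_\star=\sqrt{n/\alpha}$, is the global minimiser. Plugging back, $\alpha t_\star^2=n$ and $-2n\ln t_\star=-n\ln n+n\ln\alpha$, hence $\phi(t_\star)=n-n\ln n+n\ln(y^TAx)+\beta$; collecting the logarithms rewrites this as $\ln\!\big((y^TAx)^n/(\prod_i x_i^{c_i}\prod_j y_j^{r_j})\big)+n-n\ln n$, the second displayed line. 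This is the bulk of the work and poses no genuine difficulty.

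The part that deserves care is the closing claim that minimising $g$ and minimising $k$ are equivalent. I would establish this from two facts: (i) taking $t=1$ in the defining minimum gives $k(x,y)\le g(x,y)$ for every admissible pair; and (ii) $k$ is constant on each ray $\{(tx,ty):t>0\}$ — reparametrising the inner minimisation yields $k(tx,ty)=k(x,y)$ (equivalently, the explicit formula is invariant under $(x,y)\mapsto(tx,ty)$ since numerator and denominator both scale by $t^{2n}$). Together these give $\inf k=\inf g$; moreover if $(x^\star,y^\star)$ minimises $k$ then $(t_\star x^\star,t_\star y^\star)$ minimises $g$, because $g(t_\star x^\star,t_\star y^\star)=k(x^\star,y^\star)=\inf k=\inf g$, and conversely every minimiser of $g$ minimises $k$; in particular $g$ attains a minimum iff $k$ does, which with Lemma \ref{lem:logbar} is what we want. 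The main thing to keep straight is that this correspondence is between \emph{minima} (or infima) of $g$ and of $k$, not between arbitrary stationary points, and that the domain $x,y\in\R^n_+$ with $A\neq 0$ must be fixed throughout so that every logarithm and the division by $y^TAx$ are legitimate.
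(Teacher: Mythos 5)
Your proposal is correct and follows exactly the computation the paper intends: substitute $(tx,ty)$ into $g$, use the normalisation $e^Tr=e^Tc=n$ to collect the $\ln t$ terms, minimise the resulting strictly convex scalar function at $t_\star=\sqrt{n/(y^TAx)}$, and note that $k$ is constant on rays while $g(t_\star x,t_\star y)=k(x,y)$, so the infima coincide. If anything you are more careful than the paper, which leaves the normalisation assumption and the ray-invariance argument implicit.
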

This proves the following lemma:
\begin{lem}
Given a nonnegative matrix $A\in \R^{n\times m}$, it can be scaled to a matrix with row sums $r$ and column sums $c$ if and only if the minimum of $k(x,y)$ exists and is positive. The corresponding minima $(x,y)$ define the diagonal matrices to achieve the scaling.
\end{lem}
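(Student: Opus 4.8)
The plan is to read this lemma off the two facts already in hand: Lemma \ref{lem:logbar}, which equates scalability of $A$ (to margins $r,c$) with the existence of a stationary point of the logarithmic barrier $g$, and Observation \ref{obs:logtohom}, which shows that minimising $g$ over the positive orthant is literally the same problem as minimising the homogenised functional $k$. So the entire content is to upgrade ``$g$ has a stationary point'' to ``$k$ attains its minimum,'' and then to identify the minimiser and the value.

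First I would make the convexity precise. Although $g(x,y)$ is, as noted, only separately convex, the substitution $x_i=e^{u_i}$, $y_i=e^{v_i}$ turns it into $\sum_{ij}A_{ij}e^{u_j+v_i}-\sum_i c_iu_i-\sum_i r_iv_i$, a sum of exponentials of affine functions plus a linear term, hence jointly convex in $(u,v)$; the same substitution makes $k$ a (jointly convex) log-sum-exp expression. For a differentiable convex function on an open set a stationary point is automatically a global minimiser, and conversely an attained minimum is stationary. Hence ``$g$ has a stationary point'' $\Leftrightarrow$ ``$\inf g$ is attained,'' and by Observation \ref{obs:logtohom} this is equivalent to ``$\inf k$ is attained,'' a minimiser of either producing a minimiser of the other through the ray parameter $t$. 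Together with Lemma \ref{lem:logbar} this already gives the ``exists'' half of the equivalence in both directions, with $D_1=\diag(y)$, $D_2=\diag(x)$ read off the minimiser.

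For the location and value I would use homogeneity: $k$ is invariant under $x\mapsto sx$ and $y\mapsto ty$, so the minimisation may be restricted to the compact slice $\Omega$ of Lemma \ref{lem:marolk} (equivalently $\{\|x\|_\infty=\|y\|_\infty=1\}$), on which $k$ is continuous; the real question is whether the infimum over this slice is attained in the relatively open set where \emph{all} coordinates are strictly positive, which is exactly what makes $D_1,D_2$ genuine (invertible) scalings. When a scaling exists, summing $D_1AD_2e=r$ gives $y^{T}Ax=\sum_i r_i$; substituting this into the closed form of $k$ from Observation \ref{obs:logtohom} pins down the minimal value, whose positivity is the normalisation-dependent form of the bounded-away-from-zero clause of Lemma \ref{lem:marolk}, so one should carry along the consistency assumption $\sum_i r_i=\sum_i c_i$ when phrasing it.

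I expect the genuine obstacle to be the boundary analysis rather than the convexity. If $A$ is not fully indecomposable, $k$ need not be coercive on the compact slice: its infimum is then approached only as some $x_i$ or $y_i$ tends to $0$, there is no positive minimiser, and this is precisely the case in which Theorem \ref{thm:sink} predicts non-scalability because no $B$ with the same pattern admits margins $r,c$. Making this dichotomy rigorous — a non-attained infimum forces a zero-pattern obstruction and conversely — is the delicate step; it is the same recession-direction argument underlying Lemma \ref{lem:marolk}, so in a streamlined write-up one could simply invoke the already-noted equivalence of Lemmas \ref{lem:logbar} and \ref{lem:marolk} and cite it.
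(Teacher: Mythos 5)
Your proposal is correct and follows essentially the same route as the paper, which derives this lemma directly from Lemma \ref{lem:logbar} together with Observation \ref{obs:logtohom} (the paper simply states ``this proves the following lemma'' after that observation). You in fact supply more detail than the paper does — the joint convexity after the exponential substitution, the reduction to a compact slice by homogeneity, and the boundary/pattern dichotomy — all of which the paper leaves implicit or defers to the surrounding discussion of Lemmas \ref{lem:marolk} and \ref{lem:london}.
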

The function $k$ is also similar to Karmakar's potential function for linear programming and Algorithm \ref{alg:ras} is the coordinate descent method for this function (\cite{kal96a,kal96b}).

Setting $y(x)=(Ax)^{-1}$, we arrive at another formulation of the problem. In the doubly stochastic case, this formulation is due to \cite{djo70,lon71} and was later adapted to arbitrary column and row sums in \cite{sin74}\footnote{later studied in \cite{kru79}, who used an entropic approach for the generalised problem and in \cite{ber79}, who used a direct convergence approach reminiscent of Sinkhorn and others.}:
\begin{lem} \label{lem:london}
Let $A\in\mathbb{R}^{n\times n}$ be a nonnegative matrix. There exists a scaling to a matrix with row sums $r$ and column sums $c$ iff the infimum 
\begin{align}
	\inf\left\{\prod_{i=1}^n \left(\sum_{j=1}^n A_{ij}x_j\right)^{r_i} \middle|\prod_{i=1}^n x_i^{c_i}=1\right\} \label{eqn:london}
\end{align}
is attained on $x,y\in\R_+$.
\end{lem}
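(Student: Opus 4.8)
The plan is to reduce the statement to Lemma~\ref{lem:logbar} by partially minimising the logarithmic barrier function $g$ of (\ref{eqn:logbarrier}) over one of its two groups of variables. For fixed $x\in\R^n_+$ the map $y\mapsto g(x,y)$ is convex (as noted before the lemma), and when $Ax$ has no vanishing component its unique stationary point is $y_i=r_i/(Ax)_i$; substituting this back gives, up to an additive constant independent of $x$,
\begin{align*}
	h(x) := \min_{y\in\R^n_+} g(x,y) = \sum_{j=1}^n r_j \ln\Bigl( \sum_{k=1}^n A_{jk}x_k \Bigr) - \sum_{i=1}^n c_i\ln x_i + \mathrm{const}.
\end{align*}
By the envelope relation, a stationary point of $g$ restricts to one of $h$, and conversely a stationary point $x$ of $h$ (with $Ax$ positive) lifts to the stationary point $(x,r/(Ax))$ of $g$; so by Lemma~\ref{lem:logbar} a scaling to row sums $r$ and column sums $c$ exists iff $h$ has a stationary point in $\R^n_+$. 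A scaling can exist only when $\sum_i r_i=\sum_i c_i$, which I assume from now on.

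Next I would use homogeneity. Writing $x_i=e^{u_i}$ turns $h$ into $\sum_j r_j\,\ell_j(u)-\sum_i c_iu_i+\mathrm{const}$ with $\ell_j(u)=\ln\sum_k A_{jk}e^{u_k}$ convex, so $h$ is convex in these coordinates; moreover translating $u$ by a multiple of $e$ changes it by $(\sum_j r_j-\sum_i c_i)\cdot(\text{const})=0$. Hence $h$ descends to a convex function on the hyperplane $\{\sum_i c_iu_i=0\}$, i.e.\ on $\{\prod_i x_i^{c_i}=1\}$, and on that section $\exp(h)$ equals, up to the same irrelevant positive constant, the objective $\prod_i(\sum_j A_{ij}x_j)^{r_i}$ of (\ref{eqn:london}). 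Since a stationary point of a convex function is a global minimum, $h$ has a stationary point in $\R^n_+$ iff the infimum in (\ref{eqn:london}) is attained at an interior point of the positive orthant --- the ``$y$'' of the statement being the associated $y=r/(Ax)$, automatically positive there.

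To close the loop I would check that the Lagrange conditions really give a stationary point of $g$ and not merely of its restriction. At a minimiser $x$ of $\prod_i(Ax)_i^{r_i}$ on $\{\prod_i x_i^{c_i}=1\}$ one has $\sum_j r_jA_{jk}/(Ax)_j=\lambda c_k/x_k$ for some multiplier $\lambda$; with $y_j:=r_j/(Ax)_j$ this reads $x_k(A^Ty)_k=\lambda c_k$, and summing over $k$ gives $\lambda\sum_k c_k=y^TAx=\sum_j y_j(Ax)_j=\sum_j r_j$, so $\lambda=1$ by consistency. Then $\partial_y g=0$ and $\partial_x g=0$ both hold, and $D_1=\diag(y),D_2=\diag(x)$ realise the scaling; conversely a scaling yields a stationary point of $g$ whose $x$-part, rescaled onto $\{\prod_i x_i^{c_i}=1\}$, satisfies the Lagrange conditions.

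The main obstacle is the bookkeeping at the boundary and in degenerate cases. If $A$ has a zero row then $\min_y g(x,y)=-\infty$ for every $x$ and no scaling to positive $r_i$ exists, while the infimum in (\ref{eqn:london}) is $0$ and ``attained'' in the naive sense; one must therefore either assume $A$ has no zero line or, as I read the statement, require the infimum to be attained at a point with $x$ and $y=r/(Ax)$ \emph{strictly positive} (equivalently, to be positive and achieved). Making this precise --- and verifying that coercivity of $h$ on the section is exactly what separates an attained interior minimum from the non-scalable cases --- is the part that needs real care; everything else is the change of variables above.
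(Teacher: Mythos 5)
Your proposal is correct and follows essentially the same route as the paper: the core content in both is that the Lagrange/first-order conditions of minimising $\prod_i(\sum_j A_{ij}x_j)^{r_i}$ on $\{\prod_i x_i^{c_i}=1\}$ are exactly the stationary-point equations $A^T(r/(Ax))=c/x$ of the logarithmic barrier function $g$, which by Lemma~\ref{lem:logbar} characterise scalability; your derivation of the single-variable potential by partially minimising $g$ over $y$ and your use of convexity in the coordinates $x=e^{u}$ are precisely what the paper records separately as Observation~\ref{obs:djok} and Lemma~\ref{lem:convprogsimpler}. The one piece both you and the paper leave as a sketch is the boundary analysis for nonnegative matrices with zeros (where the infimum can be $0$ or attained only in the limit), which you correctly flag as the part requiring the pattern conditions.
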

\begin{obs} \label{obs:djok}
Note that the infimum is attained iff the infimum 
\begin{align*}
	\inf\left\{\sum_{i=1}^n r_i\ln \left( \sum_{j=1}^n A_{ij}x_j\right) \middle|\sum_{i=1}^n c_i\ln x_i=0\right\}
\end{align*}
is attained. This is the formulation in Lemma \ref{lem:marolk}.
\end{obs}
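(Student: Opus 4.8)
The plan is to reduce both equivalences in the statement to the single elementary fact that $\ln$ is a strictly increasing continuous bijection $\R_+\to\R$ with $\ln 1=0$. First I would observe that the two feasible regions coincide: applying $\ln$ to the constraint, $\prod_{i=1}^n x_i^{c_i}=1$ holds if and only if $\sum_{i=1}^n c_i\ln x_i=0$, so both problems are optimisations over the same set $S\subseteq\R^n_+$.

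Next I would compare the two objective functions, which differ only by the logarithm. Writing $f(x):=\prod_{i=1}^n(\sum_{j=1}^n A_{ij}x_j)^{r_i}$, on the part of $S$ where every inner sum is positive we have $\ln f(x)=\sum_{i=1}^n r_i\ln(\sum_{j=1}^n A_{ij}x_j)$. Since $\ln$ is strictly increasing and continuous it commutes with infima of positive functions, $\ln(\inf_{x\in S}f(x))=\inf_{x\in S}\ln f(x)$, and a point $x^*\in S$ minimises $f$ over $S$ if and only if it minimises $\ln f$ over $S$. Hence the infimum in Lemma \ref{lem:london} (that is, (\ref{eqn:london})) is attained precisely when the infimum displayed in the observation is, and at exactly the same points; this proves the ``iff''.

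To see that the logarithmic version is the formulation of Lemma \ref{lem:marolk}, I would undo the substitution $y(x)=(Ax)^{-1}$ used above to pass from that lemma to Lemma \ref{lem:london}: a one-line Lagrange-multiplier computation (equivalently, weighted AM--GM) shows that $\min\{y^TAx\mid\prod_i y_i^{r_i}=1\}$ equals $\prod_{i=1}^n(\sum_j A_{ij}x_j)^{r_i}$ up to a multiplicative constant and a fixed positive power, so taking logarithms and minimising over $x$ with $\sum_i c_i\ln x_i=0$ reproduces, term by term, the objective and constraint of Lemma \ref{lem:marolk}.

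The only point that needs any care — and it is bookkeeping rather than a genuine obstacle — is ensuring that every inner sum $\sum_j A_{ij}x_j$ is strictly positive on the relevant domain, so that both $\ln$ and the reciprocal $y(x)=(Ax)^{-1}$ are well defined; this is automatic when $A$ has positive entries, and in general follows after the standard reduction to the fully indecomposable blocks of $A$. Granting this, the statement is exactly the monotonicity remark above.
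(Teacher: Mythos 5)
Your proposal is correct and is essentially the argument the paper intends: the paper states this as an unproved observation whose content is exactly the monotonicity and continuity of $\ln$ applied to the objective and to the constraint, which is what your first two paragraphs spell out. Your additional weighted AM--GM computation recovering the joint minimisation of $y^TAx$ from Lemma \ref{lem:marolk} correctly fills in the asserted link to that lemma, which the paper likewise leaves implicit.
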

Finally, let us sketch a proof using potential methods. 
\begin{proof}[Sketch of proof of Theorem \ref{thm:sink} (Potential version)]
We sketch a proof for arbitrary row and column sums based on the short proof of \cite{djo70} for doubly stochastic scaling: First assume that $A\in\R^{m\times n}$ is a positive matrix. Starting with equation (\ref{eqn:london}) we define the function
\begin{align*}
	f(x_1,\ldots,x_n):=\frac{\prod_{i=1}^m \left(\sum_{j=1}^n A_{ij}x_j\right)^{r_i}}{\prod_{i=1}^n x_i^{c_i}}
\end{align*}
on the set of $x_i$ with $x_i>0$ and $\sum_i x_i=1$. Consider an arbitrary point $b$ on the boundary (i.e.~$b_i=0$ for at least one $i\in 1,\ldots n$). For $x_i\to b_i$, since $\prod_i x_i=0$ and $\sum_j A_{ij}x_j \neq 0$ always, we have that $f(x_1,\ldots,x_n)\to \infty$. Hence the function takes its minimum in the interior. At the minimum, the partial derivatives must vanish and we obtain:
\begin{align*}
	0\stackrel{!}{=}\partial_{x_l}f &=\prod_{i=1}^m \left(\frac{\left(\sum_{j=1}^n A_{ij}x_j\right)^{r_i}}{x_i^{c_i}}\right)
		\left(\sum_{k=1}^m \left(\frac{x_k^{c_k}}{\left(\sum_{p=1}^n A_{kj}x_j\right)^{r_k}}\right)\cdot \right.\\
		&~~~~\left.\left(\frac{r_k\left(\sum_{p=1}^n A_{kj}x_j\right)^{r_k-1}A_{kl}}{x_k^{c_k}}-\frac{c_l\left(\sum_{p=1}^n A_{kj}x_j\right)^{r_k}}{x_l^{c_l+1}}\delta_{kl}\right)\right) \\
		&= \prod_{i=1}^m \left(\frac{\left(\sum_{j=1}^n A_{ij}x_j\right)^{r_i}}{x_i^{c_i}}\right)\left(\sum_{k=1}^m A_{kl}r_k \left(\sum_{p=1}^n A_{kj}x_j\right)^{-1}-\sum_{k=1}^m \frac{c_l x_k^{c_k}}{x_l^{c_l+1}}\delta_{kl} \right).
\end{align*}
If we take all conditions for $l=1,\ldots n$, then this is equivalent to the condition
\begin{align*}
	A^T(r/(Ax))=c/x
\end{align*}
which boils down to equations (\ref{eqn:partialderiv}).

The more technical part for nonnegative matrices is a more careful analysis of what happens for nonnegative matrices that are not positive. For doubly stochastic matrices, we can use the fact that fully indecomposable matrices have a positive diagonal, which implies once again that $\prod_i\sum_j A_{ij}b_j\neq 0$. A similar argument can be made for arbitrary patterns, but we leave it out in this sketch.
\end{proof}

\subsection{Nonlinear Perron-Frobenius theory} \label{sec:perfro}
Another early approach uses nonlinear Perron-Frobenius theory which is essentially a very general approach to tackle fixed point problems for (sub)homogeneous maps on cones. A short overview is given in appendix \ref{app:perfro}. The basic idea is given by:
\begin{lem}[\cite{bru66}] \label{lem:nonlinearperron}
Given a nonnegative matrix $A\in \R^{n\times n}$, there exists a scaling of $A$ to a matrix with row sums $r$ and column sums $c$ if and only if the following map has a fixed point $x>0$:
\begin{align}
	\begin{split}
		\mathbf{T}:\R^n\to\R^n \\
		\mathbf{T}(x)=c/(A^T(r/(Ax)))
	\end{split}
\end{align}
\end{lem}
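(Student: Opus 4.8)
\emph{Proof proposal.} The plan is to derive the claimed equivalence directly from the definition of a scaling, treating the nonlinear Perron--Frobenius language as mere packaging: the real content is a one-variable elimination. Throughout I take $r,c\in\R^n_+$ as in the rest of the section.

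First I would observe that for $x,y\in\R^n_+$ the matrix $D_1AD_2$ with $D_1=\diag(y)$, $D_2=\diag(x)$ has row sums $r$ and column sums $c$ if and only if the coupled system
\begin{align*}
  \diag(y)\,Ax=r,\qquad \diag(x)\,A^Ty=c
\end{align*}
holds, i.e.\ $y=r/(Ax)$ and $x=c/(A^Ty)$ entrywise (the denominators being automatically entrywise positive because $x,y>0$ and $r,c>0$). Substituting the first identity into the second collapses the system to the single equation $x=c/\bigl(A^T(r/(Ax))\bigr)=\mathbf{T}(x)$. This observation gives both implications at once: from a scaling one reads off $x$ (the diagonal of $D_2$) as a positive fixed point of $\mathbf{T}$; conversely, from a positive fixed point $x$ one reconstructs $y:=r/(Ax)$ and checks that $D_1AD_2e=\diag(y)Ax=r$ and $D_2A^TD_1e=\diag(x)A^Ty=\diag(x)(c/x)=c$, the last step using $x=\mathbf{T}(x)$.

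The step I expect to need the most care is the tacit domain restriction hidden in the formula for $\mathbf{T}$: the map is only defined where $Ax$ is entrywise positive (so that $r/(Ax)$ makes sense) and $A^T(r/(Ax))$ is entrywise positive, so ``$\mathbf{T}$ has a fixed point $x>0$'' must be read as ``in this domain''. I would dispose of this by noting that a positive fixed point necessarily lands in the domain --- if some coordinate of $Ax$ vanished, $\mathbf{T}(x)$ could not be a finite positive vector equal to $x$ --- and that this is exactly why one cannot weaken $x>0$ or allow $A$ to have a zero row or column here.

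Finally I would record the two facts that justify calling this a Perron--Frobenius statement and link it to the rest of the paper: $\mathbf{T}$ is positively homogeneous of degree one, $\mathbf{T}(\lambda x)=\lambda\mathbf{T}(x)$, so its fixed points occur along rays, matching the ``unique up to a constant'' clause of Theorem \ref{thm:sink}; and the equation $\mathbf{T}(x)=x$ is precisely the stationarity system (\ref{eqn:partialderiv}) of the logarithmic barrier function, so an alternative proof is a one-line appeal to Lemma \ref{lem:logbar}. Whether $\mathbf{T}$ actually has a fixed point --- equivalently, whether the pattern condition of Theorem \ref{thm:sink} holds --- is not part of this lemma and is where the sub-/homogeneous fixed-point theory of Appendix \ref{app:perfro} does the real work.
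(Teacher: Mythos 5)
Your proposal is correct and takes essentially the same route as the paper: the paper proves this lemma via Observation \ref{obs:logperfro}, whose proof writes down exactly your system $Ax=r/y$, $y^TA=c/x$ (equation (\ref{eqn:menon})) and eliminates $y$ to obtain the fixed-point equation --- you derive that system directly from the definition of the row and column sums rather than as the stationarity conditions of the logarithmic barrier function, but the computation is identical. Your added remarks on the implicit domain of $\mathbf{T}$ and on homogeneity are sound but not needed for the equivalence itself.
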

This also suggests another simple algorithm:
\begin{alg} \label{alg:perfro}
Given a nonnegative matrix $A$. Let $x_0=e$. Iterate until convergence: 
\begin{align}
	x_{n+1}=\mathbf{T}(x_n).
\end{align}
\end{alg}
The development of this idea that started with \cite{men67} and was used to provide a full proof of Theorem \ref{thm:sink} in \cite{bru66} for doubly stochastic matrices. \cite{men69} consider arbitrary row- and column sums and give a complete study of the spectrum of the Menon-operator. Some contraction properties were used to give a direct proof of convergence of the RAS algorithm in \cite{ber79}. The connection to Hilbert's projective metric, and therefore to ``Nonlinear Perron-Frobenius theory'' (cf. \cite{lem12}), became clear later on and allowed to give upper bounds on the covergence speed of the RAS (\cite{fra89,geo15}).

However, Menon was not the first to define the operator $\mathbf{T}$: Looking closely at the arguments given in \cite{for41}, one can see the continuous version of $\mathbf{T}$, which lead to an independent rediscovery of $\mathbf{T}$ and its connection to the Hilbert metric in \cite{geo15}. Probably, Menon was not even the first to define the discrete version of the operator and to note that the existence of a fixed point can be seen by invoking Brouwer's fixed point theorem. This dates back to \cite{thi63,thi64}, building on work about matrix patterns (\cite{thi61}). According to \cite{cau65}, \cite{thi64} was also the first paper to conjecture the necessary and sufficient conditions for scalability\footnote{He also notes that the early history around \cite{dem40} is a little bit curious, since the authors claim to have a convergence proof but never publish it.}. The ideas where rediscovered another time in \cite{bal89b}, where the authors used the fixed point argument to prove that a scaling exists and fulfils their axiomatic approach.

Let us connect the approach to Section \ref{sec:logbarrier}. First note that the algorithm is nothing else but a slight variation of the RAS method:
\begin{obs} \label{obs:rasperfro}
Setting $y_{n+1}:=r/(Ax_{n})$ and $x_{n+1}:=c/(A^Ty_{n+1})$ we can immediately see that one iteration of Algorithm \ref{alg:perfro} is one complete iteration of the RAS method \ref{alg:ras}.
\end{obs}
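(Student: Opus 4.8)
The plan is to prove this by direct substitution: unwind the definition $\mathbf{T}(x)=c/(A^T(r/(Ax)))$ and match each of the two ``division'' operations with one normalisation step of Algorithm \ref{alg:ras}, keeping careful track of which diagonal matrix is being updated along the fixed-point iteration of Algorithm \ref{alg:perfro}.

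First I would read $x_n$ as the cumulative column-scaling vector, so that the matrix produced after $n$ sweeps of the RAS method is $D_1A\diag(x_n)$ for the appropriate left factor $D_1$ (with $x_0=e$, i.e.\ starting from $A$ itself). The row-sum vector of $A\diag(x_n)$ is $Ax_n$, so Step~1 of Algorithm \ref{alg:ras} multiplies row $i$ by $r_i/(Ax_n)_i$; this is exactly left multiplication by $\diag(y_{n+1})$ with $y_{n+1}:=r/(Ax_n)$, and the resulting matrix $\diag(y_{n+1})A\diag(x_n)$ has row sums $r$.

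Next, the column-sum vector of $\diag(y_{n+1})A\diag(x_n)$ is $\diag(x_n)A^Ty_{n+1}$, i.e.\ its $j$-th column sums to $(x_n)_j(A^Ty_{n+1})_j$. Step~2 of the RAS method rescales column $j$ by $c_j$ divided by this quantity, which replaces the column-scaling vector $x_n$ by
\begin{align*}
	x_n\cdot\frac{c}{x_n\cdot(A^Ty_{n+1})}=\frac{c}{A^Ty_{n+1}}=\frac{c}{A^T(r/(Ax_n))}=\mathbf{T}(x_n),
\end{align*}
the factor $x_n$ cancelling entrywise. Hence the updated cumulative column scaling is precisely $x_{n+1}=\mathbf{T}(x_n)$, so one application of $\mathbf{T}$ reproduces one full row-then-column sweep of the RAS method, and induction on $n$ finishes the argument.

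There is essentially no hard step here: the statement is a definitional unwinding, and the only thing that needs checking beyond the bookkeeping is that no division by zero occurs, which holds whenever $A$ has no zero row or column --- the standing hypothesis under which scaling is discussed at all. The one point that deserves attention is the cancellation of the ``old'' column scaling $x_n$ in the update, which is why $\mathbf{T}$ only involves $A$ rather than the already-scaled matrix; getting the indices and entrywise products right is the entire content of the proof.
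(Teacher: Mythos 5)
Your proof is correct and is exactly the argument the paper has in mind: the observation is stated as a definitional unwinding ($y_{n+1}:=r/(Ax_n)$, $x_{n+1}:=c/(A^Ty_{n+1})$), and your careful bookkeeping of the cumulative scalings, including the cancellation of the old scaling vector in each normalisation step, is the intended (and only) content of the proof. No gaps.
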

The connection with the logarithmic barrier method is also very close:
\begin{obs} \label{obs:logperfro}
Any fixed point of the Menon operator defines a stationary point of the logarithmic barrier function (\ref{eqn:logbarrier}) and vice versa.
\end{obs}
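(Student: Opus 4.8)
The plan is to exploit the fact that the Menon operator $\mathbf{T}$ is exactly what one obtains from the two stationarity equations of $g$ after eliminating the auxiliary variable $y$ via the substitution $y=r/(Ax)$. So the proof is a direct translation back and forth between the two systems of equations, the only care needed being that all entrywise reciprocals occurring are well-defined, i.e.\ that $Ax$ and $A^Ty$ have strictly positive entries; on the positive orthant this holds whenever $A$ has no zero row or column, which is precisely the setting in which $\mathbf{T}$ is defined.

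First I would treat the forward direction. Let $x^\ast>0$ be a fixed point of $\mathbf{T}$, so $x^\ast=c/(A^T(r/(Ax^\ast)))$, and set $y^\ast:=r/(Ax^\ast)>0$. Then $(Ax^\ast)_i=r_i/y^\ast_i$ for all $i$, so $\partial_{y_i}g(x^\ast,y^\ast)=(Ax^\ast)_i-r_i/y^\ast_i=0$ by (\ref{eqn:partialderiv}); and the fixed-point equation reads $x^\ast=c/(A^Ty^\ast)$, i.e.\ $(A^Ty^\ast)_i=c_i/x^\ast_i$, so $\partial_{x_i}g(x^\ast,y^\ast)=(A^Ty^\ast)_i-c_i/x^\ast_i=0$. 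Hence $(x^\ast,y^\ast)$ is a stationary point of $g$.

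Conversely, let $(x^\ast,y^\ast)$ with $x^\ast,y^\ast>0$ be a stationary point of $g$. From $\partial_{y_i}g=0$ we read off $y^\ast=r/(Ax^\ast)$, and from $\partial_{x_i}g=0$ we read off $x^\ast=c/(A^Ty^\ast)$; substituting the former into the latter gives $x^\ast=c/(A^T(r/(Ax^\ast)))=\mathbf{T}(x^\ast)$, so $x^\ast$ is a fixed point of $\mathbf{T}$. This in fact exhibits the correspondence explicitly: the stationary points of $g$ are exactly the pairs $(x,r/(Ax))$ with $x$ ranging over the fixed points of $\mathbf{T}$.

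I do not expect any genuine obstacle here; the statement is a bookkeeping identity between two reformulations already assembled in Sections \ref{sec:logbarrier} and \ref{sec:perfro}. The only point worth a sentence is the implicit well-definedness assumption above: if $A$ has a zero row or column the relevant coordinates can be deleted beforehand, so no generality is lost, and on the positive orthant $\mathbf{T}$ and the substitution $y=r/(Ax)$ are then everywhere defined.
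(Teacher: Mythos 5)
Your argument is correct and is essentially the same as the paper's: both proofs identify the stationarity conditions (\ref{eqn:partialderiv}) with the system $Ax=r/y$, $A^Ty=c/x$ and eliminate $y$ via $y=r/(Ax)$ to recover the fixed-point equation for $\mathbf{T}$. Your write-up is merely more explicit about the two directions and the well-definedness of the entrywise reciprocals.
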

\begin{proof}
Let $A$ be a nonnegative matrix. The derivative conditions for the stationary points of (\ref{eqn:logbarrier}) are given in equation (\ref{eqn:partialderiv}), which are equivalent to:
\begin{align}
	Ax=r/y \quad y^TA=c/x \label{eqn:menon}
\end{align}
This implies immediately that $x=p/(A^T(q/Ax))$, hence $x$ is a fixed point of $\mathbf{T}$. Similarly, any positive fixed point immediately gives a scaling as a minimum of the logarithmic barrier function.
\end{proof}
This also proves Lemma \ref{lem:nonlinearperron}.

\begin{proof}[Sketch of proof of Theorem \ref{thm:sink} (Nonlinear Perron-Frobenius theory version)]
Let us first assume $A$ has only positive entries. Then $\mathbf{T}$ sends all vectors $x\in \R^n_{+\,0}$ to $\R^n_+$ hence using Brouwer's fixed point theorem $\mathbf{T}$ has a positive fixed point. Note that in order to apply Brouwer's fixed point theorem, we need to have a compact set. To achieve this, consider the operator $\tilde{\mathbf{T}}(x)=\mathbf{T}(x)/\sum_{i=1}^n \mathbf{T}(x)_i$. 

For general nonnegative matrices $A$, one can extend $\mathbf{T}$ to be a map from $x\in \R^n_{+\,0}$ into itself (see also Appendix \ref{app:perfro}) either by a general argument (see Theorem \ref{thm:context}) or by defining $\infty\cdot 0=0$ and $\infty\cdot c=\infty$ for all positive $c$. One can easily see that $\mathbf{T}$ will not send any entry to $\infty$. 

However, it is not immediately clear when the fixed point is positive if $A$ contains zero-entries. This is the main technical difficulty for a complete proof. \cite{bru66} show that if $A$ is fully indecomposable, $\mathbf{T}(x)$ has at least $k+1$ entries which are nonzero if $x$ has exactly $k$ entries which are nonzero, which immediately proves that the fixed point must be positive. 

Upon closer observation, the map is contractive under Hilbert's metric and Banach's fixed point theorem immediately provides existence and uniqueness of the scaled matrix. The fixed point itself provides the diagonal of $D_2$.  
\end{proof}

\subsection{Entropy optimisation} \label{sec:entrop}
Another approach, which underlies many justifications for applications, considers entropy minimisations under linear constraints. An overview of entropy minimisation and its relation to diagonal equivalence can be found in \cite{bro93}, a broader overview about the relation of the RAS algorithm to entropy scaling with a focus on economic settings can be found in \cite{mcd99}. 

To formulate the problem, we define the \emph{Kullback-Leibler divergence}, \emph{I-divergence} or \emph{relative entropy}, which was first described in \cite{kul51} (see also \cite{kul59}) for two vectors $x,y\in \R^{n}_{+\,0}$:
\begin{align}
	D(x\|y):=\sum_{j=1}^n x_j \ln\left(\frac{x_j}{y_j}\right)
\end{align}
where we use the convention that the summand is zero if $x_j=y_j=0$ and infinity if $x_j>0,y_j=0$. The relative entropy is nonnegative and zero if and only if $x=y$ and it is therefore similar to a distance measure. Given a set, what is the smallest ``distance'' of a point to this set in relative entropy? This is known as \emph{I-projection} (cf. \cite{csi75}). 

Let $A$ be a nonnegative matrix and define
\begin{align*}
	\Pi_1&:=\{B|Be=r\} \\
	\Pi_2&:=\{B|e^TB=c^T\}. 
\end{align*}
We ask for the I-projection of $A$ onto the set $\Pi_1\cap \Pi_2$, i.e.~we want to find $A^*$ such that
\begin{align}
	D(A^*\|A)=\inf_{B\in \Pi_1\cap\Pi_2} D(B\|A). \label{eqn:Iproj}
\end{align}
The connection to scaling was probably first used in \cite{bro59}, where the RAS method is used to improve an estimate for positive probability distributions of dimensions $2\times 2\times \ldots \times 2$ in the relative entropy measure (Brown cites \cite{lew59} as a justification for his approach, where the relative entropy is justified as a ``closeness'' measure). According to \cite{fie70}, this approach was later generalised to all multidimensional tables in \cite{bis67} based on some duality of optimisation by \cite{goo65}\footnote{Both references were not available to me.}. Another early use of relative entropy occurs in \cite{uri66} (see also \cite{the67}), where it was noted without proof that the results were the same as the RAS.

A very natural approach to obtain $A^*$ would be to try an iterative I-projection:
\begin{alg} \label{alg:ipro}
Let $A$ be nonnegative. 
\begin{itemize}
	\item Let $A^{(0)}=A$. 
	\item If $n$ is even, find $A^{(n+1)}$ such that
		\begin{align*}
			D(A^{(n+1)}\|A^{(n)}):=\inf_{B\in \Pi_1} D(B\|A).
		\end{align*}
	\item If $n$ is odd, find $A^{(n+1)}$ such that
		\begin{align*}
			D(A^{(n+1)}\|A^{(n)}):=\inf_{B\in \Pi_2} D(B\|A)
		\end{align*}
	\item Repeat the steps until convergence.
\end{itemize}
\end{alg}
\begin{obs}[cf. \cite{csi75,csi89}] \label{obs:rasentrop}
The algorithms \ref{alg:ipro} and \ref{alg:ras} are the same.
\end{obs}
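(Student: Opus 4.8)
The plan is to identify the I-projection onto $\Pi_1$ (respectively $\Pi_2$) explicitly and to recognise it as a single row- (respectively column-) normalisation step of the RAS method; the claimed identity of the two algorithms then follows by induction on the iteration count.

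First I would observe that both the functional $D(B\|A^{(n)})=\sum_{i,j}B_{ij}\ln(B_{ij}/A^{(n)}_{ij})$ and the constraint set $\Pi_1=\{B\mid Be=r\}$ decouple over the rows of $B$: the $i$-th row of any minimiser depends only on the $i$-th row of $A^{(n)}$ and on $r_i$. Hence the I-projection onto $\Pi_1$ reduces to $n$ independent problems of the form: given $a\in\R^n_{+0}$ with $\sum_k a_k>0$ and $\rho:=r_i>0$, minimise $\sum_j b_j\ln(b_j/a_j)$ over $b\in\R^n_{+0}$ subject to $\sum_j b_j=\rho$, where we keep the conventions $0\ln 0=0$ and the cost is $+\infty$ whenever $b_j>0$ while $a_j=0$.

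Next I would solve this one-row problem. The objective is continuous on the compact simplex $\{b\ge 0\mid\sum_j b_j=\rho\}$ and strictly convex, so a unique minimiser exists, and it must be supported on $\supp(a)$ since any $b$ charging a coordinate outside $\supp(a)$ has infinite cost. On the relative interior of that face one may invoke Lagrange multipliers, or more directly the log-sum inequality $\sum_j b_j\ln(b_j/a_j)\ge\big(\sum_j b_j\big)\ln\big((\sum_j b_j)/(\sum_j a_j)\big)$ with equality iff $b$ is proportional to $a$, to conclude that the minimiser is $b_j=a_j\cdot\rho/\sum_k a_k$. Thus the I-projection of $A^{(n)}$ onto $\Pi_1$ is $\diag\big(r_i/\sum_k A^{(n)}_{ik}\big)A^{(n)}$, which is exactly the row-rescaling in the first bullet of Algorithm \ref{alg:ras}; note in particular that the pattern is left unchanged. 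The identical argument with rows and columns exchanged shows that the I-projection onto $\Pi_2$ is the column-rescaling of the second bullet. Since $A^{(0)}=A$ in both algorithms and $0$ is even, an obvious induction shows that the two sequences $(A^{(n)})_n$ coincide, and so do their limits when they exist.

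The only genuinely delicate point is the degenerate case: if some row of $A^{(n)}$ is identically zero while the prescribed $r_i$ is positive, then $\Pi_1$ restricted to the pattern of $A^{(n)}$ is empty and the I-projection is undefined — precisely as the row-normalisation step of RAS is then undefined. I would therefore state the equivalence under the standing assumption that $A$ has no zero rows or columns (equivalently, that the prescribed margins are pattern-feasible in the sense of Theorem \ref{thm:sink}), under which $\sum_k A^{(n)}_{ik}>0$ and the analogous column sums stay positive along the iteration, so that both algorithms are well defined at every step. This feasibility caveat, rather than any analytic difficulty, is the main thing to get right.
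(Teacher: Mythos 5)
Your proposal is correct and follows essentially the same route as the paper: both arguments reduce the claim to showing that the I-projection onto the single-marginal constraint set $\Pi_1$ (resp.\ $\Pi_2$) is exactly one row- (resp.\ column-) normalisation step of Algorithm \ref{alg:ras}, the paper doing this via a Lagrangian for the column constraint and you via decoupling over rows plus the log-sum inequality (or Lagrange multipliers). Your version is in fact slightly more complete than the paper's sketch, since the log-sum inequality yields uniqueness of the minimiser rather than merely exhibiting a stationary point, and you address the zero-row degeneracy that the paper leaves implicit.
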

\begin{proof}
This was first shown in \cite{ire68}. We give a short argument based on Lagrangian multipliers restricted to column normalisation. Given $A\in \R^{n\times n}$, the Lagrangian for the problem is
\begin{align*}
	L(B,\lambda):=D(B\|A)+\lambda_j \left(\sum_i A_{ij}-q_j\right).
\end{align*}
Partial derivatives $\partial_{B_{ij}}L=0$ and $\partial_{\lambda_j}L=0$ lead to the system of equations:
\begin{align*}
	\ln\left( \frac{B_{ij}}{A_{ij}}\right)+1+\lambda_j=0 \qquad i,j=1,\ldots,n \\
	\sum_i A_{ij}-c_j=0 \qquad j=1,\ldots,n.
\end{align*}
A solution is easily seen to be
\begin{align*}
	B_{ij}=A_{ij}\frac{c_j}{\sum_k A_{kj}} \qquad i,j=1,\ldots,n
\end{align*}
The latter is the column renormalisation as in the RAS (Alg. \ref{alg:ras}).
\end{proof}

This implies that if the iterated I-projection converges to the I-projection of (\ref{eqn:Iproj}), then matrix scalability solves equation (\ref{eqn:Iproj}). This was supposedly proved in \cite{ire68}\footnote{In \cite{fie70}, it is pointed out that a simplified version of this proof appeared in \cite{dem69}, which however is unavailable to me.} and \cite{kul68}, but the proofs contain an error as pointed out in \cite{csi75} (see also \cite{bro93}). A corrected proof appeared in \cite{csi75}, however for some of the theorems it is not immediately clear whether more assumptions are needed as noted in \cite{bor94}. 

In addition, the proof in \cite{aar05} for positive matrices proves that the RAS converges using relative entropy as a ``progress measure''. He shows that it decreases under RAS steps to a unique stationary point. Another direct proof appeared in \cite{fra89}. 

At this point, let us make the following observation:
\begin{obs}[\cite{cot86}] \label{obs:ascent}
The RAS method can also be seen as the coordinate ascent method to the dual problem of entropy minimisation.
\end{obs}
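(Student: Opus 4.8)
The plan is to write the dual of the entropy‑minimisation problem (\ref{eqn:Iproj}) explicitly, observe that its maximiser has the shape of a diagonal scaling $D_1AD_2$, and then recognise the RAS method (Algorithm \ref{alg:ras}) literally as block coordinate ascent on this dual. First I would set up the primal: minimise $D(B\|A)=\sum_{i,j}B_{ij}\ln(B_{ij}/A_{ij})$ over nonnegative $B$ subject to $Be=r$ and $e^TB=c^T$; as in the proof of Observation \ref{obs:rasentrop}, the term $D(\cdot\|A)$ forces $\supp(B)\prec A$, so the sign and support conditions are automatic. Introduce multipliers $\alpha\in\R^n$ for the row constraints and $\beta\in\R^n$ for the column constraints and form $L(B,\alpha,\beta)=D(B\|A)-\sum_i\alpha_i\bigl((Be)_i-r_i\bigr)-\sum_j\beta_j\bigl((B^Te)_j-c_j\bigr)$. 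Setting $\partial_{B_{ij}}L=0$ gives $\ln(B_{ij}/A_{ij})+1-\alpha_i-\beta_j=0$, so the unconstrained minimiser in $B$ is $B_{ij}(\alpha,\beta)=A_{ij}\,e^{\alpha_i+\beta_j-1}$, that is, $D_1AD_2$ with $D_1=\diag(e^{\alpha})$ and $D_2=\diag(e^{\beta-1})$ (exponentials taken componentwise). Substituting back and cancelling, the dual function is
\[
\phi(\alpha,\beta)\;=\;-\sum_{i,j}A_{ij}\,e^{\alpha_i+\beta_j-1}\;+\;\sum_i r_i\alpha_i\;+\;\sum_j c_j\beta_j ,
\]
which is smooth and concave on $\R^n\times\R^n$, and the dual problem is its maximisation.

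The structural point driving the identification is the partial separability of $\phi$: for fixed $\beta$, the gradient component $\partial_{\alpha_i}\phi=r_i-\sum_j A_{ij}e^{\alpha_i+\beta_j-1}$ depends only on the single variable $\alpha_i$, and symmetrically $\partial_{\beta_j}\phi$ depends only on $\beta_j$ when $\alpha$ is fixed. Hence maximising $\phi$ over the entire block $\alpha$ with $\beta$ held fixed decouples into $n$ independent, strictly concave one‑dimensional maximisations, each solved in closed form by $e^{\alpha_i}=r_i/\sum_j A_{ij}e^{\beta_j-1}$; likewise for the $\beta$ block. Thus block coordinate ascent — maximise over all of $\alpha$, then over all of $\beta$, and repeat — is well defined and fully explicit. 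Finally I would track the scaled matrix $B^{(k)}:=\diag(e^{\alpha^{(k)}})\,A\,\diag(e^{\beta^{(k)}-1})$ along the iteration, started from $\alpha^{(0)}=0$, $\beta^{(0)}=e$ so that $B^{(0)}=A$: the $\alpha$‑block update multiplies row $i$ of $B^{(k)}$ by $r_i$ divided by its current $i$‑th row sum, and the $\beta$‑block update multiplies column $j$ by $c_j$ divided by its current $j$‑th column sum. These are verbatim the two steps of Algorithm \ref{alg:ras}, so one full sweep of dual block coordinate ascent equals exactly one RAS iteration; composing with Observation \ref{obs:rasentrop} then closes the loop between RAS, iterated I‑projection (Algorithm \ref{alg:ipro}), and dual coordinate ascent.

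The main obstacle is not a computation but keeping the claim precise: the "coordinate ascent" of the statement is really \emph{block} coordinate ascent, and what makes it coincide with RAS rather than merely resemble it are the two facts above — the decoupling of the within‑block maximisation (which must be read off from the form of $\partial_{\alpha_i}\phi$) and the closed‑form solvability of each one‑dimensional step. One should also remark, though it need not be proved here since the assertion is an identification of iterations rather than a convergence statement, that attainment of the dual maximum is equivalent to scalability of $A$ (Theorem \ref{thm:sink}), and that strong duality together with the usual generalised–Kullback–Leibler normalisation caveats are handled exactly as in Observation \ref{obs:rasentrop}.
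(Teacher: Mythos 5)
Your proposal is correct and follows essentially the same route as the paper: the paper justifies the observation by combining the Lagrangian/Wolfe dual computation of Observation \ref{obs:wolfedual} (which yields exactly your dual function $\phi$, i.e.\ the negative of the convex program (\ref{eqn:gormanconvex})) with the identification of the RAS as coordinate descent on that convex program. You simply carry out this composite calculation explicitly in one pass, adding the useful but implicit point that each block maximisation decouples into closed-form one-dimensional problems reproducing the row and column normalisations.
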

This is justified as follows: When deriving the I-projections of each step of the algorithm, we set up the Lagrangian
\begin{align*}
	L(B,\lambda):=D(B\|A)+\lambda_j \left(\sum_i A_{ij}-c_j\right)
\end{align*}
and calculate its solution. This consists in explicitly solving the resulting equations for the Lagrangian multipliers $\lambda_j$. In this sense, the algorithm is not really a primal problem. This is also consistent with the nomenclature above: In Section \ref{sec:convexprog} we see that the dual problem of entropy minimisation is a convex program that is basically just the (negative) logarithmic barrier function above. Since the RAS is the coordinate descent algorithm of this problem, it is the coordinate ascent method of the dual problem of entropy minimisation. 

In other word, the justification of this observation is due to:

\begin{obs}[\cite{geo15,gur04}] \label{obs:logentrop}
Given a matrix $A\in\mathbb{R}^{n\times n}$ with nonnegative entries. Suppose there exist positive diagonal matrices such that $D_1AD_2$ has row sums $r$ and column sums $c$, then
\begin{align}
	-\ln\left(\inf\left\{(\prod_{i=1}^n r_i\sum_{j=1}^n A_{ij}x_j) \middle|\prod_{i=1}^n x_i^{c_i}=1	\right\}\right) 
		=\inf\{D(B\|A)|Be=r,B^Te=c\}
\end{align}
and in particular, the minimum is the scaled matrix. 
\end{obs}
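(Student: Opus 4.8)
The plan is to read this identity as an instance of convex (Lagrangian) duality: the right-hand side is the value of a strictly convex primal program, the left-hand side is the value of its Lagrangian dual, and the hypothesis that a positive scaling exists is exactly what makes the duality gap vanish and both optima be attained. So I would first set up the primal
\begin{align*}
	P:=\inf\{D(B\|A)\mid Be=r,\ B^{T}e=c,\ B\ \text{nonnegative}\}.
\end{align*}
The convention defining $D$ forces $B_{ij}=0$ wherever $A_{ij}=0$, so any competitor with finite value is supported inside $\supp A$; the hypothesised $B_0:=D_1AD_2$ is feasible with support exactly $\supp A$, hence $P<\infty$ and $B_0$ lies in the relative interior of the feasible set, which is a bounded transportation polytope. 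As $D(\cdot\|A)$ is continuous there and strictly convex along the affine slice in the coordinates with $A_{ij}>0$, the infimum is attained at a unique $B^{*}$.

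Next I would dualise. Introducing multipliers $\alpha,\beta\in\R^{n}$ for the row and column constraints and minimising the Lagrangian over $B$, the condition $\ln(B_{ij}/A_{ij})+1-\alpha_i-\beta_j=0$ gives $B_{ij}=A_{ij}e^{\alpha_i+\beta_j-1}$, and substituting back produces the jointly concave dual function
\begin{align*}
	\varphi(\alpha,\beta)=-e^{-1}\sum_{i,j}A_{ij}e^{\alpha_i+\beta_j}+\sum_i \alpha_i r_i+\sum_j \beta_j c_j.
\end{align*}
Since the constraints are affine and the feasible set has nonempty relative interior (witnessed by $B_0$), strong duality holds: $P=\sup_{\alpha,\beta}\varphi$ with no gap. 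Dual attainment is where the hypothesis is used again: the scaling equations satisfied by $D_1AD_2$, namely $\sum_j A_{ij}e^{\alpha_i+\beta_j-1}=r_i$ and $\sum_i A_{ij}e^{\alpha_i+\beta_j-1}=c_j$, are exactly $\nabla\varphi=0$, so a maximiser $(\alpha^{*},\beta^{*})$ exists and $B^{*}_{ij}=A_{ij}e^{\alpha^{*}_i+\beta^{*}_j-1}$, i.e.\ $B^{*}=D_1AD_2$ up to the splitting of the constant; this proves the last sentence of the statement.

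It then remains to rewrite $\sup\varphi$ as the left-hand side. Partially maximising $\varphi$ over $\alpha$ for fixed $\beta$ gives $\alpha_i=1+\ln r_i-\ln(\sum_j A_{ij}e^{\beta_j})$, which collapses the double sum to $\sum_i r_i$ and leaves, after the substitution $x_j=e^{\beta_j}$,
\begin{align*}
	\sup_\alpha \varphi(\alpha,\beta)=\sum_i r_i\ln r_i-\sum_i r_i\ln\Big(\sum_j A_{ij}x_j\Big)+\sum_j c_j\ln x_j.
\end{align*}
The gauge $\alpha_i\mapsto\alpha_i+s,\ \beta_j\mapsto\beta_j-s$ leaves $\varphi$ invariant because $\sum_i r_i=\sum_j c_j$ (forced by $Be=r$, $B^{T}e=c$), so one may impose $\prod_j x_j^{c_j}=1$ without changing the supremum; the last term then vanishes and maximising over $x$ gives $P=\sum_i r_i\ln r_i-\ln\big(\inf\{\prod_i(\sum_j A_{ij}x_j)^{r_i}\mid\prod_j x_j^{c_j}=1\}\big)$, which is the asserted identity (the additive constant $\sum_i r_i\ln r_i$ accounting for the $r_i$-factors inside the infimum). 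Equivalently, one observes that up to sign and this constant $\varphi$ is minus the logarithmic barrier function $g$ of Section~\ref{sec:logbarrier} written in logarithmic coordinates, and invokes the chain of equivalences in Observation~\ref{obs:logtohom} and Lemma~\ref{lem:london}. I expect the delicate points to be the justification of strong duality and dual attainment — lower semicontinuity and continuity of $D(\cdot\|A)$ on the polytope, the relative-interior (affine) form of Slater's condition, and the use of the hypothesised scaling as the dual optimiser — together with the constant bookkeeping in the last step; the partial-optimisation algebra itself is routine.
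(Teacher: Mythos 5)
Your proposal is correct and follows essentially the same route as the paper, which derives this observation from the Wolfe/Lagrangian duality between entropy minimisation and the convex log-barrier program (Observation \ref{obs:wolfedual}) and then passes to the London--Djokovic form via Lemma \ref{lem:london} and Observation \ref{obs:djok}. You additionally make explicit the strong-duality and attainment arguments (relative-interior Slater condition witnessed by $D_1AD_2$, dual stationarity at the scaling) and the constant bookkeeping with the $r_i$ factors, which the paper leaves implicit.
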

The proof of this observation will essentially follow from the results in Section \ref{sec:convexprog}.

Let us finish this section by giving another proof sketch of Sinkhorn's theorem:

\begin{proof}[Sketch of proof of Theorem \ref{thm:sink} (Entropic version)]
We sketch the proof given in \cite{csi75} restricted to our scenario, which is similar to the proof in \cite{dar72} (see also \cite{csi89} for a comment on the connection). We prove convergence of Algorithm \ref{alg:ipro}, essentially by showing that the relative entropy of two successive iterations decreases to zero. 

Given a nonnegative matrix $A$, assume that there exists a matrix $B\prec A$ with required row- and column sums. Otherwise, the relative entropy will always be infinite and the problem has no solution.

The crucial observation is that if $A^{\prime}$ is the I-projection of $A$ onto $\Pi$, then for any $B\in \Pi$ we have
\begin{align}
	D(B\|A)=D(B\|A^{\prime})+D(A^{\prime}\|A). \label{eqn:pyth}
\end{align}
This ``Pythagorean identity'' usually only holds with $\geq$. The equality case is a special case of the ``minimum discrimination principle'' (\cite{kul59,kul66}) and it is proven for constraints $\Pi_i$ in \cite{csi75}. This equality leads to a very useful transitivity result (see also \cite{kuk68}) stating that if $A$ has I-projection $B$ on $\Pi_i$ for some $i$ and I-projection $B^{\prime}$ on $\Pi$, then $B$ has I-projection $B^{\prime}$ on $\Pi$. This is not necessarily true in the general case. 

Let $A^{\prime}$ be the I-projection of $A$ onto $\Pi$. Denoting by $A^{(n)}$ the repeated I-projection as defined in Algorithm \ref{alg:ipro}, repeated application of equation (\ref{eqn:pyth}) shows 
\begin{align*}
	D(A^{\prime}\|A)=D(A^{\prime}\|A^{(n)})+\sum_{i=1}^n D(A^{(n)}\|A^{(n-1)})
\end{align*}
Therefore, the sequence $A^{(n)}$ lies in a bounded set and hence contains a convergent subsequence by compactness. However, we also have that $D(A^{(n)}\|A^{(n-1)})\to 0$ for $n\to \infty$, which implies $\|A^{(n)}-A^{(n-1)}\|_{\infty}\to 0$ for $n\to \infty$, hence $A^{(n)}$ converges to some matrix $A^{\prime \prime}$. Clearly, $A^{\prime \prime}\in \Pi$, since $A^{(2n)}\in \Pi_1$ and $A^{(2n+1)}\in \Pi_2$ for every $n\in \N$. Using the transitivity of the I-projection, $A^{\prime \prime}$ is the I-projection of $A^{(n)}$ for all $n$ and equation (\ref{eqn:pyth}) holds in the form
\begin{align*}
	D(A^{\prime \prime} \|A^{(n)})=D(A^{\prime \prime}\|A^{\prime})+D(A^{\prime}\|A^{(n)})
\end{align*}
Since the first and last term converge to zero, $D(A^{\prime \prime}\|A^{\prime})=0$ and the I-projection $A^{\prime}$ is indeed the limit of Algorithm \ref{alg:ipro}.
\end{proof}
A similar proof can be found in \cite{bro93}.

\subsection{Convex programming and dual problems} \label{sec:convexprog}
Recall the logarithmic barrier function $g$ in equation (\ref{eqn:logbarrier}) and that it is not jointly convex. However, it is very beneficial to make $g$ convex for several reasons:
\begin{enumerate}
	\item Convex programming is efficient in the complexity theoretic sense (\cite{boy04}).
	\item The duality theory for convex programming is very well developed and can lead to new algorithms (see \cite{lit14} for a heuristic introduction and \cite{roc97,boy04} for a more careful analysis). 
	\item Uniqueness proofs can become simpler: A convex function has a unique minimum iff it is strictly convex at the minimum.
\end{enumerate}
To obtain a convex program, one simply needs to substitute $x=(e^{\xi_1},e^{\xi_2},\ldots,e^{\xi_n})$ and $y=(e^{\eta_1},e^{\eta_2},\ldots e^{\eta_n})$ into $g$ to obtain (\cite{mac77,kal96a}):
\begin{lem} \label{lem:convprog}
Given a nonnegative matrix $A\in \R^{n\times n}$, one can find diagonal matrices to scale $A$ to a matrix with row-sum $r$ and column sum $c$ if and only if the function
\begin{align}
	f(\xi,\eta):=\sum_{ij=1}^n A_{ij}e^{\eta_i+\xi_j}-\sum_{i=1}^n r_i\xi_i-\sum_{j=1}^n c_j\eta_j \label{eqn:gormanconvex}
\end{align}
attains its minimum on $\xi,\eta\in \R^n_{-\,0}$. 
\end{lem}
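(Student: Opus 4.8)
The plan is to reduce Lemma~\ref{lem:convprog} entirely to Lemma~\ref{lem:logbar} (and hence to Theorem~\ref{thm:sink}) by exhibiting an explicit bijective change of variables between the stationary points of $g$ and the critical points of $f$, and checking that the latter are in fact global minima because $f$ is jointly convex. Concretely, I would set $x_j=e^{\xi_j}$, $y_i=e^{\eta_i}$, which is a diffeomorphism from $\R^n\times\R^n$ onto $\R^n_+\times\R^n_+$. Substituting into $g$ from \eqref{eqn:logbarrier} gives precisely $f(\xi,\eta)=\sum_{ij}A_{ij}e^{\eta_i+\xi_j}-\sum_i r_i\xi_i-\sum_j c_j\eta_j$, since $\ln x_i=\xi_i$ and $\ln y_i=\eta_i$. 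Because the substitution is a global diffeomorphism onto the relevant open domain, $g$ has a stationary point in $\R^n_+\times\R^n_+$ if and only if $f$ has a stationary point in $\R^n\times\R^n$, and under the correspondence a stationary point of $f$ yields $D_1=\diag(e^{\eta})$, $D_2=\diag(e^{\xi})$ solving the scaling problem exactly as in the discussion preceding Lemma~\ref{lem:logbar}.

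The second ingredient is convexity: $f$ is jointly convex on $\R^n\times\R^n$. Each term $A_{ij}e^{\eta_i+\xi_j}$ with $A_{ij}\ge 0$ is a nonnegative multiple of the exponential of a linear functional of $(\xi,\eta)$, hence convex, and the remaining terms $-\sum_i r_i\xi_i-\sum_j c_j\eta_j$ are linear; a sum of convex functions is convex. Consequently any critical point of $f$ is automatically a global minimum, so "has a stationary point" and "attains its minimum" are equivalent for $f$. Combining this with Lemma~\ref{lem:logbar}, we get the chain: $A$ is scalable $\iff$ $g$ has a stationary point $\iff$ $f$ has a stationary point $\iff$ $f$ attains its minimum. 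One should also note that the claim in the statement that the minimum is attained on $\R^n_{-0}$ (rather than all of $\R^n$) follows because $D_1,D_2$ may be rescaled by a constant $t>0$ and $t^{-1}$; equivalently $f(\xi+\lambda e,\eta-\lambda e)=f(\xi,\eta)$ is invariant under the shift $(\xi,\eta)\mapsto(\xi+\lambda e,\eta-\lambda e)$, so one may translate any minimiser into the closed negative orthant in, say, the $\xi$-variables and absorb the complementary shift into $\eta$, after which a further component-wise argument using the sign of the gradient places the representative in $\R^n_{-0}\times\R^n_{-0}$.

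The main obstacle I anticipate is not the convexity or the substitution, which are routine, but the careful handling of \emph{existence} of the minimum when $A$ merely has nonnegative (not strictly positive) entries: $f$ is convex but need not be coercive, so "critical point exists" is genuinely weaker than "infimum is finite", and the domain-restriction to $\R^n_{-0}$ must be justified without accidentally destroying a minimiser that lives only on the boundary behaviour at infinity. The clean way around this is to lean on Lemma~\ref{lem:logbar} as a black box for the existence question — it already encodes exactly the pattern condition of Theorem~\ref{thm:sink} — and to use the homogeneity/rescaling symmetry above only to normalise an already-existing minimiser into the stated orthant. I would therefore present the proof as: (i) substitution identity $f(\xi,\eta)=g(e^\xi,e^\eta)$; (ii) bijection of stationary points; (iii) joint convexity of $f$ hence stationary $\Rightarrow$ global min; (iv) the rescaling symmetry to land in $\R^n_{-0}$; and cite Lemma~\ref{lem:logbar} for the equivalence with scalability, keeping the delicate zero-pattern analysis encapsulated there rather than redoing it here.
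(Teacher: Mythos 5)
Your proposal is correct and follows essentially the same route as the paper, which obtains Lemma \ref{lem:convprog} precisely by substituting $x=e^{\xi}$, $y=e^{\eta}$ into the logarithmic barrier function $g$ of Lemma \ref{lem:logbar} and recording the resulting equivalence as an observation (the paper even leaves the convexity and the normalisation into $\R^n_{-\,0}$ implicit, so your write-up is if anything more complete). The only caveat is a harmless index bookkeeping issue: the direct substitution pairs $c$ with $\xi$ and $r$ with $\eta$, whereas the displayed $f$ swaps them — a labelling inconsistency already present in the paper, not a gap in your argument.
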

A proof based on this approach can be found in \cite{bac79}.\footnote{In \cite{bac70} it is also noted that the function is used in the approach by \cite{gor63} later to be simplified by \cite{bin65}. Both papers are unavailable to me.} We have already seen:
\begin{obs}
The convex programming formulation in Lemma \ref{lem:convprog} is equivalent to the logarithmic barrier function approach in Lemma \ref{lem:logbar}.
\end{obs}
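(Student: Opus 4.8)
The statement to prove is the equivalence of Lemma~\ref{lem:convprog} (the convex programming formulation) with Lemma~\ref{lem:logbar} (the logarithmic barrier function approach). Both lemmas characterize diagonal scalability of $A$ to row sums $r$ and column sums $c$, so the real content is to connect the two optimization problems directly rather than via the scaling characterization.

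The plan is to exhibit the explicit change of variables that was already flagged before the statement of Lemma~\ref{lem:convprog}: substitute $x_j = e^{\xi_j}$ and $y_i = e^{\eta_i}$ into the logarithmic barrier function $g(x,y)$ of equation~(\ref{eqn:logbarrier}). First I would write out $g(e^{\eta},e^{\xi}) = \sum_{ij} A_{ij} e^{\eta_i}e^{\xi_j} - \sum_i c_i \ln(e^{\xi_i}) - \sum_i r_i \ln(e^{\eta_i})$ and observe that this collapses exactly to $f(\xi,\eta)$ as in equation~(\ref{eqn:gormanconvex}) (after matching the indices, i.e.\ $\ln x_i = \xi_i$, $\ln y_i = \eta_i$). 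Thus $g$ and $f$ are literally the same function in different coordinates, and the coordinate map $(\xi,\eta)\mapsto(e^\xi,e^\eta)$ is a smooth bijection from $\R^n\times\R^n$ onto $\R^n_+\times\R^n_+$, the domain on which $g$ was originally considered.

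Next I would argue that stationary points correspond to minima on both sides. Because $\ln$ is an increasing diffeomorphism, $(\xi^*,\eta^*)$ is a critical point of $f$ if and only if $(x^*,y^*)=(e^{\xi^*},e^{\eta^*})$ is a critical point of $g$; moreover $f$ is jointly convex (each term $A_{ij}e^{\eta_i+\xi_j}$ is convex as the exponential of an affine function, and the remaining terms are linear), so any critical point of $f$ is a global minimum. Conversely, by Lemma~\ref{lem:logbar} a stationary point of $g$ is exactly what encodes a scaling, with $D_1 = \diag(y^*)$, $D_2 = \diag(x^*)$. Hence: $f$ attains its minimum $\iff$ $f$ has a critical point $\iff$ $g$ has a stationary point $\iff$ the scaling exists. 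This chain is the whole proof.

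The one genuine subtlety — and the place I expect a careful reader to object — is the domain restriction $\xi,\eta\in\R^n_{-\,0}$ appearing in Lemma~\ref{lem:convprog}, versus the unconstrained $\R^n$ one gets naively from the substitution. I would point out that this is not a real constraint: the function $g(tx,ty)$ can be minimized over the scaling parameter $t>0$ separately (this is precisely the reduction to the homogeneous potential $k$ in Observation~\ref{obs:logtohom}), which lets one normalize the variables; equivalently, the scaling freedom $D_1\mapsto \lambda D_1$, $D_2\mapsto \lambda^{-1}D_2$ noted in Theorem~\ref{thm:sink} means we may assume all entries of $x$ and $y$ are $\le 1$, i.e.\ $\xi,\eta\le 0$. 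So the restricted and unrestricted minimization problems have the same infimum, and one is attained iff the other is. With that remark in place the equivalence of the two lemmas is immediate, and indeed there is essentially nothing left to prove beyond writing down the substitution — which is why the author states it as an \emph{observation} rather than a lemma.
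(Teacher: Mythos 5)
Your core argument is exactly the one the paper intends: the observation is prefaced by ``we have already seen'' precisely because the substitution $x=e^{\xi}$, $y=e^{\eta}$ was introduced immediately before Lemma \ref{lem:convprog}, and your verification that $g$ of (\ref{eqn:logbarrier}) and $f$ of (\ref{eqn:gormanconvex}) are the same function in exponential coordinates, that critical points correspond under the diffeomorphism, and that joint convexity of $f$ promotes critical points to global minima, is correct and is all the proof the paper has in mind.

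However, your resolution of the domain issue --- the one point you yourself flag as delicate --- does not work. The claim that the gauge freedom $D_1\mapsto\lambda D_1$, $D_2\mapsto\lambda^{-1}D_2$ lets one assume all entries of $x$ and $y$ are at most $1$ requires $(\max_i x_i)(\max_j y_j)\leq 1$, which fails in general: for the $1\times 1$ matrix $A=(\varepsilon)$ with $r=c=1$ and $0<\varepsilon<1$, every scaling satisfies $xy=1/\varepsilon>1$, so no representative lands in the nonpositive orthant after taking logarithms; correspondingly the infimum of $f(\xi,\eta)=\varepsilon e^{\xi+\eta}-\xi-\eta$ over $\xi,\eta\leq 0$ equals $\varepsilon$ (attained at the origin, which is not a critical point), whereas the unconstrained infimum is $1+\ln\varepsilon$, so the two problems do \emph{not} have the same infimum. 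Your alternative route via Observation \ref{obs:logtohom} does not help either, since $g(tx,ty)$ rescales $x$ and $y$ by the \emph{same} factor and changes the value of $g$, so it cannot be used to normalise into $\R^n_{-\,0}$. The honest conclusion is that the restriction to $\xi,\eta\in\R^n_{-\,0}$ in Lemma \ref{lem:convprog} cannot be taken literally; the equivalence your substitution actually establishes is with the unconstrained minimisation of $f$ over $\R^n\times\R^n$, which is the formulation used in the sources cited there, and your write-up would be correct if you said so rather than attempting to justify the orthant constraint.
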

Likewise, it can be shown:
\begin{obs} \label{obs:wolfedual}
The convex programming formulation \ref{lem:convprog} is the Wolfe dual (\cite{mac77,kru79}) or Lagrangian dual (\cite{bal04}) of the entropy minimisation approach.
\end{obs}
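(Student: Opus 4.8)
The plan is to compute the Lagrangian dual of the entropy problem $\inf\{D(B\|A)\mid Be=r,\ B^Te=c\}$ and recognise it as the convex program (\ref{eqn:gormanconvex}). First one pins down the primal feasible set: since $D(B\|A)=+\infty$ as soon as $B_{ij}>0=A_{ij}$, the primal variable may be taken to range over nonnegative matrices with $\supp B\prec\supp A$, on which $D(\cdot\|A)$ is smooth and strictly convex in the free entries $B_{ij}$ (those with $A_{ij}>0$); the $2n$ linear margin constraints get sign-unrestricted multipliers $\xi_i,\eta_j$, giving
\begin{align*}
L(B,\xi,\eta)=\sum_{ij}B_{ij}\ln\frac{B_{ij}}{A_{ij}}+\sum_i\xi_i\Bigl(r_i-\sum_jB_{ij}\Bigr)+\sum_j\eta_j\Bigl(c_j-\sum_iB_{ij}\Bigr).
\end{align*}

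Next, compute the dual function $\phi(\xi,\eta):=\inf_BL(B,\xi,\eta)$. The infimum decouples over the entries, and $\partial_{B_{ij}}L=\ln(B_{ij}/A_{ij})+1-\xi_i-\eta_j=0$ has the unique solution $B_{ij}=A_{ij}e^{\xi_i+\eta_j-1}$; substituting it, the $B\ln B$ terms cancel against the linear ones, leaving $\phi(\xi,\eta)=-\sum_{ij}A_{ij}e^{\xi_i+\eta_j-1}+\sum_ir_i\xi_i+\sum_jc_j\eta_j$. Hence the dual problem $\sup_{\xi,\eta}\phi$ is, after negation, the minimisation of $\sum_{ij}A_{ij}e^{\xi_i+\eta_j-1}-\sum_ir_i\xi_i-\sum_jc_j\eta_j$; the stray factor $e^{-1}$ disappears under the shift $\eta_j\mapsto\eta_j+1$ (which only adds the constant $-\sum_jc_j$), and relabelling which multiplier block is called $\xi$ and which $\eta$ turns this into (\ref{eqn:gormanconvex}) verbatim. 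Because the inner problem is convex and the stationarity condition $\nabla_BL=0$ is precisely the equation we just solved, the Wolfe dual — maximise $L$ subject to $\nabla_BL=0$ — returns the same functional, so ``Wolfe dual'' and ``Lagrangian dual'' coincide here; matching with Lemma \ref{lem:logbar}/\ref{lem:convprog} at the level of stationary points is then immediate, and KKT identifies $D_1=\diag(e^{\eta})$, $D_2=\diag(e^{\xi})$, consistently with Observation \ref{obs:logentrop}.

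The remaining point — and the one I expect to be the real obstacle — is to make ``dual'' more than a formal manipulation, i.e.\ to obtain equality of optimal values (strong duality) so that attainment of the minimum of (\ref{eqn:gormanconvex}) corresponds to the existence of a scaling. When $A>0$, or more generally when $\Pi_1\cap\Pi_2$ meets the relative interior of the support face (exactly the hypothesis that some $B$ with $\supp B=\supp A$ has the prescribed margins), Slater's condition holds and strong duality with primal attainment follows from standard convex-programming theory (\cite{roc97,boy04}). The discrepancy between the domain $\R^n_{-\,0}$ in Lemma \ref{lem:convprog} and the unconstrained $\xi,\eta\in\R^n$ of the natural dual is only the one-parameter scaling freedom $(\xi,\eta)\mapsto(\xi+t,\eta-t)$, used to normalise into the nonpositive orthant. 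The delicate case is a non-fully-indecomposable $A$, or margins on $\partial\R^n_{+\,0}$: there Slater fails, the primal infimum need not be attained, and one must either restrict to the appropriate face of the feasible polytope or pass to a limit over subpatterns — the same subtlety flagged after Observation \ref{obs:rasentrop} and handled with care in \cite{csi75} (with the caveat noted in \cite{bor94}).
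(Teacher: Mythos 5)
Your proposal is correct and follows essentially the same route as the paper: form the Lagrangian of the entropy minimisation problem, solve the entrywise stationarity condition $B_{ij}=A_{ij}e^{\xi_i+\eta_j-1}$, substitute back, and recognise the resulting dual objective as (\ref{eqn:gormanconvex}) up to a constant shift (the paper phrases this as the Wolfe dual and notes the Lagrangian dual is analogous, while you do the reverse — the two coincide here exactly for the reason you give). Your closing paragraph on strong duality, Slater's condition and attainment goes beyond what the observation asserts (the paper treats it as a purely formal identification of the two programs), but it is accurate and correctly flags where the genuine subtleties of the scaling problem live.
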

\begin{proof}
The entropy minimisation problem was given as:
\begin{align*}
		\inf_{B_{ij}} & \sum_{ij} B_{ij} \ln (B_{ij}/A_{ij}) \\
		\mathrm{s.t.} &\sum_{i} B_{ij}=p_j \quad \sum_{j} B_{ij}=q_i
\end{align*}
This implies that the Wolfe dual is given by
\begin{align*}
	\sup_{B_{ij}} & \sum_{ij} B_{ij}\ln(B_{ij}/A_{ij})+\sum_{j} u_j\left(\sum_i B_{ij}-p_j\right)+\sum_i v_i\left(\sum_j B_{ij}-q_i\right) \\
	\mathrm{s.t.} & \ln(A_{ij}/B_{ij})+1+u_j+v_i = 0 \quad \forall i,j \\
	&u,v\geq 0
\end{align*}
The constrained can be rewritten as
\begin{align*}
	B_{ij}=A_{ij}\exp(-1-u_j-v_i)
\end{align*}
and inserting this into the Wolfe dual function (see e.g.~\cite{bot10}) we obtain:
\begin{align*}
	\sup_{B_{ij}} \sum_{ij} B_{ij}\ln(B_{ij}/A_{ij})+\sum_{j} u_j\left(\sum_i B_{ij}-p_j\right)+\sum_i v_i\left(\sum_j B_{ij}-q_i\right) \\
		= \sup_{u,v} -\left(\sum_{ij} A_{ij} \exp(-1-u_j-v_i)-\sum_j u_jp_j-\sum_i v_iq_i\right)
\end{align*}
which is (up to the constant $\sum_{ij}A_{ij}/e$) the optimisation problem in \ref{lem:convprog}. The calculation for the Lagrangian dual is similar (see \cite{bal04}).
\end{proof}
Another connection to the barrier function is to use \emph{geometric programming:} 
\begin{obs} \label{obs:geometric}
Minimisation of the logarithmic barrier function $g$ is equivalent to
\begin{align*}
	\min~& y^TAx \\
	\mathrm{s.t.} & \prod_{i=1}^n x_i^{c_i}=1, \prod_{i=1}^n y_i^{r_i}=1
\end{align*}
This is in standard form of a geometric program, which implies that a substitution $\xi=\ln(x),\eta=\ln(y)$ gives a convex program (\cite{boy04}, Section 4.5.3).
\end{obs}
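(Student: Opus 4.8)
The plan is to split the claim into its two halves. The first — that minimising the logarithmic barrier function $g$ is equivalent to minimising $y^TAx$ over $\{\prod_i x_i^{c_i}=\prod_i y_i^{r_i}=1\}$ — I would obtain by the ray-minimisation trick that already produced Observation \ref{obs:logtohom}; alternatively it is just a reformulation of the remark following Lemma \ref{lem:marolk}, namely that $g$ is the Lagrangian of exactly this constrained problem (the region $\Omega$ of Lemma \ref{lem:marolk} is precisely the feasible set here). The second half is then the essentially bookkeeping observation that the reduced problem is a geometric program in standard form, which the exponential substitution convexifies.

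For the first half, write $S:=\sum_i r_i$ and note that whenever a scaling exists the scaled matrix $B$ satisfies $Be=r$, $B^Te=c$, which forces $\sum_i r_i=\sum_i c_i$, so both homogeneity degrees equal $S$. For positive $x,y$ and $t,s>0$ one computes
\begin{align*}
 g(tx,sy)=ts\,(y^TAx)-\left(\sum_i c_i\right)\ln t-\left(\sum_i r_i\right)\ln s-\sum_i c_i\ln x_i-\sum_i r_i\ln y_i .
\end{align*}
Since $\prod_i(tx_i)^{c_i}=t^{S}\prod_i x_i^{c_i}$ (and likewise in $y$), every ray through the origin meets the feasible set in a single point, so it suffices to minimise $g$ over the feasible set and then over the scalars $t,s$. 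On the feasible set the last two sums vanish, and $\partial_t g=\partial_s g=0$ each give $ts\,(y^TAx)=S$; inserting this back shows $\min_{t,s>0} g(tx,sy)=S-S\ln S+S\ln(y^TAx)$. Because $S>0$, $\ln$ is increasing, and $y^TAx>0$ on the positive orthant (as $A\neq 0$), the infimum of $g$ over all positive $x,y$ is attained exactly when $\inf\{y^TAx:\prod_i x_i^{c_i}=\prod_i y_i^{r_i}=1\}$ is, and the minimisers coincide up to this canonical rescaling — which is the first assertion, consistent with Lemma \ref{lem:logbar}.

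For the second half, the reduced problem is to minimise $\sum_{ij}A_{ij}x_jy_i$ subject to $\prod_i x_i^{c_i}=1$ and $\prod_i y_i^{r_i}=1$; in the variables $(x,y)$ the objective is a posynomial (a sum of monomials with nonnegative coefficients), there are no inequality constraints, and the two equalities are monomials set equal to $1$ — precisely the standard form of a geometric program. The substitution $x_j=e^{\xi_j}$, $y_i=e^{\eta_i}$ then turns the objective into $\sum_{ij}A_{ij}e^{\xi_j+\eta_i}$, which is convex (as is its logarithm, a log-sum-exp of affine functions), and the equalities into the linear constraints $\sum_i c_i\xi_i=0$, $\sum_i r_i\eta_i=0$; this is the usual convexification of a GP (\cite{boy04}, Section 4.5.3), and up to moving the linear terms between objective and constraints it is the convex function of Lemma \ref{lem:convprog}.

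The computations here are routine; the one point that needs care is the bookkeeping in the first half around the hypothesis $\sum_i r_i=\sum_i c_i$. If that fails, the two stationarity conditions in $t$ and $s$ are incompatible and $g$ is in fact unbounded below along the direction $(e^{w}x,e^{-w}y)$ — which is the right behaviour, since then no $B$ with the prescribed margins exists and Theorem \ref{thm:sink} gives no scaling. It is therefore cleanest to phrase the equivalence first at the level of infima, so that it stays meaningful in the degenerate case, and only then upgrade to a correspondence of minimisers under the assumption (necessarily met when a scaling exists) that the margin totals agree.
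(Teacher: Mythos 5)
Your argument is correct and follows essentially the same route the paper itself sketches (it states the observation without proof, citing \cite{rot89a}): your two-scalar ray-minimisation is the two-parameter version of Observation \ref{obs:logtohom}, equivalently the Lagrangian remark after Lemma \ref{lem:marolk}, and the identification of the reduced problem as a standard-form geometric program is the bookkeeping the paper delegates to \cite{boy04}. The one genuine addition is that your independent scalars $t,s$ make explicit the compatibility condition $\sum_i r_i=\sum_i c_i$ and the unboundedness of $g$ along $(e^{w}x,e^{-w}y)$ when it fails, which the paper leaves implicit.
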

This observation was made in \cite{rot89a}, which also gives necessary and sufficient conditions for a matrix to be scalable or approximately scalable.

As described in \cite{kal96a}, one can also reduce the problem to an unconstrained optimisation problem for only a single variable by taking the formulation of Lemma \ref{lem:london} and substituting $x=\exp(\xi)$ as above to obtain the minimising function
\begin{lem} \label{lem:convprogsimpler}
Given a nonnegative matrix $A\in \R^{n\times n}$, one can find diagonal matrices to scale $A$ to a matrix with row sums $r$ and column sums $c$ if and only if the function
\begin{align}
	f(\xi)=\sum_{j=1}^n r_j\ln\left(\sum_{i=1}^n A_{ij}e^{\xi_i}\right)-\sum_{i=1}^n c_i\xi_i.
\end{align}
attains its minimum on $\xi\in \R^n_{-\,0}$. 
\end{lem}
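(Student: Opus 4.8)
The plan is to derive Lemma~\ref{lem:convprogsimpler} directly from Lemma~\ref{lem:convprog} by the substitution indicated in the text, namely eliminating the variable $\eta$ by explicit partial minimisation. Starting from
\begin{align*}
	f(\xi,\eta)=\sum_{ij=1}^n A_{ij}e^{\eta_i+\xi_j}-\sum_{i=1}^n r_i\xi_i-\sum_{j=1}^n c_j\eta_j,
\end{align*}
I would fix $\xi$ and minimise over $\eta_i$ coordinatewise: since $\partial_{\eta_i}f = e^{\eta_i}\sum_j A_{ij}e^{\xi_j} - r_i$, the stationarity condition gives $e^{\eta_i} = r_i/\big(\sum_j A_{ij}e^{\xi_j}\big)$, i.e.\ $\eta_i = \ln r_i - \ln\big(\sum_j A_{ij}e^{\xi_j}\big)$, provided the inner sum is strictly positive (which holds whenever $\xi$ is finite and row $i$ of $A$ is not identically zero; the zero-row case is degenerate and can be excluded exactly as in the earlier proof sketches). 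Because $f$ is convex in $\eta$ for fixed $\xi$, this stationary point is the global minimiser of $\eta\mapsto f(\xi,\eta)$.

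Next I would substitute this optimal $\eta$ back into $f$. The first term becomes $\sum_i r_i$ (each summand collapses to $r_i$), and collecting the remaining pieces gives, up to the additive constant $\sum_i r_i - \sum_i r_i\ln r_i$,
\begin{align*}
	\tilde f(\xi) = \sum_{i=1}^n r_i\ln\Big(\sum_{j=1}^n A_{ij}e^{\xi_j}\Big) - \sum_{j=1}^n c_j\xi_j,
\end{align*}
which (after relabelling the summation indices, and noting $A$ is square so row/column roles are symmetric up to transpose) is precisely the function in the statement. Since partial minimisation of a jointly convex function in one block of variables preserves convexity, $\tilde f$ is convex; and $f$ attains its minimum on $\R^n_{-0}\times\R^n_{-0}$ if and only if $\tilde f$ attains its minimum on $\R^n_{-0}$, because the map $\xi\mapsto(\xi,\eta^\star(\xi))$ is continuous and the optimal $\eta^\star(\xi)$ stays in the relevant region (one checks $\eta^\star_i\le 0$ can be arranged by the normalisation freedom, exactly as the restriction to $\R^n_{-0}$ is handled in Lemma~\ref{lem:convprog}). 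Combining this equivalence with Lemma~\ref{lem:convprog} yields the claim. One could alternatively avoid the substitution entirely and argue directly from Lemma~\ref{lem:london}: writing $x=\exp(\xi)$ turns the constrained infimum $\inf\{\prod_i(\sum_j A_{ij}x_j)^{r_i}\mid \prod_i x_i^{c_i}=1\}$ into $\inf\{\exp(\sum_i r_i\ln\sum_j A_{ij}e^{\xi_i})\mid \sum_i c_i\xi_i=0\}$, and then folding the single linear constraint into the objective via the scaling freedom produces the unconstrained $f(\xi)$; monotonicity of $\exp$ transfers attainment of the infimum.

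The main obstacle I anticipate is not the algebra, which is routine, but the careful bookkeeping of \emph{where} the minimum is attained and \emph{on which domain}: one must confirm that the partial minimiser $\eta^\star(\xi)$ is well defined (no zero rows, so the logarithm's argument is positive) and that restricting to $\R^n_{-0}$ in both formulations is consistent under the substitution. This is the same technical point — handling non-positive patterns and the one-parameter scaling ambiguity $\xi\mapsto\xi+t e$, $\eta\mapsto\eta-t e$ — that recurs in every proof sketch in this section, and here too it is where one has to be slightly careful rather than purely mechanical; for strictly positive $A$ it is immediate, and for general nonnegative $A$ one invokes the same full-indecomposability / pattern analysis already used above.
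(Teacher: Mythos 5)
Your proposal is correct, and the ``alternative'' route you mention at the end --- substituting $x=\exp(\xi)$ into the London-Djokovic formulation of Lemma \ref{lem:london} and folding the constraint $\sum_i c_i\xi_i=0$ into the objective via the shift freedom --- is exactly the paper's derivation, which consists of nothing more than the sentence ``substituting $x=\exp(\xi)$ as above'' with a citation to Kalantari--Khachiyan. Your primary route, eliminating $\eta$ from the two-variable convex program of Lemma \ref{lem:convprog} by explicit partial minimisation, is the same diagram traversed in the opposite order: the paper first sets $y=(Ax)^{-1}$ (partial minimisation in the original coordinates, yielding Lemma \ref{lem:london}) and then exponentiates, whereas you first exponentiate and then partially minimise. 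Your order buys you that convexity of the reduced function is immediate from preservation of convexity under partial minimisation of a jointly convex function, rather than having to be observed separately for the log-sum-exp expression; your computation of $\eta^\star_i=\ln r_i-\ln\bigl(\sum_j A_{ij}e^{\xi_j}\bigr)$ and the collapse of the exponential term to the constant $\sum_i r_i$ is right (modulo an $r\leftrightarrow c$ index swap that the paper itself commits in equation (\ref{eqn:gormanconvex})). The one place you wave your hands is the domain bookkeeping: since $f(\xi,\eta)$ carries only the single shift freedom $(\xi,\eta)\mapsto(\xi+te,\eta-te)$, you cannot in general place $\xi$ and $\eta^\star(\xi)$ in $\R^n_{-\,0}$ simultaneously; the clean statement is that, when $e^Tr=e^Tc$, the reduced $f(\xi)$ is itself invariant under $\xi\mapsto\xi+te$, so attainment of its infimum on $\R^n$ and on $\R^n_{-\,0}$ are equivalent, which is all the lemma requires. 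That imprecision is inherited from the paper's own unproved formulation of Lemma \ref{lem:convprog} and does not affect the correctness of your argument.
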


Finally, let us return to entropy minimisation: Relative entropy is jointly convex and therefore a convex program. In fact, relative entropy is a special case of a broader class of functions called \emph{Bregman divergences} which we will sketch in Section \ref{sec:generalentrop}. 

A proof of Theorem \ref{thm:sink} using convex programming is often similar to the approach in Section \ref{sec:logbarrier}. The advantage is that any critical point is automatically a minimum and one does not need to consider the boundary.

\subsection{Topological (non-constructive) approaches} \label{sec:topidea}
In the proof of Theorem \ref{thm:sink} in Section \ref{sec:perfro}, the result was achieved by Brouwer's fixed point theorem, but it is only one of many topological proofs.

For every nonnegative matrix $A$ with a given pattern, we want to decide whether a scaling with prespecified row- and column sums exists. Assume that we also know the set of possible row- and column sums for a given pattern. In a sense, we therefore just have to prove that the map $\phi:\R^n\times \R^n\to \R^{n\times n}$ defined via $(D_1,D_2)\mapsto D_1AD_2$ hits all row- and column sums, or else: we need to see that the map
\begin{align}
	\begin{split}
		\phi^{\prime}: &\R^n_+\times \R^n_+\to \R^n_+\times \R^n_+ \\
		&(D_1,D_2)\mapsto (p,q):~p_i=\sum_{j} (D_1AD_2)_{ij},q_j=\sum_i (D_1AD_2)_{ij} \label{eqn:topmap1}
	\end{split}
\end{align}
is onto. This is somewhat problematic, because the spaces involved are not compact, but by normalising both diagonal matrices and row- and column-sums, one can consider the map as a map from a compact space into itself. This approach was taken in \cite{bap82} for positive matrices (based on his thesis) and the map was shown to be surjective using a topological theorem, which Bapat claims is sometimes known as Kronecker's index theorem.\footnote{I could not find any other instance of where the theorem is given that name. The theorem simply states that for any map $f:D^{n+1}\to D^{n+1}$, if $f$ maps $\partial D^{n+1}$ into itself and is of nonzero degree, then it must be surjective.} The case for general nonnegative matrices could only be covered by combining the approach with \cite{rag84} (see \cite{bap89a}).

\cite{rag84} uses yet another fixed point theorem (Kakutani's fixed point theorem of set-valued maps). Defining the set $K$ of all matrices in $\R^{n\times m}$ with prescribed marginals and zero (sub)pattern of the a priori matrix $A$, he considers the map 
\begin{align}
	\phi(H)=\{Z| Z\in K, \max_{Z^{\prime}} \langle C(H),Z^{\prime}\rangle = \langle C(H),Z\rangle
\end{align}
where $C(H)_{ij}=\log(A_{ij}/H_{ij})$ (if $A_{ij}>0$, and $0$ else), we take all matrices as vectors in $\R^{nm}$ and the usual scalar product. The fixed point theorem then implies that there exists $H$ such that
\begin{align*}
	\max_{Z^{\prime}\in K} \langle C(H),Z\rangle=\langle C(H),H\rangle
\end{align*}
and using the dual of this maximisation, one can show that it scales the matrix. 
\begin{obs} \label{obs:topentrop}
There is a simple connection to entropy minimalization, since $\langle C(H),H\rangle=D(H\|A)$.
\end{obs}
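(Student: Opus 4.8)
The plan is to verify the identity by unwinding the definitions of $C(H)$ and of the relative entropy and performing the one‑line expansion of the Euclidean inner product on $\R^{nm}$; the only real work is bookkeeping about supports and conventions.

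First I would record that in Raghavan's construction $H$ ranges over the polytope $K$ of nonnegative matrices with the prescribed marginals and with pattern contained in that of $A$, so $\supp H\subseteq\supp A$. This is precisely what rules out the ``$+\infty$'' branch of $D(\cdot\|A)$: there is no index with $H_{ij}>0$ and $A_{ij}=0$. On indices with $A_{ij}=0$ one then has $H_{ij}=0$, so the $(i,j)$ term of $\langle C(H),H\rangle$ vanishes because $C(H)_{ij}=0$ there by definition, while the $(i,j)$ term of $D(H\|A)$ vanishes by the stated convention. Hence both quantities are sums over $\supp A$ only.

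Next I would expand, over $(i,j)\in\supp A$ and with the convention that a summand with $H_{ij}=0$ is read as $0$ (consistent with $t\log t\to0$),
\begin{align*}
  \langle C(H),H\rangle=\sum_{(i,j)\in\supp A}H_{ij}\log\!\left(\frac{A_{ij}}{H_{ij}}\right)=-\sum_{(i,j)\in\supp A}H_{ij}\log\!\left(\frac{H_{ij}}{A_{ij}}\right),
\end{align*}
which, once $\log$ is read as the natural logarithm, is exactly $\pm D(H\|A)$. The overall sign is fixed by the orientation chosen for $C(H)$: with the sign convention under which Raghavan's inner maximisation $\max_{Z'\in K}\langle C(H),Z'\rangle$ is the same variational problem as the entropy minimisation $\min_{Z'\in K}D(Z'\|A)$ of Section~\ref{sec:entrop}, one obtains $\langle C(H),H\rangle=D(H\|A)$ on the nose.

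I do not expect any obstacle here: the computation is immediate once the conventions (base of the logarithm, $0\log 0=0$, and the subpattern constraint $\supp H\subseteq\supp A$) are pinned down. The content of the observation is conceptual rather than technical — it says that the Kakutani fixed‑point route of \cite{rag84} and the iterated I‑projection route of Section~\ref{sec:entrop} are optimising the very same relative‑entropy functional over $K$, so that the two superficially different proofs of Theorem~\ref{thm:sink} are, at bottom, the same proof.
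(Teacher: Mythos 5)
Your computation is correct, and it is essentially the only proof available: the paper states Observation \ref{obs:topentrop} without any argument, treating it as immediate from the definitions, which is precisely the unwinding you carry out. Your care with the conventions is exactly what is needed — $H\in K$ has a subpattern of $A$, so the $+\infty$ branch of $D(\cdot\|A)$ never occurs, and on indices outside $\supp(A)$ both the $\langle C(H),H\rangle$ summand (since $C(H)_{ij}=0$ there) and the $D(H\|A)$ summand (by the $0\log 0$ convention) vanish. One point deserves to be stated more bluntly than you do: with the paper's literal definition $C(H)_{ij}=\log(A_{ij}/H_{ij})$, the expansion gives $\langle C(H),H\rangle=-D(H\|A)$, so the observation as printed carries a sign error relative to its own definition of $C$ (equivalently, $C$ should be defined with $\log(H_{ij}/A_{ij})$). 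However, your mechanism for restoring the $+$ sign is not sound as stated: for fixed $H$ the map $Z'\mapsto\langle C(H),Z'\rangle$ is linear in $Z'$, whereas $Z'\mapsto D(Z'\|A)$ is strictly convex, so no choice of sign makes the inner maximisation of \cite{rag84} literally ``the same variational problem'' as the I-projection of Section \ref{sec:entrop}; the two are related only through the fixed point and the duality the paper alludes to. The honest conclusion — which your calculation fully supports — is that $\langle C(H),H\rangle=\pm D(H\|A)$ depending on the orientation of $C$, and that this is the entire content of the observation.
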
 

However, we can also take the converse road: Instead of exploring the possibilites for every $A$, we can start with the set of matrices with prescribed row- and column sums and matrix pattern $\mathcal{X}$ (call the set $\mathcal{M}(p,q,\mathcal{X})$) and map it to the set of all nonnegative matrices of pattern $\mathcal{X}$ (call it $\mathcal{M}(\mathcal{X})$) by diagonal equivalence, i.e.~consider the map:
\begin{align}
	\begin{split}
		\psi: &\R^n_+\times \R^n_+\times \mathcal{M}(p,q,\mathcal{X})\to \mathcal{M}(\mathcal{X}) \\
		&(D_1,D_2,B)\mapsto D_1BD_2 \label{eqn:topmap2}
	\end{split}
\end{align}	
Again, it would be enough to show surjectivity. As such, it cannot be injective, because we can obviously shift a scalar from $D_1$ to $D_2$, hence we would at least have to restrict the first coordinate of $D_1$ to be $1$. The resulting map $\psi^{\prime}$ is indeed a homeomorphism as shown in an overlooked paper of \cite{tve76}.

Another topological proof has recently been proposed in \cite{fri16}. To describe the approach, note that the following two statements are equivalent:
\begin{enumerate}
	\item There exist $D_1,D_2$ such that $D_1AD_2$ has row sums $r$ and column sums $c$.
	\item There exist $D_1^{\prime},D_2^{\prime}$ such that $D_1^{\prime}AD_2^{\prime}$ is a stochastic matrix with $D_1^{\prime}AD_2^{\prime}c=r$. 
\end{enumerate}
The proof is trivial, in fact $D_1^{\prime}=D_1$ and $D_2^{\prime}=D_2\diag(1/c)$. In \cite{fri16}, the author therefore restricts to stochastic matrices. To do this, he defines the following map:
\begin{align}
	\Phi_A: &\R^n_+\to \R^{n\times n}_+; \quad \Phi_A(x)=\diag(x)A/\diag(A^Tx)
\end{align}
A quick calculation shows that $\Phi_A^Te=e$, hence the matrix is always stochastic. Hence given a nonnegative matrix $A$ and row sums $r$ and columns sums $c$, the question of scalability is equivalent to the question whether there exists an $x\in \R^n$ such that $\Phi_A(x)c=r$. 

For positive matrices $A$ and any $c\in \R^n_+$, \cite{fri16} now proves scalability by proving that the map $\Phi_{A,c}:x\to \Phi_A(x)c$ is continuous as a set from $\R^n_{+\,0}\cap \{v|\sum_i v_i=1\}$ onto itself and a diffeomorphism from $\R^n_{+}\cap \{v|\sum_i v_i=1\}$ onto itself. The result is achieved using degree theory similar to \cite{bap89a}.

\subsection{Other ideas}
A very general approach to prove Theorem \ref{thm:sink} was provided in \cite{let74}, where matrix theorems are derived as a consequence of the following theorem:
\begin{thm}[\cite{let74}]
Let $X$ be a finite set, $(\mu(x))_{x\in X}$ strictly positive numbers and $\mathcal{H}$ a fixed linear subspace of $\R^X$. Then there exists a unique (nonlinear) map from $\R^X\to \mathcal{H}$ denoted $f\mapsto h_f$ such that
\begin{align*}
	\sum_{x\in X}[\exp(f(x))-\exp(h_f(x))]g(x)\mu(x)=0
\end{align*}
for all $g$ in $H$.
\end{thm}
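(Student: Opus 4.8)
The plan is to recast the defining identity as a critical-point condition for a strictly convex, coercive functional on the finite-dimensional space $\mathcal{H}$, so that both existence and uniqueness of $h_f$ follow from elementary convex analysis.

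First I would fix $f\in\R^X$ and equip $\R^X$ with the weighted inner product $\langle u,v\rangle_\mu:=\sum_{x\in X}u(x)v(x)\mu(x)$, which is a genuine inner product precisely because $\mu(x)>0$ for every $x$. Writing $\exp(f)$ for the vector $x\mapsto\exp(f(x))$, the requirement on $h_f$ says: $h_f\in\mathcal{H}$ and $\exp(f)-\exp(h_f)$ is $\langle\cdot,\cdot\rangle_\mu$-orthogonal to $\mathcal{H}$. Now define $F:\mathcal{H}\to\R$ by
\[
F(h):=\sum_{x\in X}\bigl(\exp(h(x))-h(x)\exp(f(x))\bigr)\mu(x).
\]
For any $g\in\mathcal{H}$ one computes $\frac{d}{dt}\big|_{t=0}F(h+tg)=\sum_{x\in X}\bigl(\exp(h(x))-\exp(f(x))\bigr)g(x)\mu(x)$, so the stationary points of $F$ on $\mathcal{H}$ are \emph{exactly} the elements $h$ of $\mathcal{H}$ satisfying the identity in the statement (the sign is irrelevant). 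Hence it suffices to prove that $F$ has a unique stationary point on $\mathcal{H}$.

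Uniqueness is immediate from strict convexity: the second variation is $\frac{d^2}{dt^2}\big|_{t=0}F(h+tg)=\sum_{x\in X}\exp(h(x))g(x)^2\mu(x)$, which is strictly positive for every $g\in\mathcal{H}\setminus\{0\}$ since $\mu(x)>0$ and $g$ cannot vanish identically on $X$; thus $F$ is strictly convex on $\mathcal{H}$ and has at most one stationary point, necessarily its unique global minimiser. For existence I would show $F$ is coercive on $\mathcal{H}$, i.e. $F(h)\to\infty$ as $\|h\|\to\infty$ with $h\in\mathcal{H}$; then continuity forces a global minimum, which is the sought $h_f$. To see coercivity, take $h_n\in\mathcal{H}$ with $\|h_n\|\to\infty$ and, after passing to a subsequence, $h_n/\|h_n\|\to\hat h$ with $\|\hat h\|=1$. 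If $\hat h(x)>0$ for some $x$, then $h_n(x)\to\infty$ and the term $\exp(h_n(x))\mu(x)$ grows exponentially, dominating the (at most linear) remaining terms, so $F(h_n)\to\infty$. If instead $\hat h(x)\le0$ for all $x$, then $\hat h\neq0$ together with $\exp(f(x))\mu(x)>0$ forces $\sum_{x}\hat h(x)\exp(f(x))\mu(x)<0$, so the linear part $-\sum_x h_n(x)\exp(f(x))\mu(x)$ tends to $+\infty$ while $\sum_x\exp(h_n(x))\mu(x)\ge0$; again $F(h_n)\to\infty$.

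Combining the two parts yields, for every $f\in\R^X$, a unique $h_f\in\mathcal{H}$ with $\exp(f)-\exp(h_f)\perp_\mu\mathcal{H}$, hence a well-defined and unique (and manifestly nonlinear) map $f\mapsto h_f$. The only genuinely delicate point — and the step I expect to be the main obstacle — is coercivity: one must exploit strict positivity of the weights $\mu$ twice over, first to get a positive-definite Hessian on $\mathcal{H}$, and second to ensure that in the "everywhere-nonpositive direction" case the linear part of $F$ actually diverges to $+\infty$ rather than merely remaining bounded. Everything else is routine finite-dimensional convex optimisation.
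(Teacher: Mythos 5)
Your proposal is correct as it stands: the reformulation of the defining identity as the Euler--Lagrange condition for $F(h)=\sum_{x}\bigl(\exp(h(x))-h(x)\exp(f(x))\bigr)\mu(x)$ on $\mathcal{H}$ is exact, the second-variation computation does give strict convexity on all of $\mathcal{H}$ (not just on a hyperplane, since the quadratic form $g\mapsto\sum_x\exp(h(x))g(x)^2\mu(x)$ is positive definite for every $h$), and your two-case coercivity argument is sound --- the subsequence device at the end closes in the standard way. Note, however, that the paper does not actually prove this theorem; it is imported wholesale from \cite{let74} and only its corollary (the deduction of Theorem \ref{thm:sink}) is sketched, so there is no in-text proof to compare against. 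What is worth observing is that your functional is not an alien object in this paper: when $\mathcal{H}=\ran(\pi\circ a)$ is the pattern-restricted range of $(\xi,\eta)\mapsto(\xi_i+\eta_j)$, $\mu_{ij}=A_{ij}$ and $\exp(f_{ij})=B_{ij}/A_{ij}$, your $F$ becomes exactly the convex program $\sum_{ij}A_{ij}e^{\xi_i+\eta_j}-\sum_i r_i\xi_i-\sum_j c_j\eta_j$ of equation (\ref{eqn:gormanconvex}) and Lemma \ref{lem:convprog}. So your argument buys the general Letac statement by running the paper's convex-programming proof of Sinkhorn's theorem in the abstract setting of an arbitrary subspace $\mathcal{H}$; conversely it makes transparent why Letac's theorem subsumes equivalence scaling, $DAD$ scaling and balancing simultaneously (each corresponds to a different choice of $\mathcal{H}$). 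The one caveat to flag is that uniqueness in the theorem refers to $h_f$, not to the parametrisation: in the matrix application the pair $(\xi,\eta)$ is only unique up to $\ker(\pi\circ a)$, which is consistent with your proof since strict convexity lives on $\mathcal{H}$ itself rather than on the preimage coordinates.
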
 
Sinkhorn's theorem follows as an easy corollary:
\begin{proof}[Sketch of proof of Theorem \ref{thm:sink}, \cite{let74}]
First, let $X\subset \{1,\ldots,m\}\times \{1,\ldots n\}$, then we first define the following maps:
\begin{align*}
	a: (\R^m,\R^n)\to \R^{m\times n}, &\quad (\xi,\eta)\to (\xi_i+\eta_j)_{ij} \\
	\pi: (\R^{m\times n})\to \R^X, & \quad (A_{ij})_{i=1,j=1}^{i=m,j=n} \to (A_{ij})_{(i,j)\in X}
\end{align*}
The second is just the natural projection from $\R^{m\times n}$ to $\R^X$. 

Now let $A\in \R^{m\times n}$ be a nonnegative matrix and let $X:=\{(i,j)|A_{ij}>0\}$ be its pattern. We already know that the pattern is a necessary condition for scalability, hence we know that there exists a $B\in \R^{m\times n}$ with row sums $r$ and column sums $c$. Given the pattern, we define $\mathcal{H}=\ran(\pi\circ a)$ the range of the composition of $\pi$ and $a$. 

Now let $F\in \R^{X}$ be the matrix with entries $F_{ij}:=\log(B_{ij}/A_{ij})$ and apply the theorem to $F$, i.e.~there exists a unique matrix $H\in \mathcal{H}$ such that
\begin{align*}
	\sum_{ij}\exp(F_{ij})A_{ij}G_{ij}=\sum_{ij} \exp(H_{ij})A_{ij}G_{ij} \quad \forall G\in \mathcal{H}
\end{align*}
But since $\exp(F_{ij})A_{ij}=B_{ij}$ and $H_{ij}=\xi_i+\eta_j$ for some $\xi\in \R^m$, $\eta\in \R^n$ by definition of $\mathcal{H}$, we have 
\begin{align*}
		\sum_{ij}B_{ij}G_{ij}=\sum_{ij} \exp(\xi_i)A_{ij}\exp(\eta_j)G_{ij} \quad \forall G\in \mathcal{H}
\end{align*}
which implies that $\exp(\xi_i)A_{ij}\exp(\eta_j)$ has row sums $r$ by taking $G=\pi\circ a(e_i,0)$ and column sums $c$ by taking $G=\pi \circ a (0,e_j)$ for the unit vectors $e_i\in \R^m,~e_j\in \R^n$.

Clearly, the choice of $(\xi,\eta)$ is unique up to $\ker(\pi\circ a)$, which can be made explicit and leads to the usual conditions.
\end{proof}

\subsubsection{Geometric proofs}
In principle, we have already two geometric interpretations of the RAS: First, the RAS is akin to iterated I-projections and second, the RAS is the application of a contractive mapping on a cone with a projective metric. Two other ``geometric'' proofs are known: 

\cite{fie70} shows that the RAS is a contractive mapping in the Euclidean metric using that the RAS preserves \emph{cross-ratios} of a matrix. Given a matrix, the products 
\begin{align}
	\alpha_{ijkl}:=\frac{A_{ij}A_{kl}}{A_{il}A_{kj}}
\end{align}
remain invariant. This was first observed in \cite{mos68}, where it was used to justify the use of the RAS in statistical settings (see Section \ref{sec:applic}). Fienberg then follows that if one associates any positive matrix to a point of the simplex
\begin{align*}
	S_{rc}=\{(A_{11},\ldots,A_{1c},\ldots,A_{r1},\ldots,A_{rc})| \sum_{ij}A_{ij}=1\}
\end{align*}
by normalising the matrix, then any point reachable by diagonal equivalence scaling lies on a certain type of manifold inside the simplex. Using some structural knowledge of these manifolds he then shows that each full cycle of the RAS corresponds to a contraction mapping with respect to the Euclidean metric. The result is general enough to cover multidimensional tables, but in this simplicity handles only positive matrices.

There is an interesting connection: While the cross-ratios within the matrix remain constant, Hilbert's metric is also closely connected to cross-ratios. In fact, the contraction ratio is connected to the largest cross-ratio within the matrix and it is not finite if the matrix contains zeros. In that case, the matrix does not easily define a contraction in Hilbert's metric. The same holds true for Fienberg's proof. 

\cite{bor98} consider the column space of scaled matrices $AS$ and notes that $RAS$ is doubly stochastic if the columns are included in the convex hull of the columns of $R^{-1}$ and the barycentre of the sets of the two columns coincide. The observation of the barycentre then leads them to a proof involving Brouwer's fixed point theorem once again. By some continuity argument, the proof can be extended to nonnegative matrices.

\subsubsection{Other direct convergence proofs}
Many papers contain direct convergence proofs, not least the original approach in \cite{sin64} and the proof of the full result \cite{sin67a} (another proof based on this approach is given in \cite{pre80}). The idea is to show that some seemingly unrelated quantity always converges. Often, this quantity turns out to be very much related to some potential barrier function or entropy and we already cited the approach in the corresponding section. 

One different proof is the short convergence proof of \cite{mac77} establishing that $\sum_j A_{ij}^{(n)}/\sum_j A_{ij}^{(n-1)}\to 1$ and similarly $\sum_i A_{ij}^{(n)}/\sum_i A_{ij}^{(n-1)}\to 1$ for every $i,j$. This proof is in some sense derived from Bacharach's approach (\cite{bac65,bac70}, see also \cite{sen06}) and is very straightforward.\footnote{Macgill also mentions yet another work that contains a proof of Theorem \ref{thm:sink}, namely \cite{her73}, however no details are given beyond the fact that it contains also approximate scaling.} In parallel to Bacharach's earlier work \cite{bac65} but not cited in his later \cite{bac70}, \cite{cau65} proved the convergence of the RAS method in the general case of multidimensional matrices via the same idea which he attributes to \cite{thi64} (see the appendix of \cite{cau65}).

A second direct proof of convergence in \cite{sin67b}, uses a norm difference as convergence measure. More precisely, he considers the map
\begin{align*}
	\phi(x,y)= \max_i \left(r_i^{-1} \sum_j x_iA_{ij}y_j\right) - \min_i \left(r_i^{-1} \sum_j x_iA_{ij}y_j\right)
\end{align*}
on the set of all $(x,y)\in \R^n_+\times \R^n_+$ with some boundedness condition on their entries and proves that $\phi(x,y)=0$ is achieved by two positive vectors. 

A third proof of direct and approximate scaling is given in \cite{puk09} by combining the approach of Bacharach with a simple $L^1$-error function borrowed from \cite{bal89b}.

\section{Equivalence scaling} \label{sec:equivscaling}
Let us now collect maximal results. A similar but scarcely referenced collection of results was provided in \cite{kru79}. We follow the cleaner presentation style of \cite{rot89a}. Starting with equivalence scaling, we have:
\begin{thm}
Let $A\in \R^{n\times m}$ be a nonnegative matrix and $r\in \R^n_+,c\in \R^m_+$. Then the following are equivalent:
\begin{enumerate}
	\item There exist positive diagonal matrices $D_1,D_2$ such that $D_1AD_2$ has row sums $r$ and column sums $c$.
	\item There exists a matrix $B$ with row sums $r$ and column sums $c$ with the same pattern as $A$ (\cite{men68,bru68}).
	\item There exists no pair of vectors $(u,v)\in \R^n\times \R^m$ such that (\cite{rot89a})
		\begin{align*}
			u_i+v_j\geq 0 \quad \forall (i,j)\in \supp(A) \\
			r^Tu+c^Tv\leq 0 \\
			\mathrm{either}~u_i+v_j>0~\mathrm{for~some~}(i,j)\in \supp(A)\mathrm{~or~} r^Tu+c^Tv<0 
		\end{align*}
	\item For every $I\subset \{1,\ldots,m\},J\subset \{1,\ldots,n\}$ such that $A_{I^cJ}=0$ we have that 
		\begin{align*}
			\sum_{i\in I}r_i\geq \sum_{j\in J}c_j
		\end{align*}
		and equality holds if and only if $A_{IJ^c}=0$ (\cite{men69}).
	\item The RAS method converges and the product of the diagonal matrices of the iteration also converges to positive diagonal matrices (\cite{sin67a}).
\end{enumerate}
\end{thm}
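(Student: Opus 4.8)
The plan is to prove the five statements equivalent by a short list of implications that close into a cycle, reusing the heavy machinery of Theorem~\ref{thm:sink} and the convergence analyses already discussed rather than reproving anything. Concretely: $1\Leftrightarrow 2$ via Theorem~\ref{thm:sink}; $2\Leftrightarrow 3$ via a theorem of the alternative; $1\Leftrightarrow 4$ via max-flow--min-cut together with the pattern-preservation in $1$; and $1\Leftrightarrow 5$ via the known convergence of the RAS method.

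\emph{Steps $1\Leftrightarrow 2$ and $1\Leftrightarrow 5$.} The implication $1\Rightarrow 2$ is immediate: if $D_1AD_2$ has row sums $r$ and column sums $c$, put $B:=D_1AD_2$; since $D_1,D_2$ are positive diagonal, $\supp(B)=\supp(A)$. The converse $2\Rightarrow 1$ is exactly the content of Theorem~\ref{thm:sink}, for which potential, nonlinear Perron--Frobenius, entropic and convex-programming proofs were sketched above, so I would only cite it. I record for later that the scaled matrix in $1$ has the \emph{full} pattern of $A$, not merely a subpattern. For $1\Leftrightarrow 5$: $5\Rightarrow 1$ is immediate by continuity (the limit of $D_1^{(k)}AD_2^{(k)}$ is $D_1AD_2$ with the prescribed margins, and $D_1,D_2>0$ by hypothesis), while $1\Rightarrow 5$ is the assertion that, when a scaling exists, Algorithm~\ref{alg:ras} and its accumulated diagonal factors converge to positive diagonal matrices. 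Since the RAS method is coordinate descent on the biconvex barrier function (\ref{eqn:logbarrier}) --- equivalently the iterated $I$-projection of Algorithm~\ref{alg:ipro} --- and a finite stationary point exists by Lemma~\ref{lem:logbar}, this is precisely the direct convergence statement of \cite{sin67a} (see also Section~\ref{sec:entrop} and \cite{luo92,fra89}); I would invoke it rather than reprove it.

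\emph{Step $2\Leftrightarrow 3$.} This is a pure linear-feasibility fact. Eliminating the forced zeros, condition $2$ is the solvability of $\{x\ge 0,\ Mx=b\}$ where the columns of $M$ are indexed by $\supp(A)$ and $b=(r,c)$; by a theorem of the alternative (the Gordan/Stiemke-style version that tracks strict inequalities) this fails exactly when some $(u,v)$ satisfies $u_i+v_j\ge 0$ on $\supp(A)$, $r^Tu+c^Tv\le 0$ and at least one inequality strict --- i.e.\ the negation of $3$. The only care needed is phrasing the alternative so that the ``either \dots\ or \dots\ strict'' clause of $3$ comes out correctly.

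\emph{Step $1\Leftrightarrow 4$.} Condition $4$ is the Gale--Hoffman feasibility criterion for the transportation polytope with forbidden cells. For $4\Rightarrow 2$: taking $I$ all rows and $J$ all columns makes the relevant submatrices empty, so $4$ forces $e^Tr=e^Tc$; for an arbitrary column set $J$ put $I:=\{i:A_{ij}>0$ for some $j\in J\}$, so $A_{I^cJ}=0$ and $4$ gives $\sum_{j\in J}c_j\le\sum_{i\in I}r_i$ --- exactly the cut inequality --- whence max-flow--min-cut on the bipartite supply/demand network (row supplies $r_i$, column demands $c_j$, infinite-capacity edges on $\supp(A)$) yields a feasible $B$; with Step~1 this gives $4\Rightarrow 1$. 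For $1\Rightarrow 4$: with $B=D_1AD_2$ and $A_{I^cJ}=0$ one has $\sum_{i\in I}r_i=\sum_{i\in I}\sum_j B_{ij}\ge\sum_{i\in I}\sum_{j\in J}B_{ij}=\sum_{j\in J}\sum_i B_{ij}=\sum_{j\in J}c_j$, the last equality using $B_{I^cJ}=0$; if moreover $A_{IJ^c}=0$ the support splits into blocks $I\times J$ and $I^c\times J^c$ and the chain holds with equality, while conversely equality forces $\sum_{i\in I}\sum_{j\in J^c}B_{ij}=0$, so $B_{IJ^c}=0$, and because $B$ has the full pattern of $A$ this forces $A_{IJ^c}=0$.

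\emph{Main obstacle.} Everything genuinely hard is inherited --- $1\Leftrightarrow 2$ is Theorem~\ref{thm:sink} and $1\Rightarrow 5$ rests on the external convergence proof --- so the only delicate points are bookkeeping. The sharpest is the \emph{equality clause} of condition $4$: it is not a consequence of transportation feasibility alone, and its ``only if'' direction genuinely needs that a diagonal scaling preserves the pattern of $A$ exactly (hence must be deduced from $1$, not from $2$); getting that dependency right, and stating the theorem of the alternative in Step~$2$ with the strict-inequality accounting that matches clause~$3$, are the two places where care is required.
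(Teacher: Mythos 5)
Your overall architecture matches what the paper does (it, too, leans on Theorem~\ref{thm:sink} for $1\Leftrightarrow 2$, on the pattern characterisation for item~4, on duality for item~3, and on \cite{sin67a} for item~5), and your verification of $1\Rightarrow 4$, including the equality clause via $\supp(B)=\supp(A)$, is correct. But there is a genuine gap in your $4\Rightarrow 2$ step. The cut inequalities $\sum_{j\in J}c_j\le\sum_{i\in I}r_i$ fed into max-flow--min-cut only produce a feasible transportation matrix $B$ with $B\prec A$, i.e.\ a \emph{subpattern} of $A$. That is precisely the content of item~3/5 of the paper's \emph{approximate} scaling theorem, and it is not enough to invoke Theorem~\ref{thm:sink}, which requires $B$ to have the \emph{same} pattern as $A$. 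The equality clause of condition~4 is not only needed in the direction $1\Rightarrow 4$ (where you correctly place it); it is indispensable in $4\Rightarrow 2$ as well: one must show that whenever $A_{IJ^c}\neq 0$ the cut inequality is strict, deduce from this that for every cell $(i,j)\in\supp(A)$ there is a feasible $B^{(ij)}$ with $B^{(ij)}_{ij}>0$ (e.g.\ by imposing a small lower bound on that edge and re-running the cut analysis), and then average the $B^{(ij)}$ over $\supp(A)$ to obtain a single feasible $B$ with the full pattern. This is exactly the Menon--Brualdi pattern theorem that the paper delegates to Appendix~\ref{app:prelimmatrices} and to \cite{men68,bru68}; as written, your chain $4\Rightarrow 2\Rightarrow 1$ breaks at the hand-off to Theorem~\ref{thm:sink}.

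One smaller remark on route: for item~3 the paper obtains the certificate by studying the geometric program of Observation~\ref{obs:geometric} following \cite{rot89a}, whereas you apply a Stiemke-type transposition theorem directly to the linear system $\{x>0 \text{ on } \supp(A),\ Mx=(r,c)\}$. Both are duality arguments and both are valid; yours is more elementary and makes the ``either $\ldots$ or $\ldots$ strict'' clause transparent (the easy direction is just $0\ge r^Tu+c^Tv=\sum_{(i,j)\in\supp(A)}B_{ij}(u_i+v_j)\ge 0$ with strict positivity of $B$ on the support forcing triviality of the certificate), while the paper's route has the advantage of tying item~3 to the same convex program that drives the algorithmic and complexity results elsewhere in the text. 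Do make sure you state the transposition theorem in the form that handles strict feasibility of $x>0$, not merely $x\ge 0$, since that is exactly where the third bullet of condition~3 comes from.
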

The equivalence of the first two items was essentially established in the proof sketches in section (\ref{sec:approaches}). The equivalence to the fourth item follows from the characterisation of matrix patterns (see appendix \ref{app:prelimmatrices}) and the third follows from studying the geometric program \ref{obs:geometric}.

For doubly stochastic scaling, using the classification of doubly stochastic patterns, we then know that scalability is equivalent to having total support (cf. \cite{csi72}). The scaling matrices $D_1,D_2$ are unique up to scalar multiplication if and only if $A$ is fully indecomposable.

For approximate equivalence scaling, the results are similar. The only difference is that certain elements of $A$ can become zero in the limit (which implies that elements of $D_i$ must become zero and others infinite, hence the diagonal matrices cannot exist):
\begin{thm}
Let $A\in \R^{n\times m}$ be a nonnegative matrix and $r\in \R^n_+,c\in \R^m_+$. Then the following are equivalent:
\begin{enumerate}
	\item For every $\varepsilon>0$ there exist diagonal matrices $D_1,~D_2$ such that $B=D_1AD_2$ satisfies
		\begin{align*}
			\|Be-r\|<\varepsilon, \|B^Te-c\|<\varepsilon
		\end{align*}
	\item There exists a matrix $A^{\prime}\prec A$ such that $A^{\prime}$ is scalable to a matrix $B$ with row sums $r$ and column sums $c$.
	\item There exists a matrix $B\prec A$ with row sums $r$ and column sums $c$ (\cite{sch80}).
	\item There exists no pair of vectors $(u,v)\in \R^{n\times m}$ such that (\cite{rot89a})
		\begin{align*}
			u_i+v_j\geq 0 \quad \forall (i,j)\in \supp(A) \\
			r^Tu+c^Tv<0
		\end{align*}
	\item For every $I\subset \{1,\ldots,m\},J\subset \{1,\ldots,n\}$ such that $A_{I^cJ}=0$ we have that 
		\begin{align*}
			\sum_{i\in I}r_i\geq \sum_{j\in J}c_j
		\end{align*}
	\item The RAS method converges (\cite{sin67a} for the d.s. case).
\end{enumerate}
\end{thm}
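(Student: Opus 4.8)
The plan is to establish the cycle of equivalences $(1)\Leftrightarrow(2)\Leftrightarrow(3)\Leftrightarrow(5)$ and then $(1)\Leftrightarrow(6)$, following the same philosophy as the first (exact-scaling) theorem but tracking where the weakening from ``scalable'' to ``approximately scalable'' buys us something. The key conceptual point, already visible in the proof sketches of Section~\ref{sec:approaches}, is that approximate scalability corresponds exactly to the \emph{infimum} in the relevant potential/entropy functional being the right value — whereas exact scalability additionally requires that infimum to be \emph{attained}. So I would reuse the machinery of Lemma~\ref{lem:logbar}, Observation~\ref{obs:logtohom}, and the convex reformulation of Lemma~\ref{lem:convprog}, but drop the requirement that the minimum is achieved.

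First I would prove $(2)\Leftrightarrow(3)$, the purely combinatorial equivalence. Here $(3)$ is a direct translation of $(2)$ via a duality/separation argument: the set of matrices with support contained in $\supp(A)$, nonnegative entries, row sums $r$ and column sums $c$ is a polytope, and it is nonempty if and only if there is no separating functional, which is precisely the pair $(u,v)$ described in $(3)$. This is a standard Farkas/transportation-polytope argument — write the feasibility LP, take its dual, and read off that infeasibility of the primal is exactly the existence of $(u,v)$ with the stated inequalities (the strict-inequality clause encodes that the polytope, not merely its closure, is empty). The equivalence $(3)\Leftrightarrow(5)$ is then combinatorial bookkeeping: the classes of ``bad'' functionals $(u,v)$ can be taken, by an extreme-point argument, to be $\{0,1\}$-valued indicator vectors of index sets $I,J$, and the inequality $r^Tu+c^Tv<0$ together with $u_i+v_j\geq 0$ on the support translates into the condition $\sum_{i\in I}r_i\geq\sum_{j\in J}c_j$ whenever $A_{I^cJ}=0$ — note that, unlike in the exact theorem, there is no equality clause, which is exactly the slack that $(6)$ will need.

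Next I would prove $(2)\Leftrightarrow(1)$, the analytic heart. For the direction $(1)\Rightarrow(2)$: if for every $\varepsilon$ we have $D_1^{(\varepsilon)}AD_2^{(\varepsilon)}$ with marginals within $\varepsilon$ of $(r,c)$, then the normalised sequence of scaled matrices lies in a compact set (all entries bounded, total mass bounded), so along a subsequence it converges to some $B$; the limit has row sums $r$ and column sums $c$ by continuity, and since every entry of a scaled matrix is a positive multiple of the corresponding entry of $A$, the limit satisfies $B_{ij}=0$ whenever $A_{ij}=0$, i.e.\ $B\prec A$. For $(2)\Rightarrow(1)$ one works with the convex function $f(\xi,\eta)$ of Lemma~\ref{lem:convprog} (or equivalently the homogeneous $k$ of Observation~\ref{obs:logtohom}): existence of $B\prec A$ with the prescribed marginals forces $f$ to be bounded below, and minimising sequences $(\xi^{(n)},\eta^{(n)})$ — even when they run off to the boundary of $\R^n_{-\,0}$, which is exactly what happens when no exact scaling exists — produce scaled matrices $D_1^{(n)}AD_2^{(n)}$ whose marginals converge to $(r,c)$. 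The equivalence of $(2)$ and $(2')$ is then immediate: take $A' := B$ of item $(2)$, which is trivially scalable to itself by identity matrices; conversely a scalable $A'\prec A$ has a scaled version $B\prec A'\prec A$ with the right marginals.

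Finally, $(1)\Leftrightarrow(6)$: that RAS convergence implies approximate scalability is clear since the RAS iterates are themselves of the form $D_1AD_2$ with marginals alternately equal to $r$ and tending to $c$ (and vice versa), so consecutive iterates give the required $\varepsilon$-approximations; here one should remark that RAS steps are well-defined as long as no full row or column of the current matrix is zero, which is guaranteed when $B\prec A$ with positive marginals exists. The converse — approximate scalability implies RAS converges — is the one substantive import from the literature (Sinkhorn--Knopp \cite{sin67a} in the doubly stochastic case): one invokes that the relative entropy $D(\,\cdot\,\|A)$ is a strict Lyapunov function for the RAS iteration (Observation~\ref{obs:rasentrop} and the corrected convergence analysis of \cite{csi75}, see also \cite{aar05}), so the iterates form a Cauchy sequence converging to the I-projection of $A$ onto $\Pi_1\cap\Pi_2$, which by $(2)$ is nonempty. \textbf{The main obstacle} I expect is precisely this last converse: handling the case where $A$ is not fully indecomposable and some entries of the RAS iterates decay to zero (so the diagonal factors genuinely fail to converge while their product still does). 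The clean way around it is to pass to the I-projection / entropy picture rather than arguing with the diagonal matrices directly, since the entropy functional is continuous on the whole compact set of nonnegative matrices with bounded mass and does not ``see'' the blow-up of individual diagonal entries; alternatively one reduces to the fully indecomposable blocks of a suitable block decomposition of $A$ and applies the classical result block-wise. I would present the entropy argument, since it is the most robust and ties back cleanly to Section~\ref{sec:entrop}.
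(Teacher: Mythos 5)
Your proposal is correct and assembles exactly the machinery the paper itself points to for this theorem (which it states as a literature compilation without a self-contained proof): the compactness/limit argument and the convex potential of Lemma~\ref{lem:convprog} for the equivalence of approximate scalability with the existence of $B\prec A$, linear-programming duality and the max-flow/extreme-ray reduction for the combinatorial items, and the iterated I-projection argument of \cite{csi75} for the convergence of the RAS. Two small points should be tightened. First, a bookkeeping slip: the separating-functional condition $(u,v)$ is item~$(4)$, not item~$(3)$; your ``combinatorial'' block is really proving $(3)\Leftrightarrow(4)\Leftrightarrow(5)$, with $(2)\Leftrightarrow(3)$ being the trivial observation you record at the end. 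Second, the assertion that ``minimising sequences $(\xi^{(n)},\eta^{(n)})$ produce scaled matrices whose marginals converge to $(r,c)$'' is not true for an \emph{arbitrary} minimising sequence of $f$; boundedness below of a convex $C^1$ function only guarantees that \emph{some} sequence has vanishing gradient (Ekeland's variational principle), or, more in the spirit of the paper, one should take the specific coordinate-descent sequence --- the RAS iterates themselves --- for which each half-step zeroes one block of the gradient and the summability of the per-step decreases of $f$ (or of the relative entropy) forces the residual marginal error to vanish. With that choice made explicit the argument is complete, and it simultaneously delivers the implication from item~$(3)$ to item~$(6)$ that you otherwise import from \cite{sin67a} and \cite{csi75}.
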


For doubly stochastic scaling, using the classification of doubly stochastic patterns, we have that approximate scalability is equivalent to $A$ having support. Using \cite{sch80}, this is a trivial consequence of the fact that a matrix has total support if and only if it has doubly stochastic pattern and Proposition \ref{prop:fullyindec}\footnote{One recent observation of this is in \cite{bra10}. The observation has however already been made before such as in \cite{ach93}}.

The uniqueness conditions are also simple enough to state:
\begin{thm}
Let $A\in \R^{n\times m}$ be a nonnegative matrix and $r\in \R^m_+$, $c\in \R^n_+$. Then, $A$ has at most one scaling.

Furthermore, if there exist no permutations $P,Q$ such that $PAQ$ is a direct sum of block matrices, then $D_1,D_2$ are unique up to scalar multiples. Otherwise, the scaled matrices $D_1,D_2$ are only unique up to a scalar multiple in each block. 
\end{thm}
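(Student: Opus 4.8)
The statement has two parts: (i) uniqueness of the scaled matrix $B$, and (ii) uniqueness of the diagonal factors $D_1,D_2$ up to the obvious block-wise scalar freedom. For (i) I would work with any of the convex reformulations already established, most conveniently the jointly convex function $f(\xi,\eta)$ of Lemma~\ref{lem:convprog} (or the single-variable version in Lemma~\ref{lem:convprogsimpler}). A scaling exists iff this convex function attains its minimum, and the scaled matrix $B$ has entries $B_{ij}=A_{ij}e^{\eta_i+\xi_j}$ read off from a minimizer. Since $f$ is convex, the set of minimizers is convex; the key point is that the map $(\xi,\eta)\mapsto (A_{ij}e^{\eta_i+\xi_j})_{ij}$ is \emph{constant} on that set. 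One way to see this: if $(\xi,\eta)$ and $(\xi',\eta')$ are both minimizers, then by convexity the whole segment between them consists of minimizers, so the directional derivative of $f$ along $(\xi'-\xi,\eta'-\eta)$ vanishes identically on the segment; writing this out shows $\sum_{ij}A_{ij}e^{\eta_i+\xi_j}(\Delta\eta_i+\Delta\xi_j)$ is constant, and combining with the row/column-sum stationarity conditions forces $B=B'$. Alternatively, and perhaps more cleanly, I would invoke the entropic picture (Observation~\ref{obs:logentrop}): the scaled matrix is the I-projection $A^*$ of $A$ onto $\Pi_1\cap\Pi_2$, and $D(\cdot\|A)$ is \emph{strictly} convex on the affine set $\Pi_1\cap\Pi_2$ wherever it is finite, so its minimizer is unique. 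Either route gives part (i) immediately.

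\textbf{Uniqueness of the factors.} For (ii), suppose $D_1AD_2$ and $D_1'AD_2'$ are both equal to the (now unique) scaled matrix $B$, equivalently $D_1AD_2 = D_1'AD_2'$. Setting $E_1 = D_1'^{-1}D_1$ and $E_2 = D_2D_2'^{-1}$ (positive diagonal), this reads $E_1 A = A E_2^{-1}$, i.e. for every $(i,j)\in\supp(A)$ we have $(E_1)_{ii} = (E_2)_{jj}$. Now form the bipartite graph on row-indices $\sqcup$ column-indices with an edge $i\sim j$ whenever $A_{ij}>0$. Within a connected component of this graph all the values $(E_1)_{ii}$ and $(E_2)_{jj}$ must coincide — call the common value $\lambda$ for that component — so on each component the pair $(D_1,D_2)$ is determined up to multiplying the rows in the component by $\lambda$ and the columns in the component by $\lambda^{-1}$. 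It remains to translate ``connected components of the bipartite graph'' into ``blocks of $PAQ$ under independent row/column permutations'': a direct-sum block decomposition $PAQ = A^{(1)}\oplus\cdots\oplus A^{(k)}$ exists precisely when the bipartite graph is disconnected, and the blocks correspond exactly to the components. Hence the factors are unique up to a single scalar per block, which is the assertion; when no such block decomposition exists the graph is connected and we get the usual ``unique up to one global scalar'' conclusion.

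\textbf{Main obstacle.} The genuinely delicate point is strictness in part (i). The jointly convex function $f(\xi,\eta)$ is \emph{not} strictly convex on all of $\R^n\times\R^n$ — it is flat along the direction $(\xi,\eta)\mapsto(\xi+t\mathbf 1,\eta-t\mathbf 1)$, and, more to the point, along additional directions whenever $A$ has a block structure. So one cannot simply say ``strictly convex $\Rightarrow$ unique minimizer''; the correct statement is that $f$ is strictly convex \emph{transverse to the kernel direction(s)}, and the residual freedom is exactly the block-scalar freedom of part (ii). I would therefore organize the argument so that (i) proves uniqueness of $B$ only (which holds unconditionally), and (ii) separately extracts the factor ambiguity from the combinatorics of $\supp(A)$ — being careful that "at most one scaling" in the theorem statement refers to the matrix $B$, not the pair $(D_1,D_2)$. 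A secondary technical nuisance is the non-positive case: if $A$ has zero entries, some components of a minimizing $(\xi,\eta)$ can run off to $\pm\infty$ (the factors fail to exist as honest positive diagonal matrices), but conditioned on a genuine scaling existing, the finiteness of $D(B\|A)$ on $\Pi_1\cap\Pi_2$ and the restriction of the analysis to $\supp(A)$ handle this exactly as in the proof sketches of Section~\ref{sec:approaches}.
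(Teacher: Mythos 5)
Your argument is essentially correct, and it is worth noting that the paper itself does not prove this theorem at all: it only cites the original sources (\cite{bru66} for the doubly stochastic case, \cite{men68}, \cite{her88}, \cite{men69} for general marginals). What you have done is reconstruct a proof from the machinery the review develops elsewhere, and the reconstruction is sound. Your cleanest route for part (i) -- any scaling is the I-projection of $A$ onto $\Pi_1\cap\Pi_2$ (Lemma~\ref{lem:loglinentrop}), and the Pythagorean identity $D(q\|\pi)=D(p\|\pi)+D(q\|p)$ applied in both directions forces any two scalings to coincide -- is exactly the argument implicit in Section~\ref{sec:entrop}, and it is sharper than invoking ``strict convexity'' loosely, since it sidesteps the flat directions of $f(\xi,\eta)$ that you correctly identify in your last paragraph. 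The classical proofs in the cited references instead argue directly on the diagonal factors via the support structure, which is closer in spirit to your part (ii); so your hybrid (entropic uniqueness of $B$, then combinatorial uniqueness of the factors) buys a cleaner separation of the analytic and combinatorial content. Two small points to tidy up: first, in part (ii) the identity $E_1A=AE_2^{-1}$ gives $(E_1)_{ii}=(E_2)_{jj}^{-1}$ on $\supp(A)$, not $(E_1)_{ii}=(E_2)_{jj}$ -- harmless for the connectivity argument, but as written the row and column scalars in a component would both equal $\lambda$ rather than being reciprocal, which contradicts your own correct conclusion one sentence later. Second, if you do use the convex-program route for part (i), make the second-derivative step explicit: constancy of $f$ along the segment of minimizers forces $\sum_{ij}A_{ij}e^{\eta_i+\xi_j}(\Delta\eta_i+\Delta\xi_j)^2=0$, hence $\Delta\eta_i+\Delta\xi_j=0$ on $\supp(A)$, hence $B=B'$; the first-derivative statement alone does not immediately yield this.
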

For doubly-stochastic scaling, this result already appears in \cite{bru66}. For the case of general marginals, it occurs in \cite{men68} and for general matrices and marginals in \cite{her88,men69}. The tools can also be applied to prove that the approximately scaled matrix is unique.

Let us now have a closer look at the difference between approximate scaling and equivalence scaling. What can be said about the convergence of the RAS?
\begin{thm}[\cite{pre80}, Theorem 1] \label{thm:pretzel}
Let $A\in\mathbb{R}^{n\times n}$ be a matrix that is approximately scalable to a matrix with row sums $r$ and column sums $c$. Let $B$ be a matrix with row sums $r$ and column sums $c$ with maximal subpattern of $A$ (i.e.~the number of entries $(i,j)$ such that $B_{ij}=0$ and $A_{ij}>0$ is minimal). 

Then $A$ converges to a matrix $C\prec B$ and the same result holds for $A^{\prime}$ with $A^{\prime}_{ij}=A_{ij}$ if $B_{ij}>0$ and $A^{\prime}_{ij}=0$ else.
\end{thm}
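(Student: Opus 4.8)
\textbf{Proof proposal for Theorem~\ref{thm:pretzel}.}

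The plan is to reduce the statement to the equality (equivalence) case already established via the entropic / I-projection picture of Section~\ref{sec:entrop}. First I would observe that since $A$ is approximately scalable, there exists some $B\prec A$ with row sums $r$ and column sums $c$, so the set of such subpatterns is nonempty; among these pick one, call its pattern $P$, whose support is \emph{maximal} (contains as many of the nonzero positions of $A$ as possible). The first key step is to show this maximal subpattern is essentially unique in the relevant sense: if $B_1\prec A$ and $B_2\prec A$ both have marginals $r,c$ and maximal support, then $\supp(B_1)=\supp(B_2)$. This follows from a standard convexity/flow argument — the set of nonnegative matrices with marginals $r,c$ and support inside $\supp(A)$ is a (bounded) polytope, its relative interior consists of the matrices of maximal support, and any two relative-interior points have the same support; the ``maximal subpattern'' $P$ is exactly the union of supports over this polytope. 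This $P$ is the pattern referred to in the statement of the theorem, and crucially a matrix with pattern \emph{exactly} $P$ and marginals $r,c$ actually exists (take a strict convex combination, i.e.\ a relative-interior point).

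The second step is the heart of the matter: run Algorithm~\ref{alg:ipro} (equivalently the RAS, by Observation~\ref{obs:rasentrop}) starting from $A$ and track the relative entropy. Fix once and for all a matrix $B^\star$ with pattern exactly $P$ and marginals $r,c$; then $D(B^\star\|A)<\infty$ since $\supp(B^\star)=P\subseteq\supp(A)$. The Pythagorean identity~(\ref{eqn:pyth}) used in the entropic proof sketch of Theorem~\ref{thm:sink} gives, for the iterates $A^{(n)}$,
\begin{align*}
	D(B^\star\|A)=D(B^\star\|A^{(n)})+\sum_{k=1}^n D(A^{(k)}\|A^{(k-1)}),
\end{align*}
so the telescoping sum converges, forcing $D(A^{(n)}\|A^{(n-1)})\to 0$, hence $\|A^{(n)}-A^{(n-1)}\|\to 0$, and $\{A^{(n)}\}$ stays in the bounded set $\{D(B^\star\|\,\cdot\,)\le D(B^\star\|A)\}$. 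By the compactness/transitivity argument of the entropic proof sketch, $A^{(n)}$ converges to the I-projection $C$ of $A$ onto $\Pi_1\cap\Pi_2$, which is a matrix with marginals $r,c$ and with $C\prec A$; in particular $C$ is a genuine (exact) scaling of the \emph{reduced} matrix $A'$ defined by $A'_{ij}=A_{ij}$ on $P$ and $0$ elsewhere, because the RAS iterates depend only on the current support and the support cannot leave $P$ once we know a finite-entropy competitor with support $P$ exists. The containment $C\prec B$, i.e.\ $\supp(C)\subseteq P$, is exactly the statement that the I-projection minimizes $D(\,\cdot\,\|A)$ and hence cannot have strictly larger support than some finite-entropy competitor; conversely a short argument using strict convexity of $D(\,\cdot\,\|A)$ on the marginal polytope shows $\supp(C)$ is in fact all of $P$ when $A$ is positive on $P$, but for the theorem as stated only $C\prec B$ is needed.

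Finally, the claim about $A'$: since $A'$ is obtained from $A$ by zeroing out precisely the entries outside the maximal subpattern $P$, and the RAS/I-projection iterates of $A$ never use those entries (they are already driven to $0$ in the limit and, more to the point, a finite-entropy target supported on $P$ exists), the sequences of iterates for $A$ and for $A'$ agree in the limit; thus $A'$ also converges under RAS to the same $C\prec B$. The main obstacle I anticipate is the uniqueness/well-definedness of the maximal subpattern $P$ and the rigorous justification that the iterates cannot ``escape'' to a support incomparable with $P$ — both are handled cleanly by the polytope-of-marginals viewpoint together with the strict joint convexity of the relative entropy, but one must be careful that the RAS step (row or column normalization) genuinely preserves membership in the face of the polytope determined by $\supp(A^{(n)})$, which is where a little bookkeeping with zero patterns is required, exactly as in the more technical part of the nonnegative case in the proof sketches of Section~\ref{sec:approaches}.
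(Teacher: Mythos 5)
The paper does not reproduce a proof of Theorem~\ref{thm:pretzel}; it defers entirely to \cite{pre80}, whose argument is one of the ``direct convergence proofs'' in the Sinkhorn--Bacharach tradition: Pretzel fixes a matrix $B$ with the required marginals and maximal subpattern and tracks a quantity of the form $\prod_{ij}\bigl(A^{(n)}_{ij}\bigr)^{B_{ij}}$, showing it is monotone and bounded along the RAS, from which convergence and the support statement follow. Your route through Csisz\'ar's alternating I-projections is therefore genuinely different in presentation, though the two are close relatives: Pretzel's monotone quantity is, up to an additive constant, $-D(B\Vert A^{(n)})$, and its monotonicity is exactly the Pythagorean inequality you invoke. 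What your version buys is a clean identification of the limit as the I-projection of $A$ onto $\Pi_1\cap\Pi_2$, and, via the boundary behaviour of $x\mapsto x\ln x$ (infinite negative slope at $0$), the fact that this I-projection has support equal to the maximal subpattern $P$. Your polytope argument for the well-definedness of $P$ (any two feasible matrices average to one whose support is the union, so the maximal support is unique and attained at relative-interior points) is correct and is the same combinatorial fact Pretzel needs. The cost is that you inherit the delicacy of Csisz\'ar's convergence theorem in the non-exactly-scalable regime -- precisely where the earlier proofs of \cite{ire68} and \cite{kul68} failed and where \cite{bor94} raise concerns -- so the ``bookkeeping with zero patterns'' you defer is not optional: one must check that the Pythagorean identity holds with equality at each normalisation step, which it does here because each step multiplies entire rows or columns by constants, making $\ln(A^{(n+1)}_{ij}/A^{(n)}_{ij})$ constant along the constraint directions.

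One sentence does need repair: the claim that the RAS iterates of $A$ ``never use'' the entries outside $P$ is false at every finite stage -- those entries remain strictly positive and enter the row and column sums, so the iterates of $A$ and of $A'$ are different matrices throughout the iteration. The correct argument for the second half of the theorem, which you gesture at but do not state, is that every feasible matrix is supported inside $P$, where $A$ and $A'$ agree entrywise, so $D(B\Vert A)=D(B\Vert A')$ for every $B$ in the marginal polytope; hence $A$ and $A'$ have the same I-projection, and running your convergence argument separately for $A'$ (which is \emph{exactly} scalable, since a matrix with pattern exactly $P$ and marginals $r,c$ exists) lands both sequences on the same limit $C$. With that substitution the proposal goes through.
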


The continuity of the scaling can also be studied:
\begin{thm}
Let $A$ be nonnegative and $r,c$ be prescribed row- and column sums. Then the limit of the Sinkhorn iteration procedure is a continuous function of $A$ on the space of matrices with $r,c$-pattern.

When the scaling matrices are unique up to a scalar multiple, this also implies that the scaling is continuous in $D_1,D_2$.
\end{thm}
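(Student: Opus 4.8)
\textit{Proof proposal.} The plan is to realise the Sinkhorn limit as the I-projection of $A$, using the identification from Section~\ref{sec:entrop}. For $A$ in the domain --- a matrix with an $r,c$-pattern, hence exactly scalable --- write $B(A)$ for the scaled matrix; by the uniqueness theorem above it is single-valued, and by Observation~\ref{obs:rasentrop} together with the entropic proof it is the unique minimiser of $F(B',A):=D(B'\|A)$ over the compact set $K:=\{B'\ge 0 : B'e=r,\ (B')^{T}e=c\}$. First I would fix $A_0$ in the domain, with pattern $\mathcal{X}_0$, and a sequence $A_n\to A_0$ with each $A_n$ in the domain. Because the entries of $A_0$ on $\mathcal{X}_0$ are strictly positive, for large $n$ the pattern of $A_n$ contains $\mathcal{X}_0$; hence $F(B(A_0),A_n)$ is finite, and since the entries involved stay bounded away from $0$, $F(B(A_0),A_n)\to F(B(A_0),A_0)$.

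Next I would run the standard $\arg\min$-continuity argument. The matrices $B_n:=B(A_n)$ all lie in the compact set $K$, so any subsequence has a sub-subsequence $B_{n_k}\to B^{*}\in K$. Relative entropy is jointly lower semicontinuous, so
\begin{align*}
 F(B^{*},A_0)\ \le\ \liminf_k F(B_{n_k},A_{n_k})\ =\ \liminf_k\,\min_{B'\in K}F(B',A_{n_k})\ \le\ \liminf_k F(B(A_0),A_{n_k})\ =\ F(B(A_0),A_0).
\end{align*}
Since $A_0$ has support $\mathcal{X}_0$, any $B'\in K$ whose support is not contained in $\mathcal{X}_0$ has $F(B',A_0)=+\infty$, so minimising $F(\cdot,A_0)$ over $K$ has the same value and minimiser as the I-projection; thus $B^{*}$ is a minimiser of $F(\cdot,A_0)$ over $K$, and by uniqueness $B^{*}=B(A_0)$. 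As every subsequence of the bounded sequence $(B_n)$ has a sub-subsequence converging to $B(A_0)$, we get $B_n\to B(A_0)$, i.e.\ $A\mapsto B(A)$ is continuous on the domain. One could equally run this through the jointly convex potential of Lemma~\ref{lem:convprog} and the continuity of the $\arg\min$ of a strictly convex function under continuous perturbation of the data.

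For the second statement, add the hypothesis that no permutations $P,Q$ turn $A$ into a direct sum of blocks; equivalently the bipartite graph of $\mathcal{X}_0$ (a vertex per row and per column, an edge per positive entry) is connected, and then $(D_1,D_2)$ with $D_1AD_2=B(A)$ is unique up to the rescaling $D_1\mapsto tD_1,\ D_2\mapsto t^{-1}D_2$. I would normalise $(D_1)_{11}=1$, fix a spanning tree $T$ of that graph rooted at the first row vertex, and recover the diagonal entries by traversing $T$: across a tree edge $(i,j)$ the relation $B(A)_{ij}=(D_1)_{ii}A_{ij}(D_2)_{jj}$ expresses the freshly reached diagonal entry through its parent and the ratio $B(A)_{ij}/A_{ij}$, so each entry of $D_1,D_2$ is a finite product of such ratios along a path of $T$. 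For $A$ near $A_0$ the edges of $T$ still lie in the pattern of $A$, the relevant $A_{ij}$ and $B(A)_{ij}$ stay bounded away from $0$, and $A\mapsto B(A)$ is continuous by the first part, so every entry of $D_1$ and $D_2$ depends continuously on $A$.

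The main obstacle is the identification $B^{*}=B(A_0)$: it rests both on the joint lower semicontinuity of relative entropy and on the uniqueness of the scaled matrix, and one must also notice that the domain is not closed --- under a small perturbation a pattern can only gain entries, never lose them --- which is precisely what keeps $F(B(A_0),A_n)$ finite and convergent. For the second part the delicate point is the global consistency of the tree propagation, which is exactly the content of the connectivity (equivalently, the uniqueness of $D_1,D_2$ up to a scalar) hypothesis.
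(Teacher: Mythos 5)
Your argument is correct, but it proves the theorem by a genuinely different route than the one the paper relies on. The paper does not give a self-contained proof: it derives the result from the homeomorphism property of the map $\psi'$ in (\ref{eqn:topmap2}) due to Tverberg (continuity of $A\mapsto(D_1,D_2,B)$ is then just continuity of the inverse homeomorphism), and alternatively points to the quantitative Hilbert-metric contraction estimates of the type in Section \ref{sec:convstab}. You instead characterise the Sinkhorn limit variationally as the unique I-projection of $A$ onto the transportation polytope $K$ (using Observation \ref{obs:rasentrop} and the entropic convergence proof of Section \ref{sec:entrop}), and then run a standard $\arg\min$-continuity argument from compactness of $K$, joint lower semicontinuity of $D(\cdot\|\cdot)$, and uniqueness of the minimiser; the key observation that patterns can only grow under small perturbation, so that $D(B(A_0)\|A_n)$ stays finite and converges, is exactly the right point to isolate. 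Your spanning-tree propagation for recovering $D_1,D_2$ is likewise sound: connectivity of the bipartite support graph (equivalent to uniqueness up to a scalar) persists under perturbation since a supergraph of a connected graph is connected, and each diagonal entry is a finite product of continuous ratios along tree edges whose denominators $A_{ij}$ stay bounded away from zero. The trade-off is the usual one: your entropic route is elementary and self-contained given material already in the paper, but yields only qualitative continuity, whereas the homeomorphism route gives the stronger bicontinuity statement in one stroke and the Hilbert-metric route gives explicit moduli of continuity and convergence rates.
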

The first proof of this result limited to the doubly-stochastic case was given in \cite{sin72}. The full result follows directly from the homeomorphism properties of the map (\ref{eqn:topmap2}) from \cite{tve76}. A discussion is also presented in \cite{kru79}. Furthermore, the continuity can be achieved using arguments of Section \ref{sec:convstab}.

Finally, let us mention another characterisation of equivalence scaling using \emph{transportation graphs}. 

Following \cite{sch90b}, let $A\in \R^{m\times n}$ be a nonnegative matrix. Let $M=\{1,\ldots,m\},N=\{1,\ldots,n\}$ and consider the bipartite graph with the bipartition given by the vertices $M$ and $N$ and the edges defined via $E=\{(i,j):A_{ij}>0\}$, directed from $i\in M$ to $j\in N$. Now we define a source $S_1$ that connects to each vertex in $M$, where the edges have capacity $r_i$ (corresponding to the edge from $S_1$ to $i\in M$) and we define a sink $S_2$ that is connected from every vertex in $N$, where the edges have capacities $c_j$ (see Fig. \ref{fig:transportationgraph} for an example).
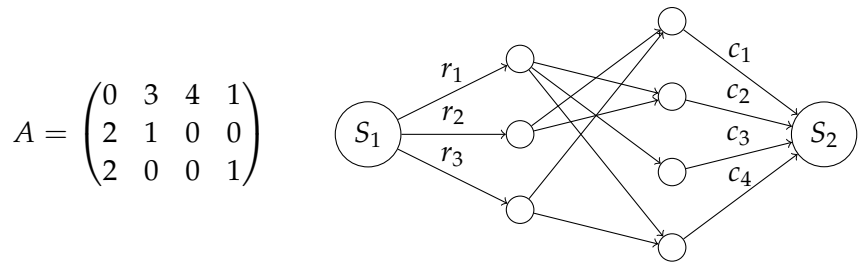
\begin{figure}[htbp]
\begin{center}
\begin{tikzpicture}

	\node at (0,-2) (mat) {$A=\begin{pmatrix}{} 0 & 3 & 4 & 1 \\ 2 & 1 & 0 & 0 \\ 2 & 0 & 0 & 1\end{pmatrix}$};
	
	\node[circle,draw] at (3,-2) (S1) {$S_1$};
	\node[circle,draw] at (5,-1) (r1) {};
	\node[circle,draw] at (5,-2) (r2) {};
	\node[circle,draw] at (5,-3) (r3) {};
	\node[circle,draw] at (7,-0.5) (c1) {};
	\node[circle,draw] at (7,-1.5) (c2) {};
	\node[circle,draw] at (7,-2.5) (c3) {};
	\node[circle,draw] at (7,-3.5) (c4) {};
	\node[circle,draw] at (9,-2) (S2) {$S_2$};
	
	\path[->] (S1) edge node[above]{$r_1$} (r1);
	\path[->] (S1) edge node[above]{$r_2$} (r2);
	\path[->] (S1) edge node[above]{$r_3$} (r3);
	
	\path[->] (c1) edge node[above]{$c_1$} (S2);
	\path[->] (c2) edge node[above]{$c_2$} (S2);
	\path[->] (c3) edge node[above]{$c_3$} (S2);
	\path[->] (c4) edge node[above]{$c_4$} (S2);
		
	\path[->] (r1) edge (c2);
	\path[->] (r1) edge (c3);
	\path[->] (r1) edge (c4);
	\path[->] (r2) edge (c1);		
	\path[->] (r2) edge (c2);
	\path[->] (r3) edge (c1);
	\path[->] (r3) edge (c4);
	
\end{tikzpicture}
\end{center}
\caption{An easy example of the transportation graph for row sums $r$ and column sums $c$ corresponding to the pattern of the matrix $A$. This example is similar to an example in \cite{sch90b}}
\label{fig:transportationgraph}
\end{figure} 

Then it is easy to see that the matrix is approximately scalable if and only if the maximum flow of this network is equal to $\sum_i r_i$. The flows along the edges $E$ then define a matrix with the wanted pattern. The matrix is exactly scalable if and only if the maximum flow of this network is equal to $\sum_i r_i$ and every edge contains flow. 

\section{Other scalings}
The problem of equivalence scaling is closely connected to different forms of scalings, the most prominent ones asking for a diagonal matrix $D$ such that $DAD$ is row-stochastic or such that $DAD^{-1}$ has equal row and column-sums. 

Many modern approaches to matrix equivalence scaling are general enough to cover most of those different scalings (see Section \ref{sec:generalised}).

\subsection{Matrix balancing}
Given a matrix $A$, does there exist a matrix $D$ such that $DAD^{-1}$ has equal column- and row sums? Clearly, this is a special case of $D_1AD_2$ scaling with a different set of constraints. We have the following characterisation:

\begin{thm}
Let $A\in\R^{n\times n}$ be a nonnegative matrix. Then the following are equivalent:
\begin{enumerate}
	\item There exists a diagonal matrix $D$ such that $B=DAD^{-1}$ fulfills $\sum_{i=1}^n B_{ij}=\sum_{i=1}^n B_{ji}$.
	\item $A$ is completely reducible or equivalently, a direct sum of irreducible matrices (\cite{har71}).
	\item There exists $B$ with the same pattern as $A$ and $\sum_{i=1}^n B_{ij}=\sum_{i=1}^n B_{ji}$ (\cite{let74}).
\end{enumerate}
The scaling of $A$ is unique and $D$ is unique up to scalars for each irreducible block of $A$. 
\end{thm}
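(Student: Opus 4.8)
The plan is to reduce matrix balancing ($DAD^{-1}$ scaling) to equivalence scaling ($D_1BD_2$ scaling) so that the characterization from Section~\ref{sec:equivscaling} can be invoked. First I would observe that $B=DAD^{-1}$ having equal row and column sums $s_i:=\sum_j B_{ij}=\sum_j B_{ji}$ is a $D_1AD_2$-type scaling with the extra linear constraint $D_1=D_2^{-1}$ and with the row/column-sum vector $r=c=s$ itself unknown. The key structural fact is that a nonnegative matrix admits an equal-row-and-column-sum matrix with the same pattern if and only if that pattern is ``balanceable'', which is exactly the condition that the pattern graph decomposes into irreducible components --- this is the content of the cited result of Hartfiel and of Letac's theorem, and it gives the equivalence of items (2) and (3).

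For the core implication (3)$\Rightarrow$(1), I would appeal to Letac's general theorem quoted earlier in the excerpt (the one producing, for a linear subspace $\mathcal{H}\subseteq\R^X$ and positive weights $\mu$, a unique map $f\mapsto h_f$ with $\sum_x[\exp(f(x))-\exp(h_f(x))]g(x)\mu(x)=0$ for all $g\in\mathcal{H}$). The trick is the same as in the proof sketch of Theorem~\ref{thm:sink} via Letac, but with a different choice of $\mathcal{H}$: instead of $\mathcal{H}=\ran(\pi\circ a)$ with $a(\xi,\eta)=(\xi_i+\eta_j)$, one takes $\mathcal{H}$ to be the image of the map $\xi\mapsto(\xi_i-\xi_j)_{(i,j)\in X}$, i.e. the subspace encoding conjugation $DAD^{-1}$ rather than independent two-sided scaling. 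Running Letac's theorem on $F_{ij}=\log(B_{ij}/A_{ij})$ for a same-pattern balanced witness $B$ (which exists by (3)) then yields $\xi$ with $\exp(\xi_i)A_{ij}\exp(-\xi_j)$ having the same row and column sums as $B$, hence equal row and column sums; set $D=\diag(e^{\xi_i})$. The converse (1)$\Rightarrow$(3) is trivial since $DAD^{-1}$ itself is a same-pattern witness, and (1)$\Rightarrow$(2) follows because conjugation preserves the pattern graph up to relabeling and a matrix with a strongly connected component feeding into but not out of the rest cannot have equal row and column sums (a standard flow/counting argument: summing row sums minus column sums over the ``downstream'' vertex set forces all connecting entries to vanish).

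For uniqueness, I would use the convexity already exploited in Section~\ref{sec:convexprog}: balancing is the stationarity condition of a convex potential (the restriction of the logarithmic barrier function $g$ of~\eqref{eqn:logbarrier} to the diagonal $y=1/x$, equivalently $\sum_{ij}A_{ij}e^{\xi_i-\xi_j}$), and this function is strictly convex modulo the obvious degeneracy $\xi\mapsto\xi+t e$, which scales $D$ by a constant and leaves $DAD^{-1}$ unchanged. On each irreducible block the strict convexity is genuine after quotienting by constants, so $D$ is unique up to a scalar per block, and the balanced matrix $DAD^{-1}$ is unique outright. Alternatively one can transport the uniqueness statement from the $D_1AD_2$ uniqueness theorem via the reduction above.

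\textbf{Main obstacle.} The delicate point is pinning down exactly when a pattern is balanceable and matching it to ``completely reducible''. The existence of a same-pattern balanced matrix is a combinatorial max-flow feasibility question on the bipartite-style transportation graph with $r=c$ free, and one must show this is feasible precisely when the directed pattern graph on $\{1,\dots,n\}$ is a disjoint union of strongly connected pieces (with no cross-edges between distinct components, after permutation); the ``only if'' is the flow-balance counting argument above, while the ``if'' requires constructing a positive balanced matrix on each strongly connected component, which is where circulation-existence in a strongly connected digraph is used. Everything else --- the Letac application and the convex-duality uniqueness --- is routine given the machinery already assembled in the excerpt.
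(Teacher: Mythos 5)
Your plan is sound, but there is no in-paper proof to measure it against: the review states this theorem as a compilation of cited results, crediting \cite{har71} for the irreducibility characterisation (via a progress measure based on the maximal row-minus-column-sum difference), \cite{let74} for the pattern interpretation, and \cite{osb60,gra71,sch80,gol83,eav85} for algorithmic and alternative proofs, without reproducing any argument. Judged on its own, your assembly works and is built from exactly the ingredients the review itself supplies. The Letac step is correct: $\mathcal{H}=\{(\xi_i-\xi_j)_{(i,j)\in X}\}$ is a linear subspace of $\R^X$, so the theorem applies, and testing the resulting identity against the $G$ corresponding to a unit vector $e_k$ yields that the scaled matrix has vanishing $k$-th row-sum-minus-column-sum whenever the same-pattern witness $B$ does. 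The equivalence of (2) and (3) is the circulation argument you outline: summing the balance condition over a reachability-closed vertex set $S$ forces the total weight of arcs leaving and entering $S$ to vanish, which disconnects the strongly connected components, while conversely every arc of a strongly connected digraph lies on a directed cycle, so summing cycle indicators produces a balanced matrix with the prescribed pattern — this is precisely the transshipment-graph picture of Figure \ref{fig:transshipmentgraph}. For uniqueness, the Hessian of $\xi\mapsto\sum_{ij}A_{ij}e^{\xi_i-\xi_j}$ equals $\sum_{ij}A_{ij}e^{\xi_i-\xi_j}(e_k-e_l)(e_k-e_l)^T$ summed over arcs, whose kernel at every point is exactly the space of vectors constant on each irreducible block, so stationary points are unique modulo a scalar per block and the balanced matrix is unique outright. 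Compared with the cited originals, your route buys uniformity — existence, pattern characterisation and uniqueness all follow from machinery already assembled in the review (Letac's theorem, transshipment graphs, the convex potential of Section \ref{sec:convexprog}) — whereas \cite{osb60,gra71,har71} prove existence constructively through convergence of an iterative balancing scheme, which additionally yields an algorithm; a third route within the paper's own framework would be to bound the constraint set and invoke Theorem \ref{thm:bapatraghavan} with the matrix $C$ of equation (\ref{eqn:balancing}), as the text itself sketches.
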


The problem was first considered in \cite{osb60} in the context of preconditioning matrices (see Section \ref{sec:applic}) by proposing an algorithm and proving its convergence (and uniqueness). \cite{gra71}, building on Osborne's results, considers the matrix balancing method and provides an algorithm and convergence proof for completely reducible matrices. Unaware of the effort of Osborne and Grad, but considering ``the analogue of [Sinkhorn's] result in terms of irreducible matrices'' \cite{har71} proves essentially the same result. His approach is based on a progress measure which is basically the maximum difference of the row- and column sums. \cite{let74} provided an interpretation in terms of patterns. The same was later proved in \cite{sch80,gol83,eav85} by yet different means.

Similar to the RAS method, one can propose a simple iterative approximation algorithm:
\begin{alg}[\cite{sch90b}]
Let $A\in\R^{n\times n}$ be nonnegative. Let $A^0:=A$. For $k=0,1,\ldots$ we define the steps
\begin{enumerate}
	\item For $i=1,\ldots,n$, let $u_i=\sum_{j=1}^n A_{ij}^k$ be the row sum and similarly $v_i$ be the column sum. Then define $p$ as the minimum index such that $|u_p-v_p|$ is maximal among $|u_i-v_i|$. 
	\item Define $\alpha_k$ such that $\alpha_k u_p=1/\alpha_k v_p$.
	\item Let $D=\diag(1,\ldots,1,\alpha_k,1,\ldots,1)$ with $\alpha_k$ at the $p$-th position. Then define $A^{k+1}=DA^+D^{-1}$ and iterate.
\end{enumerate}
\end{alg}
According to \cite{sch90b}, this algorithm is also similar to the proposed scheme in \cite{osb60}. At any step, the $p$-th row is already correctly scaled, while all other rows change their scaling a bit. Note that unlike in the RAS method, the selection of the row and column to be scaled are done using norm differences. Given the results of \cite{bro93} that the RAS converges regardless of the order of column and row sum normalisations, a similar condition might also accelerate RAS convergence.

We have the following observation:
\begin{prop}[e.g.~\cite{sch90b}]
The algorithm converges to a balanced matrix $B$. This matrix is also the unique minimiser of the function
\begin{align}
	\sum_{i,j=1}^n \left(B_{ij}\ln\left(\frac{B_{ij}}{A_{ij}}\right)-B_{ij}\right)
\end{align}
subject to the balancing conditions.
\end{prop}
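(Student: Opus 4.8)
The plan is to recognize this as an instance of the entropy-minimization framework already developed in Section~\ref{sec:entrop}, specialized to the balancing constraint $Be = B^Te$, and then to identify the given algorithm as coordinate ascent on the dual. First I would set up the constrained optimization: minimize the Bregman-type functional $G(B) = \sum_{ij}(B_{ij}\ln(B_{ij}/A_{ij}) - B_{ij})$ over the convex set $\{B \prec A : Be = B^Te\}$. Since each summand $t \mapsto t\ln(t/a) - t$ is strictly convex on $(0,\infty)$ (its second derivative is $1/t>0$), $G$ is strictly convex on the interior of the feasible region, so if a minimizer exists it is unique; existence follows because the balancing constraints together with a normalization (the total sum $\sum_{ij}B_{ij}$ is preserved by $B \mapsto DBD^{-1}$ but not by the feasible set itself, so one argues coercivity of $G$ on the relevant slice) confine the search to a compact set, using that $A$ is, without loss, completely reducible so that a feasible $B \prec A$ with the same pattern exists by the characterization theorem quoted just above.

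Next I would compute the stationarity conditions via Lagrange multipliers. Introducing multipliers $\lambda_i$ for the constraints $\sum_j B_{ij} - \sum_j B_{ji} = 0$, the Lagrangian is $G(B) + \sum_i \lambda_i(\sum_j B_{ij} - \sum_j B_{ji})$, and $\partial_{B_{ij}} = 0$ gives $\ln(B_{ij}/A_{ij}) + \lambda_i - \lambda_j = 0$, i.e.\ $B_{ij} = A_{ij}e^{\lambda_j - \lambda_i}$. Setting $d_i = e^{-\lambda_i}$ and $D = \diag(d)$, this says exactly $B = DAD^{-1}$. Hence the unique minimizer of $G$ subject to balancing is of the form $DAD^{-1}$ and is automatically balanced, which is one half of the proposition: the balanced matrix produced by any correct procedure must coincide with this minimizer by uniqueness.

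For the convergence claim I would argue that each step of the algorithm strictly decreases $G$ (unless already balanced), mirroring Observation~\ref{obs:ascent}: fixing all coordinates of $D$ except the $p$-th and optimizing $G(DAD^{-1})$ over $\alpha_k = d_p$ is a one-dimensional strictly convex problem whose solution is precisely the choice $\alpha_k^2 u_p = v_p$ in step~2 (differentiate $\alpha \mapsto \alpha^{-1}u_p\cdot(\text{stuff}) + \alpha v_p \cdot(\text{stuff})$; the relevant terms collect to give $\alpha^2 = v_p/u_p$). So the algorithm is coordinate descent on $G$ with the Gauss--Southwell rule (always picking the coordinate of maximal constraint violation $|u_p - v_p|$). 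Monotone decrease plus the compactness/coercivity above gives that $G(A^k)$ converges; the Gauss--Southwell selection forces $\max_i|u_i^{(k)} - v_i^{(k)}| \to 0$ — this is the key quantitative point, and it is essentially Hartfiel's progress-measure argument — so every accumulation point is balanced, and by uniqueness of the minimizer the whole sequence converges to $B$.

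The main obstacle is the convergence rate/selection argument: coordinate descent with a greedy rule need not converge in general, so one must exploit the specific structure — that the $p$-th row and column become exactly equal after the step and the functional decreases by an amount controlled from below by $|u_p - v_p|$ — to rule out the pathology where the violation fails to shrink to zero. This is exactly where one leans on the detailed estimates of \cite{osb60,gra71,har71}; the convexity setup makes uniqueness and characterization of the limit routine, but the dynamics of the iteration is the substantive part, and I would cite those sources (or \cite{sch90b}) rather than reproduce the epsilon-chasing here.
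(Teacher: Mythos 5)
Your proposal is correct and rests on the same two pillars as the paper's sketch---the Lagrange-multiplier computation showing that the constrained minimiser of the Bregman functional must have the form $DAD^{-1}$ (hence is the balanced scaling), and the identification of each algorithm step as an exact minimisation over a single coordinate/constraint---but the convergence mechanism you invoke is genuinely different. The paper views each step as an I-projection onto one linear balancing constraint and then defers to the Csisz\'ar-type theory of iterated I-projections onto linear families (\cite{csi75}), whose remaining burden is showing that the order of projections is irrelevant as long as every direction recurs infinitely often. You instead exploit the specific greedy (Gauss--Southwell) selection rule of this particular algorithm: monotone decrease of the functional, a per-step decrease bounded below in terms of $|u_p-v_p|$, and maximality of the chosen index together force $\max_i|u_i-v_i|\to 0$. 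That buys you a self-contained convergence argument tailored to this algorithm, closer in spirit to the progress-measure proofs of \cite{osb60,gra71,har71}, whereas the paper's route buys order-independence of the sweeps for free. One step of yours needs more care: from ``every accumulation point is balanced'' you conclude convergence to $B$ ``by uniqueness of the minimiser,'' but a balanced accumulation point of the orbit $\{DAD^{-1}\}$ could a priori have a strictly smaller pattern than $A$ (coercivity of $G$ bounds the entries of the iterates from above, not from below), and such a point need not be the constrained minimiser. Closing this gap requires either the Pythagorean identity of the I-projection framework or an explicit argument, using complete reducibility of $A$ and the existence of a feasible point with the full pattern, that the infimum of $G$ over the closure of the orbit is attained in the interior; the paper's own sketch glosses over the same point by citing the general projection theory.
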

\begin{proof}[Sketch of proof]
The fact that the balanced matrix minimises the entropy functional can be seen by direct calculation (the minimiser must be a scaling of the original matrix and the balancing conditions ensure that the scaling is of the form $DAD^{-1}$). 

A proof is similar to observation \ref{obs:rasentrop}: Each step of the algorithm is an I-projection onto the set of matrices with only one row/column balancing constraint. Since the conditions are linear, the repeated projection will converge. 

It remains to see that the order of the projections does not matter as long as all directions are chosen arbitrarily often.
\end{proof}

As with equivalence scaling, a graph version of this problem exists, this time using \emph{transshipment graphs}. A nice description can be found in \cite{sch90b} (see also Figure \ref{fig:transshipmentgraph}): Given a nonnegative matrix $A\in \R^{n\times n}$, let $V=\{1,\ldots,n\}$ and define the set of edges of the transshipment graph $(V,E)$ by $E=\{(i,j)|A_{ij}>0,i\neq j\}$. We can then add weights $A_{ij}$ to any edge $(i,j)$. A matrix is then balanced, if and only if the incoming flow at each vertex equals the outgoing flow.

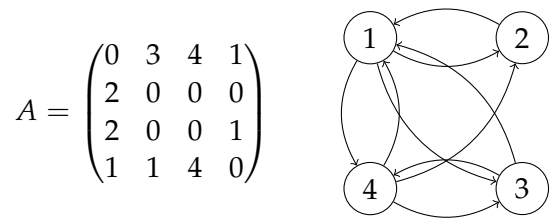
\begin{figure}[htbp]
\begin{center}
\begin{tikzpicture}
	\node at (0,-2) (mat) {$A=\begin{pmatrix}{} 0 & 3 & 4 & 1 \\ 2 & 0 & 0 & 0 \\ 2 & 0 & 0 & 1 \\ 1 & 1 & 4 & 0 \end{pmatrix}$};
	
	\node[circle,draw] at (3,-1) (1) {$1$};
	\node[circle,draw] at (5,-1) (2) {$2$};
	\node[circle,draw] at (5,-3) (3) {$3$};
	\node[circle,draw] at (3,-3) (4) {$4$};
	
	\path[->] (1) edge[bend right] (2);
	\path[->] (1) edge[bend right] (3);
	\path[->] (1) edge[bend right] (4);
	
	\path[->] (2) edge[bend right] (1);

	\path[->] (3) edge[bend right] (1);
	\path[->] (3) edge[bend right] (4);
	
	\path[->] (4) edge[bend right] (1);
	\path[->] (4) edge[bend right] (2);
	\path[->] (4) edge[bend right] (3);			
\end{tikzpicture}
\end{center}
\caption{An easy example of the transshipment graph corresponding to the pattern of the matrix $A$ similar to the example in \cite{sch90b}.}
\label{fig:transshipmentgraph}
\end{figure}

\subsection{DAD scaling}
Another closely related problem is the question, whether given a nonnegative matrix $A$, there exists a single diagonal matrix $D$ such that $DAD$ has prespecified row- or column sums. A short but quite good overview is given in \cite{joh09}. 

\paragraph*{Symmetric nonnegative matrices}

Let us first focus on the case where $A$ is symmetric. It seems natural that this follows directly from Sinkhorn's theorem: If $D_1AD_2$ has equal row-sums and $A$ is symmetric, so does $D_2AD_1$. By uniqueness of $D_i$ up to scaling, this implies that one can choose $D_1=D_2$. This was noted for example in \cite{sin64}. 

The first discussion of the case of symmetric $A$ can be traced back to the announcements \cite{mar61,max62}\footnote{This is covered in many papers, for instance \cite{mar68}.}. A first proof for the case of positive matrices and doubly stochastic scaling was given in \cite{sin64}. Shortly later, \cite{bru66} consider the case of doubly stochastic scaling for nonnegative matrices with positive main diagonal, while \cite{csi72} shows that a doubly stochastic scaling exists if and only if there exists a symmetric doubly stochastic matrix with the same zero pattern if and only if the matrix has total support. This was extended in \cite{bru74} to cover the case of arbitrary row sums giving the following theorem:

\begin{thm}[\cite{bru74}] \label{thm:symnonneg}
Let $A\in \R^{n\times n}$ be a symmetric nonnegative matrix. Then the following are equivalent:
\begin{enumerate}
	\item There exists a diagonal matrix $D$ with positive entries such that $DAD$ has row sums given by $r\in \R^n_+$.
	\item There exists a symmetric nonnegative matrix $B$ with the same pattern as $A$ and row sums $r$.
	\item For all partitions $\{I,J,K\}$ of $\{1,\ldots,n\}$ such that $A(J\cup K,K)=0$, $\sum_{i\in I} r_i\geq \sum_{i\in K} r_i$ with equality if and only if $A(I,I\cup J)=0$. 
\end{enumerate}
Furthermore, the scaling is unique.
\end{thm}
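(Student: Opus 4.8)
The implication $(1)\Rightarrow(2)$ is immediate: if $D$ has positive diagonal and $DAD$ has row sums $r$, then $B:=DAD$ is symmetric, nonnegative, has row sums $r$, and — since $D$ is positive — has the pattern of $A$. The whole content of the theorem lies in $(2)\Rightarrow(1)$, and the plan is to deduce it from Theorem \ref{thm:sink} together with the uniqueness of the scaled matrix recorded in Section \ref{sec:equivscaling}. Assume $B$ is symmetric, nonnegative, shares the pattern of $A$, and has row sums $r$; being symmetric it also has column sums $r$. Theorem \ref{thm:sink}, applied with row sums $r$ and column sums $c:=r$, produces positive diagonal matrices $D_1,D_2$ with $D_1AD_2$ having row sums $r$ and column sums $r$. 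Transposing and using $A^T=A$, the matrix $(D_1AD_2)^T=D_2AD_1$ has row sums $r$, column sums $r$, and again the pattern of $A$; since $A$ admits at most one scaling to these marginals, $D_1AD_2=D_2AD_1$. Writing $d_k(i)$ for the $i$-th diagonal entry of $D_k$ and comparing entries, this forces $d_1(i)d_2(j)=d_2(i)d_1(j)$ whenever $A_{ij}>0$.

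Now I would set $D:=\diag\big(\sqrt{d_1(1)d_2(1)},\dots,\sqrt{d_1(n)d_2(n)}\big)$, which is a positive diagonal matrix. When $A_{ij}=0$ both $(DAD)_{ij}$ and $(D_1AD_2)_{ij}$ vanish, and when $A_{ij}>0$ the identity above gives $d_1(i)d_2(i)d_1(j)d_2(j)=(d_1(i)d_2(j))^2$, hence $\sqrt{d_1(i)d_2(i)}\sqrt{d_1(j)d_2(j)}=d_1(i)d_2(j)$ and therefore $(DAD)_{ij}=(D_1AD_2)_{ij}$. Thus $DAD=D_1AD_2$, which has row sums $r$, proving $(1)$. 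Uniqueness of the scaling — meaning uniqueness of the scaled matrix $DAD$ — is then inherited verbatim from the uniqueness of the scaled matrix in equivalence scaling; $D$ itself is determined only up to the evident rescaling within each bipartite connected component of the pattern graph of $A$, and uniquely when no such component admits this freedom (for instance when it carries a positive diagonal entry).

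For $(2)\Leftrightarrow(3)$ the plan is a symmetrisation reduction. A symmetric nonnegative matrix with the pattern of $A$ and row sums $r$ exists if and only if \emph{some} nonnegative matrix $B'$ with the pattern of $A$, row sums $r$, and column sums $r$ exists: one direction is trivial, and conversely $\tfrac12(B'+(B')^T)$ is symmetric, nonnegative, has row sums $r$, and has support $\supp\big(B'+(B')^T\big)=\supp(A)\cup\supp(A^T)=\supp(A)$, using that $A$ is symmetric and that nonnegative matrices do not cancel. Existence of such a $B'$ is exactly the feasibility clause of the equivalence-scaling theorem with $c=r$, i.e. the cut condition (item (4) of the equivalence-scaling theorem in Section \ref{sec:equivscaling}, equivalently the pattern characterisation of Appendix \ref{app:prelimmatrices}); it may also be read off the maximum-flow formulation on the associated transportation graph. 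Specialising that cut condition to $c=r$ and a symmetric support, and repackaging its two index sets as the three-block partition $\{I,J,K\}$ — with $K$ the vertices reachable only from $I$ (so $A(J\cup K,K)=0$), $I$ the complementary "supply" side, and $J:=\{1,\dots,n\}\setminus(I\cup K)$ — reproduces (3), the equality clause corresponding to the case where $I$ feeds $K$ and nothing else (so $A(I,I\cup J)=0$).

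I expect the crux to be $(2)\Rightarrow(1)$, and within it the appeal to uniqueness of the scaled matrix: that is the non-elementary input that pins down $D_1AD_2=D_2AD_1$, after which the geometric-mean substitution $D=\diag(\sqrt{d_1d_2})$ makes the collapse to a single congruence routine. The equivalence $(2)\Leftrightarrow(3)$ is mostly bookkeeping once symmetrisation is available, but matching the cut inequality to the precise three-block form — in particular its equality case — is the part that needs to be done carefully.
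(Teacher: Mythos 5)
Your argument for the crux $(2)\Rightarrow(1)$ is correct and follows essentially the same route as the paper: apply Theorem \ref{thm:sink} with $c=r$, observe that symmetry of $A$ makes $D_2AD_1$ a second scaling, and invoke uniqueness to collapse to a single $D$. In fact your execution is slightly \emph{more} careful than the paper's one-line sketch. The paper argues ``by uniqueness of $D_i$ up to scaling, one can choose $D_1=D_2$,'' but uniqueness of the scaling \emph{matrices} up to a scalar only holds when $A$ is fully indecomposable (more generally, scalar freedom appears in each block); you instead use uniqueness of the scaled \emph{matrix} — which holds unconditionally per Section \ref{sec:equivscaling} — to get $D_1AD_2=D_2AD_1$, extract $d_1(i)d_2(j)=d_2(i)d_1(j)$ on the support, and pass to the geometric mean $D=\diag(\sqrt{d_1d_2})$. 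That closes a small gap the paper leaves open, and your caveat that $D$ itself need not be unique (only $DAD$ is) is accurate and worth having; the $2\times 2$ antidiagonal matrix already shows $D$ can vary continuously. For $(2)\Leftrightarrow(3)$ the paper simply cites \cite{bru68}; your symmetrisation $B\mapsto\tfrac12(B'+(B')^T)$ correctly reduces the symmetric pattern question to the two-sided one, but the remaining step — showing that the cut condition quantified over \emph{arbitrary} (possibly overlapping) index pairs $I,J$ with $A_{I^cJ}=0$ is, for symmetric supports with $c=r$, equivalent to the condition quantified only over three-block partitions $\{I,J,K\}$, including the equality clause — is asserted rather than proved, and is exactly the combinatorial content of Brualdi's theorem. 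That part would need to be carried out (or cited) to make the equivalence complete; as it stands you are no worse off than the paper, which also defers it to the reference.
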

The equivalence of 2. and 3. is given in \cite{bru68}. 1. follows from 2. using Sinkhorn's theorem and the reverse direction is proved via contradiction. Using the uniqueness in Sinkhorn's theorem then provides uniqueness for the scaling. 

Note that the following observation gives a very simple proof of Theorem \ref{thm:sink}:
\begin{obs}\label{obs:symvsequiv}
Let $A\in \R^{m\times n}$ be a matrix and $r\in \R^m_+,c\in \R^n_+$ be two prescribed vectors. Then $A$ has an equivalence scaling if and only if the following symmetric matrix $A^{\prime}$
\begin{align}
	A^{\prime}=\begin{pmatrix}{} 0 & A \\ A^T & 0 \end{pmatrix} \label{eqn:symmetricspecial}
\end{align}
has a row-sum symmetric scaling to $(r^{\prime})^T=(r^T,c^T)$. 
\end{obs}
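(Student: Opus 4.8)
The plan is to establish a bijection between equivalence scalings of $A$ and row-sum-symmetric $DA'D$-scalings of the block matrix $A'$, and then invoke Theorem \ref{thm:sink} (in its ``pattern'' formulation) to close the loop. First I would observe that the block structure of $A'$ interacts cleanly with block-diagonal conjugation: if $D = \diag(D_1, D_2)$ with $D_1 \in \R^{m\times m}$, $D_2 \in \R^{n\times n}$ positive diagonal, then
\begin{align*}
	DA'D = \begin{pmatrix} 0 & D_1 A D_2 \\ D_2 A^T D_1 & 0 \end{pmatrix},
\end{align*}
so that the row sums of $DA'D$ are exactly $(\text{row sums of } D_1AD_2,\ \text{row sums of } D_2A^TD_1)$. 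Hence $DA'D$ has row sums $(r^T, c^T)^T$ precisely when $D_1AD_2 e = r$ and $D_2 A^T D_1 e = c$, i.e.\ precisely when $(D_1, D_2)$ is an equivalence scaling of $A$ to row sums $r$ and column sums $c$. This already gives the implication: an equivalence scaling of $A$ yields a row-sum-symmetric scaling of $A'$.

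For the converse the subtlety is that an arbitrary positive diagonal $D$ scaling $A'$ need not a priori be block-diagonal in the above sense — but in fact every diagonal matrix of size $(m+n)$ automatically \emph{is} of this block form, simply by partitioning its diagonal entries into the first $m$ and last $n$. So writing $D = \diag(D_1, D_2)$ is no loss of generality, and the computation above runs in reverse: a row-sum-symmetric scaling of $A'$ to $(r^T, c^T)^T$ restricts to an equivalence scaling of $A$. Thus the two scaling problems are literally equivalent, and the forward and backward maps $(D_1,D_2) \leftrightarrow \diag(D_1,D_2)$ are mutually inverse. One should also note that the pattern of $A'$ is $\{0\text{-block}\} \cup \supp(A) \cup \supp(A^T)$, so a symmetric matrix $B'$ with the pattern of $A'$ and row sums $(r^T,c^T)^T$ has the block form $\left(\begin{smallmatrix} 0 & B \\ B^T & 0\end{smallmatrix}\right)$ with $\supp(B) \subseteq \supp(A)$, $Be = r$, $B^Te = c$ — which is exactly the pattern-feasibility condition for $A$ in Theorem \ref{thm:sink}.

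The main obstacle — really the only thing to be careful about — is the direction of the ``very simple proof of Theorem \ref{thm:sink}'' claimed in the observation: one must make sure the symmetric scaling result being invoked (Theorem \ref{thm:symnonneg}, the Brualdi result) does not itself rely on Theorem \ref{thm:sink}, on pain of circularity. Inspecting \ref{thm:symnonneg}, its nontrivial content (the equivalence of items 1 and 2, and uniqueness) is obtained there \emph{from} Sinkhorn's theorem, so for the observation to deliver an independent proof of \ref{thm:sink} one needs an independent proof of the symmetric case. Brualdi's original argument for \ref{thm:symnonneg} proceeds by a direct fixed-point/contradiction argument rather than through the rectangular case, so this is fine, but it is worth flagging. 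Modulo that point, the proof is just the block-matrix bookkeeping above: translate the scaling equations, translate the pattern condition, and observe both translations are reversible.
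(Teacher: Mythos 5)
Your proof is correct and follows essentially the same route as the paper: write $D'=\diag(D_1,D_2)$, compute the block form of $D'A'D'$, and note that every positive diagonal matrix of size $m+n$ automatically has this block structure, so the correspondence runs in both directions. Your additional remark about avoiding circularity when using the observation to deduce Theorem \ref{thm:sink} from the symmetric case is a sensible caveat, but the core argument matches the paper's.
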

\begin{proof}
First assume that there exist $D_1,D_2$ positive diagonal such that $D_1AD_2$ fulfills
\begin{align*}
	D_1AD_2e=r, \qquad D_2A^TD_1e=c.
\end{align*} 
Then setting $D^{\prime}:=\diag(D_1,D_2)$ we have 
\begin{align*}
	D^{\prime}A^{\prime}D^{\prime}=\begin{pmatrix}{} 0 & D_1AD_2 \\ D_2A^TD_1 & 0 \end{pmatrix}
\end{align*}
and clearly $D^{\prime}A^{\prime}D^{\prime}e=(r^T,c^T)^T$.

Conversely, if $A^{\prime}$ has a row-sum symmetric scaling $D^{\prime}$, by an analogous argument $A$ will have an equivalence scaling with row sums $r$ and column sums $c$.
\end{proof}
This was already known in the 70s, maybe even earlier; explicit formulations include \cite{rot94,kni08,kni12}. Note that the observation can easily be extended to not just row- and column sums, but all $p$-norms for $0<p\leq \infty$ as considered in Section \ref{sec:normscaling}. It can also be extended to approximate scalings with the same proof. This implies:
\begin{obs}
Results from symmetric scaling for symmetric nonnegative matrices $A$ can always be translated to cover equivalence scaling for arbitrary nonnegative matrices. 
\end{obs}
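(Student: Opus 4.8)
The plan is to exploit the correspondence already set up in Observation~\ref{obs:symvsequiv}. To a nonnegative $A\in\R^{m\times n}$ with target marginals $r\in\R^m_+$, $c\in\R^n_+$ one associates the symmetric matrix $A'=\begin{pmatrix}0 & A\\ A^T & 0\end{pmatrix}\in\R^{(m+n)\times(m+n)}$ together with the target row-sum vector $(r^T,c^T)^T$; by Observation~\ref{obs:symvsequiv}, $A$ is equivalence-scalable to $(r,c)$ if and only if $A'$ is symmetrically scalable to $(r^T,c^T)^T$, via $D'=\diag(D_1,D_2)$. The first step is to record the dictionary between the combinatorial data of $A$ and of $A'$: the support of $A'$ is completely determined by that of $A$; a matrix $B$ has the pattern of $A$ and marginals $(r,c)$ precisely when $\begin{pmatrix}0 & B\\ B^T & 0\end{pmatrix}$ is symmetric with the pattern of $A'$ and row sums $(r^T,c^T)^T$; and the partition conditions of Theorem~\ref{thm:symnonneg} applied to $A'$ unwind, after splitting each index set according to the bipartition, into the index-set conditions of the equivalence-scaling theorems of Section~\ref{sec:equivscaling}. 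This makes the transfer of every existence criterion immediate.

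The second step is to handle the statements that go beyond existence, namely uniqueness, approximate scaling and continuity. Here one must be careful: $A'$ is never fully indecomposable because of its zero blocks, so the crude uniqueness statement for symmetric scaling does not apply directly; instead one uses the refined version --- $D'$ is unique up to a scalar on each irreducible block of $A'$ --- and checks that the irreducible blocks of $A'$ are exactly the ``bipartite doubles'' of the fully indecomposable blocks of $A$ after bringing $A$ to its block form. This reproduces verbatim the uniqueness clause of the equivalence-scaling uniqueness theorem. For approximate scaling, the proof of Observation~\ref{obs:symvsequiv} goes through with $D_1,D_2$ replaced by near-scalings, since the restriction of $D'\mapsto D'A'D'$ to block-diagonal $D'$ is exactly $(D_1,D_2)\mapsto(D_1AD_2,D_2A^TD_1)$; hence $\varepsilon$-approximate symmetric scalings of $A'$ correspond to $\varepsilon$-approximate equivalence scalings of $A$. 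Continuity of the scaled matrix in $A$ transfers likewise, since $A\mapsto A'$ is linear and the symmetric-scaling limit is continuous on the affine subspace of symmetric matrices of the special block shape.

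Finally, I would note that the argument uses only that the constraint ``row sums equal $(r^T,c^T)^T$'' decouples across the two diagonal blocks of $A'$, so replacing the row-sum constraint by any $p$-norm constraint with $0<p\le\infty$ as in Section~\ref{sec:normscaling} leaves the correspondence intact --- one simply concatenates the two $p$-norm target vectors. The main obstacle I expect is bookkeeping rather than conceptual: one must verify that each combinatorial hypothesis in a symmetric-scaling theorem (total support, the partition conditions, irreducibility of blocks) matches the corresponding hypothesis for $A$ under the doubling $A\mapsto A'$, and in particular that the permutations witnessing (in)decomposability of $A'$ can always be chosen to respect the bipartition, so that they descend to permutations acting separately on the rows and columns of $A$. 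Once this dictionary is pinned down, every symmetric-scaling result follows by a one-line application to $A'$.
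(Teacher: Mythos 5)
Your proposal is correct and follows essentially the same route as the paper, which derives this observation directly from the block construction $A\mapsto A'$ of Observation \ref{obs:symvsequiv} together with the remark that the same correspondence persists for approximate scalings and for $p$-norm constraints. Your additional bookkeeping (the pattern dictionary, the refined per-block uniqueness statement needed because $A'$ is never fully indecomposable, and the descent of permutations respecting the bipartition) is sound and simply makes explicit what the paper leaves implicit.
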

The other direction is not true, since clearly not all symmetric matrices are of the special form (\ref{eqn:symmetricspecial}). However, it can still be beneficial to study equivalence scaling on its own, as many algorithms (e.g.~the RAS) do not preserve symmetry.

\paragraph*{Arbitrary symmetric matrices}
Theorem \ref{thm:symnonneg} can be generalised to cover matrices that are not necessarily nonnegative:
\begin{thm} \label{thm:dadscalings}
Let $A\in \R^{n\times n}$ be symmetric and $\lambda\in\R^n_+$ prescribed column sums. Then:
\begin{enumerate}
	\item If $A$ is positive semidefinite, then $A$ is scalable if and only if $A$ is strictly copositive (\cite{kal90,kal96b}).
	\item Any principal submatrix of $A$ (including $A$) is scalable if and only if $A$ is strictly copositive (\cite{joh09}).
\end{enumerate}
In general, at least one of the following two propositions is true (\cite{kal96b}):
\begin{enumerate}
	\item The following set is not empty:
		\begin{align}
			\{x\in \R^n| x^TAx=0,x\geq 0,x\neq 0\} \label{eqn:setdad}
		\end{align}
	\item For all $\lambda\in \R^n_+$ with $\lambda>0$ there exists a positive diagonal matrix $D$ such that $DADe=\lambda$. In other words, for any set of prescribed row sums, there exists a scaling.
\end{enumerate}
\end{thm}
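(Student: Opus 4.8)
The plan is to reduce everything to one statement — a symmetric matrix is scalable to an arbitrary prescribed positive marginal if and only if it is strictly copositive — and to read off the positive semidefinite case and the principal submatrix case from the same construction. Write $\Lambda:=\sum_i\lambda_i>0$ and, for symmetric $A\in\R^{n\times n}$ and $d$ in the open positive orthant $\R^n_+$, introduce the \emph{scaling function}
\begin{align*}
\psi_A(d):=\tfrac12\,d^TAd-\sum_{i=1}^n\lambda_i\ln d_i .
\end{align*}
Since $\partial_{d_i}\psi_A(d)=(Ad)_i-\lambda_i/d_i$, the point $d\in\R^n_+$ is a critical point of $\psi_A$ exactly when $d_i(Ad)_i=\lambda_i$ for all $i$, i.e.\ when $D=\diag(d)$ satisfies $DADe=\lambda$. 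Thus scalability is equivalent to $\psi_A$ having a critical point in $\R^n_+$, and the whole proof is about deciding when this happens; if $A$ is positive semidefinite then $\psi_A$ is convex, so a critical point is automatically the (essentially unique) global minimiser.

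First I would show ``strictly copositive $\Rightarrow$ scalable to every $\lambda>0$'', which yields the $\Leftarrow$ parts of items~1 and~2 and one half of the concluding dichotomy. Put $c:=\min\{u^TAu:u\ge 0,\ \|u\|_1=1\}$; this is attained on a compact set and is positive by strict copositivity, and by homogeneity $d^TAd\ge c\,\|d\|_1^2$ for all $d\ge0$. Since $0<d_i\le\|d\|_1$ we get $\sum_i\lambda_i\ln d_i\le\Lambda\ln\|d\|_1$, hence
\begin{align*}
\psi_A(d)\ \ge\ \tfrac c2\,\|d\|_1^2-\Lambda\ln\|d\|_1 ,
\end{align*}
and the right-hand side tends to $+\infty$ both as $\|d\|_1\to 0$ and as $\|d\|_1\to\infty$. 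So $\psi_A$ is bounded below and any minimising sequence stays in some annulus $0<a\le\|d\|_1\le b$; a convergent subsequence $d^{(k)}\to\bar d\ge0$ then has $\bar d\ne0$, and $\bar d$ cannot have a zero entry, for if $\bar d_l=0$ the terms $-\lambda_l\ln d^{(k)}_l$ blow up to $+\infty$ while the rest of $\psi_A(d^{(k)})$ stays bounded. Hence $\bar d\in\R^n_+$ minimises $\psi_A$, giving a critical point and thus a scaling. For the converse in item~1, assume $A$ is positive semidefinite and $DADe=\lambda$; replacing $A$ by the congruent matrix $DAD$ — which changes neither positive semidefiniteness nor strict copositivity, of $A$ nor of any principal submatrix — we may assume $Ae=\lambda$. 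If some $x\ge0$, $x\ne0$ had $x^TAx=0$, then $Ax=0$ by positive semidefiniteness, so $\lambda^Tx=e^TAx=0$, contradicting $\lambda>0$; hence $A$ is strictly copositive.

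The converse of item~2 is the heart of the matter; I would base it on the following lemma: \emph{if $B\in\R^{k\times k}$ is symmetric, scalable to some positive marginal, and every proper principal submatrix of $B$ is strictly copositive, then $B$ itself is strictly copositive.} Granting this, suppose every principal submatrix of $A$ is scalable but $A$ is not strictly copositive; choose $x\ge0$, $x\ne0$ with $x^TAx\le0$ whose support $S$ is as small as possible, and let $y$ be the corresponding subvector, so $y>0$ and $y^TA[S]\,y=x^TAx\le0$, where $A[S]$ denotes the principal submatrix on $S$. Then $A[S]$ is scalable, and by minimality of $S$ every proper principal submatrix of $A[S]$ is strictly copositive, so the lemma forces $A[S]$ to be strictly copositive — contradicting $y^TA[S]\,y\le0$. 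To prove the lemma, suppose $B$ is not strictly copositive; a witnessing vector whose support is a proper subset of $\{1,\dots,k\}$ would contradict strict copositivity of a proper principal submatrix of $B$, so there is $w>0$ with $w^TBw\le0$. Normalising (replace $B$ by $DBD$, $w$ by $D^{-1}w$, using that $B$ is scalable) we may assume $Be=\lambda>0$. Now minimise $q(z)=z^TBz$ over the connected compact simplex $\Delta=\{z\ge0:\|z\|_1=1\}$, with minimum value $m$ attained at $z^\ast$. If $z^\ast>0$, then $z^\ast$ is an unconstrained minimiser of $q$ on the affine hull of $\Delta$, so $2Bz^\ast\in\operatorname{span}(e)$, say $2Bz^\ast=\theta e$; then $m=(z^\ast)^TBz^\ast=\theta/2$ while $\lambda^Tz^\ast=e^TBz^\ast=\theta n/2$, and $\lambda^Tz^\ast>0$ forces $\theta>0$, so $m>0$. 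If instead $z^\ast$ has a zero entry, then with $S'=\supp(z^\ast)\subsetneq\{1,\dots,k\}$ we get $m=(z^\ast)^TBz^\ast=(z^\ast|_{S'})^TB[S']\,(z^\ast|_{S'})>0$ by strict copositivity of the proper submatrix $B[S']$. Either way $m>0$, so $z^TBz\ge m\,\|z\|_1^2>0$ for every $z\ge0$, $z\ne0$; in particular $w^TBw>0$, the desired contradiction.

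As for the final dichotomy: if no nonzero $x\ge0$ satisfies $x^TAx=0$, then $z\mapsto z^TAz$ is continuous and nowhere zero on the connected simplex $\Delta$, hence of one constant sign; if that sign is $+$ then $A$ is strictly copositive, hence scalable to every $\lambda>0$, which is the second alternative. (The opposite sign, $x^TAx<0$ for all $x\ge0$, $x\ne0$, is incompatible with any $DADe=\lambda>0$, since such a scaling forces $d^TAd=e^T\lambda>0$; so for the statement to be exhaustive the first alternative should be read as ``$A$ is not strictly copositive'', i.e.\ as non-emptiness of $\{x\ge0,\,x\ne0:x^TAx\le0\}$.) The main obstacle is the converse of item~2: once the minimal-support reduction has been set up, the interior case is forced by a single Lagrange-multiplier identity, but getting there — and dispatching the boundary case — uses the strict copositivity of all proper principal submatrices in an essential way, which is why ``$A$ scalable'' alone would not suffice.
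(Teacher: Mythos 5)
Your argument is essentially correct, and it is worth saying up front that the paper does not prove this theorem at all: it is a survey entry whose three parts are imported by citation from Kalantari--Khachiyan and Johnson--Reams. What you have written is therefore a self-contained reconstruction rather than a variant of the paper's proof. The sufficiency half (strictly copositive $\Rightarrow$ scalable to every $\lambda>0$, via coercivity of $\psi_A(d)=\tfrac12 d^TAd-\sum_i\lambda_i\ln d_i$ and the observation that a minimising sequence cannot approach the boundary of the orthant) is exactly the Marshall--Olkin barrier argument that the paper itself alludes to in its remarks following the theorem, and the converse for positive semidefinite $A$ is the standard kernel argument. The genuinely new content is your treatment of item~2: the lemma ``scalable $+$ all proper principal submatrices strictly copositive $\Rightarrow$ strictly copositive'', proved by minimising $z^TBz$ over the simplex and splitting into the interior case (where the first-order condition $2Bz^{\ast}=\theta e$ together with $Be=\lambda>0$ forces the minimum value $\theta/2$ to be positive) and the boundary case (handled by the submatrix hypothesis), combined with the minimal-support reduction, is a clean and correct route to the Johnson--Reams characterisation. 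Two small points: in the interior case the identity is $e^TBz^{\ast}=\theta k/2$ for a $k\times k$ block, not $\theta n/2$; and the normalisation ``replace $B$ by $DBD$'' should state explicitly that positive diagonal congruence preserves strict copositivity of every principal submatrix, since you use this after renormalising.

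On the final dichotomy you have correctly spotted that the statement as printed is false: for $A=-\id$ the set $\{x\geq 0,\ x\neq 0,\ x^TAx=0\}$ is empty, yet no scaling $DADe=\lambda>0$ can exist, since any such scaling forces $d^TAd=\sum_i\lambda_i>0$. Your constant-sign argument on the simplex shows that the only failure mode is $x^TAx<0$ throughout the punctured nonnegative orthant, so the alternative becomes correct (and, given the first part of your proof, essentially automatic) once the first branch is read as non-emptiness of $\{x\geq 0,\ x\neq 0\,:\,x^TAx\leq 0\}$. This is a defect of the paper's rendering of Kalantari's theorem rather than of your argument, but a reader should be told that you are proving a corrected statement, not the literal one.
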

More general conditions for scalability of arbitrary symmetric $A$ can be found in \cite{joh09}. We make a number of remarks concerning the results:
\begin{enumerate}
	\item Another necessary condition for scalability (the matrix must be \emph{diluted}) is provided in \cite{liv04}.
	\item The question of equivalent conditions for the scalability of matrices remains open. However, these conditions might not have a very useful description, since scalability of arbitrary symmetric matrices is NP-hard (\cite{kha96}\footnote{This was conjectured also in \cite{joh09}, who noted that deciding whether a matrix is (strictly) copositive is NP-complete according to \cite{mur87}. The authors seemed to have been unaware of the paper by Khachiyan. The alternative in Theorem \ref{thm:dadscalings} is also not very useful computationally, because deciding the emptiness of the set (\ref{eqn:setdad}) is also NP-hard (\cite{kal90}, according to \cite{kal96b}).}). 
 \item The second result implies in particular that if a matrix is strictly copositive, it is scalable, which was first proved in \cite{mar68}. Note that positive definite matrices are in particular strictly copositive, which means that this result encompasses the claimed proofs of scalability of completely positive matrices in \cite{max62}. An elementary proof for matrices with strictly positive entries has recently appeared in \cite{joh09} based on an iterative procedure. 	
	\item For doubly stochastic scaling, the alternative conditions of \cite{kal96b} can also be derived using linear programming duality and/or the hyperplane separation theorem using extremely general methods of duality in self-concordant cones (\cite{kal98,kal99,kal05}).
	\item Scaling of the special class of Euclidean predistance matrices has been considered in \cite{joh05}. It turns out that all such matrices are scalable.
	\item Note that the equivalence conditions for positive semidefinite matrices can be strengthened. If a matrix is scalable and positive semidefinite, 
	\begin{align*}
		\mu:=\min\{x^TAx|x\geq 0,\|x\|_2=1\}
	\end{align*}
can be bounded in terms of the matrix dimension (cf. \cite{kha92}, where it is also noted that the scaling problem is related to linear programming). 
\end{enumerate}

Uniqueness of matrix scaling has also been studied:
\begin{prop}
Let $A\in \R^{n\times n}$ be symmetric and $\lambda\in \R^n_+$ prescribed row sums. Then
\begin{enumerate}
	\item If $A$ has two or more distinct scalings, then there exists a matrix $D$ such that $DAD$ has eigenvalues $+1$ and $-1$ (\cite{joh09}).
	\item For scalable positive definite matrices $A$ there exist $2^n$ diagonal matrices $D$ such that $DADe=\lambda$, one for each sign pattern of $D$ (\cite{ole03}). In particular, scaling by positive diagonal matrices is unique.
	\item If $A$ is positive semidefinite, then if $A$ is scalable to row sums $r$, the positive diagonal matrix is unique (\cite{mar68}).
\end{enumerate}
For the scaling of positive semidefinite matrices, upper and lower bounds on $\|D\|$ were derived in \cite{kha92,ole03}.
\end{prop}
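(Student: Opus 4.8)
\emph{Proof proposal.}
The plan is to obtain items~2 and~3 from the convexity of the logarithmic barrier function and item~1 from a direct construction. Introduce
\begin{equation*}
  h(d)=\frac{1}{2}\,d^{T}Ad-\sum_{i=1}^{n}\lambda_i\ln d_i ,\qquad d\in\R^{n}_{+},
\end{equation*}
whose gradient is $\nabla h(d)=Ad-\lambda/d$ (Hadamard quotient), so that critical points of $h$ on the open convex cone $\R^{n}_{+}$ are exactly the vectors $d$ for which $D=\diag(d)$ satisfies $DADe=\lambda$; its Hessian is $\nabla^{2}h(d)=A+\diag(\lambda_i/d_i^{2})$. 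For item~3 this already suffices: if $A$ is positive semidefinite then, since $\lambda>0$, the Hessian is positive definite throughout $\R^{n}_{+}$, hence $h$ is strictly convex on a convex set and has at most one critical point there; combined with the assumed existence of a scaling this gives uniqueness of the positive diagonal matrix.

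For item~2 I would run the same argument in each orthant. Fix a sign vector $\sigma\in\{\pm1\}^{n}$ and put $\Sigma=\diag(\sigma)$. Then $\Sigma A\Sigma$ is congruent to $A$, hence positive definite, hence strictly copositive, so by Theorem~\ref{thm:dadscalings} it is scalable to row sums $\lambda$ by a positive diagonal matrix, and that matrix is unique by item~3; call it $D_{\sigma}$. One checks directly that $\Sigma D_{\sigma}$ scales $A$ to $\lambda$ and has sign pattern $\sigma$, and conversely that any scaling of $A$ with sign pattern $\sigma$ has the form $\Sigma D_{\sigma}$. Hence $A$ has exactly one scaling for each of the $2^{n}$ sign patterns, the all-positive one being unique; this also yields the ``in particular'' clause.

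Item~1 is the real work. Suppose $D_{1}AD_{1}$ and $D_{2}AD_{2}$ are distinct matrices, both with row sums $\lambda$. Set $B:=D_{1}AD_{1}$ (symmetric, $Be=\lambda$) and $G:=D_{2}D_{1}^{-1}=\diag(\gamma)$, so that $D_{2}AD_{2}=GBG$ and $GBGe=\lambda$, i.e.\ $\gamma_i(B\gamma)_i=\lambda_i$ for all $i$; distinctness of the two scaled matrices forces $\gamma\neq e$ and $\gamma\neq-e$. Now pick a diagonal matrix $P$ with $P^{2}=\diag(\gamma)$, let $R=\diag(\sqrt{\lambda_i})$, and set $Q:=RP^{-1}$ (so $Q^{2}=\diag(\lambda/\gamma)$ and $PQ=R$) and $D:=D_{1}Q^{-1}$. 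The proposed scaled matrix is $\tilde B:=DAD=Q^{-1}BQ^{-1}$, and the identity to verify---a short computation from $Be=\lambda$ and $B\gamma=\lambda/\gamma$---is that $\tilde B$ interchanges the vectors $Qe$ and $Q\gamma$. Granting this, $\tilde B$ fixes $Q(e+\gamma)$ and sends $Q(e-\gamma)$ to its negative; both vectors are nonzero because $\gamma\neq-e$ and $\gamma\neq e$, so $DAD$ has $+1$ and $-1$ among its eigenvalues. (If one restricts attention to positive scalings then $\gamma>0$, $Q$ and $D$ are real, and $D$ is just the rescaled geometric mean $\diag(1/\sqrt{\lambda_i})(D_{1}D_{2})^{1/2}$.)

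The main obstacle is exactly the choice of $D$ in item~1: the correct candidate is this rescaled geometric mean of the two given scalings, which is what turns $DAD$ into an involution on $\operatorname{span}\{Qe,Q\gamma\}$; everything else is bookkeeping, the only subtleties being that $\lambda>0$ is genuinely needed for strict convexity in items~2--3, that ``distinct scalings'' in item~1 must be read as distinct scaled matrices (equivalently $\gamma\neq\pm e$), and that without the positivity restriction $D$ may have to be complex. The quantitative bounds on $\|D\|$ in the positive semidefinite case can then be read off from the growth of $h$ on $\R^{n}_{+}$ together with the scaling identity $d_i(Ad)_i=\lambda_i$, or cited from \cite{kha92,ole03}.
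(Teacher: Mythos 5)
Your proposal is correct, but there is no in-paper proof to compare it against: this proposition is stated in the review with citations only (\cite{mar68}, \cite{ole03}, \cite{joh09}), and what you have written is essentially a reconstruction of the arguments in those sources. Item 3 via the potential $h(d)=\tfrac12 d^TAd-\sum_i\lambda_i\ln d_i$, whose critical points on $\R^n_+$ are exactly the positive scalings and whose Hessian $A+\diag(\lambda_i/d_i^2)$ is positive definite once $A\geq 0$ and $\lambda\in\R^n_+$, is Marshall--Olkin's uniqueness argument; item 2 by running it in each orthant after the congruence $A\mapsto\Sigma A\Sigma$ is O'Leary's; and the involution construction in item 1 --- $D=\diag(1/\sqrt{\lambda_i})(D_1D_2)^{1/2}$ in the positive case, so that $DAD$ swaps two nonzero vectors and hence fixes their sum and negates their difference --- is the Johnson--Reams proof. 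The identity you deferred does check out: with $Q^2=\diag(\lambda/\gamma)$ one has $(Q^{-1}\lambda)_i=\sqrt{\lambda_i\gamma_i}=(Q\gamma)_i$ and $\bigl(Q^{-1}(\lambda/\gamma)\bigr)_i=\sqrt{\lambda_i/\gamma_i}=(Qe)_i$, so $Q^{-1}BQ^{-1}$ indeed interchanges $Qe$ and $Q\gamma$, and both $Q(e\pm\gamma)$ are nonzero precisely when $\gamma\neq\mp e$.

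The one point worth stating explicitly is the caveat you already flag: item 1 is only consistent with item 2 if ``two distinct scalings'' means either two scalings with distinct scaled matrices or two distinct \emph{positive} scalings. A scalable positive definite $A$ always has $2^n$ distinct real scaling matrices by item 2, yet by Sylvester's law of inertia no real invertible $D$ can make $DAD$ have eigenvalue $-1$; so the literal reading ``distinct scaling matrices $D$'' would be false. Under either corrected reading $\gamma=\diag(D_2D_1^{-1})\neq\pm e$, your construction applies, and in the positive reading $D$ is real, which is what \cite{joh09} intends.
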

\cite{joh09} also note that for nonnegative matrices uniqueness holds in particular if $A$ is primitive (including the case of positive matrices already covered in \cite{sin64}) or if $A$ is irreducible and there does not exist a permutation $P$ such that 
\begin{align*}
	PAP^T=\begin{pmatrix}{} 0 & B \\ B^T & 0 \end{pmatrix}.
\end{align*}

It is also very simple to give an algorithm of RAS type for this problem, using the observation that a $DAD$ scaling to row sums $\lambda$ exists if and only if $ADe=r/(De)$. This implies that any scaling is a fixed point of the map $\mathbf{T}_{\mathrm{sym}}:\R^n\to \R^n$ with $\mathbf{T}_{\mathrm{sym}}(x)=r/(Ax)$.
\begin{alg}[\cite{kni08}]
Let $A\in \R^{n\times n}$ be nonnegative and symmetric. For the algorithm, set $x_0=e$ and iterate
\begin{align}
	x_{n+1}=\mathbf{T}_{\mathrm{sym}}(x_n)
\end{align}
\end{alg}

\paragraph*{Nonsymmetric matrices}
If we do not restrict to symmetric matrices we can only hope to scale $A$ to a matrix with given row-sums. The only notable result seems to be:
\begin{prop}(\cite{sin66})
Let $A\in \R^{n\times n}$ be a positive matrix. Then there exists $D$ such that $DAD$ is stochastic.
\end{prop}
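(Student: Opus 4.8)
\textit{Proof proposal.} Interpret ``stochastic'' as row-stochastic (the column-stochastic case follows by applying the argument to $A^{T}$). Writing $D=\diag(x)$ with $x\in\R^{n}_{+}$, the $i$-th row sum of $DAD$ equals $x_{i}\sum_{j}A_{ij}x_{j}=x_{i}(Ax)_{i}$, so the requirement ``$DAD$ is row-stochastic'' is exactly the system $x_{i}(Ax)_{i}=1$ for all $i$, i.e.\ $x=e/(Ax)$. Thus $x$ must be a fixed point of the map $\mathbf{T}_{\mathrm{sym}}(x)=e/(Ax)$ already met in the symmetric $DAD$-scaling algorithm, which makes sense here even though $A$ need not be symmetric. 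The plan is therefore: (i) produce a \emph{positive eigenvector} of $\mathbf{T}_{\mathrm{sym}}$ by a compactness/fixed-point argument, and (ii) rescale it to an honest fixed point.

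For step (i), let $\Delta=\{x\in\R^{n}:x_{i}\ge 0,\ \sum_{i}x_{i}=1\}$, which is compact and convex. Since $A$ is entrywise positive, $x\in\Delta$ (in particular $x\ge 0$, $x\neq 0$) forces $(Ax)_{i}>0$ for every $i$, so $\mathbf{T}_{\mathrm{sym}}$ is well-defined and continuous on all of $\Delta$, boundary included. Hence the normalised map
\[
\tilde{\mathbf{T}}(x)=\mathbf{T}_{\mathrm{sym}}(x)\Big/\textstyle\sum_{k}\mathbf{T}_{\mathrm{sym}}(x)_{k}
\]
is a continuous self-map of the compact convex set $\Delta$, and Brouwer's fixed point theorem gives $x^{*}\in\Delta$ with $\tilde{\mathbf{T}}(x^{*})=x^{*}$. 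Consequently $\mathbf{T}_{\mathrm{sym}}(x^{*})=\lambda x^{*}$ for $\lambda=\sum_{k}\mathbf{T}_{\mathrm{sym}}(x^{*})_{k}>0$; moreover every entry of $\mathbf{T}_{\mathrm{sym}}(x^{*})$ is strictly positive, so $x^{*}>0$ automatically.

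For step (ii), from $\mathbf{T}_{\mathrm{sym}}(x^{*})=\lambda x^{*}$ we get $x^{*}_{i}(Ax^{*})_{i}=1/\lambda$ for all $i$. Put $y=\sqrt{\lambda}\,x^{*}>0$; then $y_{i}(Ay)_{i}=\lambda\,x^{*}_{i}(Ax^{*})_{i}=1$, so $D:=\diag(y)$ has positive entries and $DAD$ is entrywise positive with all row sums equal to $1$, i.e.\ stochastic, as desired.

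The argument is essentially a clean instance of the Brouwer-type proofs of Section~\ref{sec:perfro}, so there is no deep obstacle; the only points requiring care are (a) that $\tilde{\mathbf{T}}$ is genuinely defined and continuous on the \emph{whole} simplex, which is where entrywise positivity of $A$ is used, and (b) the final homogeneity rescaling that converts a positive eigenvector into an exact fixed point. I would remark that $\mathbf{T}_{\mathrm{sym}}$ is order-reversing and hence $\mathbf{T}_{\mathrm{sym}}^{2}$ is order-preserving and positively homogeneous of degree $1$, so one could alternatively run the nonlinear Perron--Frobenius / Hilbert-metric machinery to obtain the same conclusion together with uniqueness of $D$ up to the obvious scalar ambiguity — but that is more than the proposition claims.
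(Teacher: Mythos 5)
Your proof is correct. The paper does not actually spell out a proof of this proposition (it only cites Sinkhorn 1966 and Johnson--Reams 2009), but your argument is precisely the Brouwer/Menon-operator template the paper sketches in Section~\ref{sec:perfro} and the fixed-point characterisation via $\mathbf{T}_{\mathrm{sym}}(x)=e/(Ax)$ stated immediately before the proposition, so in spirit it is the same approach. The one step that genuinely needs care --- converting the positive eigenvector $\mathbf{T}_{\mathrm{sym}}(x^*)=\lambda x^*$ into an exact solution by setting $y=\sqrt{\lambda}\,x^*$, which works because $x\mapsto \diag(x)A\diag(x)$ is homogeneous of degree $2$ --- is handled correctly, and positivity of $A$ is used exactly where it must be, namely to make $\tilde{\mathbf{T}}$ a continuous self-map of the whole simplex and to force the fixed point into the interior.
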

The theorem can be extended to cover arbitrary row sums. The first proof occurred in \cite{sin66}. Likewise, the proof in \cite{joh09} does not need symmetry of $A$. 

\subsection{Matrix Apportionment}
Another scaling problem which is interesting particularly for its applications, is asking for an equivalence scaling, but with the added constraint that the resulting matrix have integer entries. This is important for instance when attributing votes to seats in a parliament and has been applied as early as 1997 (\cite{bal97}, see also \cite{puk04} for one of many explicit accounts for actual changes). 

This problem, which is often called \emph{matrix apportionment} has first been studied in \cite{bal89a,bal89b}. Algorithms akin to the RAS method exist and others based on network flows can be obtained from \cite{rot07}; an overview and many references can be found in \cite{puk09}.

\subsection{More general matrix scalings} \label{sec:genscaling}
This review has so far largely been concerned with nonnegative matrix scaling, with the exception of symmetric $DAD$ scaling. This is understandable, as most of the applications concern nonnegative matrices. However, in view of completeness, let us mention a few of the (mostly quite recent) other cases of matrix scaling. 

\paragraph*{Arbitrary equivalence scaling}
While arbitrary $D_1AD_2$ scaling is interesting for real symmetric matrices, scalings of general real matrices have never sparked a similar amount of interest. It is merely known that the question whether or not a matrix is scalable is NP-hard (\cite{kha96}) - a question that has also been considered for matrices over the algebraic numbers in \cite{kal97}. Since the problem of nonnegative matrix scaling turns out to be equivalent to the existence of matrices with given pattern, it seems natural to ask whether the $(+,-,0)$-pattern of matrices with prescribed row- and column sums play a similar role. For positive diagonal scaling the sign pattern of the matrix cannot change and it is a necessary condition for scalability, which is not sufficient as shown in \cite{joh01}. Nevertheless, the authors achieve a characterisation of general matrix patterns (generalising \cite{bru68}, see also \cite{joh00,eis02c}).

\paragraph*{Complex matrices}
Let us first start with the definition
\begin{dfn}
Let $A\in \C^{n\times m}$ be a complex matrix, then $A$ is \emph{doubly quasistochastic} if all sums and columns sum to one. 
\end{dfn}
Note that in case all entries are nonnegative the matrix is doubly stochastic. For the rest of this section, let us restrict to square matrices. Quasistochasticity is interesting, because if $A$ is quasistochastic, then $F_n^*AF_ne_1=e_1$, where $e_1=(1,0,\ldots,0)^T$ and $F_n$ is the $n\times n$ discrete Fourier transformation. This is true since $F_ne_1=e$ and $e$ is an eigenvector of $A$ by quasistochasticity. A doubly quasistochastic matrix $A$ therefore satisfies that $F_n^*AF_n$ has $e_1$ as its first row and column. Repeating diagonal scalings and Fourier transform can then lead to new matrix decompositions.

The natural generalisation of $DAD$ scaling would be $D^*AD$-scalings for positive semidefinite matrices. These were first studied in \cite{per03} and later in \cite{per14}. Observing that the proof of \cite{mar68} extends to complex entries, the authors obtain already part of the following partial results:
\begin{thm}[\cite{per14}]
Let $A\in \C^{n\times n}$ be positive definite. Then there exist diagonal matrices $D_1,D_2$ such that $D_1AD_2$ is doubly quasistochastic. 

Neither $D_1,D_2$ nor the scaled matrices are necessarily unique. However, there exists at most one scaling with positive matrices $D_1,D_2$.
\end{thm}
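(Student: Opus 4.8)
The plan is to run the variational argument of Marcus (\cite{mar68}) essentially verbatim, now over $\C$ rather than $\R$, and to read the two (non-)uniqueness statements off it afterwards. Throughout I take ``positive definite'' in the usual complex sense: $A$ Hermitian with $z^{*}Az>0$ for all $z\in\C^{n}\setminus\{0\}$, and I write $\lambda_{\min}>0$ for its smallest eigenvalue.

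For existence I would first reduce to finding an invertible diagonal $D=\diag(z)$ with $\bar z_i(Az)_i=1$ for every $i$, and then put $D_2:=D$, $D_1:=\overline D$. Indeed the $i$-th row sum of $\overline D A D$ is exactly $\bar z_i(Az)_i$, and since $A$ is Hermitian the $j$-th column sum equals $\overline{\bar z_j(Az)_j}$; so once all row sums are the real number $1$, all column sums are $\overline 1=1$, and $\overline DAD$ is doubly quasistochastic. To produce such a $z$ I would minimise $\Psi(z):=z^{*}Az$ over the set $\{z\in\C^{n}:\prod_i|z_i|^{2}=1\}$. Positive definiteness is what makes this work: by the arithmetic--geometric mean inequality,
\begin{align*}
	\Psi(z)\;\geq\;\lambda_{\min}\sum_i|z_i|^{2}\;\geq\;n\lambda_{\min}\Big(\prod_i|z_i|^{2}\Big)^{1/n}\;=\;n\lambda_{\min}\;>\;0
\end{align*}
on this set, while $\Psi$ is coercive there (the product constraint makes its sublevel sets compact and forces every $|z_i|$ to stay positive), so the minimum is attained at some $z^{0}$ with all $|z^{0}_i|$ finite and positive. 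A Lagrange-multiplier computation at $z^{0}$ -- most cleanly with Wirtinger derivatives, using $\partial_{\bar z_k}\Psi=(Az)_k$ and $\partial_{\bar z_k}\sum_i\ln|z_i|^{2}=1/\bar z_k$ -- then yields a scalar $\mu$ with $\bar z^{0}_k(Az^{0})_k=\mu$ for all $k$; summing over $k$ gives $\mu=\Psi(z^{0})/n\geq\lambda_{\min}>0$, so replacing $z^{0}$ by $z^{0}/\sqrt{\mu}$ turns every one of these numbers into $1$. This is precisely Marcus's proof with real quadratic forms replaced by Hermitian ones.

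For non-uniqueness I would simply exhibit a small example: $A=\left(\begin{smallmatrix}2&1\\1&2\end{smallmatrix}\right)$ is (real, hence complex) positive definite, and both $D_1=D_2=\tfrac1{\sqrt3}\,\id$, giving the scaled matrix $\tfrac13A=\left(\begin{smallmatrix}2/3&1/3\\1/3&2/3\end{smallmatrix}\right)$, and $D_1=D_2=\diag(1,-1)$, giving $\left(\begin{smallmatrix}2&-1\\-1&2\end{smallmatrix}\right)$, make $D_1AD_2$ doubly quasistochastic -- with different diagonal factors and different scaled matrices. The second example uses a sign-changing (non-positive) diagonal, which is exactly the freedom that should disappear in the positive case.

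For uniqueness among positive scalings, suppose $D_1=\diag(a)$, $D_2=\diag(b)$ with $a,b\in\R^{n}_{+}$ and $D_1AD_2$ doubly quasistochastic. Then each row sum $a_i(Ab)_i$ equals the real number $1$, which forces $(Ab)_i\in\R$, i.e.\ $\operatorname{Im}(A)\,b=0$; symmetrically $\operatorname{Im}(A)\,a=0$. On the real subspace where this holds I may replace $A$ by $H:=\operatorname{Re}(A)$, which is a real symmetric positive definite matrix, and $D_1HD_2$ is still doubly quasistochastic; the uniqueness of positive diagonal scalings of a real symmetric positive definite matrix to prescribed marginals (the real analogue of the present statement, again Marcus's argument) then pins down the scaled matrix $D_1HD_2$ and the products $a_ib_j$ on the support of $H$ up to the scalar gauge $(a,b)\mapsto(ta,t^{-1}b)$, under which $D_1AD_2$ is invariant. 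The step I expect to be the genuine obstacle -- the complex analogue of the delicate part of Marcus's uniqueness argument -- is controlling the entries $(i,j)$ with $\operatorname{Re}(A)_{ij}=0$ but $A_{ij}\neq 0$ (purely imaginary): there one has to feed the constraints $\operatorname{Im}(A)\,a=\operatorname{Im}(A)\,b=0$ back in and use $A_{ii}>0$ together with connectedness of the support graph of $A$ to conclude that every product $a_ib_j$, hence $D_1AD_2$ itself, is uniquely determined.
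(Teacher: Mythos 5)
Your existence argument and your $2\times 2$ non-uniqueness example are correct, and the existence part is essentially the paper's own route: the text gives no proof beyond the remark that the variational argument of \cite{mar68} extends to complex entries, and minimising $z^{*}Az$ over $\prod_i|z_i|^{2}=1$, reading off the Lagrange condition $\bar z_k(Az)_k=\mu$ and rescaling by $\sqrt{\mu}$ is exactly that extension. Note that your construction automatically returns a scaling of the special form $\overline{D}AD$, i.e.\ a $D^{*}AD$-scaling, which is the form actually studied in \cite{per14}.

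The uniqueness part, however, has a genuine gap, and the step you defer at the end (``controlling the entries with $\operatorname{Re}(A)_{ij}=0$ but $A_{ij}\neq 0$'') cannot be carried out in the generality you set up: with $D_1=\diag(a)$ and $D_2=\diag(b)$ \emph{independent} positive diagonal matrices, the assertion ``at most one positive scaling'' is false. Take $0<s<1/2$ and
\begin{align*}
	A=\begin{pmatrix} 1 & 0 & is & -is\\ 0 & 1 & -is & is\\ -is & is & 1 & 0\\ is & -is & 0 & 1\end{pmatrix},
\end{align*}
a Hermitian matrix with eigenvalues $1,1,1\pm 2s$, hence positive definite. For every $u,v>0$ the positive matrices $D_1=\diag(u,u,v,v)$ and $D_2=D_1^{-1}$ make $D_1AD_2$ doubly quasistochastic (every row and column sum collapses to $1$ because the imaginary contributions cancel in pairs), yet $(D_1AD_2)_{13}=is\,u/v$, so one obtains a one-parameter family of distinct scaled matrices with distinct positive scaling pairs. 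This is precisely the configuration you flag as the obstacle: $\operatorname{Re}(A)=\id$ is decomposable, $\operatorname{Im}(A)a=\operatorname{Im}(A)b=0$ holds for the whole family, and the connectedness of the support graph of $A$ does not save the argument. The claim must instead be read, as the paper's surrounding discussion of $D^{*}AD$-scalings indicates, with $D_1=\overline{D_2}$; positivity then forces $D_1=D_2=D$ with positive real entries, the gauge $(ta,t^{-1}b)$ that powers the counterexample disappears, and your own reduction finishes cleanly: $d_i(Ad)_i=1$ with $d$ positive forces $\operatorname{Im}(A)d=0$, hence $D\operatorname{Re}(A)D$ has unit row sums, and the uniqueness of the positive diagonal $D$ for the real symmetric positive definite matrix $\operatorname{Re}(A)$ (\cite{mar68,ole03}, quoted elsewhere in the paper) determines $D$ and therefore $DAD$. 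Under that reading no further argument about the pattern of $\operatorname{Re}(A)$ is needed.
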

The authors suggested that such scalings can be applied to generate highly entangled symmetric states. They furthermore conjectured that the number of such scalings would be upper-bounded, but this was disproved recently in \cite{hut16} by giving counterexamples for $n\geq 4$, which have infinitely many scalings. For $n=3$, there exist at most four scalings. An RAS type algorithm can be obtained from the fact that an equivalent version of Observation \ref{obs:rasperfro} also holds in the complex case.

\paragraph*{Unitary matrices}
For the subclass of unitaries, we proved the following theorem:
\begin{thm}[\cite{ide15}] \label{thm:unitsink}
For every unitary matrix $U\in U(n)$ there exist diagonal unitary matrices $D_1,D_2$ such that $D_1UD_2$ is doubly quasistochastic. Neither $D_1,D_2$ nor $D_1UD_2$ are generally unique, in fact in some cases there may even be a continuous group of scalings.
\end{thm}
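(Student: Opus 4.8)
The plan is to separate the two quasistochasticity constraints. Since $D_1$ and $D_2$ are unitary, $M:=D_1UD_2$ is unitary, and for a unitary matrix $Me=e$ is equivalent to $M^\dagger e=e$ (apply $M^\dagger$ to $Me=e$ and use $M^\dagger M=\id$); hence ``all row sums equal $1$'' automatically yields ``all column sums equal $1$''. So it suffices to find a unitary diagonal $D_2$ for which $UD_2e$ has all entries of modulus one: then $D_1:=\diag(\overline{UD_2e})$ is unitary diagonal and $D_1UD_2e=e$. Writing $z:=D_2e$, which runs over the unimodular torus $\mathbb{T}^n:=\{z\in\C^n:|z_i|=1\ \forall i\}$, the whole theorem becomes the geometric assertion that $U\mathbb{T}^n\cap\mathbb{T}^n\neq\emptyset$ for every $U\in U(n)$.

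Next I would reformulate this variationally. As $U$ is an isometry, $\sum_i|(Uz)_i|^2=n$ for every $z\in\mathbb{T}^n$, so by the arithmetic--geometric mean inequality the function $\Phi(z):=\prod_i|(Uz)_i|^2$ on the compact torus $\mathbb{T}^n$ satisfies $\Phi\le 1$, with equality exactly at the vectors $z$ with $Uz\in\mathbb{T}^n$ (equivalently one may maximise $\|Uz\|_1$, which by Cauchy--Schwarz is $\le n$ with equality under the same condition). Thus the theorem is equivalent to $\max_{\mathbb{T}^n}\Phi=1$. The maximum is attained, and since $\log\Phi$ is finite off the nowhere-dense real-analytic set where some $(Uz)_i$ vanishes, a maximiser $z^\star$ has all $(Uz^\star)_i\neq0$ and is a smooth critical point; a short computation turns stationarity into the condition that $U^\dagger\diag(|(Uz^\star)_i|^{-2})\,Uz^\star$ is, coordinatewise, a real multiple of $z^\star$.

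The crux --- and where I expect the main difficulty --- is to show that this maximal value is actually $1$ (equivalently, that the critical configuration is balanced); the first-order condition alone does not force it. My preferred route is a deformation argument: the set $\mathcal S:=\{U\in U(n):U\mathbb{T}^n\cap\mathbb{T}^n\neq\emptyset\}$ is nonempty (it contains $\id$, for which \emph{every} $z\in\mathbb{T}^n$ works) and closed (compactness of $\mathbb{T}^n$), so it is enough to prove $\mathcal S$ open and use that $U(n)$ is connected. Openness should come from the implicit function theorem applied to $h_U(z)_i:=|(Uz)_i|^2-1$, which takes values in the hyperplane $\{x\in\R^n:\sum_ix_i=0\}$ and is invariant under the global phase $z\mapsto e^{\mathrm i\varphi}z$: generically $h_U^{-1}(0)$ is a disjoint union of circles along which $Dh_U$ is onto the hyperplane, and such a circle persists under perturbation of $U$. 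The remaining obstacle is to deal with $U_0\in\mathcal S$ all of whose solutions are degenerate --- ruling these out, or perturbing past them, is the hard part. A quicker but heavier alternative is to note that $\mathbb{T}^n$ is a rescaling of the Clifford torus, a non-displaceable monotone Lagrangian torus in $\mathbb{CP}^{n-1}$, and that $U$ descends to a Hamiltonian isometry of $\mathbb{CP}^{n-1}$; non-displaceability then forces the image of the Clifford torus to meet it, and since a point of $\mathbb{CP}^{n-1}$ lies on the Clifford torus precisely when its homogeneous coordinates have equal moduli, this is exactly $U\mathbb{T}^n\cap\mathbb{T}^n\neq\emptyset$.

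Finally, the non-uniqueness claims are immediate from this picture: for $U=\id$ the scaling condition reads $D_1D_2=\id$, so the admissible pairs $(D_1,D_2)$ form a whole $n$-torus, which gives the ``continuous group of scalings''; and in general, whenever $z^\star$ is a solution so is its entire global-phase orbit $\{e^{\mathrm i\varphi}z^\star\}$, so $D_1$, $D_2$ and $D_1UD_2$ need not be unique.
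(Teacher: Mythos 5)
Your reduction is exactly the one the paper uses: for unitary $M$ the row-sum condition $Me=e$ already forces $M^{\dagger}e=e$, so the theorem is equivalent to finding $x$ with $Ux=y$ and $|x_i|=|y_i|=1$ (a biunimodular vector), i.e.\ to $U\mathbb{T}^n\cap\mathbb{T}^n\neq\emptyset$; and your ``quicker but heavier alternative'' --- pass to $\mathbb{CP}^{n-1}$, observe that $\mathbb{T}^n$ projects to the Clifford torus, which is a non-displaceable Lagrangian, and that $U$ descends to a Hamiltonian isometry --- \emph{is} the paper's proof (this is the theorem cited from \cite{bir04}). Your treatment of the non-uniqueness claims via $U=\id$ and the global-phase orbit is also fine.

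What you present as your primary route, however, is not a proof, and the gap you flag yourself is not a removable technicality. The closed--open--connected scheme needs openness of $\mathcal S$ at \emph{every} $U_0\in\mathcal S$, and the implicit-function/transversality argument only gives it when some solution circle is cut out transversally. Degenerate solution sets genuinely occur --- the Fourier matrices, for instance, carry continuous families of biunimodular vectors beyond the global-phase orbit (this is precisely the ``continuous group of scalings'' in the statement of the theorem) --- so you cannot rule them out, and a solution set that is degenerate to first order can a priori vanish under perturbation of $U$. The whole content of the theorem is concentrated at exactly this point, and the only known way to make ``the intersection persists under Hamiltonian deformation'' rigorous in the presence of degeneracies is the Floer-theoretic non-displaceability statement, i.e.\ your second route. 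So the correct reading of your proposal is: the variational and deformation discussion is motivation, and the symplectic argument is the proof.
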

An algorithm how to obtain $D_1,D_2$ similar to the RAS method is given and studied in \cite{vos14a}, however its convergence is unknown.

The theorem was conjectured in \cite{vos14a} and used later (\cite{vos14b,ide15}) to prove that any unitary matrix can be considered as a product of diagonal unitary matrices and Fourier transforms on principal submatrices. Recently, it has also been applied to prove an analogue of the famous Birkhoff theorem for doubly-stochastic matrices (\cite{vos16}). 

The proof of Theorem (\ref{thm:unitsink}) boils down to noticing that a scaling exists if and only if there exists a vector $x$ with $Ux=y$ and $|x_i|=|y_i|=1$ for all $i=1,\ldots,n$. This is a problem of symplectic topology in disguise and can be solved using a theorem in \cite{bir04}. When we published the theorem in \cite{ide15} we were unaware of the fact that this proof had in principle already been found, since the equation $Ux=y$ with $|x_i|=|y_i|=1$, which defines so called \emph{biunimodular vectors} (see for instance \cite{fuh15}), also pops up in several other places. In this context, essentially the same proof was described in \cite{lis11}. A first formal publication containing this proof was probably \cite{kor14} applying it to error-disturbance relations in quantum mechanics.

\section{Generalised approaches} \label{sec:generalised}
All of the approaches above can be generalised to some extend. Many can then incorporate also different scalings. With an eye towards matrix equivalence, we will attempt to see the different ways of generalisations and what can be gained. A quick summary can be found in Table \ref{tab:generalise}.

\afterpage{
	\clearpage
	\begin{landscape}
		\begin{table} \label{tab:generalise}
		\centering 
			\resizebox{1.35\textwidth}{!}{%

			\begin{tabular}{p{2cm}p{3.8cm}p{3.3cm}p{1cm}p{1cm}p{1cm}p{1cm}p{4.5cm}}
			\toprule

			\textbf{Type} & \textbf{Base case} & $D_1AD_2$ & $DAD$ & $DAD^{-1}$ & \textbf{multi\-dim.} & \textbf{conti\-nuous} & \textbf{additional generalisation} \\  \toprule

			Algorithmic approaches & RAS-type algorithms & with row or column norm constraints (also max-sum) & $\checkmark$ & $\checkmark$ & - & - & - \\ \midrule

			Axiomatic approach & $D_1AD_2$ scaling with inequality constraints & $\checkmark$ & - & - & - & - & - \\ \midrule

			Convex optimisation & $D_1AD_2$, $DAD$ scaling & $\checkmark$ & $\checkmark$ & - & - & - & also copositive matrices, complex scaling \\ \midrule

			Entropies & minimising relative entropy minimisation + (non)linear constraints & $\checkmark$ & ($\checkmark$) & ($\checkmark$) & $\checkmark$ & $\checkmark$ & special case of Bregman divergences; cross entropies; justifications \\ \midrule

			Letac's approach & no scaling: existence of some function & $\checkmark$ & $\checkmark$ & $\checkmark$ & - & - & completely different applications \\ \midrule

			Log linear models & scaling of prob. distributions $w_i=x_i\prod_j d_j^{C_{ij}}$ given $C,x$ with constraints $Cw=b$ & $\checkmark$ & $\checkmark$ & $\checkmark$ & $\checkmark$ & - & Different scalings defined via $C$ \\ \midrule

			N-L Perron-Frobenius Theory & fixed points of homogeneous maps on cones & $\checkmark$ & $\checkmark$ & - & - & $\checkmark$ & Maps in different vector spaces (such as positive maps); infinite matrices \\ \midrule

			Truncated matrix scaling & $X=\Lambda DAD^{-1}$ balancing with $L\leq X\leq U$ entrywise & also inequalities & $\checkmark$ & $\checkmark$ & - & - & - \\

			\bottomrule
			\end{tabular}
			}
			\caption{This table gives an overview about possible approaches to matrix scaling, their application to various different scalings discussed in Section \ref{sec:generalised} and additional possible applications.}
		\end{table}
	\end{landscape}
}

\subsection{Direct multidimensional scaling}
Especially in transportation planning, equivalence scaling of arrays with three indices has been important from the beginning. Except for nonlinear Perron-Frobenius theory, the approaches can be readily generalised to this case. As already pointed out, \cite{bro59} was the first to consider multidimensional scaling. According to \cite{eva74} (see also \cite{eva70}), Furness pointed out iterative scaling as a possible solution to certain transportation planning problems in the unpublished paper \cite{fur62}. Evans and Kirby themselves proved convergence in a limited scenario by extending the convex programming approach of equation (\ref{eqn:gormanconvex}) and proofs have been provided or pointed out in several other papers such as \cite{fie70,kru79}. The case of approximate multidimensional scaling is discussed in \cite{bro93}.

For multidimensional exact or approximate scaling, the convergence results of \cite{pre80} reflected in Theorem \ref{thm:pretzel} still hold. In addition, the order in which we normalise any of the indices of the multidimensional array is irrelevant:
\begin{thm}[\cite{bro93} and comment in \cite{bro59}]
Let $A$ be an array with $m$ indices (or dimensions) and let $i_k$ be the dimension of the array that is scaled in the $k$-th step. If each element of $\{1,\ldots,m\}$ appears in the sequence $\{i_1,i_2,\ldots\}$ infinitely often, then the scaling converges to the limit of the cyclic RAS method, the I-projection of $A$.
\end{thm}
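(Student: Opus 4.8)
The plan is to reduce the statement about arbitrary index-orderings to the already-established result about the cyclic (round-robin) RAS method, using the entropic/I-projection viewpoint of Section~\ref{sec:entrop}, which is the cleanest framework in which the order of projections is visibly irrelevant. First I would set up the notation precisely: for an $m$-dimensional nonnegative array $A$ with prescribed marginals, let $\Pi_k$ denote the affine set of arrays whose $k$-th marginal matches the prescribed one, and let $\Pi = \bigcap_{k=1}^m \Pi_k$. Exactly as in Observation~\ref{obs:rasentrop}, normalising index $i_k$ in the multidimensional RAS step is the $I$-projection of the current array onto $\Pi_{i_k}$; this is a routine Lagrange-multiplier computation identical to the two-dimensional one. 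So the sequence $\{A^{(n)}\}$ generated by \emph{any} admissible ordering $\{i_1, i_2, \ldots\}$ is a sequence of successive $I$-projections onto the sets $\Pi_{i_1}, \Pi_{i_2}, \ldots$.

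The second step is to carry over the machinery from the entropic proof sketch of Theorem~\ref{thm:sink}. Assuming (as in that sketch) that $\Pi$ is nonempty relative to the pattern of $A$ — otherwise relative entropy is identically infinite and there is nothing to prove — the key ingredients are: (i) the Pythagorean equality $D(B\|A^{(n-1)}) = D(B\|A^{(n)}) + D(A^{(n)}\|A^{(n-1)})$ for every $B \in \Pi$, which holds because each $\Pi_{i_k}$ is one of the linear-constraint sets for which Csisz\'ar proved the equality case of the minimum discrimination principle; (ii) the transitivity of $I$-projections onto these sets. From (i), telescoping along the ordering gives, for any fixed $B \in \Pi$,
\begin{align*}
	D(B\|A) = D(B\|A^{(n)}) + \sum_{j=1}^{n} D(A^{(j)}\|A^{(j-1)}),
\end{align*}
so the partial sums $\sum_j D(A^{(j)}\|A^{(j-1)})$ are bounded, forcing $D(A^{(j)}\|A^{(j-1)}) \to 0$, hence $\|A^{(j)} - A^{(j-1)}\|_\infty \to 0$; together with boundedness of the sequence (it lies in a compact set since $D(\cdot\|A)$ is bounded along it and has compact sublevel sets on the pattern of $A$) and the fact that the differences vanish, this pins down a genuine limit $A^{\infty}$ rather than merely accumulation points. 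Here is where the hypothesis that every index appears infinitely often in $\{i_1, i_2, \ldots\}$ is essential: it guarantees $A^{\infty} \in \Pi_k$ for every $k$, hence $A^{\infty} \in \Pi$, since for each $k$ there is a subsequence of iterates lying in $\Pi_k$ converging to $A^{\infty}$ and $\Pi_k$ is closed.

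The final step is to identify $A^{\infty}$ with the $I$-projection $A'$ of $A$ onto $\Pi$, which is the limit of the cyclic RAS method by the argument already given in the excerpt. Using transitivity of the $I$-projection exactly as in that proof, $A^{\infty}$ is the $I$-projection of $A^{(n)}$ onto $\Pi$ for every $n$, so the Pythagorean equality specialises to $D(A^{\infty}\|A^{(n)}) = D(A^{\infty}\|A') + D(A'\|A^{(n)})$; letting $n \to \infty$ both outer terms vanish, whence $D(A^{\infty}\|A') = 0$ and therefore $A^{\infty} = A'$. Since $A'$ does not depend on the ordering, all admissible orderings converge to the same array, namely the $I$-projection of $A$, which is the cyclic RAS limit. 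The main obstacle — and the only genuinely delicate point beyond bookkeeping — is justifying the equality case of the Pythagorean identity and the transitivity of $I$-projections for the multidimensional marginal-constraint sets; these are not automatic for general convex sets, and one must invoke (or re-derive, following Csisz\'ar) the fact that the $\Pi_k$ here are precisely of the special ``linear family'' type for which these properties hold. Everything else is a verbatim adaptation of the two-dimensional entropic proof with ``row/column normalisation'' replaced by ``normalisation of index $i_k$''.
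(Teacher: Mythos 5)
Your proposal is correct and takes essentially the same route as the paper, which proves this result (following \cite{csi75} and \cite{bro93}) by viewing each normalisation as an $I$-projection onto a linear family, telescoping the Pythagorean identity to force $D(A^{(j)}\|A^{(j-1)})\to 0$, and using transitivity of $I$-projections to identify the limit with the order-independent $I$-projection of $A$ onto $\Pi$. The only caveat, shared with the paper's own sketch, is that ``consecutive differences vanish plus compactness'' does not by itself give convergence of the full sequence; the rigorous version first extracts a convergent subsequence and then uses the Pythagorean identity to upgrade subsequential convergence to convergence of the whole sequence.
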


\subsection{Log-linear models and matrices as vectors}
Most of the ideas above use matrices as matrices, as sets of numbers with two indices. One can likewise consider just vectors of numbers and define columns and rows by defining partitions of the vectors. This approach has the advantage that the generalisation to multidimensional matrices is immediate. It was probably pioneered by \cite{dar72}, although \cite{lam81} credit Murchland, who circulated his results later (\cite{mur77,mur78}\footnote{The papers were not available to me}). The approach was then taken on in \cite{bap89a} (see also \cite{bap97}, Chapter 6 for an overview and a more lucid presentation of their ideas). While \cite{dar72} used an entropic approach, \cite{bap89a} is based on a combination of optimisation and topological approaches as discussed in Section \ref{sec:topidea}. The same theorem is also proved in \cite{fra89} in a very elementary fashion and in \cite{rot89b} using optimisation techniques. 

The original goal of \cite{dar72} was not to study matrix scaling but rather obtaining probability distributions using so called log-linear models. Given a positive (sub)probability distribution $\pi$ over some finite index set $I$, a log-linear model is a probability distribution $p$ such that
\begin{align}
	p_i=\pi_i D \prod_{s=1}^d D_s^{C_{si}} \label{eqn:loglinearmod}
\end{align}
which satisfies some constraints $\sum_{i\in I} C_{si}p_i=k_s$. Here, $D$ and $D_s$ have to be determined while $C$ is given from the problem. The name derives from the fact that the solution is an exponential family of probability distributions.

Depending on the choice of $C$, one can write matrix balancing, equivalence scaling or $DAD$ scaling as finding a log-linear model.

To achieve equivalence scaling with row-sums $r$ and column sums $s$, consider for simplicity the case of a $2\times 3$ matrix. Then $C$ and $b$ are given by
\begin{align*}
	C=\begin{pmatrix}{} 1 & 1 & 1 & 0 & 0 & 0 \\ 0 & 0 & 0 & 1 & 1 & 1 \\ 1 & 0 & 0 & 1 & 0 & 0 \\ 0 & 1 & 0 & 0 & 1 & 0 \\ 0 & 0 & 1 & 0 & 0 & 1 \end{pmatrix}, \qquad b=\begin{pmatrix}{} r_1 \\ r_2 \\ s_1 \\ s_2 \\ s_3 \end{pmatrix}
\end{align*}
and we define $y_1=A_{11}, y_2=A_{12}, \ldots, y_5=A_{22}, y_6=A_{23}$ (example from \cite{bap89a,rot89b}). 

To achieve matrix balancing with row-sums equaling column sums, consider for simplicity the case $3\times 3$, then $C$ is given by
\begin{align}
	C=\begin{pmatrix}{} 0 & 1 & 1 & -1 & 0 & 0 & -1 & 0 & 0 \\ 0 & -1 & 0 & 1 & 0 & 1 & 0 & -1 & 0 \\ 0 & 0 & -1 & 0 & 0 & -1 & 1 & 1 & 0 \end{pmatrix} \label{eqn:balancing}
\end{align}
and $b=0$ and we order $x,y$ again as before (example from \cite{rot89b}).

We have the following theorem:
\begin{thm}[\cite{bap89a}] \label{thm:bapatraghavan}
Let $C\in \R^{m\times n}$ and $b\in \R^m_{+\,0}$. Let $K=\{v|Cv=b,v\geq 0\}$ be bounded. Let $x\in \R^n_{+\,0}$. Then there exists a $w\in K$ such that for some $D\in \R^n_+$ we have
\begin{align*}
	w_j=x_j\prod_{i=1}^m D^{C_{ij}},\quad j=1,\ldots,n
\end{align*}
if and only if there exists a vector $y\in \R^n_{+\,0}$ with $y\in K$ and the same zero pattern as $x$. 
\end{thm}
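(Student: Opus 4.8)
\textbf{Proof proposal for Theorem \ref{thm:bapatraghavan}.}

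The plan is to deduce this from the general-purpose result of Letac already quoted in the excerpt (the theorem that for a finite set $X$, strictly positive weights $\mu$, and a linear subspace $\mathcal{H}\subseteq\R^X$, there is a unique nonlinear map $f\mapsto h_f$ with $\sum_{x} [\exp(f(x))-\exp(h_f(x))]g(x)\mu(x)=0$ for all $g\in\mathcal{H}$), exactly mirroring how Theorem \ref{thm:sink} was derived from it. First I would set $X=\{1,\ldots,n\}$ and take the linear subspace $\mathcal{H}$ to be the row space of $C$, i.e.~$\mathcal{H}=\{C^Tu\mid u\in\R^m\}\subseteq\R^n$. The ansatz $w_j=x_j\prod_i D_i^{C_{ij}}$ is, upon writing $D_i=\exp(\eta_i)$, precisely $w_j=x_j\exp((C^T\eta)_j)$, so the desired scaled vector has the form $w_j=\exp(h_j)x_j$ with $h\in\mathcal{H}$. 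Thus the existence of the log-linear representation is equivalent to finding $h\in\mathcal{H}$ so that the vector $w$ with $w_j=x_j\exp(h_j)$ lies in $K$, i.e.~satisfies $Cw=b$ and $w\geq 0$; positivity of $w$ is automatic once $x_j>0$ for the relevant coordinates, and on the zero set of $x$ we get $w_j=0$.

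The key step is to feed the right input function into Letac's theorem. By the stated necessary condition there is $y\in K$ with the same zero pattern as $x$; restrict attention to the index set $X=\supp(x)=\supp(y)$ (the zero coordinates play no role since they contribute nothing to $Cw=b$), put $\mu_j=x_j>0$ on $X$, and define $f_j:=\ln(y_j/x_j)$ so that $\exp(f_j)x_j=y_j$. Applying Letac's theorem yields a unique $h\in\mathcal{H}$ with
\begin{align*}
\sum_{j\in X}\bigl[\exp(f_j)-\exp(h_j)\bigr]g_j x_j=0\qquad\text{for all }g\in\mathcal{H},
\end{align*}
equivalently $\sum_{j\in X}(y_j-\exp(h_j)x_j)g_j=0$ for all $g$ in the row space of $C$. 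Writing $g=C^Tu$ this says $\sum_i u_i\bigl(\sum_{j} C_{ij}(y_j-\exp(h_j)x_j)\bigr)=0$ for every $u\in\R^m$, hence $C(y-w)=0$ where $w_j:=\exp(h_j)x_j$. Since $Cy=b$, we get $Cw=b$, and $w\geq0$ by construction, so $w\in K$ and $w$ has the required log-linear form with $D_i=\exp(\eta_i)$ where $\eta$ is any $u$ with $C^Tu=h$. The converse direction is immediate: if such a $w$ exists it is itself the vector $y\in K$ with the same zero pattern as $x$, since multiplying $x_j$ by the strictly positive factor $\prod_i D_i^{C_{ij}}$ preserves the support.

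The main obstacle I expect is bookkeeping around supports and the boundedness hypothesis rather than anything deep. One must check that restricting $X$ to $\supp(x)$ does not lose any of the constraints $Cw=b$ — this is fine because $w_j=0$ off the support, so those columns of $C$ drop out, but it should be said explicitly; and one must make sure $\mathcal{H}$ is interpreted as a subspace of $\R^X$ (the restriction of the row space of $C$ to the coordinates in $X$), since Letac's theorem is stated for subspaces of $\R^X$. The hypothesis that $K$ is bounded is what guarantees the minimum/fixed point underlying Letac's theorem actually exists in the relevant regime (it plays the role that properness of the barrier function plays in Section \ref{sec:logbarrier}); if one instead proved Letac's theorem by the potential-optimisation route, boundedness of $K$ would be exactly the condition ensuring the corresponding convex potential attains its infimum. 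Finally, uniqueness of $h$ from Letac's theorem gives uniqueness of $w$, and the ambiguity in $D$ is precisely $\ker(C^T)$, matching the usual ``unique up to scalars on blocks'' statements elsewhere in the paper.
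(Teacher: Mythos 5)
Your derivation is correct, but it follows a genuinely different route from the one the paper points to. The paper states Theorem \ref{thm:bapatraghavan} with a citation only, and describes the proof of \cite{bap89a} as a combination of optimisation and topological arguments in the spirit of Section \ref{sec:topidea} (Kakutani/Kronecker-index fixed point theorems on a compact set), with alternative proofs in \cite{fra89} and \cite{rot89b}; you instead deduce it from Letac's theorem, which the paper itself only exploits for the special case of Theorem \ref{thm:sink}. Your reduction is sound: writing $D_i=e^{\eta_i}$ turns the log-linear ansatz into $w_j=x_j e^{(C^T\eta)_j}$, restricting to $X=\supp(x)$ with $\mu_j=x_j$ and $f_j=\ln(y_j/x_j)$ puts you exactly in Letac's setting, and the orthogonality relation against all $g$ in the (restricted) row space of $C$ gives $C(y-w)=0$, hence $w\in K$; the converse is indeed immediate. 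What this buys is a short, essentially variational proof --- the existence behind Letac's theorem comes from minimising $\sum_{j\in X}\bigl(x_je^{h_j}-y_jh_j\bigr)$ over $h\in\mathcal{H}$, which is coercive and strictly convex because $x_j,y_j>0$ on $X$ --- and, notably, it never uses the boundedness of $K$ at all, so you actually prove a stronger statement than the one quoted (consistent with the paper's remark that \cite{let74} covers matrix balancing, where $K$ is unbounded). The one point where your write-up goes astray is the closing speculation that boundedness of $K$ is ``exactly the condition ensuring the corresponding convex potential attains its infimum'': it is not --- the potential above is coercive regardless of $K$, precisely because the relevant weights are strictly positive; boundedness is a hypothesis needed by (or at least used in) the topological degree/fixed-point proofs on compact sets, not by your argument. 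This is a wrong side remark rather than a gap, but you should delete or correct it.
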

Note that this is a major generalisation of scaling as the matrix $C$ can contain any real numbers.

The limiting factor of the theorem is the boundedness of $K$. While the constraints in the case of matrix equivalence are bounded, the constraint set defined by (\ref{eqn:balancing}) is not necessarily bounded. \cite{rot89b} applies a completely different proof which only works for positive matrices. However we can still apply Theorem \ref{thm:bapatraghavan}: $K$ is unbounded, because the matrix entries can become unbounded since we only want equal row and column sums but do not specify them further. We fix that by using
\begin{align*}
	\tilde{C}=\begin{pmatrix}{} 0 & 1 & 1 & -1 & 0 & 0 & -1 & 0 & 0 \\ 0 & -1 & 0 & 1 & 0 & 1 & 0 & -1 & 0 \\ 0 & 0 & -1 & 0 & 0 & -1 & 1 & 1 & 0 \\ 1 & 1 & 1 & 1 & 1 & 1 & 1 & 1 & 1 \end{pmatrix}
\end{align*}
and $b_4=1$. The last row just implies that the sum of all matrix entries should be one which makes $K$ a bounded set. A simple calculation then shows that this is equivalent to searching for a diagonal matrix $D$ and a scalar $d$ such that $dDAD^{-1}$ has equal row- and column sums and the sum of all matrix entries is one. Clearly, this is equivalent to matrix balancing and we can apply Theorem \ref{thm:bapatraghavan}.

The connection to entropy minimisation is simple:
\begin{lem}[\cite{dar72}, Lemma 2] \label{lem:loglinentrop}
Given a positive (sub)probability distribution $\pi$, if a positive probability distribution $p$ satisfying (\ref{eqn:loglinearmod}) and the linear constraints exists, then it minimises relative entropy $\sum_i p_i\log(p_i/\pi_i)$ subject to the linear constraints.
\end{lem}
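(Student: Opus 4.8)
The plan is to establish the ``Pythagorean identity''
\[
	D(q\|\pi)=D(q\|p)+D(p\|\pi)
\]
for every competing distribution $q$ and then conclude by nonnegativity of the relative entropy. This is exactly equation (\ref{eqn:pyth}) specialised to the linear constraint set, and it is precisely the exponential-family shape (\ref{eqn:loglinearmod}) of $p$ that makes it hold with equality.

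First I would fix an arbitrary positive probability distribution $q$ satisfying the same linear constraints, i.e.\ $\sum_{i\in I}q_i=1$ and $\sum_{i\in I}C_{si}q_i=k_s$ for all $s$. Because $p$ is positive by hypothesis, $D(q\|p)$ is finite, and because $\pi$ is positive, $D(q\|\pi)$ is finite as well, so one may legitimately write
\[
	D(q\|\pi)-D(q\|p)=\sum_{i\in I}q_i\ln\frac{p_i}{\pi_i}.
\]
Now substitute the log-linear form $p_i/\pi_i=D\prod_{s=1}^d D_s^{C_{si}}$, take logarithms and exchange the order of summation:
\[
	\sum_{i\in I}q_i\ln\frac{p_i}{\pi_i}
	=\Big(\sum_{i\in I}q_i\Big)\ln D+\sum_{s=1}^d\Big(\sum_{i\in I}C_{si}q_i\Big)\ln D_s
	=\ln D+\sum_{s=1}^d k_s\ln D_s .
\]
The crucial observation is that the right-hand side depends only on the data $D,D_s,k_s$ and not on $q$: every feasible distribution yields the same constant.

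Next I would specialise $q=p$, which is itself feasible (it is a log-linear model meeting the constraints). Since $D(p\|p)=0$, the identity above collapses to $D(p\|\pi)=\ln D+\sum_s k_s\ln D_s$, identifying the constant. Combining this with the computation for general $q$ gives
\[
	D(q\|\pi)=D(q\|p)+D(p\|\pi)\ \ge\ D(p\|\pi),
\]
because $D(q\|p)\ge 0$, with equality if and only if $q=p$. Hence $p$ is the (unique) minimiser of $\sum_i p'_i\log(p'_i/\pi_i)$ over the feasible set, which is the assertion of the lemma.

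I do not anticipate a genuine obstacle here; the only care needed is the bookkeeping of finiteness, which is handled by the positivity of $\pi$ and of $p$. If one wants the statement for arbitrary (not necessarily positive) competitors $q$, it suffices to note that $D(q\|p)$ is still finite while $D(q\|\pi)$ is a well-defined element of $[0,\infty]$, so the inequality is unaffected. It may also be worth remarking that this is the instance of Kullback's ``minimum discrimination information'' principle referenced after equation (\ref{eqn:pyth}).
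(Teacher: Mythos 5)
Your proposal is correct and follows essentially the same route as the paper: both proofs rest on the observation that the log-linear form makes $\log(p_i/\pi_i)$ affine in the constraint functionals, so that $\sum_i q_i\log(p_i/\pi_i)$ is the same for every feasible $q$, yielding the Pythagorean identity $D(q\|\pi)=D(q\|p)+D(p\|\pi)$ and the conclusion by nonnegativity of relative entropy. The only cosmetic difference is that the paper starts from $D(p\|\pi)$ and swaps $p$ for $q$ inside the constraint sums, whereas you identify the common constant by specialising $q=p$; the content is identical.
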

\begin{proof}
The proof in \cite{dar72} is a straightforward calculation and follows directly from \cite{kul66}. If $q$ is a probability distribution satisfying the linear constraints, then
\begin{align*}
	D(p||\pi) &= \sum_{i\in I} p_i(\log \xi + \sum_{s=1}^d C_{si}\log \xi_s) \\
		&=\log \xi \left(\sum_{i\in I} p_i\right)+\sum_{s=1}^d \log \xi_s \left(\sum_{i\in I} C_{si}p_i\right) \\
		&=\log \xi \left(\sum_{i\in I} q_i\right)+\sum_{s=1}^d \log \xi_s \left(\sum_{i\in I} C_{si}q_i\right) \\
		&=\sum_{i\in I} q_i\log (p_i/\pi_i) \\
		&=D(q||\pi)-D(q||p)
\end{align*}
which implies the lemma by the nonnegativity of relative entropy.
\end{proof}

\subsection{Continuous Approaches}
Nonnegative matrices were always tied to joint probability distributions. Obviously, there is no reason to only study discrete probability distributions. The first such generalisation was obtained in \cite{hob65}. Also the basic theorems of \cite{kul68} and \cite{csi75} are more general than counting measures (although both have problems with parts of their arguments, see \cite{bor94}). 

As pointed out in \cite{bor94}, there are essentially two approaches to continuous versions, the entropy maximisation approach studied by Kullback and later Csisz\'{a}r, and the approach via fixed point theorems or contractive ratios (one can see this as a precursor to nonlinear Perron-Frobenius theory) studied in, for instance, \cite{for41,nus87,nus93}. The natural continuous extension of the $DAD$ theorem for symmetric matrices was studied in \cite{now66,kar67} (via fixed points or iterative contractions). The most general results in \cite{bor94} combine these two approaches. To give a flavour of their results, we cite
\begin{thm}[\cite{bor94} Theorem 3.1]
Given a finite measure spaces $\mu(s,t)=k(s,t)\,ds\,dt$ and marginal distributions $\alpha(s),\beta(t)\in L^1(dt/ds)$, consider the following minimisation problem:
\begin{align}
	\begin{split}
		\min & \int_{S\times T} [u(x,y)\log(u(x,y))-u(x,y)]k(s,t)\,ds\,dt \\
			\mathrm{s.t.} &\int_T u(s,t)k(s,t)\,dt=\alpha(s)\quad a.e. \\
			&\int_S u(s,t)k(s,t)\,ds=\beta(t) \quad a.e.
	\end{split}
\end{align}
where $u\in L^1(dt,ds)$. Furthermore, we require $\int_S \alpha(s)\,ds=\int_T \beta(t)\,dt$. Then the minimisation problem has a unique optimal solution. If there exists a $u_0$ which fulfils the constraints and there exist $x_0\in L^{\infty}$ and $y_0\in L^{\infty}$ such that $\log u_0(s,t)=x(s)+y(t)$ almost everywhere, then $u_0$ is the unique solution. Conversely, if there exists a feasible solution $u$ with $u>0$ almost everywhere, then the unique optimal solution satisfies $u_0>0$ almost everywhere and there exist sequences $x_n\in L^{\infty}$ and $y_n\in L^{\infty}$ such that 
\begin{align*}
	\lim_{n\to \infty} (x_n(s)+y_n(t))=\log u_0(s,t)\quad a.e.
\end{align*}
\end{thm}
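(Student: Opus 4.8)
The plan is to treat this as a convex optimisation problem in the function space $L^1$ and to apply the same circle of ideas that underlies the entropic proof of Theorem~\ref{thm:sink}, now in the continuous setting. The objective $J(u):=\int_{S\times T}[u\log u-u]\,k\,ds\,dt$ is strictly convex (the integrand $t\mapsto t\log t-t$ is strictly convex on $[0,\infty)$), and the constraint set $\mathcal{C}:=\{u\in L^1(k\,ds\,dt):\int_T uk\,dt=\alpha\ \text{a.e.},\ \int_S uk\,ds=\beta\ \text{a.e.},\ u\ge 0\}$ is convex. The normalisation requirement $\int_S\alpha=\int_T\beta$ is exactly the obvious necessary condition for $\mathcal{C}$ to be nonempty; note $\mathcal{C}$ could still be empty, in which case the statement about a unique optimal solution is understood with the convention that the infimum is $+\infty$ and there is nothing to prove --- so I would first dispose of that case and thereafter assume $\mathcal{C}\neq\emptyset$.

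First I would establish \textbf{existence and uniqueness of the optimal solution}. Uniqueness is immediate from strict convexity of $J$ together with convexity of $\mathcal{C}$: two distinct minimisers would have their midpoint strictly lower. For existence one takes a minimising sequence $u_n$; since $J$ has superlinear growth, $\{u_n\}$ is bounded in $L^1$ and (after passing to the nonnegative half) uniformly integrable (de la Vall\'ee--Poussin), hence weakly sequentially precompact in $L^1$ by the Dunford--Pettis theorem. The constraint functionals $u\mapsto\int_T uk\,dt$ etc.\ are weakly continuous, so the weak limit $u_0$ lies in $\mathcal{C}$, and $J$ is weakly lower semicontinuous (being convex and strongly lsc, in fact continuous on the positive cone), so $u_0$ is a minimiser. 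This is the cleanest route and avoids any delicate argument about the iterative procedure converging.

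Next I would handle the \textbf{two sufficiency/necessity statements about the form $\log u_0=x+y$}. For the sufficiency direction, suppose $u_0\in\mathcal{C}$ satisfies $\log u_0(s,t)=x(s)+y(t)$ a.e.\ with $x,y\in L^\infty$. Then for any other feasible $u\in\mathcal{C}$ one computes, exactly as in Lemma~\ref{lem:loglinentrop} and in the Pythagorean identity~(\ref{eqn:pyth}), that
\begin{align*}
	J(u)-J(u_0)=\int_{S\times T}\Bigl[u\log\frac{u}{u_0}-u+u_0\Bigr]k\,ds\,dt,
\end{align*}
because the cross term $\int(u-u_0)(x(s)+y(t))k\,ds\,dt$ vanishes --- split it using the marginal constraints, which $u$ and $u_0$ share, and use $x,y\in L^\infty$ to justify Fubini. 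The bracket is pointwise $\ge 0$ (it is a relative-entropy density), so $J(u)\ge J(u_0)$, and by uniqueness $u_0$ is \emph{the} optimal solution. For the necessity/approximation direction: assuming there is a feasible $u$ with $u>0$ a.e., one shows $u_0>0$ a.e.\ (on a positive-measure set where $u_0=0$, moving mass along a direction built from $u$ decreases $J$ because of the infinite slope of $t\log t-t$ at $0$ --- a standard first-variation argument), and then one produces the sequences $x_n,y_n$. The natural way to get $x_n,y_n$ is to run the continuous analogue of the RAS / iterative $I$-projection (alternately rescaling $u$ to match $\alpha$, then $\beta$): at stage $n$ the iterate is $u_n(s,t)=u(s,t)\exp(x_n(s)+y_n(t))$ for suitable $x_n,y_n\in L^\infty$, the objective values $J(u_n)$ decrease to $J(u_0)$, and a relative-entropy estimate $J(u_n)-J(u_0)\ge$ (something controlling $\|u_n-u_0\|$) forces $u_n\to u_0$, hence $x_n+y_n\to\log u_0$ a.e.\ (up to passing to a subsequence, then upgrading).

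The main obstacle is the \textbf{convergence of the iterative projection scheme in the continuous setting} --- i.e.\ the last assertion producing the sequences $x_n,y_n$. In the discrete proof of Theorem~\ref{thm:sink} one leans on compactness of a finite-dimensional simplex and on the exact Pythagorean identity for linear constraint sets; in $L^1$ the ``simplex'' is only weakly compact, the $I$-projections onto the affine sets $\Pi_1,\Pi_2$ need to be shown to exist and to enjoy the Pythagorean identity in this measure-theoretic generality (this is where \cite{csi75} and the caveats of \cite{bor94} enter), and one must control that the multipliers stay in $L^\infty$ rather than merely being measurable. I would quote the continuous $I$-projection machinery for the Pythagorean identity and focus the real work on the a priori bound keeping $x_n,y_n$ bounded and on extracting the a.e.-convergent subsequence; everything else is the convex-duality bookkeeping sketched above.
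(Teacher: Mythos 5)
First, note that the review does not actually prove this theorem: it is quoted from \cite{bor94} purely ``to give a flavour of their results,'' so there is no in-paper proof to compare against and your outline has to be judged on its own. Your first two steps are sound. The direct method (strict convexity of $t\mapsto t\log t-t$ for uniqueness; de la Vall\'ee--Poussin and Dunford--Pettis plus weak lower semicontinuity of the convex integrand for existence, the constraint set being convex and strongly, hence weakly, closed) does produce the unique minimiser, modulo the standing assumption that some feasible point has finite objective. The sufficiency computation --- the cross term $\int(u-u_0)(x(s)+y(t))k\,ds\,dt$ vanishing because $u$ and $u_0$ share marginals and $x,y\in L^{\infty}$ licenses Fubini, leaving a pointwise nonnegative relative-entropy density --- is exactly right and is the continuous analogue of Lemma \ref{lem:loglinentrop}.

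The gap is in the converse, which is where essentially all the content of \cite{bor94}'s Theorem 3.1 lives. You propose to manufacture $x_n,y_n$ by running the continuous RAS and to ``quote the continuous $I$-projection machinery,'' but under the stated hypotheses ($\alpha,\beta$ merely in $L^1$, $k$ merely the density of a finite measure) the rescaling factors $\log\bigl(\alpha(s)/\int_T u_n k\,dt\bigr)$ produced by the iteration need not lie in $L^{\infty}$ at any stage, so the iterates need not have the advertised form with essentially bounded exponents; and the validity of the Pythagorean identity (\ref{eqn:pyth}) together with convergence of the alternating projections in this measure-theoretic generality is precisely the delicate point the review itself flags (the errors in \cite{ire68,kul68} and the caveats about \cite{csi75} raised in \cite{bor94}), so invoking that machinery comes close to assuming what is to be proved. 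Even granting $u_n\to u_0$ in entropy or in $L^1$, you still owe the upgrade to a.e.\ convergence of the exponents $x_n+y_n$ to $\log u_0$. The actual argument in \cite{bor94} sidesteps the iteration entirely and runs through Fenchel duality: the dual is the unconstrained concave maximisation of $\int\alpha x+\int\beta y-\int e^{x(s)+y(t)}k\,ds\,dt$ over $(x,y)\in L^{\infty}\times L^{\infty}$, there is no duality gap, and $(x_n,y_n)$ is any dual maximising sequence --- the conclusion is phrased with sequences exactly because the dual supremum need not be attained. Your positivity argument for $u_0>0$ a.e.\ (infinite slope of $t\log t$ at $0$ along the segment towards a strictly positive feasible point) is fine modulo integrability bookkeeping, but the construction of the sequences is the theorem, and your sketch does not yet contain a proof of it.
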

This in fact also covers the approximate scaling case. Note that the results also extend to more than two marginals.

\subsection{Infinite matrices}
Instead of continuous functions, we can also consider infinite matrices. Results usually posit that row and column sums should be finite in some norm. 

The first such result was obtained in \cite{net69}, which proves Theorem \ref{thm:sink} in the case where the column and row sums are uniformly bounded in the $l^1$-norm and the matrix entries are uniformly bounded. The proof is reminiscent of \cite{bru66} and Nonlinear Perron-Frobenius theory, using a fixed point argument involving Schauder's fixed point theorem.

Another approach was presented in \cite{ber79}, where matrices that are infinite in one direction are studied (rows or columns are finite in a $l^p$-norm). Once again, the matrix entries must be bounded uniformly (in this case, in a $l^p$-norm) and convergence of the iterative algorithm to a unique solution is proved in certain topologies.

\subsection{Generalised entropy approaches} \label{sec:generalentrop}
As we saw in Section \ref{sec:entrop}, we can write matrix scaling as the problem
\begin{align*}
	\min_P D(P\|Q) \quad \mathrm{s.t.~}P\in \Pi
\end{align*}
where $\Pi$ is an intersection of linear constraints. This approach can be generalised in two ways: First, one could consider other functions than relative entropy but related to it or second, one can consider more general sets $\Pi$. 

Telative entropy is a special case of \emph{Bregman divergences}. These were originally introduced in \cite{bre67a} and later named in \cite{cen81}. The idea is to study distance measures derived from functions $\phi:S\in\R^n\to \R$, which are defined on a closed convex set $S$, continuously differentiable and strictly convex. Then
\begin{align*}
	\Delta_\phi(x,y)=\phi(x)-\phi(y)-\langle \nabla \phi(y),x-y\rangle
\end{align*}
behaves similarly to a metric, although it is not necessarily symmetric and obeys no triangle inequality. If one takes $\phi(x)=\sum_{i} (x_i\log (x_i)-x_i)$ (negative entropy modulo the linear term), then $\Delta_\phi(x,y)=\sum_i (x_i\ln (x_i/y_i)-x_i+y_i)$. This example was already studied in \cite{bre67a} giving in addition an iterative algorithm to find the projections onto the minimum Bregman distance given linear constraints, which is a variant of the RAS method (see also \cite{lam81}).

Another way to generalise $D$ is the basic observation underlying \cite{mcd99}: Relative entropy for matrices is equivalent to the sum of cross-entropies between matrix columns, where a \emph{cross-entropy} of the column $j$ of the matrices $A,B$ is just $D_j(A\|B):=\sum_{i} B_{ij}\log(B_{ij}/A_{ij})$. Instead of taking the sum of all cross-entropies, it might be justified to take weighted sums of cross-entropies. This is relevant in economic settings and, aside from \cite{mcd99}, was studied in e.g.~\cite{gol96,gol97}\footnote{References corrected but taken from \cite{mcd99} as they were unavailable to me}. 

On the other hand, we can work with relaxed constraints. This was covered in \cite{bro93}: The extension of linear families of probability distributions is still covered by \cite{csi75}, while finding the I-projection for closed, convex but nonlinear constraints requires different means such as \emph{Dykstra's iterative fitting procedure} (cf. \cite{dyk85}).

\subsection{Row and column sum inequalities scaling} \label{sec:inequalityscaling}
Instead of wishing for matrices to have prespecified row and column sums, it might be interesting to consider cases where only lower and upper bounds on the row and column sums and the matrix entries are considered. 

If we denote the set of all nonnegative matrices with row sums between $r^-\in \R^n_+$ and $r^+\in \R^n_+$ and column sums between $c^-\in\R^n_+$ and $c^+\in\R^n_+$ and total sum of its entries $h$ by $R(r^-,r^+,c^-,c^+,h)$, then we can ask the question, whether for a given nonnegative $A\in\R^{n\times n}$, there exists $\delta>0$ and $D_1,D_2$ diagonal matrices such that
\begin{align*}
	B:=\delta D_1AD_2\in R(r^-,r^+,c^-,c^+,h)
\end{align*}
such that if $(D_1)_{ii}>1$, then $\sum_{j} B_{ij}=r^-$ and $(D_1)_{ii}<1$, then $\sum_j B_{ij}=r^+$ and the same conditions for $D_2$ and $c$. This problem was studied in \cite{bal89a,bal89b}, where they call such a matrix a \emph{fair share matrix}. The main purpose of the approach is described in Section \ref{sec:justbalinski}. Using the arguments from nonlinear Perron-Frobenius theory (Section \ref{sec:perfro}), they prove
\begin{thm}[\cite{bal89a}] \label{prop:balinski}
Let $A$ be a nonnegative matrix. There exists a unique fair share matrix for $A$ if and only if there exists a matrix $B\in R(r^-,r^+,c^-,c^+,h)$ with the same pattern as $A$.
\end{thm}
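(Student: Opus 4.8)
The plan is to reduce the statement to a strictly convex entropy minimisation and read everything off from its optimality conditions; the nonlinear Perron--Frobenius route of Section~\ref{sec:perfro} used by \cite{bal89a} is an equally valid alternative, but the convex formulation makes both the role of the ``same pattern'' hypothesis and the complementary-slackness clause in the definition of a fair share matrix transparent. The necessity direction needs nothing: a fair share matrix $B=\delta D_1AD_2$ with $\delta>0$ and $D_1,D_2$ positive diagonal has support exactly $\supp(A)$ and, by definition, lies in $R(r^-,r^+,c^-,c^+,h)$, so it is already a matrix with the same pattern as $A$ in $R(r^-,r^+,c^-,c^+,h)$.

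For the converse, fix a matrix $\hat B\in R(r^-,r^+,c^-,c^+,h)$ with the same pattern as $A$ and consider the problem of minimising
\[
  \sum_{(i,j)\in\supp(A)} \bigl(B_{ij}\ln(B_{ij}/A_{ij})-B_{ij}+A_{ij}\bigr)
\]
over all nonnegative $B$ with $\supp(B)\subseteq\supp(A)$, subject to $r^-_i\le \sum_j B_{ij}\le r^+_i$, $c^-_j\le \sum_i B_{ij}\le c^+_j$ and $\sum_{ij}B_{ij}=h$. The feasible set is nonempty (it contains $\hat B$), compact and convex, and the objective is continuous and strictly convex on it, so there is a unique minimiser $B^\ast$. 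The first real step is to prove $\supp(B^\ast)=\supp(A)$: since the partial derivative $\partial_{B_{ij}}$ of the objective equals $\ln(B_{ij}/A_{ij})\to-\infty$ as $B_{ij}\downarrow 0$, if some $B^\ast_{ij}=0$ with $(i,j)\in\supp(A)$ then the one-sided directional derivative of the objective at $B^\ast$ along the feasible direction $\hat B-B^\ast$ is $-\infty$, contradicting minimality. This is the single place where the hypothesis is used.

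With $B^\ast$ in the relative interior of its domain, the Karush--Kuhn--Tucker (equivalently Lagrangian duality) conditions give, for $(i,j)\in\supp(A)$,
\[
  \ln(B^\ast_{ij}/A_{ij}) = (\beta_i-\alpha_i)+(\delta_j-\gamma_j)-\nu ,
\]
where $\alpha_i,\beta_i\ge 0$ (resp.\ $\gamma_j,\delta_j\ge 0$) are the multipliers of the upper and lower bounds on the $i$-th row (resp.\ $j$-th column) sum and $\nu\in\R$ is the multiplier of the total-sum constraint. Exponentiating, $B^\ast=\delta D_1AD_2$ with $\delta=e^{-\nu}>0$, $(D_1)_{ii}=e^{\beta_i-\alpha_i}>0$ and $(D_2)_{jj}=e^{\delta_j-\gamma_j}>0$. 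Complementary slackness does the rest: if $(D_1)_{ii}>1$ then $\beta_i>0$, so the lower bound is active and $\sum_j B^\ast_{ij}=r^-_i$, whereas $(D_1)_{ii}<1$ forces the upper bound, $\sum_j B^\ast_{ij}=r^+_i$, and likewise for columns. Hence $B^\ast$ is a fair share matrix for $A$.

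Uniqueness runs the argument backwards: any fair share matrix $B=\delta D_1AD_2$ has exactly the above multiplicative form, and its defining slackness relations let one choose nonnegative multipliers (according to whether each $(D_1)_{ii}$ and $(D_2)_{jj}$ is $>1$, $=1$ or $<1$) making $B$ a KKT point of the convex program; by convexity $B$ is then a global minimiser, so $B=B^\ast$. The part I expect to need most care is the full-support step of the second paragraph --- converting the bare existence of a feasible matrix with the pattern of $A$ into the statement that the optimiser does not drop onto a proper subpattern. In the nonlinear Perron--Frobenius version of the proof the same difficulty reappears as the need to show that the clipped Menon-type iteration cannot lose support, which is handled exactly as in the proof of Theorem~\ref{thm:sink} by the Brualdi--Parter--Schneider counting argument (\cite{bru66}) together with a reduction to the block decomposition of $A$.
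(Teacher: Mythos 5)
Your proof is correct, but it takes a genuinely different route from the one the paper points to: Theorem \ref{prop:balinski} is stated there with reference to \cite{bal89a}, whose proof runs through the nonlinear Perron--Frobenius machinery of Section \ref{sec:perfro} --- the fair share matrix is produced as a fixed point of a clipped Menon-type operator on the cone, with the pattern hypothesis entering through a support-counting argument in the style of \cite{bru66}, exactly as you anticipate in your closing paragraph. Your convex formulation is instead the inequality-constrained analogue of Observations \ref{obs:rasentrop} and \ref{obs:wolfedual} (and is close in spirit to the truncated-scaling optimisation of \cite{sch89,sch90a} discussed in Section \ref{sec:inequalityscaling}). The trade-off is clear. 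Your route gets existence and uniqueness of the fair share matrix in one stroke from strict convexity of the generalised relative entropy on the compact polytope $R(r^-,r^+,c^-,c^+,h)\cap\{\supp(B)\subseteq\supp(A)\}$, and it makes the two less obvious ingredients conceptually transparent: the ``same pattern'' hypothesis is used exactly once, to rule out the optimiser dropping onto a proper subpattern via the $-\infty$ one-sided directional derivative towards the feasible $\hat B$, and the slackness clauses in the definition of a fair share matrix are literally the KKT complementary-slackness conditions for the row/column bound multipliers (your sign bookkeeping, $(D_1)_{ii}=e^{\beta_i-\alpha_i}>1\Rightarrow\beta_i>0\Rightarrow$ row sum at $r^-_i$, matches the paper's definition, and the reverse KKT argument for uniqueness is sound because the constraints are affine and the objective is differentiable at the full-support optimiser). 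What the fixed-point route buys instead is the iterative allocation algorithm itself together with its convergence, and a formulation that slots directly into Balinski and Demange's axiomatic framework of Section \ref{sec:justbalinski}; what your route buys is a shorter, self-contained existence-and-uniqueness proof and an explanation of why the complementary-slackness conditions are the ``right'' ones to impose.
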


A different generalisation is called \emph{truncated matrix scaling}. It is studied in \cite{sch89,sch90a} and can also account for equivalence scaling. While the proofsuse a combination of optimisation techniques for an optimisation problem defined via entropy functionals, the motivation and interpretation uses graphs and transportation problems for graphs.

The problem considers matrix balancing and not equivalence scaling as its basic problem and then explains the connection. A problem consists of an ordered triple $(A,L,U)$ of nonnegative matrices in $\R^{n\times n}$ satisfying
\begin{enumerate}
	\item $0\leq L\leq U\leq \infty$,
	\item There is a matrix $X$ whose pattern is a subpattern of $A$ such that $X$ is balanced and $L\leq X\leq U$,
	\item $U_{ij}>0$ whenever $A_{ij}>0$.
\end{enumerate}
and asks for a diagonal matrix $D$ and a nonnegative matrix $\Lambda$ such that
\begin{enumerate}
	\item $X=\Lambda DAD^{-1}$ is balanced and $L\leq X\leq U$,
	\item $X$ and $\Lambda$ satisfy
		\begin{align*}
			\begin{cases} \Lambda_{ij}>1~\Rightarrow~X_{ij}=L_{ij} \\
				\Lambda_{ij}<1~\Rightarrow~X_{ij}=U_{ij}
			\end{cases}
		\end{align*}
\end{enumerate}
The conditions above are consistency conditions which are trivially necessary for the existence of $D,\Lambda$. One can easily see that for $L=0$ and $U=\infty$ the problem is equivalent to matrix scaling, because $\Lambda=\id$. Note the similarity of the treatment of inequalities to the ideas of Balinski and Demange.

The connection to equivalence scaling is simple (cf. \cite{sch89}): Given a nonnegative matrix $A$ and row and column sums $r,c$, we start with the graph of Figure \ref{fig:transportationgraph}: we join the two vertices $S_1$ and $S_2$ into one vertex (call it $S$), keeping everything else fixed. We label the edges between the nodes with the corresponding matrix entries $A_{ij}$. $A^{\prime}$ is now the matrix corresponding to the graph. 

Now we copy the graph twice and erase the weights of the edges and instead label the first graph by $l_{(i,j)}$ and the second by $u_{(i,j)}$ where
\begin{align*}
	l_{(i,j)}:=\begin{cases} 0 & \mathrm{if~} A_{ij}>0 \\ r_i & \mathrm{if~} j=0 \\ c_j & \mathrm{if~} i=0 \end{cases} 
	\quad 
	u_{(i,j)}:=\begin{cases} \infty & \mathrm{if~} A_{ij}>0 \\ r_i & \mathrm{if~} j=0 \\ c_j & \mathrm{if~} i=0 \end{cases}.
\end{align*}
Now $L^{\prime}$ ($U^{\prime}$) is the matrix corresponding to the graph with labels $l_{(i,j)}$ ($u_{(i,j)}$). Finally, $(A^{\prime},L^{\prime},U^{\prime})$ is the triple for truncated matrix scaling.

The main result of the paper then includes:
\begin{thm}[\cite{sch89} Theorem 14 (part of it)]
Let $(A,L,U)$ be a triple in $\R^{n\times n}$ fulfilling the consistency conditions 1.-3. above. Then the truncated matrix scaling has a solution satisfying the conditions 1. and 2. above if and only if there exists a balanced matrix $X$ such that 
\begin{itemize}
	\item $L\leq X\leq U$,
	\item $X_{ij}>0$ iff $A_{ij}>0$ always.
\end{itemize}
\end{thm}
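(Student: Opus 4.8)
The plan is to treat the two directions separately. The ``only if'' direction is a short consistency check: given $D$ (necessarily with positive diagonal, since $D^{-1}$ appears) and a nonnegative $\Lambda$ satisfying conditions 1.\ and 2., one has $X_{ij}=\Lambda_{ij}D_{ii}A_{ij}D_{jj}^{-1}$, so $X_{ij}=0$ whenever $A_{ij}=0$; and if $A_{ij}>0$ while $X_{ij}=0$, then $\Lambda_{ij}=0<1$, whence $X_{ij}=U_{ij}$ by condition 2.\ and so $U_{ij}=0$, contradicting consistency condition 3.\ Hence $X$ is automatically a balanced matrix with $L\le X\le U$ and $X_{ij}>0$ iff $A_{ij}>0$.

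For the ``if'' direction --- which is where the real work is --- I would mimic the entropy-functional approach used for ordinary matrix balancing above. Let $X_{0}$ be the balanced matrix supplied by the hypothesis, with $L\le X_{0}\le U$ and $(X_{0})_{ij}>0$ exactly on the pattern of $A$. Consider minimising $F(Z)=\sum_{A_{ij}>0}\bigl(Z_{ij}\ln(Z_{ij}/A_{ij})-Z_{ij}\bigr)$ over the set $\mathcal{K}$ of nonnegative matrices $Z$ whose pattern is a subpattern of $A$, which are balanced ($\sum_{k}Z_{ik}=\sum_{k}Z_{ki}$ for all $i$) and which satisfy $L\le Z\le U$ entrywise. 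By hypothesis $X_{0}\in\mathcal{K}$, so $\mathcal{K}$ is a nonempty closed polyhedral set; $F$ is convex and, since each summand $t\mapsto t\ln(t/a)-t$ tends to $+\infty$ as $t\to\infty$, coercive on the nonnegative orthant. Hence $F$ attains its minimum on $\mathcal{K}$ at some $X^{\ast}$, even when some $U_{ij}=\infty$.

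The hard part, and the only place the \emph{full}-pattern hypothesis is genuinely needed, is showing that $X^{\ast}_{ij}>0$ for every $(i,j)$ with $A_{ij}>0$, so that $X^{\ast}$ has exactly the pattern of $A$ and $F$ is differentiable at $X^{\ast}$. I would argue by contradiction: if $X^{\ast}_{ij}=0$ for some such pair, the derivative $\ln(X^{\ast}_{ij}/A_{ij})$ of the corresponding summand is $-\infty$ there, while $(X_{0})_{ij}>0$; therefore the one-sided directional derivative of $F$ at $X^{\ast}$ along the feasible direction $X_{0}-X^{\ast}$ equals $-\infty$, so a small step towards $X_{0}$ strictly decreases $F$, contradicting optimality. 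With $X^{\ast}$ in the relative interior of $\mathcal{K}$ with respect to the pattern constraints, and all constraints defining $\mathcal{K}$ being affine, the KKT conditions hold without any constraint qualification: there are multipliers $\mu_{i}\in\R$ for the balancing equalities and $\alpha_{ij},\beta_{ij}\ge 0$ for $Z_{ij}\ge L_{ij}$ and $Z_{ij}\le U_{ij}$, with complementary slackness $\alpha_{ij}(X^{\ast}_{ij}-L_{ij})=0$ and $\beta_{ij}(U_{ij}-X^{\ast}_{ij})=0$, satisfying the stationarity relation $\ln(X^{\ast}_{ij}/A_{ij})+\mu_{i}-\mu_{j}-\alpha_{ij}+\beta_{ij}=0$ on the pattern of $A$.

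Finally I would read off the answer. Put $D:=\diag(e^{-\mu_{1}},\dots,e^{-\mu_{n}})$, a positive diagonal matrix, and $\Lambda_{ij}:=e^{\alpha_{ij}-\beta_{ij}}$ on the pattern of $A$ (and $\Lambda_{ij}:=1$ otherwise). Stationarity rearranges to $X^{\ast}_{ij}=\Lambda_{ij}D_{ii}A_{ij}D_{jj}^{-1}$, i.e.\ $X^{\ast}=\Lambda DAD^{-1}$ entrywise, which is balanced and obeys $L\le X^{\ast}\le U$ because $X^{\ast}\in\mathcal{K}$; this is condition 1.\ For condition 2.: if $\Lambda_{ij}>1$ then $\alpha_{ij}>\beta_{ij}\ge 0$, so $\alpha_{ij}>0$ and complementary slackness forces $X^{\ast}_{ij}=L_{ij}$; symmetrically $\Lambda_{ij}<1$ forces $X^{\ast}_{ij}=U_{ij}$. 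Thus $(D,\Lambda)$ solves the truncated scaling problem. I expect the only subtle points to be the ones flagged --- the strict positivity of the optimiser on the pattern, and invoking KKT on an unbounded polyhedron when some $U_{ij}=\infty$ --- handled, respectively, by the full-pattern hypothesis and by the coercivity/polyhedrality of the entropy program.
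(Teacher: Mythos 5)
The paper does not actually prove this theorem; it only quotes it from Schneider's work and remarks that the original proofs rest on an optimisation problem defined via entropy functionals. Your argument is precisely such a proof and it is correct: the ``only if'' direction is the right consistency check against condition 3, and in the ``if'' direction the three load-bearing steps all hold --- the coercive, lower semicontinuous entropy functional attains its minimum on the nonempty closed polyhedron $\mathcal{K}$ even when some $U_{ij}=\infty$; the optimiser is strictly positive on the pattern of $A$ because the one-sided directional derivative towards the full-pattern witness $X_0$ would otherwise be $-\infty$; and KKT multipliers exist without any constraint qualification since all constraints are affine and $F$ is differentiable at $X^{\ast}$. The identification $D=\diag(e^{-\mu_1},\dots,e^{-\mu_n})$, $\Lambda_{ij}=e^{\alpha_{ij}-\beta_{ij}}$ together with complementary slackness then yields conditions 1 and 2 exactly as you state, so I see no gap.
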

Once again, the answer is dominated by the pattern of the matrix and the conditions boil down to the usual conditions for similarity scaling. In a sense, this gives another explanation as to why both problems, equivalence scaling and matrix similarity, need pattern conditions for feasibility: They are both similar graph-related problems.

\subsection{Row and column norm scaling} \label{sec:normscaling}
Instead of asking the question whether one can scale a matrix to prescribed row- and column sums, one can ask for a scaling to prescribed row- and column norms. 

For the $\infty$-norm, this is discussed in \cite{rot94}. Their proof relies on an algorithm for symmetric $DAD$ scaling using Observation \ref{obs:symvsequiv}. In fact, an algorithm for the problem had already been studied for the symmetric case in \cite{bun71}\footnote{Reference from \cite{kni08} among others. I could not obtain the reference.}.

\begin{thm}[\cite{rot94}]
Let $A\in \R^{m\times n}$ be a nonnegative matrix and $r\in \R^m_+$, $c\in \R^n_+$ be prescribed row and column maxima. Then the following are equivalent:
\begin{enumerate}
	\item There exist diagonal matrices $D_1$ and $D_2$ such that $D_1AD_2$ has prescribed row and column maxima $r$ and $c$.
	\item There exists a matrix $B$ with row and column maxima $r$ and $c$ with the same pattern as $A$.
	\item There exists a matrix $B$ with row and column maxima $r$ and $c$ with some subpattern of $A$.
	\item The vectors $r$ and $c$ fulfil
		\begin{align}
			\max_{i=1,\ldots m} r_i=\max_{j=1,\ldots n} c_j \\
			\max_{i\in I} r_i\leq \max_{j\in J^c} c_j \\
			\max_{j\in J} c_j\leq \max_{i\in J^c} r_i
		\end{align}
		for every subsets $I\subset \{1,\ldots,m\}$ and $J\subset \{1,\ldots,n\}$ such that $A_{IJ}=0$. 
\end{enumerate}
\end{thm}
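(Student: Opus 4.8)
The plan is to establish the implications $(1)\Rightarrow(2)\Rightarrow(3)\Rightarrow(4)$, then $(4)\Rightarrow(2)$, and finally $(2)\Rightarrow(1)$; together these close the equivalence of all four statements. The first two implications are immediate: a positive diagonal scaling $B=D_1AD_2$ neither creates nor destroys zero entries, so $B$ has exactly the pattern of $A$, and a pattern is in particular one of its own subpatterns.

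For $(3)\Rightarrow(4)$ I would simply read off the combinatorial inequalities from where the row and column maxima of $B$ are attained. If $B$ has a subpattern of $A$, then $A_{IJ}=0$ forces $B_{IJ}=0$. The globally largest entry of $B$ sits in some row and in some column, so $\max_i r_i=\max_j c_j$. Fix $I,J$ with $A_{IJ}=0$; for $i\in I$ the value $r_i$ is attained in some column $j$ with $B_{ij}=r_i>0$, hence $j\notin J$, and $r_i=B_{ij}\le\max_{i'}B_{i'j}=c_j\le\max_{j'\in J^c}c_{j'}$. Maximising over $i\in I$ gives the second inequality of $(4)$, and the transposed argument using columns gives the third.

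The real combinatorial content is $(4)\Rightarrow(2)$: one must exhibit a nonnegative $B$ with exactly the pattern of $A$ whose row maxima are $r$ and column maxima are $c$. I would model this as an assignment problem on the bipartite graph of the pattern (the graph of Figure \ref{fig:transportationgraph} with sums replaced by maxima): choose, for each row $i$, a pattern edge on which to place the value $r_i$, and for each column $j$ a pattern edge on which to place $c_j$, where a single edge may legitimately carry a common value when $r_i=c_j$, and then set every remaining pattern entry to something harmless such as the minimum of the two maxima governing its row and column. Feasibility of such a planting is a Hall/defect-type condition, and unwinding it over all sets $I,J$ with $A_{IJ}=0$ yields precisely the inequalities in $(4)$, the equality clause there taking care of the boundary cases where a row maximum and a column maximum are forced onto a common edge. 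So this step reduces to a K\"onig/Hall or max-flow argument on the transportation graph; I expect the bookkeeping around ties (many rows or columns sharing the global maximum, or the equality case $\max_{i\in I}r_i=\max_{j\in J^c}c_j$) to be the delicate part.

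Finally, $(2)\Rightarrow(1)$ is the $\infty$-norm analogue of Sinkhorn's theorem, and here I would invoke Observation \ref{obs:symvsequiv} in the form valid for all $p$-norms: equivalence $\infty$-norm scaling of $A$ is the same as symmetric $DAD$ scaling of the matrix $A'$ of \eqref{eqn:symmetricspecial} to row maxima $(r^T,c^T)^T$, and hypothesis $(2)$ becomes the existence of a symmetric matrix with the pattern of $A'$ and those row maxima. For the symmetric max-scaling problem one runs a RAS-type iteration: $DAD$ has row maxima $\lambda$ iff $\lambda_i=x_i\max_j(A_{ij}x_j)$, i.e.\ $x$ is a fixed point of $\mathbf{T}_{\mathrm{sym}}(x)_i=\lambda_i/\max_j(A_{ij}x_j)$, and iterating after normalisation keeps $x$ in a compact simplex. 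Since $\mathbf{T}_{\mathrm{sym}}$ is homogeneous and order-preserving on the positive cone, this is again a nonlinear Perron-Frobenius situation (Section \ref{sec:perfro}); the obstacle is that the $\max$ makes the map only nonexpansive, not strictly contractive, in Hilbert's metric, so convergence and interiority of the fixed point are not automatic and the fixed point need not be unique. I would therefore close the argument either by citing the symmetric $\infty$-norm algorithm of \cite{bun71} and \cite{rot94}, or by a Brouwer-type compactness argument in the spirit of Section \ref{sec:topidea}, showing that the pattern hypothesis $(2)$ (equivalently $(4)$) forces the limiting $x$ to be strictly positive, which then produces the required positive diagonal matrices $D_1,D_2$.
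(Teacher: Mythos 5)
The paper itself does not prove this theorem: it is quoted from \cite{rot94}, with the remark that the proof there goes through symmetric $DAD$ max-scaling via Observation \ref{obs:symvsequiv} and an algorithm going back to \cite{bun71}, so your overall architecture for the key direction matches the literature's. Your chain $(1)\Rightarrow(2)\Rightarrow(3)\Rightarrow(4)$ is correct (and you implicitly repair the index slip in the third inequality, which should read $\max_{i\in I^c}r_i$ rather than $\max_{i\in J^c}r_i$). However, for $(4)\Rightarrow(2)$ you are working much too hard, and the assignment-problem framing is the wrong model: several row maxima may legitimately be attained in the same column, so there is nothing to match. Simply set $B_{ij}=\min(r_i,c_j)$ whenever $A_{ij}>0$ and $B_{ij}=0$ otherwise. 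Applying the second inequality of $(4)$ to $I=\{i\}$ and $J=\{j\mid A_{ij}=0\}$ produces a $j$ in the support of row $i$ with $c_j\geq r_i$, so the $i$-th row maximum of $B$ is exactly $r_i$; the transposed argument handles the columns, and positivity of $r,c$ gives $B$ exactly the pattern of $A$. No Hall/K\"onig or max-flow argument and no tie bookkeeping are needed; only the singleton instances of $(4)$ enter.

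The genuine gap is in $(2)\Rightarrow(1)$, and you have diagnosed it yourself without closing it. The reduction via Observation \ref{obs:symvsequiv} and the fixed-point reformulation $x_i=\lambda_i/\max_j(A_{ij}x_j)$ are fine, but the map $x\mapsto(\max_j A_{ij}x_j)_i$ is order-preserving and homogeneous without being linear, so Birkhoff's contraction theorem does not apply and the iteration is merely nonexpansive in Hilbert's metric; and the Brouwer alternative yields a fixed point on the closed simplex only, so the entire burden falls on showing that the pattern hypothesis forces the fixed point into the interior $\R^n_+$ --- precisely the step you leave open. As written, this implication is an appeal to \cite{bun71,rot94} rather than a proof; to make the argument self-contained you would need either a direct monotone-convergence analysis of the iteration as in \cite{rot94}, or an explicit interiority argument (e.g.\ a lower bound on the iterates $x^{(k)}_i$ in terms of the data, derived from condition $(4)$) before invoking compactness.
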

There are two further technical conditions given in \cite{rot94} as well as an algorithm that converges to the solution.

The usual equivalence scaling now corresponds to $1$-norm scaling. For $p$-norms of row and columns with $0<p<\infty$, it is shown that this problem reduces to $1$-norm scaling in \cite{rot94}. If $A^{(p)}$ denotes the entrywise power, we have:

\begin{thm}[\cite{rot94}]
Let $A\in \R^{m\times n}$ be a nonnegative matrix and $r\in \R^m$, $c\in \R^n$. Then the following are equivalent:
\begin{enumerate}
	\item There exist matrices $D_1$ and $D_2$ such that $D_1AD_2=B$ has prescribed row and column $p$-norms $r$ and $c$.
	\item There exist matrices $D_1$ and $D_2$ such that $D_1^{(p)}A^{(p)}D_2^{(p)}$ has prescribed row and column sums $r^{(p)}$ and $c^{(p)}$. 
	\item There exists a matrix $B$ with the same pattern as $A$ and row and column sums given by $r^{(p)}$ and $c^{(p)}$. 
\end{enumerate}
\end{thm}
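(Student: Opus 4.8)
The plan is to reduce the $p$-norm scaling problem entirely to the $1$-norm (ordinary row/column sum) scaling problem by applying the entrywise $p$-th power map, and then invoke Theorem~\ref{thm:sink} (in the maximal form given at the start of Section~\ref{sec:equivscaling}). The crucial elementary observation is the following: if $D_1 = \diag(u)$ and $D_2 = \diag(v)$ are nonnegative diagonal matrices and $B = D_1 A D_2$, then the entrywise $p$-th power satisfies $B^{(p)} = D_1^{(p)} A^{(p)} D_2^{(p)}$, since $(u_i A_{ij} v_j)^p = u_i^p A_{ij}^p v_j^p$. Moreover the $p$-norm of row $i$ of $B$ equals $\bigl(\sum_j B_{ij}^p\bigr)^{1/p} = \bigl((B^{(p)})\text{'s $i$-th row sum}\bigr)^{1/p}$, and likewise for columns. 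Hence ``$B$ has row $p$-norms $r$ and column $p$-norms $c$'' is \emph{exactly} the statement ``$B^{(p)}$ has row sums $r^{(p)}$ and column sums $c^{(p)}$.'' This gives the equivalence of items 1 and 2 immediately, in both directions: given $D_1, D_2$ achieving the $p$-norm scaling, the matrices $D_1^{(p)}, D_2^{(p)}$ achieve the corresponding $1$-norm scaling of $A^{(p)}$; conversely, given $E_1, E_2$ nonnegative diagonal with $E_1 A^{(p)} E_2$ having row sums $r^{(p)}$ and column sums $c^{(p)}$, set $D_1 := E_1^{(1/p)}$, $D_2 := E_2^{(1/p)}$ (the entrywise $p$-th root, well-defined and nonnegative since $p>0$), and then $D_1^{(p)} A^{(p)} D_2^{(p)} = E_1 A^{(p)} E_2$, so $D_1 A D_2$ has the prescribed $p$-norms.

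Next I would handle the equivalence of items 2 and 3. This is a direct application of Theorem~\ref{thm:sink} to the nonnegative matrix $A^{(p)}$ with target row sums $r^{(p)}$ and column sums $c^{(p)}$: that theorem says $A^{(p)}$ is scalable to these sums if and only if there exists a matrix $B'$ with $B'e = r^{(p)}$, $(B')^Te = c^{(p)}$, and the same pattern as $A^{(p)}$. The only point needing a remark is that $A^{(p)}$ and $A$ have the same pattern --- indeed $A_{ij}>0 \iff A_{ij}^p > 0$ since $p>0$ --- so a matrix has the pattern of $A^{(p)}$ precisely when it has the pattern of $A$. Then given such a $B'$, the matrix $B := (B')^{(1/p)}$ has the pattern of $A$ and row and column $p$-norms $r$ and $c$; conversely, from a pattern-matching $B$ with the prescribed $p$-norms we recover $B' = B^{(p)}$. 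Chaining these gives $2 \iff 3$, and combined with the previous paragraph, all three items are equivalent.

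I do not expect a genuine obstacle here; the statement is essentially a change of variables. The only place to be careful is the bookkeeping around the entrywise power and root maps: one must check that these are bijections on the nonnegative reals (true for $0 < p < \infty$), that they preserve the zero pattern (true), and that they interact correctly with diagonal conjugation (the displayed identity $D_1^{(p)} A^{(p)} D_2^{(p)} = (D_1 A D_2)^{(p)}$). A minor subtlety worth flagging is whether one insists the diagonal matrices be positive or merely nonnegative; since the $p$-th power and $p$-th root both fix $0$ and are strictly increasing on $[0,\infty)$, positivity (or invertibility) of $D_1, D_2$ transfers faithfully to $D_1^{(p)}, D_2^{(p)}$ and back, so the correspondence respects whichever convention Theorem~\ref{thm:sink} is stated under. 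Beyond that, the proof is just assembling the three biconditionals.
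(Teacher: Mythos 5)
Your proposal is correct and follows exactly the route the paper indicates for this result: the reduction to $1$-norm scaling via the entrywise power identity $(D_1AD_2)^{(p)}=D_1^{(p)}A^{(p)}D_2^{(p)}$, followed by an application of Theorem \ref{thm:sink} to $A^{(p)}$, which shares its pattern with $A$. The paper itself only states this reduction without writing out the details, and your bookkeeping (bijectivity of entrywise powers on $[0,\infty)$, pattern preservation, transfer of positivity of the diagonal factors) fills them in correctly.
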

Hence the answer again reduces to a question of patterns. Likewise, the $\varepsilon$-scalability can immediately be transferred. Much weaker results were obtained in \cite{liv04}, where the problem of $2$-norm scaling was studied for arbitrary (not necessarily nonnegative) matrices. A (fast) algorithm is also derived in \cite{kni12}. 

\subsection{Row and column product scaling} \label{sec:rowprod}
At this point, one might wonder what happens when replacing the row- and column sums by row- and column products. This has been treated in \cite{rot92}, however it is not connected to entropy or maximum likelihood estimation, but instead to least square estimations, which is why we will not discuss the techniques here. However, this is interesting in light of the original justification of the RAS method in transportation planning by \cite{dem40}. The results are simple:

\begin{thm}[\cite{rot92}]
Let $A$ be a nonnegative matrix. Then the following are equivalent:
\begin{enumerate}
	\item There exist positive diagonal matrices $D_1$ and $D_2$ such that $D_1AD_2$ has row and column products $r_p$ and $c_p$.
	\item There exists a matrix $B$ with the same zero pattern as $A$ and row and column products $r_p$ and $c_p$.
\end{enumerate}
The scalded matrix $D_1AD_2$ is unique. 

Furthermore, if $A$ has no zero rows or columns, there always exists a matrix $D$ such that $DAD^{-1}$ has equal row and column products.
\end{thm}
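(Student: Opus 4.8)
The plan is to reduce everything to linear algebra, because---in sharp contrast to Sinkhorn's theorem---the product version becomes \emph{linear after taking logarithms}. Write $A\in\R^{m\times n}$, $S=\supp(A)$, let $N\in\{0,1\}^{m\times n}$ be the indicator matrix of $S$, and let $k=Ne$, $\ell=N^{T}e$ record how many nonzero entries each row, resp.\ column, of $A$ has. If $D_1=\diag(e^{u})$ and $D_2=\diag(e^{v})$ with $u\in\R^{m}$, $v\in\R^{n}$, then the logarithm of the product of the nonzero entries in row $i$ of $D_1AD_2$ is $k_iu_i+(Nv)_i+\sum_{j:(i,j)\in S}\ln A_{ij}$, and symmetrically for columns. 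Hence ``$D_1AD_2$ has row products $r_p$ and column products $c_p$'' is equivalent to solvability of the linear system
\begin{align}
	Q\begin{pmatrix}u\\v\end{pmatrix}=\begin{pmatrix}\rho\\\gamma\end{pmatrix},\qquad
	Q:=\begin{pmatrix}\diag(k) & N\\ N^{T} & \diag(\ell)\end{pmatrix}, \label{eqn:prodsystem}
\end{align}
where $\rho_i=\ln(r_p)_i-\sum_{j:(i,j)\in S}\ln A_{ij}$ and $\gamma_j=\ln(c_p)_j-\sum_{i:(i,j)\in S}\ln A_{ij}$. The matrix $Q$ is the \emph{signless Laplacian} of the bipartite graph $G$ with biadjacency matrix $N$; conjugating by the signature matrix $\diag(I_m,-I_n)$ turns $Q$ into the ordinary graph Laplacian $L$ of $G$, so $Q$ is symmetric positive semidefinite, $\ran Q=(\kernel Q)^{\perp}$, and $\kernel Q$ is spanned by the vectors that equal $+1$ on the row-vertices and $-1$ on the column-vertices of a single connected component of $G$.

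With this in hand the equivalence is short. The implication $1.\Rightarrow 2.$ is immediate: $B=D_1AD_2$ has the pattern of $A$ and the prescribed products. For $2.\Rightarrow 1.$, take a witness $B$ with $\supp B=\supp A$ and set $w_{ij}=\ln(B_{ij}/A_{ij})$ for $(i,j)\in S$; then the right-hand side of (\ref{eqn:prodsystem}) is exactly the vector of row sums and column sums of $w$. Testing it against the generator of $\kernel Q$ attached to a component $C$ gives $\sum_{i\in C}\sum_{j:(i,j)\in S}w_{ij}-\sum_{j\in C}\sum_{i:(i,j)\in S}w_{ij}$, and both double sums equal $\sum_{(i,j)\in S\cap(C\times C)}w_{ij}$, since an edge of $G$ incident to a vertex of $C$ stays inside $C$; hence the right-hand side lies in $\ran Q$, the system is solvable, and exponentiating a solution yields $D_1,D_2$ which by back-substitution realise $r_p,c_p$. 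Uniqueness of the scaled matrix drops out of the same picture: two solutions of (\ref{eqn:prodsystem}) differ by an element of $\kernel Q$, which on each component adds a constant $t_C$ to the row log-scalings and $-t_C$ to the column log-scalings, leaving every entry $e^{u_i}A_{ij}e^{v_j}$ unchanged; in particular $D_1,D_2$ are unique up to one multiplicative constant per connected block of $A$, just as in the additive case.

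For the final assertion, repeat the logarithmic trick with $B=DAD^{-1}$, $D=\diag(e^{\delta})$: requiring, for each $i$, that the product of the nonzero entries of row $i$ of $B$ equal that of column $i$ of $B$ becomes a linear system $L'\delta=\beta$ where $L'$ is the ordinary Laplacian of the undirected multigraph obtained by symmetrising the pattern of $A$ and $\beta_i=\sum_{j:(j,i)\in S}\ln A_{ji}-\sum_{j:(i,j)\in S}\ln A_{ij}$. Exactly as before, $\sum_{i\in C}\beta_i=0$ for every connected component $C$ (both contributions collapse to $\sum_{(i,j)\in S\cap(C\times C)}\ln A_{ij}$), so $\beta\perp\kernel L'$ and the system is \emph{always} solvable once $A$ has no zero row or column. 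This is precisely why, unlike additive matrix balancing, no reducibility hypothesis is needed here.

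I expect the only real work to be bookkeeping rather than a conceptual obstacle: getting the kernel/range description of the signless Laplacian exactly right, in particular treating disconnected patterns and isolated vertices (zero rows or columns) correctly, and checking that condition $2.$ encodes neither more nor less than the per-component ``product identity'' forced on \emph{every} matrix with the given pattern. Once that identity is isolated, both existence statements and the uniqueness statement are immediate consequences of the fact that a symmetric positive semidefinite system $Qx=b$ is solvable exactly when $b\perp\kernel Q$.
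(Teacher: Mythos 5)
Your argument is correct, and it is worth noting that the paper itself does not prove this theorem: it only cites \cite{rot92} and explicitly declines to discuss the techniques, remarking that they are tied to least-squares estimation rather than entropy. Your observation that the product-scaling problem becomes \emph{linear} after taking logarithms is exactly the mechanism behind that remark, and your execution is sound: the system $Q(u,v)^{T}=(\rho,\gamma)^{T}$ with $Q$ the signless Laplacian of the bipartite pattern graph is the right object, the conjugation by $\diag(I_m,-I_n)$ correctly identifies $\kernel Q$ with the signed component indicators, the witness $B$ in condition 2 supplies precisely the per-component orthogonality $\sum_{i\in C}\rho_i-\sum_{j\in C}\gamma_j=0$ needed for solvability, and the kernel description immediately gives both uniqueness of the scaled matrix and uniqueness of $D_1,D_2$ up to one constant per connected block. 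The $DAD^{-1}$ statement goes through the same way with the ordinary Laplacian of the symmetrised pattern, and the identity $\sum_{i\in C}\beta_i=0$ holds because both sums collapse to $\sum_{(i,j)\in S\cap(C\times C)}\ln A_{ij}$. The only point you should make explicit rather than leave as ``bookkeeping'' is the convention that row and column products are taken over the \emph{nonzero} entries (otherwise any zero entry annihilates the product and statement 1 would force $A$ positive); with that convention fixed, and zero rows or columns excluded or handled as isolated vertices whose prescribed product must be the empty product, the proof is complete. This is a genuinely cleaner and more self-contained route than anything sketched in the review, since it replaces the fixed-point or variational machinery needed for sum-scaling by elementary linear algebra over $\ran Q=(\kernel Q)^{\perp}$.
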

Note that in the case of matrix balancing to equal row and column products, the result is also the same: This is possible if and only if a balanced matrix with the same pattern exists which is always the case (cf. \cite{rot92}, Theorem 5.2).

\section{Algorithms and Convergence complexity}
After the basic existence problems of matrix scaling were solved in the 60s to 80s, the focus shifted to algorithms and complexity theory in the 90s. The story is equally convoluted, not least because algorithmic complexity is difficult and not always well-defined in itself: One can decide to study worst case or average convergence speed, count algorithm steps or computational operations. Given the RAS method and the fact that it is a coordinate descent method for an intrinsically convex optimisation problem, which is amenable to a host of other techniques, the choice of a relevant class of algorithms is already not unique. 

Since this review is geared more towards the mathematical aspects of the problem, our focus will lie on exact complexity results instead of proofs by example. Papers focussed on numerical aspects appeared as early as the late 70s, early 80s with \cite{rob74} average convergence considerations, \cite{bac79} and \cite{par82}. A small overview about many of the recent developments can be found in \cite{kni12}.

\subsection{Scalability tests}
Most algorithms explicitly require that the matrix $A$ be scalable (or positive). This means that we first need to check for scalability.
\begin{prop}
Let $r\in \R^m_+,c\in\R^n_+$ be two positive vectors with $\sum_i r_i=\sum_j c_j$. Let $A$ be a nonnegative matrix, then one can check whether $A$ is approximately scalable in polynomial time $\mathcal{O}(pq\log(q^2/p))$ with $q=\min\{m,n\}$ and $p$ the number of nonzero elements in $A$. 

If $r\in \Q^m_+,c\in \Q^n_+$, then one can check for exact scalability in polynomial time of the same order.
\end{prop}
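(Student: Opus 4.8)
\noindent The plan is to reduce both decision problems to a single maximum-flow computation on the bipartite transportation network of Section~\ref{sec:equivscaling}. Since we are given $\sum_i r_i=\sum_j c_j$ and $r,c>0$, we may reject immediately if $A$ has a zero row or column, and otherwise $m,n\le p$. Build the network $N(A,r,c)$ exactly as in Figure~\ref{fig:transportationgraph}: a source $S_1$ joined to each row vertex $i$ by an arc of capacity $r_i$, each column vertex $j$ joined to a sink $S_2$ by an arc of capacity $c_j$, and an arc of capacity $\min\{r_i,c_j\}$ from row vertex $i$ to column vertex $j$ whenever $A_{ij}>0$ (this finite capacity is equivalent to $+\infty$, since no interior arc can carry more than $r_i$ or $c_j$). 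By the transportation-graph characterisation recalled at the end of Section~\ref{sec:equivscaling}, $A$ is approximately scalable to row sums $r$ and column sums $c$ if and only if the maximum $S_1$--$S_2$ flow in $N(A,r,c)$ has value $\sum_i r_i$; so the approximate-scalability test is simply to compute a maximum flow and compare its value with $\sum_i r_i$.

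For the running time, note that $N(A,r,c)$ is a bipartite network whose bipartition classes have $m$ and $n$ vertices, whose interior arcs number $p$, and whose source/sink arcs number $m+n=\mathcal{O}(p)$; the smaller class has $q=\min\{m,n\}$ vertices. Running a maximum-flow algorithm tuned to bipartite networks — the preflow--push variant whose complexity on a bipartite network with smaller side of size $q$ and $p$ arcs is $\mathcal{O}(pq\log(q^2/p+2))$ — yields the stated bound $\mathcal{O}(pq\log(q^2/p))$. The rationality of $r,c$ in the second half of the statement is used only to make it a genuine bit-complexity claim: with rational data the maximum flow, the residual network, and all comparisons below involve numbers whose encoding length stays polynomial in the input size; for the approximate test the same holds, or else one works in the arithmetic (real-RAM) model.

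For exact scalability, by the same characterisation $A$ is exactly scalable if and only if $N(A,r,c)$ admits a maximum flow (necessarily of value $\sum_i r_i$) that is strictly positive on every interior arc $(i,j)$, i.e.\ on every arc with $A_{ij}>0$. Since the set of maximum flows is a (bounded) polytope, such a flow exists if and only if, for each interior arc $e$, \emph{some} maximum flow is positive on $e$: averaging one such flow $f_e$ for each of the $p$ interior arcs gives a maximum flow positive on all of them. Given one maximum flow $f$ — already computed for the approximate test — a short exchange-argument shows that an interior arc $e=(u,v)$ carries positive flow in some maximum flow precisely when $f(e)>0$, or when $e$ lies on a cycle of the residual network $G_f$, equivalently when $u$ and $v$ lie in the same strongly connected component of $G_f$ (push a small amount of flow around the cycle $u\to v\to\cdots\to u$, which preserves the flow value and makes $e$ positive; conversely decompose $g-f$ for any maximum flow $g$ with $g(e)>0$ into residual cycles). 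The strongly connected components of $G_f$ are computed once in time $\mathcal{O}(p)$, after which all interior arcs are checked in $\mathcal{O}(p)$ total, and $A$ is exactly scalable iff every interior arc passes this test. The overall cost is dominated by the single maximum-flow call, giving the claimed bound.

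The routine parts are the network construction and the bookkeeping that keeps every auxiliary step (the sum check, deletion of zero lines, residual-component computation) within $\mathcal{O}(p)$ so that the max-flow call dominates. The genuinely delicate point is the exact-scalability reduction: one must argue carefully that ``a maximum flow positive on all interior arcs exists'' is equivalent to ``each interior arc is positive in some maximum flow'' and that the latter is decided by the residual-cycle criterion above — and, on the complexity side, that invoking a bipartite-specialised maximum-flow routine (rather than a generic one) is exactly what produces the $\log(q^2/p)$ factor and the dependence on $q=\min\{m,n\}$ rather than on $m+n$.
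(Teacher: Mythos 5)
Your reduction of approximate scalability to a single maximum-flow computation on the transportation network of Figure~\ref{fig:transportationgraph} is exactly the paper's argument (following \cite{bal04}), including the appeal to a bipartite-specialised max-flow routine for the $\mathcal{O}(pq\log(q^2/p))$ bound. Where you genuinely diverge is in the exact-scalability test. The paper (following \cite{kal08}) uses the rationality of $r,c$ in an essential way: it clears denominators so that $hr,hc$ are integral, invokes the fact that a feasible integral transportation problem then admits a solution whose positive entries are bounded below by $1/|E|$, and reduces the question to a max-flow problem with a uniform lower bound of $1/(2|E|)$ on every interior arc, at the cost of roughly two max-flow calls. You instead characterise exact scalability as the existence of a maximum flow that is positive on every interior arc, observe by convexity of the set of maximum flows that this holds iff each interior arc individually carries positive flow in \emph{some} maximum flow, and decide the latter by the residual-cycle (strongly-connected-component) criterion applied to a single computed maximum flow $f$; the exchange argument via the cycle decomposition of $g-f$ is correct and the SCC computation adds only $\mathcal{O}(p)$ overhead. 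Your route is cheaper (one max-flow call instead of two) and, notably, does not need rationality for correctness at all --- only to make the statement meaningful in the bit-complexity model, which you rightly flag; the paper's route buys an explicit quantitative lower bound on the entries of the scaled pattern matrix, which is occasionally useful downstream but is not needed for the decision problem. Both arguments establish the claimed complexity.
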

The fact that approximate scalability can be efficiently checked was probably first seen in \cite{lin00a}. A complete and well-readable proof giving explicit bounds appeared in \cite{bal04}, exact scalability can be found in \cite{kal08}.
\begin{proof}[Sketch of Proof]
We first follow the proof in \cite{bal04}, which uses the transportation graph described in Figure \ref{fig:transportationgraph}.

The matrix is approximately scalable iff the maximum flow of this network is equal to $\sum_i r_i$. The flows along the edges $E$ then define a matrix with the wanted pattern. Such a network flow problem can be solved in time $\mathcal{O}(pq\log(q^2/p))$ with $q=\min\{m,n\}$ and $p$ the number of nonzero elements in $A$ (\cite{ahu94}). 

In order to check for exact scalability, one has to check whether there exists a solution where each edge has a positive amount of flow (otherwise the entry would have to be reduced to zero). We can check for a solution to the maximum flow problem with minimum flow through each edge bigger than a prespecified value $\varepsilon$ with the same costs as solving a maximum flow problem twice. Clearly, this does not help as, we would have to check scalability for any $\varepsilon>0$. 

However (following \cite{kal08}) if $r,c$ have only rational entries, we can find a number $h$ such that $hr,hc$ have only integer values. In this case, the flow problem has a solution iff there exists a matrix $B$ with column sum $hc$ and row sum $hr$ where each positive entry fulfils $B\geq 1/|E|$, where $|E|$ denotes the number of edges in $E$. 

This implies that it suffices to check for a solution with capacities $hr, hc$ and minimum flow through each edge having prespecified value $1/(2|E|)$.
\end{proof}
For positive semidefinite matrices, scalability can also be checked easily:
\begin{prop}[\cite{kha96}]
Let $A\in\R^{n\times n}$ be positive semidefinite. Then $A$ is scalable if and only if $Ax=0$ and $e^Tx=1$ has no solution $x\geq 0$. This can be tested by a linear program.
\end{prop}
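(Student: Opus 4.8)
The plan is to reduce the statement to the characterisation of scalability for positive semidefinite matrices already recorded in Theorem~\ref{thm:dadscalings} --- namely that a positive semidefinite $A$ is scalable if and only if it is strictly copositive --- and then to translate strict copositivity of a \emph{positive semidefinite} matrix into the linear feasibility condition appearing in the proposition.

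First I would record the elementary fact that for a positive semidefinite $A$ one has $x^TAx=0$ if and only if $Ax=0$: writing $A=B^TB$ (e.g.~via the spectral decomposition), $x^TAx=\|Bx\|_2^2$, so $x^TAx=0$ forces $Bx=0$ and hence $Ax=B^T(Bx)=0$, while the converse is immediate. Since moreover $x^TAx\geq 0$ for every $x$, the matrix $A$ \emph{fails} to be strictly copositive precisely when there is a nonzero $x\geq 0$ with $x^TAx\leq 0$, equivalently with $x^TAx=0$, equivalently with $Ax=0$. Finally, any such $x$ is nonzero and nonnegative, so $e^Tx>0$, and rescaling by $1/(e^Tx)$ we may assume $e^Tx=1$; conversely, a solution of $Ax=0$, $e^Tx=1$, $x\geq 0$ is in particular a nonzero nonnegative vector in the kernel of $A$. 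Hence $A$ is strictly copositive if and only if the system $Ax=0$, $e^Tx=1$, $x\geq 0$ has no solution.

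Combining this equivalence with Theorem~\ref{thm:dadscalings} then yields exactly the claimed statement: $A$ is scalable iff the system $Ax=0$, $e^Tx=1$, $x\geq 0$ is infeasible. For the last sentence I would simply observe that deciding feasibility of this system is a linear program --- the constraints $Ax=0$ (that is, $n$ linear equalities), $e^Tx=1$, and $x\geq 0$ are all linear --- so one can, for instance, maximise the zero objective (or run a Phase~I of the simplex method) over this polyhedron and read off whether it is empty.

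The computations here are routine; the only place that needs a little care is the identity $x^TAx=0\Leftrightarrow Ax=0$, which genuinely uses positive semidefiniteness (it fails for indefinite $A$, for which the scaling problem is in fact NP-hard, as noted after Theorem~\ref{thm:dadscalings}), together with the remark that imposing $e^Tx=1$ loses nothing because a nonzero nonnegative vector has strictly positive coordinate sum. The substantive input --- the equivalence between scalability and strict copositivity for positive semidefinite $A$ --- is imported wholesale from Theorem~\ref{thm:dadscalings}, so the main ``obstacle'' is really just ensuring that that theorem is invoked with the intended notion of scalability (existence of a positive diagonal $D$ with $DADe=\lambda$ for the prescribed positive $\lambda$).
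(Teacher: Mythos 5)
Your argument is correct, and it is in fact more complete than what the paper itself records: the paper's ``proof'' consists of a single remark about the linear program (maximise $e^Tx$ subject to $Ax=0$, $x\geq 0$, and check whether the optimum is zero or unbounded), while the equivalence between scalability and the infeasibility of the system is simply attributed to the cited reference and never derived. You instead obtain that equivalence internally, by chaining Theorem~\ref{thm:dadscalings} (for positive semidefinite $A$, scalability is equivalent to strict copositivity) with the elementary observations that $x^TAx=0\Leftrightarrow Ax=0$ for positive semidefinite $A$ and that a nonzero nonnegative kernel vector can be normalised to $e^Tx=1$. Each step is sound; the only caveat is that the substantive content is thereby shifted onto Theorem~\ref{thm:dadscalings}, which the paper also states without proof, so neither route is self-contained from first principles. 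On the algorithmic sentence the two treatments differ only cosmetically: you propose a Phase~I feasibility test of the system $Ax=0$, $e^Tx=1$, $x\geq 0$ directly, whereas the paper drops the normalisation and maximises $e^Tx$ over the cone $\{x\geq 0: Ax=0\}$ (where one must then distinguish optimal value $0$ from unboundedness); both are legitimate linear programs and decide the same question.
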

\begin{proof}
The formulation is already nearly in canonical form. We maximize $e^Tx$ subject to the equality constraints $Ax=0$ and $x\geq 0$. 
\end{proof}
For arbitrary matrices scalability is mostly NP-hard (see Section \ref{sec:genscaling}).

\subsection{The RAS algorithm}
The RAS algorithm, being the natural algorithm to compute approximate scaling, is also the most studied algorithm. For the case of doubly stochastic matrices, it has long been known (cf. \cite{sin67b}) that for positive matrices, the RAS converges linearly (sometimes called geometrically) in the $l_{\infty}$ norm. \cite{kru79} gave a simple argument that the iteration will get better at any step. This can also be inferred from the fact that the RAS method is iterated I-projection onto a convex set using \cite{csi75}. Later, \cite{fra89} showed that the convergence is also linear in Hilbert's projective metric, while \cite{sou91} showed linear convergence for all exactly scalable matrices basically in arbitrary vector norms, albeit without explicit bounds. Conversely, it was shown that only scalable matrices can have linear convergence meaning that the RAS converges sublinear for matrices with support that is not total (\cite{ach93}). We have the following best bounds:

\begin{thm}[\cite{kni08}, Theorem 4.5] \label{thm:convergencespeed}
Let $A\in\mathbb{R}^{n\times n}$ be a fully indecomposable matrix and denote by $D_1,D_2$ the diagonal matrices such that $D_1AD_2$ is doubly stochastic. Let $D_1^i$ and $D_2^i$ be the diagonal matrices after the $k$-th step of the Sinkhorn iteration, there exists a $K\in \mathbb{N}$ such that for all $k\geq K$, in an appropriate matrix norm
\begin{align}
\|D_1^{i+1}\oplus D_2^{i+1}-D_1\oplus D_2\|\leq \sigma_2^2\|D_1^{i}\oplus D_2^{i}-D_1\oplus D_2\|
\end{align}
where $\sigma_2$ is the second largest singular value of $D_1AD_2$.
\end{thm}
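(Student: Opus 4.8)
The plan is to turn this into a local convergence analysis of the Menon fixed-point iteration, whose derivative at the fixed point is (up to a harmless nilpotent coupling) the symmetric positive semidefinite matrix $P^{T}P$, where $P=D_{1}AD_{2}$ is the doubly stochastic limit; the second largest eigenvalue of $P^{T}P$ is exactly $\sigma_{2}^{2}$. First I would reduce to the doubly stochastic case: since $A$ is fully indecomposable it is scalable, so $P:=D_{1}AD_{2}$ is doubly stochastic, and in the coordinates of the running diagonal scalings one full RAS cycle applied to $A$ is conjugated by the fixed pair $(D_{1},D_{2})$ into one full RAS cycle applied to $P$. This conjugation is a diffeomorphism near the fixed point, so the asymptotic contraction rates of the two iterations agree, and we may assume $A=P$; the fixed point is then $x^{*}=y^{*}=e$, because $Pe=e$ and $P^{T}e=e$.

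Next I would linearise. By Observation \ref{obs:rasperfro} one RAS cycle on $P$ is the Menon map $x\mapsto\mathbf{T}(x)=e/\bigl(P^{T}(e/(Px))\bigr)$ with auxiliary row scaling $y=e/(Px)$. Substituting $x=e+\varepsilon\xi$ and expanding each reciprocal via $1/(1+t)=1-t+O(t^{2})$ componentwise gives $\mathbf{T}(e+\varepsilon\xi)=e+\varepsilon\,P^{T}P\xi+O(\varepsilon^{2})$ and $y=e-\varepsilon\,P\xi+O(\varepsilon^{2})$. Hence, writing the joint error of the two diagonals as $(\xi,\eta)$, the iteration is differentiable at the fixed point with derivative $L=\left(\begin{smallmatrix} P^{T}P & 0\\ -P & 0\end{smallmatrix}\right)$.

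Then I would analyse the spectrum of $L$. The block $P^{T}P$ is symmetric positive semidefinite with eigenvalues $\sigma_{i}(P)^{2}$; its top eigenvalue is $\sigma_{1}^{2}=1$ with eigenvector $e$, corresponding to the homogeneity $\mathbf{T}(\lambda x)=\lambda\mathbf{T}(x)$, which is removed by normalising the iterates (e.g.\ fixing $\sum_{i}(x_{n})_{i}$). Full indecomposability of $A$ makes $\sigma_{1}=1$ a \emph{simple} singular value of $P$ with $\sigma_{2}<1$ (the input from the theory of matrix patterns, Appendix \ref{app:prelimmatrices}), so on the invariant complement of the neutral direction $L$ has spectral radius $\sigma_{2}^{2}<1$. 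A short computation shows $L$ is diagonalisable: since $P^{T}P$ is positive semidefinite one gets $\ker L^{2}=\ker L$, so there is no Jordan block at the eigenvalue $0$, while symmetry of $P^{T}P$ rules out Jordan blocks at the nonzero eigenvalues. Therefore there is a norm on that complement in which $L$ has operator norm exactly $\sigma_{2}^{2}$ — the ``appropriate matrix norm'' of the statement.

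Finally, Ostrowski's local convergence theorem for fixed-point iterations gives that the normalised error $e_{k}=D_{1}^{k}\oplus D_{2}^{k}-D_{1}\oplus D_{2}$ tends to $0$ with asymptotic rate $\sigma_{2}^{2}$, i.e.\ $\|e_{k+1}\|\le(\sigma_{2}^{2}+o(1))\|e_{k}\|$, and in particular $\|e_{k+1}\|\le(\sigma_{2}^{2}+\varepsilon)\|e_{k}\|$ for all $k\ge K(\varepsilon)$. The delicate point, and what I expect to be the real work, is to sharpen this to the clean bound with $\sigma_{2}^{2}$ itself: one must control the quadratic remainder of $\mathbf{T}$, using that along the actual trajectory the pair $(\xi_{k},\eta_{k})$ (with $\eta_{k}=-P\xi_{k-1}+O(\|\xi_{k-1}\|^{2})$) aligns asymptotically with the dominant eigendirection of $L$, so that the second-order terms get absorbed by the chosen norm once $k$ is large. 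Establishing the precise matrix-pattern fact ``fully indecomposable $\Rightarrow\sigma_{2}(P)<1$'' is the only other non-routine ingredient; the rest is Taylor expansion and linear algebra.
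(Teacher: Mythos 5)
The paper does not actually prove this theorem: it is quoted from \cite{kni08}, and the text only records two remarks about the cited proof, namely that it relies on doubly stochastic patterns being direct sums of primitive matrices, and that the appearance of $\sigma_2$ reflects the spectral gap of $D_1AD_2$. Your route --- reduce to the doubly stochastic case, linearise the Menon map of Observation \ref{obs:rasperfro} at the fixed point, identify the nontrivial part of the Jacobian with $P^TP$, and quotient out the scale-invariance direction $e$ --- is exactly the strategy of the cited proof, and your computation $\mathbf{T}(e+\varepsilon\xi)=e+\varepsilon P^TP\xi+O(\varepsilon^2)$ is correct. The ingredient you flag as non-routine, ``fully indecomposable $\Rightarrow\sigma_2(P)<1$'', is precisely the fact the review says the proof crucially relies on; it follows because $PP^T$ is then symmetric, irreducible and has positive diagonal, hence is primitive, so its Perron eigenvalue $1$ is simple.

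The genuine gap is the last step, which you have correctly located but not closed. A local linearisation argument yields only $\|e_{k+1}\|\leq(\sigma_2^2+\varepsilon)\|e_k\|$ for $k\geq K(\varepsilon)$, equivalently $\|e_{k+1}\|\leq\sigma_2^2\|e_k\|+O(\|e_k\|^2)$; the clean constant $\sigma_2^2$ does not follow from it. Your proposed repair --- that the error aligns asymptotically with the dominant eigendirection of $L$ so that the quadratic remainder ``gets absorbed by the chosen norm'' --- fails as stated: if the error is asymptotically parallel to the eigenvector of eigenvalue $\sigma_2^2$, the linear part contracts by \emph{exactly} $\sigma_2^2$, and the quadratic remainder, whose sign you do not control, can push the ratio $\|e_{k+1}\|/\|e_k\|$ above $\sigma_2^2$ at every step. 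To finish you must either settle for the asymptotic statement $\limsup_k\|e_{k+1}\|/\|e_k\|\leq\sigma_2^2$ (which is what the linearisation honestly gives, and is how the source presents the rate) or do genuine second-order work on the remainder. Two smaller loose ends: the fixed point is only a ray $\{(cD_1,c^{-1}D_2):c>0\}$ and the neutral eigenvalue $1$ of $L$ lives along it, so you must argue that the scalar component of the error converges at least as fast as the transverse component before the stated bound on $D_1^i\oplus D_2^i$ makes sense; and the block $-P$ in your $L$ means the $D_1$-error lags the $D_2$-error by a half-step, which needs bookkeeping when passing from the vector iteration to the joint error norm.
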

The proof of this theorem crucially relies on the fact that matrices with a doubly stochastic pattern are direct sums of primitive matrices (modulo row and column permutations). Hence it cannot easily be extended to matrices with arbitrary row and column sums if those matrix patterns allow for non-primitive matrices. The approach in \cite{fra89} for positive matrices can also be extended to arbitrary row and column sums.

We observe that the occurrence of the second singular value should not come as a big surprise: Given a stochastic matrix $A$, $A^k$ converges to a fixed matrix and the convergence is dominated also by the gap between the largest singular value $1$ and the second largest singular value of $A$.

For practical purposes, one then needs to work out how many operations are needed to obtain a given accuracy of the solution. The first such bounds can be derived from the bounds in \cite{fra89}. The main study of these questions was conducted in the early 90s and 2000s, starting with \cite{kal93}. Let $A\in \R^{n\times n \times \ldots \times n}$ with $d$ copies of $\R^n$ be a positive multidimensional matrix which can be scaled to doubly stochastic form, then they proved that the $RAS$ takes at most
\begin{align}
	\mathcal{O}\left(\left(\frac{1}{\varepsilon}+\frac{\ln(n)}{\sqrt{d}}\right)d^{3/2}\sqrt{n}\ln \frac{V}{\nu}\right)
\end{align}
steps, where all matrix entries are in the interval $(\nu,V]$ and the maximal error is upper-bounded by $\varepsilon$. (\cite{kal93}, Theorem 1). They also derive a bound for a randomised version of the RAS, where at each step, the direction of descent is selected randomly and once in a while, the whole error function is computed randomly. The expected runtime is then slightly lower. 

In the case of positive matrix scaling, \cite{kal08} give better bounds covering also the case of inequality constraints as in \cite{bal89a}. In particular, let $A\in \R^{n\times m}$ be a positive matrix with $\nu\leq A_{ij}\leq V$, let $N=\max\{n,m\}$, let $\rho=\max\{r_i,c_j\}$ and $h=\sum_{ij} A_{ij}$. Then the number of iterations needed to scale $A$ to accuracy $\varepsilon$ is of order
\begin{align*}
	\mathcal{O}\left(\left(\frac{1}{\varepsilon}+\ln(hN)\right)\rho\sqrt{N}\left(\ln(\rho)+\ln\left(\frac{V}{\nu}\right)\right)\right).
\end{align*}
The two results (specific bounds and asymptotic linear behaviour for convergence speed) imply that the RAS method has generally good convergence properties if the matrix is positive. 

A fully polynomial time algorithm (i.e.~without a factor involving the size of the matrix entries) for general marginals was given in \cite{lin00a} based on the RAS method with preprocessing. However, the algorithm scales with $\mathcal{O}(n^7\log(1/\varepsilon))$ for the general $(r,c)$-scaling and (using a different algorithm closer to the RAS) with $\mathcal{O}((n/\varepsilon)^2)$ for doubly-stochastic scaling.

In summary, the RAS method, while not fully polynomial by itself, can be tweaked in various ways to allow for fully polynomial algorithms. In addition, it has the advantage of being parallelisable as demonstrated in \cite{zen90a,zen90b}. However, the scaling behaviour is not particularly fast in specific examples (see for instance \cite{bal04,kni12}), in particular it doesn't seem to be very good at handling sparse matrices.

\subsection{Newton methods}
One of the first alternative algorithms to the RAS methods was provided in \cite{mar68} as a minimisation of $x^{T}Ay$ using a modified Newton method as described in \cite{gol66} (it is not related how the equality constraints are introduced into the problem. This can be done using a $C^2$-penalty function). 

Newton methods were also developed to solve the scaling problem for positive semidefinite matrices. They can either be seen as Newton's method applied to $x^TAx$ for symmetric $A$ (cf. \cite{kha92}) or as Newton's method applied to the Sinkhorn iteration equation $x_{k+1}:=e/(Ax_k)$ (cf. \cite{kni12}). Yet a different method was considered in \cite{fur04}. 

\cite{kal05} shows that their algorithm converges in $\mathcal{O}(\sqrt{n}\ln(n/(\mu\varepsilon)))$ Newton iteration steps, where $\mu:=\inf\{x^TAx|x\geq 0\}$, if the matrix is scalable.

\subsection{Convex programming}
As noted in section \ref{sec:convexprog}, the convex programming formulation of the problem makes it amenable to a host of (polynomial time) techniques such as the ellipsoid method or interior point algorithms.

In the case of nonnegative matrices $A\in \R^{n\times n}$ with doubly stochastic marginals, a good bound was found in \cite{kal96a}, with operations of order 
\begin{align*}
	\mathcal{O}(n^4\ln (n/\varepsilon)\ln(1/\nu)). 
\end{align*}	
The bound uses ellipsoid methods. Later, the bounds were extended to cover generalised marginals in \cite{nem99} (also including the generalisation discussed in \cite{rot89b}) specifically using ellipsoid methods for the convex optimisation formulation of equation (\ref{eqn:gormanconvex}). 
The first instance of an interior point algorithm was probably formulated in \cite{bal04} applied to the entropy formulation. The authors find a strongly polynomial algorithm which scales better than \cite{lin00a} with $\mathcal{O}(n^6\log(n/\varepsilon))$\footnote{It seems that the authors were unaware of Kalantari et al.~and \cite{nem99}.}.

A different ansatz for an algorithm was used in \cite{sch90a}, where the author uses the duality in convex programming and a coordinate ascent algorithm for the dual problem of his truncated matrix scaling. This algorithm will then be some form of generalisation of the RAS method.

\subsection{Other ideas}
We give a short primer of other algorithms considered in the literature:
\begin{enumerate}
	\item The first paper to develop new algorithms with a focus on speed and not only concepts was \cite{par82}, where a bunch of slightly different and optimised algorithms is derived.
 \item An algorithm which is somewhat related to convex algorithms is considered in \cite{kal96b}. It is a total gradient based, steepest descent algorithm for the homogeneous log-barrier potential. 
	\item In \cite{rot07}, using an algorithm for the matrix apportionment problem involving network flows and using ideas of \cite{kar97}, they provide an algorithm where the number of iterations scales with 
\begin{align}
	\mathcal{O}(n^3\log n(\log(1/\varepsilon)+\log(n)+\log\log(V/\nu))).
\end{align}
Once again, $A_{ij}\in [\nu,V]$ for all $i,j$.
	\item With ever larger matrices, it is sometimes infeasible to access each element of the matrix on its own, because the matrix is not stored in that form or processed somewhere else. This makes it interesting to consider algorithms that do not need access to all elements, such as the RAS method for nonnegative matrices. Algorithms that are ``matrix free'' in this sense were developed in \cite{bra10,bra11} for doubly stochastic scaling of positive semidefinte matrices.
	\item Finally, let us mention that algorithms were also developed for infinity norm scaling (cf. \cite{bun71,rui01,kni14}) and other norm scaling (cf. \cite{rui01}).
\end{enumerate}

\subsection{Comparison of the algorithms}
A first comparison of several algorithms was performed in \cite{sch90b}, however, the comparison is not really in terms of speed (for instance, all algorithms were implemented on different programming platforms), but in terms of useability. 

While there have been many papers claiming superior convergence speed for their algorithm, the most comprehensive analysis has probably been achieved in \cite{kni12}, which is limited to doubly-stochastic scalings. In the paper, the authors compare the RAS method, a Gauss-Seidel implementation of the ideas of \cite{liv04}, and the fastest algorithm in \cite{par82} with their own Newton-method algorithm. The test matrices are mostly large sparse matrices and the new algorithm is usually the fastest and most robust algorithm. The authors also claim that their Newton-based implementation is superior to \cite{kha92} and \cite{fur04}. They also suggest that the algorithm should outperform the convex optimisation based algorithms, albeit a direct comparison to the most recent algorithm in \cite{bal04} is missing, who only showed that their algorithm clearly outperforms the RAS method. \cite{bra11} also mention that their purely matrix free algorithm will outperform explicit methods such as those in \cite{kni12} if accessing single elements in the matrix is actually slow. 

In general, matrix scaling can today be done on a routine basis even for very large matrices.

\section{Applications of Sinkhorn's theorem} \label{sec:applic}
The following problem can be encountered in many areas of applied mathematics (see also \cite{sch90b}):
\begin{pro} \label{pro:prob1}
Let $A\in\R^{m\times n}$ be a nonnegative matrix. Find a matrix $B$ which is close to $A$ and which fulfills a set of linear inequalities, for instance 
\begin{align*}
	\sum_{j=1}^n A_{ij}=r_i, \qquad \sum_{i=1}^m A_{ij}=c_j.
\end{align*}
\end{pro}
We could also ask for balanced marginals or any other type of marginals. The problem is certainly not well-posed. What does ``close'' mean? This part of the review will try to give an overview why ``close'' means equivalence scaling in many applications. We will limit our attention mostly to the mathematical justification of matrix scalings, but I will try to give pointers to other literature. 

This implies that we only consider ``nearness'' leading to equivalence scaling or matrix balancing as the result. In the literature, other nearest matrices have also been considered such as addition of small matrices (e.g.~\cite{bac80}). \cite{sch90b} describe network flow algorithms that allow for a wider variety of applications.

Matrix scaling has many different real world applications, which implies that it also needs different justifications. While statistical justifications exist, many applications argue with the simplicity of the method and the fact that it performs well in practice. These are valid arguments, but they are unsatisfactory from a mathematical point of view. In the two following subsections we collect mathematically rigorous (or partly rigorous) justifications and their history.

\subsection{Statistical justifications}
As seen, matrix scaling solves entropy minimisation with marginal constraints. This is one of the most powerful entries for justifications of equivalence scaling as the right model, since relative entropy has strong statistical justifications, mostly in the form of maximum entropy or minimum discrimination information - see \cite{jay57} or later in \cite{kul66} for a justification in physics, or \cite{kul59} and \cite{gok78} for a justification in statistics.

Another justification closely connected to entropy minimisation is \emph{maximum likelihood} models. For instance, if given a set of distributions $Q$ and an empirical i.i.d. sample $P$, then the maximum likelihood for $P$ being a sample of $Q$ is given by the minimal relative entropy (\cite{csi89,dar72}). Max-Likelihood justifications for applications in contingency tables are given in \cite{fie70,goo63}.

A different class of justifications for the validity of the matrix scaling approach are arguments showing that matrix scaling conserves certain form of interactions within the matrix. For instance, matrix scaling conserves cross products (\cite{mos68}) and so-called $k$-cycles (\cite{ber79}, $k$-cycles are certain products of matrix and inverse matrix entries). Both can be desirable for modeling reasons. 

Finally, let us mention that the original justification (matrix scaling is a least-square type optimisation) made in \cite{dem40} turned out to be wrong very quickly and was superseded by real least-square methods in \cite{ste42} and later in \cite{fri61} or \cite{car81}. Those however are not the same as equivalence scaling (see Section \ref{sec:rowprod}).

\subsection{Axiomatic justification} \label{sec:justbalinski}
Another justification for matrix scaling, which is particularly useful for application in elections is given in \cite{bal89b}. Instead of considering just any matrix ``close'' to the original estimate, we want this matrix to fulfil a set of axioms. 

Let $A$ be a nonnegative matrix, $r_+,c_+$ ($r_-,c_-$) be upper (lower) bounds to the row and column sums and $h>0$ be a scalar. As in Section \ref{sec:inequalityscaling}, we denote the set of all matrices $B$ fulfiling the bounds $q:=(r_-,r_+,c_-,c_+,h)$ with $h=\sum_{ij} B_{ij}$ by $R(q)$. For any matrix $A$ and any set of bounds $q$, we search for a method $F(A,q)$ to allocate one out of potentially many matrices $A^{\prime}$ fulfiling $q$ and the following axioms:

\begin{itemize}[leftmargin=1.5cm]
	\item[Axiom 1] \textit{Excactness:} If $r_-=c_-=0$ and $r_+=c_+=\infty$ then $A^{\prime}=(h/\sum_{ij}A_{ij}) A$
	\item[Axiom 2] \textit{Relevance:} If $q^{\prime}$ is another set of bounds such that $R(q^{\prime})\subset R(q)$ and there exists a possible $A^{\prime}\in R(q^{\prime})$, then $F(A,q^{\prime})\subset F(A,q)\cap R(q^{\prime})$.
	\item[Axiom 3] \textit{Uniformity:} For any matrix $A^{\prime}$ with bounds $q$, if we construct a new matrix $A^{\prime,\prime}$ by exchanging any submatrix $A^{\prime}_{I\times J}$ by another submatrix $B_{I\times J}$ which fulfils the same row and column sums minus the part of these bound allocated in $A^{\prime}_{(I\times J)^c}$, then $A^{\prime \prime}\in F(A,q)$.
	\item[Axiom 4] \textit{Monotonicity:} If we have two matrices $A,B$ with $A_{ij}\leq B_{ij}$ for all $(i,j)$, then it also holds that $A^{\prime}_{ij}\leq B^{\prime}_{ij}$ for all possible allocations.
	\item[Axiom 5] \textit{Homogeneity:} Suppose $r_-=r_+$ and $c_-=c_+$. Then, if two rows of $A$ are proportional and are constrained to the same row sum, then the corresponding rows in $A^{\prime}$ are always equal.
\end{itemize}

Then \cite{bal89b} show that equivalence scaling (the fair share matrix of Section \ref{sec:inequalityscaling}) is the unique allocation method $F(A,q)$ for all nonnegative matrices $A$ where $R(q)$ contains a matrix with the same pattern as $A$. 

\subsection{A primer on applications}
We will only sketch applications here since a complete list and discussion is probably infeasible. 

\paragraph*{Transportation planning}
A natural problem in geography is connected to predicting flows in a traffic network. If one considers for example a network of streets in a city at rush hour and a number of workers that want to get home, it is important to know how the traffic will be routed through the network. This is to a large degree a problem of physical modeling and a number of methods have been developed in the last century (for a recent introduction and overview see \cite{ort11}\footnote{The authors also discuss the RAS method in chapter 5. I am however not convinced by their claim that Bregman provided the best analysis of the mathematics of the problem.}). 

For our purposes, the most interesting question results from estimating trip distribution patterns from prior or incomplete data. In a simplified model, one could consider only origin and destination nodes (e.g.~home quarters and work areas), given by a nonnegative matrix $A$. While the matrix is known for one year, it might be necessary to predict the changes given that the amount of trips to and from one destination change. 

Several papers have treated a justification of the RAS method in this case. For instance, \cite{eva70} argues that the method provides a unique outcome and it is easier to handle and to compute than other methods (Detroit method, growth factor method,...). A discussion of trip distribution with respect to Problem \ref{pro:prob1} can be found in \cite{sch90b}.

\paragraph*{Contingency table analysis}
In many situations ranging from biology to economics, contingency tables need to be estimated from sample data. Contingency tables list the frequency distributions of events in surveys, experiments, etc. They are highly useful to map several variables and study their relations.

As a specific example, suppose a small census in Germany tries to estimate migration between the states. While the number of citizens is recorded, which means that the total net migration is known, it is not known where each individual migrant came from. From a small survey among migrants, how can one estimate the true table with correct marginals in the best possible way? If one does a  maximum likelihood estimation, the result is once again matrix scaling (cf. \cite{fie70,pla82}).

\paragraph*{Social accounting matrices}
Social accounting matrices, or SAMs, are an old tool developed in \cite{sto62} and later popularised in \cite{pya76} to represent the national account of a country. To date, it is an important aspect of national and international accounting (as a random example see \cite{klo04} from the German national institute of statistics. An introduction to social accounting can also be found in \cite{pya85} and, from a short mathematical perspective, in \cite{sch90b}). 

The idea is to represent income and outcome of a national economy in a matrix. Often, good growth estimates are known for the row and column sums and certain estimates are known for individual cells. The account estimates are then often not balanced, which can be achieved using matrix balancing or matrix scaling. Justifications can be imported from statistics, most notably maximum likelihood.

\paragraph*{Schr\"odinger bridges}
In \cite{sch31}, the author considered the following setup: Suppose we have a Brownian motion and a model which we are very confident about. In an experiment we observe its density at two times $t_0, t_1$. Now suppose they differ significantly from the model predictions. How can we reconcile these observations by updating our model without discarding it completely? 

This problem has been studied in a whole line of papers since then from \cite{for41} to \cite{geo15}. The minimum relative entropy approach can be justified using large deviations (see \cite{rus95}).

\paragraph*{Decreasing condition numbers}
Given a system of linear equations $Ax=b$ with nonsingular $A$, solving it relies on the Gaussian elimination procedure, which is known to be numerically unstable for matrices with bad condition number $\kappa(A):=\|A\|_{\infty}\|A^{-1}\|_{\infty}$. In order to increase the stability, we have to modify $A$, for example by multiplying with diagonal matrices $D_1,D_2$ and considering $D_1AD_2$. Given that linear systems are ubiquitous in numerical analysis, it is of paramount importance to know how best to precondition a matrix in order to minimise calculation errors (see for instance the survey \cite{ben02}). The answer to this question is problem dependent. Particular problems, where equivalence scaling is helpful to go include integral controllability tests based on steady-state information and the selection of sensors and actuators using dynamic information (see \cite{bra94}). 

One of the first papers to consider minimisation of $\kappa$ using diagonal scaling was \cite{osb60}, who focused on matrix balancing instead of equivalence scaling (see also \cite{liv04} and \cite{che00} for sparse matrices). Since the condition number contains the maximum norm, it might be best to require balanced maximum rows and columns instead of balanced row sums as observed in \cite{bau63,cur71}. This works particularly well for sparse matrices. Equivalence scaling has been studied as early as \cite{hou06}\footnote{Reference from \cite{bra94}} and later in \cite{ols96}. If we use other $p$-norms in the definition of $\kappa$, a convex programming solution for minimising $\kappa$ using equivalence scaling is provided in \cite{bra94}.

Note that unlike in all applications studied so far, preconditioning a matrix is useful not only for nonnegative matrices. This is one reason why matrix balancing was studied for copositive and not simply nonnegative matrices. A very different measure of the ``goodness'' of scaling which might also be of numerical relevance was studied in \cite{rot80}, where the authors solved the problem of matrix balancing of a matrix such that the ratio between the biggest and smallest element of the scaled matrix becomes minimial. 

\paragraph*{Elections}
A very important application of equivalence scaling can be found in Voting: Given election results in a federal election, how can one best distribute the seats among the parties within the states such that each party and each state is represented according to the outcome of the election? Note that here, we need to adjust for natural numbers, which requires rounding (cf.~\cite{mai10}). 

Early methods based on a discretised RAS method were developed in \cite{bal89a}. The problem is very intricate in itself, because the justifications rely on what is perceived as ``fair'' and any method that is fair in some instances is unfair in others (see for instance the discussions in \cite{bal97,puk04}; for an overview, see \cite{nie08}).

\paragraph*{Other applications}
Various other applications of equivalence scaling and matrix balancing exist such as:
\begin{enumerate}
	\item A Sudoku Solver based on a stochastic algorithm based on the RAS was developed in \cite{moo09}. 
	\item An algorithm to rank web-pages was developed in \cite{kni08}. The RAS allows to derive an algorithm similar in scope to the HITS algorithm (\cite{kle99}).
	\item The RAS method is analysed as a relaxed clustering algorithm in data mining (\cite{wan10}). However, it turns out that methods based on other Bregman-divergences are more favourable. 
	\item Given a (discretised) quantum mechanical time evolution, can we construct a local hidden variable model of its evolution corresponding to a deterministic stochastic transition matrix (\cite{aar05})? 
	\item Given a Markov chain with a doubly stochastic transition matrix and given an estimate of the transition matrix, the best estimate of the real transition matrix is given by a scaled matrix (\cite{sin64}).	
	\item Regularising optimal transportation by an entropy penalty term such that it can be computed using the RAS, which is already much faster than optimal transport algorithms (\cite{cut13}).
\end{enumerate}

\section{Scalings for positive maps}
We have already seen that Sinkhorn scaling is interesting for classical Schr{\"o}dinger bridges as well as for scaling transition maps of Markov processes, etc. From a physics perspective, all these applications are classical physics, transforming classical states (probability distributions) to classical states. 

In quantum mechanics, the basic objects are \emph{quantum states}. For finite dimensional systems (such as spin systems), these quantum states are positive semidefinite matrices with unit trace. A \emph{quantum operation} then maps states to states, i.e.~it needs to be positive: If $A\geq 0$, then $\mathcal{T}(A)\geq 0$. In fact, this is not all that is required for quantum operations, but one actually needs $\mathcal{T}$ to be \emph{completely positive} (for an overview about quantum operations and quantum channels, see \cite{nie00,wol12}). (Completely) Positive trace-preserving maps are then the natural generalisation of stochastic matrices. This raises the question whether concepts as irreducibility and a Perron-Frobenius theorem exist also for quantum channels and indeed they do. A Perron-Frobenius analogue was probably first described in \cite{sch00}, while the analogue for full indecomposability was first used in \cite{gur04}. 

Let us define the concepts:
\begin{dfn} \label{dfn:irredindec}
A positive map $\mathcal{E}:\mathcal{M}_d\to \mathcal{M}_d$ with $\mathcal{M}_d=\C^{d\times d}$ is called \emph{irreducible} (as in \cite{eva78, far96}) if for any nonzero orthogonal projection $P$ such that
\begin{align}
	\mathcal{E}(P\mathcal{M}_dP)\subseteq P\mathcal{M}_dP
\end{align}
we have $P=\id$. 

Likewise, it is called \emph{fully indecomposable} if for any two nonzero orthogonal projections $P,Q$ with the same rank such that
\begin{align}
	\mathcal{E}(P\mathcal{M}_dP)\subseteq Q\mathcal{M}_dQ
\end{align}
we have $P=Q=\id$. 

Finally, a map is called \emph{positivity improving} (the analogue to positive matrices) if for all $A\geq 0$, $\mathcal{E}(A)>0$.
\end{dfn}
A lot of different characterisations have been found (see Appendix \ref{app:posprel}).

Furthermore, let us define:
\begin{dfn} \label{dfn:nonincr}
Let $\mathcal{E}:\mathcal{M}_d\to \mathcal{M}_d$ be a positive map. Then $\mathcal{E}$ is called \emph{rank non-decreasing} if for all $A\geq 0$
\begin{align}
	\operatorname{rank}(\mathcal{E}(A))\geq \operatorname{rank}(A). \label{eqn:geqsign}
\end{align}
It is called \emph{rank increasing}, if the $\geq$ sign in equation (\ref{eqn:geqsign}) is replaced by a $>$.
\end{dfn}
The connections of Definitions \ref{dfn:irredindec} and \ref{dfn:nonincr} are explained in Appendix \ref{app:posprel}.

Let us now define what we mean by scaling a positive map:
\begin{dfn}
Let $\mathcal{E}:\mathcal{M}_n\to\mathcal{M}_n$ be a positive, linear map. We say that $\mathcal{E}$ is \emph{scalable} to a doubly stochastic map, if there exist $X,~Y\in\mathcal{M}_d$ such that
\begin{align}
	\mathcal{E}^{\prime}(\cdot):=Y^{\dagger}\mathcal{E}(X\cdot X^{\dagger})Y \label{eqn:scaling}
\end{align}
is doubly stochastic (i.e.~$\mathcal{E}^{\prime}(\id)=\mathcal{E}^{\prime *}(\id)=\id$). \\
We call a positive map \emph{$\varepsilon$-doubly stochastic}, if 
\begin{align}
	\operatorname{DS}(\mathcal{E}):=\tr((\mathcal{E}(\id)-\id)^2)+\tr((\mathcal{E}^{*}(\id)-\id)^2)\leq \varepsilon^2
\end{align}
We call $\mathcal{E}$ \emph{$\varepsilon$-scalable} if there exists a scaling as in equation (\ref{eqn:scaling}) to an $\varepsilon$-doubly-stochastic map $\mathcal{E}^{\prime}$. 
\end{dfn}
The error function $\operatorname{DS}$, which is similar to an $L^2$-error function for matrices, will serve twofold: first, it defines approximate scalability (which can alternatively be defined by convergence of the RAS) and second, it defines a progress measure for convergence similar to error functions as considered in \cite{bal89b}.

We can now state the full analogue of equivalence scaling to doubly stochastic form:
\begin{thm} \label{thm:sinkqm}
Given a positive map $\mathcal{E}:\mathcal{M}_d\to \mathcal{M}_d$, it is scalable to a doubly stochastic map iff there exist some matrices $X$ and $Y$ such that $Y\mathcal{E}(X\cdot X^{\dagger})Y^{\dagger}$ is a direct sum of fully indecomposable maps.

The scaling matrices are unique iff $\mathcal{E}$ is fully indecomposable.
\end{thm}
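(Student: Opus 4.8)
The plan is to mirror, in the noncommutative setting, the strategy that works for matrices: reduce the existence question to a fixed-point/optimization problem, establish the sufficiency of the structural condition (being a direct sum of fully indecomposable maps after scaling) by a compactness argument on an appropriate potential, and handle uniqueness via strict convexity. First I would set up the analogue of the logarithmic barrier function or the capacity functional: for a positive map $\mathcal{E}$, define on pairs of positive definite matrices $(X,Y)$ a functional such as $g(X,Y) = \tr(Y\mathcal{E}(X)) - \ln\detm(X) - \ln\detm(Y)$, whose stationarity conditions are exactly $\mathcal{E}(X) = Y^{-1}$ and $\mathcal{E}^*(Y) = X^{-1}$, i.e.\ the conditions that $\mathcal{E}'(\cdot) = Y^{1/2}\mathcal{E}(X^{1/2}\cdot X^{1/2})Y^{1/2}$ be doubly stochastic (using that $X = X_0 X_0^\dagger$, $Y = Y_0 Y_0^\dagger$). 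The homogenization trick of Observation \ref{obs:logtohom} carries over: minimizing $g$ is equivalent to minimizing the scale-invariant quantity $\ln\!\left(\tr(Y\mathcal{E}(X))^d / (\detm(X)\detm(Y))\right)$ over $(X,Y)$ with $\detm X = \detm Y = 1$, which is Gurvits' capacity-type functional.

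Next I would prove the two directions. For necessity, if $\mathcal{E}$ is scalable then $\mathcal{E}'$ is doubly stochastic; by the Perron–Frobenius theory for positive maps (Definition \ref{dfn:irredindec} and the results in Appendix \ref{app:posprel}), a doubly stochastic map decomposes, after a unitary conjugation respecting a block structure of common invariant projections, into a direct sum of fully indecomposable doubly stochastic maps, and this conjugation can be absorbed into $X$ and $Y$. For sufficiency, I would show that the structural hypothesis forces the infimum of the capacity functional to be attained in the interior, i.e.\ at positive definite $X,Y$ bounded away from the boundary $\detm = 0$. This is where the noncommutative analogue of "fully indecomposable matrices have a positive diagonal" enters: the rank-non-decreasing property (Definition \ref{dfn:nonincr}), which for fully indecomposable maps is actually rank-strictly-increasing on non-full-rank inputs, guarantees that as $X$ degenerates (some eigenvalue $\to 0$), $\mathcal{E}(X)$ does not degenerate as fast, so $\tr(Y\mathcal{E}(X))/(\detm X\detm Y)^{1/d} \to \infty$. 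Hence the minimum exists, gives a stationary point, and yields the scaling. For a general map that is a direct sum of fully indecomposable maps, one scales each block separately and reassembles.

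For uniqueness I would argue that the capacity functional, after the substitution $X = e^{H}$, $Y = e^{K}$ with $H,K$ Hermitian (the noncommutative analogue of the log-substitution of Section \ref{sec:convexprog}), is convex, and is \emph{strictly} convex transverse to the trivial scaling directions precisely when $\mathcal{E}$ is fully indecomposable — the fully indecomposable hypothesis rules out block directions along which the functional is flat, exactly as in the matrix case where reducibility produces a scalar freedom in each block. Thus the minimizer, hence the doubly stochastic scaling, is unique (up to the unavoidable $X \mapsto tX$, $Y\mapsto t^{-1}Y$), and conversely any proper block decomposition produces additional scalings. The main obstacle I anticipate is the degeneration estimate in the sufficiency direction: in the commutative case one exploits the combinatorial structure of the support (König's theorem, total support), whereas here one must replace this with a quantitative statement about how $\rank(\mathcal{E}(P\cdot P))$ or $\detm$ behaves under the full-indecomposability hypothesis — essentially a spectral/operator-convexity inequality controlling $\mathcal{E}$ near rank-deficient inputs — and making that estimate uniform enough to force coercivity of the functional is the delicate point; this is the step where one genuinely needs the noncommutative Perron–Frobenius machinery rather than a routine translation of the classical proof.
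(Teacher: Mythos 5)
Your architecture matches the paper's (which follows Gurvits): a capacity-type potential whose stationarity conditions encode double stochasticity, coercivity to get an interior minimiser for fully indecomposable maps, the reduction of a general doubly stochastic map to a direct sum of fully indecomposable blocks (Lemma \ref{lem:kernelred}), and uniqueness via strict convexity with the block-interchange freedom killing uniqueness in the reducible case. However, two steps that you treat as anticipated difficulties or as routine are exactly where the substance lies, and in both cases the missing ingredient is the same: mixed discriminants. For coercivity, your heuristic ``$\mathcal{E}(X)$ does not degenerate as fast as $X$'' does not follow from the rank-increasing property alone — rank is discrete and $\detm(\mathcal{E}(X))$ can still vanish in the limit even when ranks are preserved. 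The paper's quantitative substitute is Bapat's expansion $\detm\bigl(\sum_i e^{\xi_i}A_i\bigr)=\sum_{r}t_re^{(\xi,r)}$ for the tuple $A=\mathbf{A}_{\mathcal{E},U}$ built from a diagonalising unitary $U$ of $X$, together with the characterisation (Proposition \ref{prop:fully}) that full indecomposability is equivalent to positivity of the minimal mixed discriminant $\overline{M}(\mathbf{A}_{\mathcal{E},U})$; the uniform lower bound $\overline{M}>0$ over the compact group $U(n)$ yields $\gamma_1/\gamma_n\leq 2\detm(\mathcal{E}(\id))/\overline{M}$ and hence compactness of the minimising set. You would need to supply this (or an equivalent) estimate; nothing in your sketch produces it.

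The second, unacknowledged gap is in uniqueness: you assert that the capacity functional becomes convex after the noncommutative substitution $X=e^{H}$, $Y=e^{K}$ with Hermitian $H,K$. This is not a routine translation of the classical log-substitution — convexity of $H\mapsto\ln\detm(\mathcal{E}(e^{H}))$ on Hermitian matrices is not established in the paper and does not follow from the scalar argument (it is closer to a geodesic-convexity statement and requires separate machinery). The paper avoids the issue entirely: it fixes the diagonalising unitary $U$, works with the \emph{eigenvalue} coordinates $\xi\in\R^n$ only, and proves convexity of $f_A(\xi)=\ln\detm(e^{\xi_1}A_1+\ldots+e^{\xi_n}A_n)$ on $\R^n$ and strict convexity on $\{\sum_i\xi_i=0\}$ via the same mixed-discriminant expansion (Lemma \ref{lem:convexgurvits}), with strictness again powered by $\overline{M}>0$. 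Your necessity direction and the block-interchange argument for non-uniqueness are fine and agree with the paper.
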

The fact that fully indecomposable matrices are uniquely scalable was first proved in \cite{gur03}. His work built on earlier work in \cite{gur02a,gur02b} (see also \cite{gur04}), based on a generalisation of the convex approach in equation (\ref{eqn:gormanconvex}) and the London-Djokovic approach in equation (\ref{eqn:london}). 

Recently, the problem was considered with the hope to apply it for unital quantum channels in \cite{ide13} (which has never been formally published) and shortly afterwards in \cite{geo15} while trying to define and study ``quantum'' Schr\"odinger bridges. Both approaches use nonlinear Perron-Frobenius theory and thereby a generalisation of equation (\ref{eqn:menon}) to get a result. The approaches derived from classical results are discussed in Section \ref{sec:classicalquantum}.

Even earlier than Gurvits, a very limited version of the theorem was proven in \cite{ken99} (with subsequent generalisations) using an approach that does not derive from any of the classical approaches but instead uses the Choi-Jamiolkowski isomorphism. This is described in Section \ref{sec:statechannel}.

The extension of the theorem to necessary and sufficient conditions has as far as I know not been formally published\footnote{Gurvits actually claims a proof for the fact that a positive map is uniquely scalable iff it is fully indecomposable, but I did not understand how the only if part follows.}.

Furthermore, we can state an analogue of approximate scaling, which to date has only been considered in \cite{gur04}:
\begin{thm}\label{thm:approxsinkquant}
Let $\mathcal{E}:\mathcal{M}_n\to\mathcal{M}_n$ be a positive, linear map. Then $\mathcal{E}$ is approximately scalable (i.e.~$\varepsilon$-scalable for any $\varepsilon>0$) if and only if $\mathcal{E}$ is rank non-increasing.
\end{thm}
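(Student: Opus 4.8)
I would prove Theorem~\ref{thm:approxsinkquant} by transporting the classical characterisation of approximate equivalence scaling (``a subpattern of $A$ carries the prescribed marginals'', cf.\ the theorems of Section~\ref{sec:equivscaling}) to the operator setting, where its shadow is exactly the rank condition $\rank\mathcal{E}(A)\ge\rank A$ of Definition~\ref{dfn:nonincr} (which is what ``rank non-increasing'' in the statement must mean). For the necessity half, first note that an \emph{exactly} doubly stochastic positive map is rank non-decreasing: if $P$ is a rank-$r$ projection and $Q$ the orthogonal projection onto the range of $\mathcal{E}(P)$, then $\tr(P\,\mathcal{E}^{*}(\id-Q))=\tr((\id-Q)\mathcal{E}(P))=0$, and substituting $\mathcal{E}(\id)=\mathcal{E}^{*}(\id)=\id$ yields $r=\tr(P)=\tr(P\mathcal{E}^{*}(Q))\le\tr(\mathcal{E}^{*}(Q))=\tr(Q)=\rank\mathcal{E}(P)$. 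The same computation with $\mathcal{E}(\id),\mathcal{E}^{*}(\id)$ no longer equal to $\id$ but controlled by $\operatorname{DS}$ gives the quantitative version: for any positive $\mathcal{F}$ and any rank-$r$ projection $P$ with $\rank\mathcal{F}(P)=s$, Cauchy--Schwarz in the Hilbert--Schmidt inner product gives $r-s\le 2\sqrt{n}\,\sqrt{\operatorname{DS}(\mathcal{F})}$. Since conjugation by invertible $X,Y$ leaves every rank invariant, and since replacing $A\ge 0$ by its range projection $P_A$ changes neither $\rank A$ nor $\rank\mathcal{E}(A)$ (sandwich $\lambda^{+}_{\min}(A)P_A\le A\le\|A\|P_A$ and apply the monotone map $\mathcal{E}$), a map that is $\varepsilon$-scalable with $\varepsilon<1/(2\sqrt n)$ cannot violate the rank inequality: otherwise one transports the offending projection through the scaling and contradicts the estimate.

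For the sufficiency half I would follow Gurvits and generalise the London--Djokovic/convex functional (\ref{eqn:london})--(\ref{eqn:gormanconvex}), replacing products by determinants: set
\[
  \operatorname{cap}(\mathcal{E}):=\inf\{\det\mathcal{E}(X)\ :\ X>0,\ \det X=1\}.
\]
The argument splits into (a) rank non-decreasing $\Rightarrow\operatorname{cap}(\mathcal{E})>0$, and (b) $\operatorname{cap}(\mathcal{E})>0\Rightarrow$ the operator Sinkhorn iteration forces $\operatorname{DS}\to 0$. For (b): rank non-decreasingness gives $\mathcal{E}(\id)>0$ and $\mathcal{E}^{*}(\id)>0$ (because $\rank\mathcal{E}(|v\rangle\langle v|)\ge 1$ for every $v\neq 0$), so the two normalisations $\mathcal{E}\mapsto\mathcal{E}(\id)^{-1/2}\mathcal{E}(\cdot)\mathcal{E}(\id)^{-1/2}$ and $\mathcal{E}\mapsto\mathcal{E}(\mathcal{E}^{*}(\id)^{-1/2}\cdot\,\mathcal{E}^{*}(\id)^{-1/2})$ are always defined, preserve rank non-decreasingness, and keep each iterate $\mathcal{E}_k$ a scaling of $\mathcal{E}$. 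A left-normalisation multiplies $\operatorname{cap}$ by $1/\det\mathcal{E}(\id)$ and a right-normalisation by $1/\det\mathcal{E}^{*}(\id)$; since $\tr\mathcal{E}(\id)=\tr\mathcal{E}^{*}(\id)$ always holds, from the first normalisation onward the matrix against which one normalises has trace $n$, hence determinant at most $1$ by AM--GM, so $\operatorname{cap}(\mathcal{E}_k)$ is nondecreasing, and it is bounded above by $\det\id=1$ at every left-normalised step. As $\operatorname{cap}(\mathcal{E})>0$ implies $\operatorname{cap}(\mathcal{E}_1)>0$, the sequence $\operatorname{cap}(\mathcal{E}_k)$ converges to a positive limit, so the multiplicative increments tend to $1$, i.e.\ $\det\mathcal{E}_k(\id)\to 1$ and $\det\mathcal{E}_k^{*}(\id)\to 1$ along the respective steps; since a positive matrix of trace $n$ with determinant near $1$ is near $\id$ (stability of AM--GM on a compact spectral window), $\operatorname{DS}(\mathcal{E}_k)\to 0$, and $\mathcal{E}$ is $\varepsilon$-scalable for every $\varepsilon>0$.

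The step I expect to be genuinely hard is (a), the operator analogue of the König/Frobenius-type fact that a nonnegative matrix whose scaling capacity (\ref{eqn:london}) vanishes must contain a zero submatrix too large for the prescribed marginals. I would argue by contraposition and compactness: an infimising sequence $X_k>0$, $\det X_k=1$ with $\det\mathcal{E}(X_k)\to 0$ must have degenerating eigenvalues (otherwise $X_k\ge c\,\id$ along a subsequence and $\det\mathcal{E}(X_k)\ge c^{n}\det\mathcal{E}(\id)>0$); passing to a subsequence, the eigenvectors whose eigenvalues vanish span a fixed subspace, and extracting the leading term of $\det\mathcal{E}(X_k)$ in the small spectral parameters forces the images of the spectral projections under $\mathcal{E}$ to collapse onto a common smaller subspace, producing a projection $P$ with $\rank\mathcal{E}(P)<\rank P$. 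Carrying out this leading-term bookkeeping for general $n$ (the $2\times 2$ case being an easy warm-up) is the delicate point; it is precisely where the determinantal replacement of the Birkhoff--König combinatorics enters and is the technical core of Gurvits' treatment in \cite{gur04}. I would also record that the whole capacity set-up uses only positivity of $\mathcal{E}$, not complete positivity, so no extra hypothesis is needed; and that the log-convexity of $\operatorname{cap}$-type functionals over the positive-definite cone — the operator counterpart of the convexity of (\ref{eqn:gormanconvex}) — is what would be invoked for uniqueness statements (not needed here) rather than for the approximate-scaling conclusion.
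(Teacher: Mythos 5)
Your proposal follows the same route as the paper's (which is Gurvits'): the capacity $\operatorname{Cap}(\mathcal{E})=\inf\{\detm\mathcal{E}(X):X>0,\ \detm X=1\}$ as a bounded, locally scalable progress measure for the operator RAS iteration, AM--GM giving monotonicity of $\operatorname{Cap}(\mathcal{E}_k)$, and stability of AM--GM turning ``increments tend to $1$'' into $\operatorname{DS}(\mathcal{E}_k)\to 0$; this half of your argument matches Lemma \ref{lem:caplocbdd} and Proposition \ref{prop:locbdd} essentially verbatim, and is correct. Your necessity half is a genuine variant worth noting: the paper fixes a unitary $U$, writes $\tr(\mathcal{E}(u_iu_i^{\dagger}))=1+\delta_i$ with $\sum_i|\delta_i|^2\le\tr(E^2)$, and contradicts $\tr(H)\le k-1$ for $H=\sum_{i\le k}A_i$, whereas you run the same Cauchy--Schwarz estimate basis-freely on an arbitrary projection $P$ and the range projection $Q$ of $\mathcal{F}(P)$, obtaining the quantitative bound $r-s\le\sqrt{2n\,\operatorname{DS}(\mathcal{F})}$ directly. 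Both are correct; yours is arguably cleaner, and your reduction of general $A\ge 0$ to its range projection via $\lambda^{+}_{\min}(A)P_A\le A\le\|A\|P_A$ is fine.

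The place where your sketch does not close is step (a), rank non-decreasing $\Rightarrow\operatorname{Cap}(\mathcal{E})>0$, and the missing ingredient can be named precisely. The ``leading-term bookkeeping'' you anticipate is exactly the expansion $\detm\bigl(\sum_i\lambda_iA_i\bigr)=\sum_{r}t_r\lambda^{r}$ of equations (\ref{eqn:proof3})--(\ref{eqn:proof4}), with $t_r$ proportional to mixed discriminants of the tuple $\mathbf{A}_{\mathcal{E},U}=(\mathcal{E}(u_iu_i^{\dagger}))_i$. Since all $t_r\ge 0$ for positive semidefinite tuples and $\prod_i\lambda_i=\detm X=1$, keeping only the $r=(1,\dots,1)$ term already gives $\detm\mathcal{E}(X)\ge M(\mathbf{A}_{\mathcal{E},U})$ --- the easy lower bound of Lemma \ref{lem:tuplecap}; the hard van der Waerden--type upper bound is not needed for approximate scaling. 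What your degeneration argument then requires, and does not supply, is the equivalence of $M(\mathbf{A}_{\mathcal{E},U})>0$ with the Rado-type rank condition $\rank\bigl(\sum_{i\in\mathcal{S}}A_i\bigr)\ge|\mathcal{S}|$ for the tuple, i.e.\ Proposition \ref{prop:ranknon} (due to Panov), together with continuity of $M$ in $U$ and compactness of $U(n)$ to make the lower bound uniform. That combinatorial equivalence is the actual technical core of the forward implication; without importing it, the ``collapsing subspaces'' picture remains a heuristic rather than a proof.
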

An overview about different approaches and how they derive from existing approaches can be found in Figure \ref{fig:overviewpositive}.

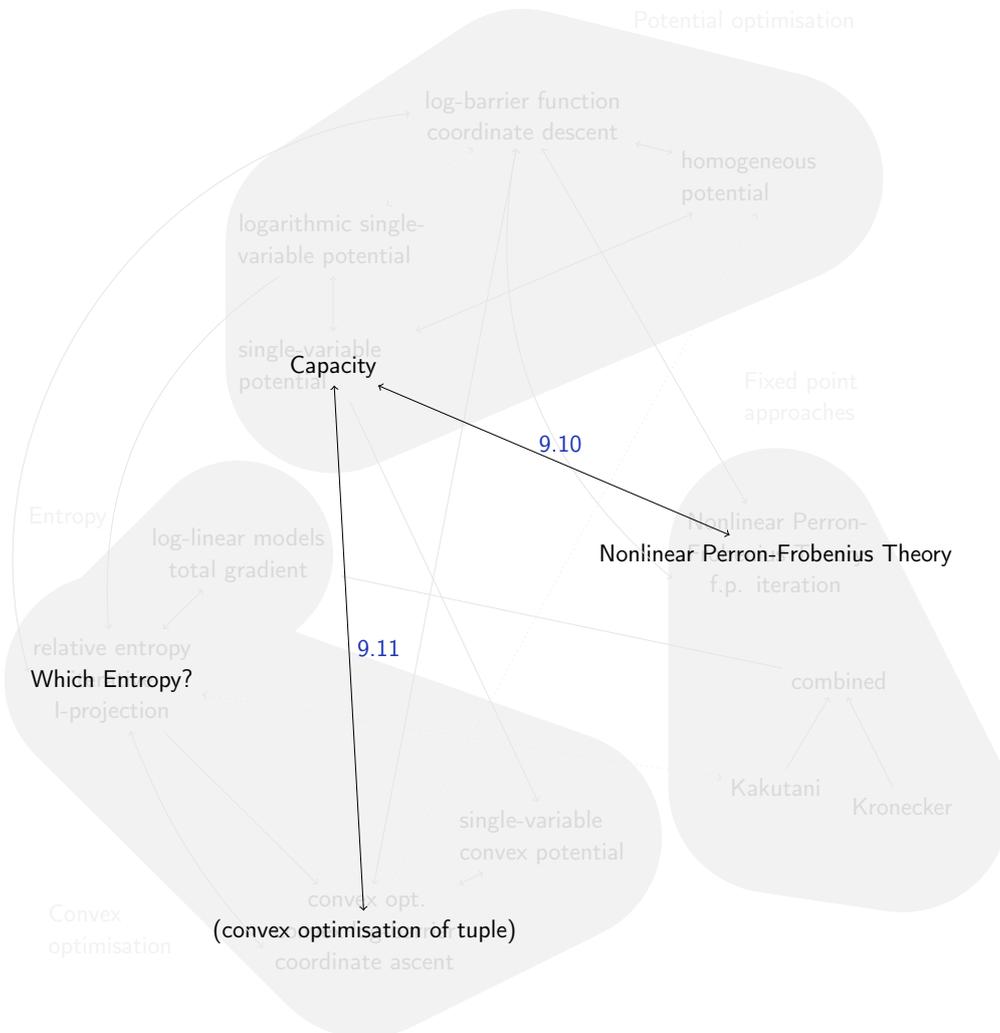
\begin{figure}[htbp] \label{fig:overviewpositive}
\resizebox{.95\textwidth}{!}{%
\begin{tikzpicture}[font=\sffamily]
  \node[black!15, text width=3.3cm,align=center] at (6.5,13) (A) {log-barrier function \\ coordinate descent};
  \node[black!15, text width=3cm] at (3.5,11) (B) {logarithmic single-variable potential};
  \node[black!15, text width=3cm] at (3.5,9) (C) {single-variable potential};
	\node at (3.5,9) (Cnew) {Capacity};
  \node[black!15, text width=3cm] at (10.5,12) (D) {homogeneous \\ potential};
	
	\node[black!15, text width=2.6cm,align=center] at (0,4) (E) {relative entropy \\ iterative I-projection};
	\node[black!15] at (-1.3,4) (Einv) {};
	\node at (0,4) (Enew) {Which Entropy?};
	\node[black!15, text width=3cm,align=center] at (2,6)(L) {log-linear models \\ total gradient};
	
	\node[black!15, text width=3cm,align=center] at (10.5,6) (F) {Nonlinear Perron-Frobenius Theory \\  f.p. iteration};
	\node[black!15] at (9,5.5) (Finv) {};
	\node at (10.5,6) (Fnew) {Nonlinear Perron-Frobenius Theory};
	\node[black!15] at (11.5,4) (G) {combined};
	\node[black!15] at (10.5,2.3) (H) {Kakutani};
	\node[black!15] at (12.5,2) (I) {Kronecker};
	
	\node[black!15, text width=3cm,align=center] at (4,0) (J) {convex opt. \\convex log-barrier \\ coordinate ascent};
	\node at (4,0) (Jnew) {(convex optimisation of tuple)};
	\node[black!15] at (2.5,-0.4) (Jinv) {};
	\node[black!15, text width=3cm] at (7,1.5) (K) {single-variable convex potential};
	
  \begin{pgfonlayer}{background}
    \foreach \nodename in {A,B,C,D,E,F,G,H,I,J,K,L} {
      \coordinate (\nodename') at (\nodename);
    }
		\node[color=black!05] at (10,14.5) (nlino) {Potential optimisation};
		\node[color=black!05,text width=2cm] at (0,0) (conv) {Convex \\ optimisation};
		\node[color=black!05,text width=3cm] at (11.5,8.5) (top) {Fixed point \\ approaches};
		\node[color=black!05] at (-0.7,6.6) (entr) {Entropy};
    \path[fill=black!05,draw=black!05,line width=3.4cm, line cap=round, line join=round] 
    (A') to (B') 
         to (C') 
         to (D') 
         to (A') -- cycle;
		\path[fill=black!05,draw=black!05,line width=3.4cm, line cap=round, line join=round]
		(E') to (J')
		     to (K')
				 to (E') -- cycle;
		\path[fill=black!05,draw=black!05,line width=3.4cm, line cap=round, line join=round]
		(F') to (G')
		     to (I')
				 to (H')
				 to (F') -- cycle;
		\path[fill=black!05,draw=black!05,line width=3cm, line cap=round, line join=round]
		(E') to (L')
				 to (E') -- cycle;
  \end{pgfonlayer}

	\path[<->] (A) edge[black!10] (F);
	\path[<->,dotted] (A) edge[black!10] (B);
	\path[<->] (B) edge[black!10] (C);
	\path[<->] (C) edge[black!10] (D);
	\path[<->] (D) edge[black!10] (A);
	\path[<->] (A) edge[black!10] (J);	
	\path[->] (H) edge[black!10] (G);
	\path[->] (I) edge[black!10] (G);
	\path[<->,dotted] (H) edge[black!10] (E);
	\path[->] (E) edge[black!10] (J);
	\path[<->] (J) edge[black!10] (K);
	\path[->] (C) edge[black!10] (K);
	\path[->] (B) edge[bend right,black!10] (E);
	\path[<->] (E) edge[black!10] (L);
	\path[<-] (L) edge[black!10] (G);
	
	\path[<->] (A) edge[bend right=30,black!10] node[right]{} (Finv); 
	\path[<->] (A) edge[bend right=50,black!10] node[right]{} (Einv);
	\path[<->] (E) edge[bend right=10,black!10] node[right]{} (Jinv);
	\path[->,dotted] (J) edge[black!10] (D);
	
	\path[<->] (Cnew) edge node[above]{\hspace{.2cm}\ref{lem:gurscaleequiv}} (Fnew);
	\path[<->] (Cnew) edge node[right]{\ref{lem:convexgurvits}} (Jnew);
\end{tikzpicture}
}
\caption{Approaches to positive map scalings and their connections. The classical approaches of Figure \ref{fig:overview} are depicted in grey, while positive map approaches derived from classical approaches are overlayed in black.}
\end{figure}

\subsection{Operator Sinkhorn theorem from classical approaches} \label{sec:classicalquantum}
We will now study how the theorems above were derived extending classical approaches to positive maps starting with an analogue of the RAS method for positive maps:
\begin{alg} \label{alg:rasqm}
Let $\mathcal{E}:\mathcal{M}_n\to\mathcal{M}_n$ be a positive, linear map.
\begin{enumerate}
	\item Start with $\mathcal{E}_0:=\mathcal{E}$.
	\item For each $i=0,\ldots,n$ define:
		\begin{align}
	\mathcal{E}_{2i+1}(\cdot)&:=\mathcal{E}_{2i}(\id)^{1/2}\mathcal{E}_{2i}(\cdot)\mathcal{E}_{2i}(\id)^{1/2} \label{eqn:iterate1}\\
	\mathcal{E}_{2i+2}(\cdot)& :=\mathcal{E}_{2i+1}(\mathcal{E}_{2i+1}^{*}(\id)^{1/2}\cdot\mathcal{E}_{2i+1}^{*}(\id)^{1/2}) \label{eqn:iterate2}
\end{align}
	\item Iterate till convergence
\end{enumerate}
By construction, we iterate between trace-preserving (even) and unital (odd) maps.
\end{alg}

\subsubsection{Potential Theory and Convex programming}
As stated, an approach along the lines of the London-Djokovic approach of equation (\ref{eqn:london}) is found in \cite{gur03, gur04} (with methods of \cite{gur02b,gur00,gur02a}). Since the complete proofs are lengthy and scattered over several papers, we provide full proofs in Appendix \ref{app:gurvitsproof} for the benefit of the reader. In this section, we only sketch the path of the proofs.

Recall that a matrix scaling exists iff the following is positive and the minimum is attained:
\begin{align}
	c(A):=\inf\left\{\prod_{i=1}^n \sum_{j=1}^n A_{ij}x_j \middle|\prod_{i=1}^n x_i=1\right\}
\end{align}
Exchanging products with determinants and sums with traces, we obtain the following definition:
\begin{dfn}
Let $\mathcal{E}:\mathcal{M}_n\to\mathcal{M}_n$ be a positive, linear map. Then define the \emph{capacity} via
\begin{align}
	\operatorname{Cap}(\mathcal{E}):=\inf \{\detm(\mathcal{E}(X))|X>0, \detm(X)=1\}
\end{align}
\end{dfn}
We will start with covering approximate scaling:

\paragraph*{Approximate scalability}
The capacity is the right functional to study scaling:
\begin{lem}[\cite{gur04}] \label{lem:approxscaling}
Let $\mathcal{E}:\mathcal{M}_d\to\mathcal{M}_d$ be a positive map. If $\operatorname{Cap}(\mathcal{E})>0$ then the RAS method of Algorithm \ref{alg:rasqm} converges and $\mathcal{E}$ is $\varepsilon$-scalable for any $\varepsilon>0$.
\end{lem}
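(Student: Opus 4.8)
The plan is to use the capacity $\operatorname{Cap}(\mathcal{E})$ as a monotone progress functional for the iteration of Algorithm \ref{alg:rasqm}, exactly mirroring the classical role of the London--Djokovic functional $c(A)$ of (\ref{eqn:london}) in the entropic proof of Theorem \ref{thm:sink}. The first ingredient is a transformation rule: for invertible $X,Y$, substituting $W=XZX^{\dagger}$ in the defining infimum and using multiplicativity of $\detm$ together with the homogeneity $\mathcal{E}(tW)=t\,\mathcal{E}(W)$ gives
\begin{align*}
	\operatorname{Cap}\bigl(Y^{\dagger}\mathcal{E}(X\cdot X^{\dagger})Y\bigr)=|\detm X|^{2}\,|\detm Y|^{2}\,\operatorname{Cap}(\mathcal{E}).
\end{align*}
Moreover $\operatorname{Cap}(\mathcal{E})>0$ forces $\mathcal{E}(\id)>0$ (the value is positive semidefinite with nonzero determinant), and likewise $\mathcal{E}^{*}(\id)>0$, since positivity of the capacity is equivalent to $\mathcal{E}$ being rank non-decreasing, a property stable under taking adjoints and under scaling (Appendix~\ref{app:posprel}); hence every iterate $\mathcal{E}_n$ and its adjoint are positive definite on $\id$, the square roots in (\ref{eqn:iterate1})--(\ref{eqn:iterate2}) exist, and the algorithm is well defined.

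The second step is monotonicity. The normalization (\ref{eqn:iterate1}), which turns $\mathcal{E}_{2i}$ into the unital map $\mathcal{E}_{2i+1}$, is the congruence $Z\mapsto G^{\dagger}ZG$ applied after $\mathcal{E}_{2i}$ with $G=\mathcal{E}_{2i}(\id)^{-1/2}$, hence multiplies $\operatorname{Cap}$ by $\detm(\mathcal{E}_{2i}(\id))^{-1}$; for $i\ge 1$ the map $\mathcal{E}_{2i}$ is trace preserving by construction, so $\tr\mathcal{E}_{2i}(\id)=d$ and AM--GM on the eigenvalues gives $\detm(\mathcal{E}_{2i}(\id))\le 1$, so the factor is $\ge 1$. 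Symmetrically, (\ref{eqn:iterate2}) multiplies $\operatorname{Cap}$ by $\detm(\mathcal{E}_{2i+1}^{*}(\id))^{-1}\ge 1$, because $\mathcal{E}_{2i+1}$ is unital and therefore $\mathcal{E}_{2i+1}^{*}$ is trace preserving. Thus $(\operatorname{Cap}(\mathcal{E}_n))_{n\ge 1}$ is nondecreasing, and it is bounded above by $1$: for any unital or trace-preserving $\mathcal{F}$, taking $X=\id$ in the infimum yields $\operatorname{Cap}(\mathcal{F})\le\detm(\mathcal{F}(\id))\le 1$. Since $\operatorname{Cap}(\mathcal{E}_1)=\operatorname{Cap}(\mathcal{E})/\detm(\mathcal{E}(\id))>0$, the bounded monotone sequence $\operatorname{Cap}(\mathcal{E}_n)$ converges to some $c^{*}\in(0,1]$.

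Convergence of the marginals then follows by telescoping: the product of the ratios $\operatorname{Cap}(\mathcal{E}_{n+1})/\operatorname{Cap}(\mathcal{E}_n)$ converges, so each ratio tends to $1$, i.e.\ $\detm(\mathcal{E}_{2i}(\id))\to 1$ and $\detm(\mathcal{E}_{2i+1}^{*}(\id))\to 1$. I would then invoke the elementary fact that a sequence of positive definite matrices of trace $d$ whose determinant tends to $1$ must converge to $\id$ --- on the simplex $\{\lambda>0:\sum_j\lambda_j=d\}$ the point $(1,\dots,1)$ is the unique maximiser of $\prod_j\lambda_j$ and its super-level sets are compact, and a Hermitian matrix is close to $\id$ iff all its eigenvalues are close to $1$. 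Combined with $\mathcal{E}_{2i}^{*}(\id)=\id$ and $\mathcal{E}_{2i+1}(\id)=\id$ exactly, this gives $\operatorname{DS}(\mathcal{E}_n)\to 0$, which is precisely $\varepsilon$-scalability for every $\varepsilon>0$ and is the sense in which the RAS iteration ``converges''.

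For honest convergence of the sequence of maps $(\mathcal{E}_n)$ one writes $\mathcal{E}_n(\cdot)=Y_n^{\dagger}\mathcal{E}(X_n\cdot X_n^{\dagger})Y_n$ with accumulated $X_n,Y_n$ satisfying $|\detm X_n|^{2}|\detm Y_n|^{2}=\operatorname{Cap}(\mathcal{E}_n)/\operatorname{Cap}(\mathcal{E})$, which lies in a compact positive interval, extracts a subsequential limit, identifies it as a doubly stochastic map, and upgrades to convergence of the whole sequence. This last step is the \textbf{main obstacle}: in the merely approximately scalable regime the infimum defining $\operatorname{Cap}$ need not be attained, so although the product of determinants stays bounded the matrices $X_n,Y_n$ themselves need not, and pinning down the limit requires the operator analogue of Csisz\'{a}r's Pythagorean identity and transitivity-of-$I$-projection argument from the entropic proof of Theorem \ref{thm:sink}. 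Everything else --- the transformation rule, the AM--GM monotonicity, and the implication ``$\detm\to1$'' $\Rightarrow$ ``operator $\to\id$'' --- is routine.
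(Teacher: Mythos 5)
Your argument is correct and is essentially the paper's own proof: the paper packages the transformation rule as the statement that $\operatorname{Cap}$ is a ``bounded locally scalable functional'' (Lemma \ref{lem:caplocbdd}) and then runs exactly your AM--GM monotonicity and ``determinant ratios tend to $1$'' argument in Proposition \ref{prop:locbdd}, the only cosmetic difference being that it quantifies the step from $\detm(\mathcal{E}_i(\id))\to 1$ to $\operatorname{DS}(\mathcal{E}_i)\to 0$ with an explicit logarithmic inequality rather than your compactness argument. The obstacle you flag in the last paragraph is not part of this lemma: here ``the RAS method converges'' means precisely $\operatorname{DS}(\mathcal{E}_n)\to 0$, and convergence of the maps themselves (equivalently, boundedness of the accumulated $X_n,Y_n$) is only claimed under the stronger hypothesis that the capacity is attained, which is Lemma \ref{lem:gurscaleequiv}.
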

The proof of this lemma uses the following observation: For any $C_1,C_2>0$ we have
\begin{align*}
	\operatorname{Cap}(C_1\mathcal{E}(C_2^{\dagger}\cdot C_2)C_1^{\dagger})=\det(C_1C_1^{\dagger})\det(C_2C_2^{\dagger})\operatorname{Cap}(\mathcal{E}).
\end{align*}
Then, a quick calculation shows that Algorithm \ref{alg:rasqm} only decreases $\operatorname{Cap}$ using this equality. If $\operatorname{Cap}(\mathcal{E})\neq 0$, one can then show that $\operatorname{DS}(\mathcal{E}_i)\to 0$ for $i\to \infty$.

Next, we need to see when the capacity is actually positive. To do this, for every unitary $U$ we need to define the tuple
\begin{align}
	\mathbf{A}_{\mathcal{E},U}:=(\mathcal{E}(u_1u_1^{\dagger}),\ldots,\mathcal{E}(u_nu_n^{\dagger})),
\end{align}
where $u_i$ is the $i$-th column of $U$. This is done to connect the capacity with so called \emph{mixed discriminants} (see also \ref{app:posprel}), which are needed for the proof. In fact, we have:
\begin{lem} \label{lem:tuplecap}
Let $\mathcal{E}:\mathcal{M}_n\to\mathcal{M}_n$ be a positive, linear map and $U\in U(n)$ a fixed unitary. Then defining
\begin{align*}
	\operatorname{Cap}(\mathbf{A}_{\mathcal{E},U}):=\inf \left\{\detm \left(\sum_i \mathcal{E}(u_iu_i^{\dagger})\gamma_i\right) | \gamma_i>0, \prod_{i=1}^n\gamma_i=1\right\}
\end{align*}
where $u_i$ are once again the rows of $U$, we have the following properties:
\begin{enumerate}
	\item Using the mixed discriminant $M$ defined in Appendix \ref{app:posprel}, we have
	\begin{align*}
		M(\mathbf{A}_{\mathcal{E},U})\leq \operatorname{Cap}(\mathbf{A}_{\mathcal{E},U})\leq \frac{n^n}{n!}M(\mathbf{A}_{\mathcal{E},U})
	\end{align*}
	\item $\inf\limits_{U\in U(n)} \operatorname{Cap}(\mathbf{A}_{\mathcal{E},U})=\operatorname{Cap}(\mathcal{E})$
\end{enumerate}
\end{lem}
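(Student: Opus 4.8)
The plan is to prove the two assertions separately: the identity $\inf_{U}\operatorname{Cap}(\mathbf{A}_{\mathcal{E},U})=\operatorname{Cap}(\mathcal{E})$ is essentially bookkeeping via the spectral theorem, while the sandwich inequality splits into an easy lower half and a genuinely hard upper half — the latter being the mixed‑discriminant analogue of the van der Waerden/Falikman–Egorychev bound.

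For the identity, I would first rewrite the tuple capacity. For fixed $U$ the matrix $\sum_{i=1}^{n}\gamma_i u_iu_i^{\dagger}$ is a unitary conjugate of $\diag(\gamma_1,\dots,\gamma_n)$, so by linearity $\sum_i\mathcal{E}(u_iu_i^{\dagger})\gamma_i=\mathcal{E}\big(U\diag(\gamma)U^{\dagger}\big)$ and $\det\big(U\diag(\gamma)U^{\dagger}\big)=\prod_i\gamma_i$. Hence
\[
\operatorname{Cap}(\mathbf{A}_{\mathcal{E},U})=\inf\left\{\det\!\big(\mathcal{E}(U\diag(\gamma)U^{\dagger})\big)\ \middle|\ \gamma>0,\ \textstyle\prod_i\gamma_i=1\right\}.
\]
Taking the infimum over all $U\in U(n)$ and invoking the spectral decomposition — every positive definite $X$ with $\det X=1$ equals $U\diag(\gamma)U^{\dagger}$ for a suitable $U$ and a suitable $\gamma>0$ with $\prod_i\gamma_i=\det X=1$ — identifies the right‑hand side with $\inf\{\det(\mathcal{E}(X))\mid X>0,\ \det X=1\}=\operatorname{Cap}(\mathcal{E})$. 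Positivity of $\mathcal{E}$ is used only to ensure $\mathcal{E}(X)\succeq 0$, so that every determinant in sight is nonnegative.

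For the sandwich, put $B_i:=\mathcal{E}(u_iu_i^{\dagger})\succeq 0$ and consider $p(\gamma):=\det\!\big(\sum_{i=1}^{n}\gamma_iB_i\big)$, homogeneous of degree $n$. Expanding, $p(\gamma)=\sum_{|\mathbf{k}|=n}c_{\mathbf{k}}\gamma^{\mathbf{k}}$ with all $c_{\mathbf{k}}\ge 0$ (nonnegativity of mixed discriminants of positive semidefinite matrices; see Appendix~\ref{app:posprel}), and the fully mixed coefficient $c_{(1,\dots,1)}$ is, by definition, $M(\mathbf{A}_{\mathcal{E},U})$. Discarding every monomial but the fully mixed one gives $p(\gamma)\ge M(\mathbf{A}_{\mathcal{E},U})\prod_i\gamma_i=M(\mathbf{A}_{\mathcal{E},U})$ on the constraint set, which is exactly the lower bound $\operatorname{Cap}(\mathbf{A}_{\mathcal{E},U})\ge M(\mathbf{A}_{\mathcal{E},U})$.

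The upper bound $\operatorname{Cap}(\mathbf{A}_{\mathcal{E},U})\le\frac{n^n}{n!}M(\mathbf{A}_{\mathcal{E},U})$ is the substantive step, and is where the main obstacle lies. The key extra structure is that $p$ is not an arbitrary nonnegative polynomial: being the determinant of a positive combination of positive semidefinite matrices, it is real stable, and for real stable homogeneous degree‑$n$ polynomials in $n$ variables with nonnegative coefficients one has Gurvits' capacity estimate $\inf_{\prod_i\gamma_i=1}p(\gamma)\le\frac{n^n}{n!}\,c_{(1,\dots,1)}$, precisely as in his proof of the van der Waerden bound for the permanent. I would establish this by induction on $n$: the map $\gamma_n\mapsto p(\gamma_1,\dots,\gamma_{n-1},\gamma_n)$ is a univariate stable polynomial, so differentiating at $\gamma_n=0$ descends to a real stable polynomial in $n-1$ variables while losing only a factor $\big(\tfrac{n-1}{n}\big)^{n-1}$ in capacity; since $\prod_{m=2}^{n}\big(\tfrac{m-1}{m}\big)^{m-1}=\frac{n!}{n^n}$, iterating down to a constant yields the claim. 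Keeping track of stability under this one‑variable specialisation and of the accumulated constant is the delicate part; as the capacity machinery for positive maps is developed in Appendix~\ref{app:gurvitsproof} in any case, I would carry out the details there, the remaining ingredients (nonnegativity of the $c_{\mathbf{k}}$ and the spectral decomposition) being routine.
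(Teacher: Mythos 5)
Your proof of part 2 is essentially identical to the paper's: both directions are obtained by diagonalising, i.e.\ writing every $X>0$ with $\detm(X)=1$ as $U\diag(\gamma)U^{\dagger}$ so that $\detm(\mathcal{E}(X))=\detm\bigl(\sum_i\gamma_i\,\mathcal{E}(u_iu_i^{\dagger})\bigr)$, and then passing to minimising sequences (the paper uses an explicit diagonal-sequence argument for the direction $\operatorname{Cap}(\mathcal{E})\leq\inf_U\operatorname{Cap}(\mathbf{A}_{\mathcal{E},U})$, which your one-line ``infimum over $U$ and $\gamma$ jointly equals infimum over $X$'' packages correctly).

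For part 1 the comparison is more interesting, because the paper gives no proof at all: it cites \cite{gur02a} and remarks that the argument is long and technical. Your lower bound is a complete and elementary proof that the paper could have included: expand $\detm\bigl(\sum_i\gamma_iB_i\bigr)=\sum_{r}t_r\gamma^{r}$ as in equation (\ref{eqn:proof3}), use nonnegativity of mixed discriminants of positive semidefinite tuples to drop all monomials except $t_{(1,\dots,1)}\prod_i\gamma_i=M(\mathbf{A}_{\mathcal{E},U})$ on the constraint set. Your upper bound follows Gurvits' later real-stability/capacity route (the inductive loss factor $\bigl(\tfrac{d-1}{d}\bigr)^{d-1}$ per variable, telescoping to $n!/n^n$), which is a genuinely different and cleaner path than the one in the reference the paper cites; the constants and the direction of the inequality check out. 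What it buys is a self-contained proof of the van der Waerden-type bound; what it costs is that three standard but nontrivial ingredients must still be supplied in the appendix: (i) real stability of $\detm\bigl(\sum_i\gamma_iB_i\bigr)$ for $B_i\geq 0$ (by perturbing to $B_i+\varepsilon\id$ and using Hurwitz, with the degenerate case $p\equiv 0$ handled separately), (ii) closure of real stability under the operation $p\mapsto\partial_{\gamma_n}p|_{\gamma_n=0}$ (again a Hurwitz/Gauss--Lucas argument, since one evaluates on the boundary of the upper half-plane), and (iii) the univariate capacity-loss lemma for real-rooted polynomials with nonnegative coefficients. None of these is a gap in the sense of a wrong step, but as written the upper half of the sandwich is a correct plan rather than a proof.
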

Most of the proof is very technical and found in \cite{gur02a}. Some parts are explained in Appendix \ref{app:gurvitsproof}.

Finally, this proves most of Theorem \ref{thm:approxsinkquant}: We know that $\mathcal{E}$ is rank non-decreasing if and only if $\operatorname{Cap}(\mathcal{E})$ is positive. In that case, the RAS algorithm converges in which case the map is approximately scalable. For the other direction, one can use a simple contradiction argument: Any map close to a doubly stochastic map must be rank non-decreasing and as scaling does not change this property of a map, any approximately scalable map must be rank non-decreasing. 

\paragraph*{Exact scalability}
The capacity is also the correct generalisation for exact scaling:
\begin{lem}[\cite{gur04}] \label{lem:gurscaleequiv}
Let $\mathcal{E}:\mathcal{M}_d\to\mathcal{M}_d$ be a positive map. Then $\mathcal{E}$ is scalable to a doubly-stochastic map if and only if $\operatorname{Cap}(\mathcal{E})>0$ and the capacity can be achieved.
\end{lem}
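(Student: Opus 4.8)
The plan is to prove the two directions separately, leaning on the machinery already assembled. The easy direction is ``scalable $\Rightarrow$ $\operatorname{Cap}(\mathcal{E})>0$ and attained.'' Suppose $\mathcal{E}'(\cdot)=Y^{\dagger}\mathcal{E}(X\cdot X^{\dagger})Y$ is doubly stochastic. For a doubly stochastic map one has $\mathcal{E}'(\id)=\id$, so $\operatorname{Cap}(\mathcal{E}')=\inf\{\detm(\mathcal{E}'(Z))\mid Z>0,\ \detm(Z)=1\}$ is attained at $Z=\id$ with value $1$; indeed the inequality $\detm(\mathcal{E}'(Z))\geq\detm(Z)$ for $Z>0$ under a doubly stochastic map (a matrix-version of the fact that relative entropy / the determinant is monotone, which can be extracted from the concavity of $\log\detm$ together with $\tr(\mathcal{E}'(Z))=\tr(Z)$ and $\mathcal{E}'(\id)=\id$) shows the infimum equals $1>0$ and is achieved. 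Then the scaling identity $\operatorname{Cap}(C_1\mathcal{E}(C_2^{\dagger}\cdot C_2)C_1^{\dagger})=\detm(C_1C_1^{\dagger})\detm(C_2C_2^{\dagger})\operatorname{Cap}(\mathcal{E})$ recorded after Lemma \ref{lem:approxscaling} transfers positivity and attainment back to $\mathcal{E}$ itself.

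For the hard direction, ``$\operatorname{Cap}(\mathcal{E})>0$ and attained $\Rightarrow$ scalable,'' I would run a first-order optimality argument, the noncommutative analogue of the potential-function proof of Theorem \ref{thm:sink} sketched in Section \ref{sec:logbarrier}. Let $X_0>0$ with $\detm(X_0)=1$ attain the infimum defining $\operatorname{Cap}(\mathcal{E})$, i.e.\ minimize $\detm(\mathcal{E}(X))$ over the manifold $\{X>0:\detm(X)=1\}$. Set $Y_0:=\mathcal{E}(X_0)^{-1}$, which is positive definite because $\operatorname{Cap}(\mathcal{E})>0$ forces $\mathcal{E}(X_0)$ to be nonsingular. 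The claim is that the rescaled map $\mathcal{F}(\cdot):=Y_0^{1/2}\mathcal{E}(X_0^{1/2}\cdot X_0^{1/2})Y_0^{1/2}$ is already doubly stochastic. Unitality $\mathcal{F}(\id)=Y_0^{1/2}\mathcal{E}(X_0)Y_0^{1/2}=\id$ is immediate. The trace-preserving condition $\mathcal{F}^*(\id)=\id$ is exactly the stationarity (Euler--Lagrange) condition of the determinant-minimization at $X_0$: differentiating $\log\detm(\mathcal{E}(X))$ under the constraint $\log\detm(X)=\mathrm{const}$ and using the adjoint relation $\tr(\mathcal{E}(H)M)=\tr(H\,\mathcal{E}^*(M))$ gives $\mathcal{E}^*(\mathcal{E}(X_0)^{-1})=\lambda X_0^{-1}$ for a Lagrange multiplier $\lambda$; taking traces and using $\tr(\mathcal{E}^*(\mathcal{E}(X_0)^{-1}))=\tr(\mathcal{E}(X_0)^{-1}\mathcal{E}(\id))$... more cleanly, taking the trace after conjugating by $X_0^{1/2}$ pins down $\lambda$ and rearranges into $X_0^{1/2}\mathcal{E}^*(Y_0)X_0^{1/2}=\id$, which is $\mathcal{F}^*(\id)=\id$. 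So $\mathcal{F}$ is doubly stochastic and $X=X_0^{1/2}$, $Y=Y_0^{1/2}$ exhibit the scaling.

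The main obstacle I anticipate is making the variational step rigorous: one must justify differentiating $\detm(\mathcal{E}(X))$ on the positive-definite cone (fine, $\detm\circ\mathcal{E}$ is smooth where $\mathcal{E}(X)>0$, which holds near the minimizer since $\operatorname{Cap}(\mathcal{E})>0$), confirm that the minimizer lies in the interior of $\{X>0\}$ rather than on the boundary $\partial\{X\geq0\}$ (this is precisely where the hypothesis that the infimum is \emph{attained} is used, ruling out an escape to the boundary), and correctly bookkeep the noncommutativity in the matrix derivative of $\log\detm$ — the identity $\tfrac{d}{dt}\log\detm(Z+tH)\big|_{t=0}=\tr(Z^{-1}H)$ is clean, but one has to be careful that $\mathcal{E}(X_0^{1/2}HX_0^{1/2})$ and the various square roots are threaded through the adjoint correctly. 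A secondary subtlety is the determinant-monotonicity inequality $\detm(\mathcal{E}'(Z))\geq\detm(Z)$ used in the easy direction; if one prefers to avoid it, the easy direction can instead be obtained directly from the scaling identity plus the trivial bound $\operatorname{Cap}(\mathcal{E}')\le\detm(\mathcal{E}'(\id))=1$ together with the fact (from Lemma \ref{lem:approxscaling} and rank non-decrease) that $\operatorname{Cap}(\mathcal{E}')>0$, and attainment at $\id$ follows because a doubly stochastic map is rank non-decreasing so $\detm(\mathcal{E}'(Z))>0$ on the compact set $\{Z\ge 0,\ \tr Z = n,\ \text{(normalization)}\}$ — I would pick whichever route keeps the appendix self-contained.
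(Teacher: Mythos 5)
Your proposal follows essentially the same route as the paper's proof in Appendix \ref{app:gurvitsproof}: the hard direction is the first-order Lagrange condition at the interior minimiser, giving $\mathcal{E}^*(\mathcal{E}(X_0)^{-1})=X_0^{-1}$ once the multiplier is fixed by a trace argument, which is then converted into a doubly stochastic scaling exactly as in Lemma \ref{lem:sinkhornderivative}; the easy direction rests on the determinant-monotonicity $\detm(\mathcal{E}'(Z))\geq\detm(Z)$ for doubly stochastic $\mathcal{E}'$ (which the paper derives from the majorisation of the spectrum of $\mathcal{E}'(Z)$ by that of $Z$ for unital trace-preserving maps, combined with the convexity of $-\ln$) together with the local-scalability identity for $\operatorname{Cap}$. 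One caution: the fallback you sketch at the end for the easy direction does not actually work, because the constraint set $\{Z>0,\ \detm(Z)=1\}$ is unbounded, so knowing $0<\operatorname{Cap}(\mathcal{E}')\leq 1$ and that the determinant is positive on some compact normalised set does not show that the infimum is attained, let alone attained at $\id$; you genuinely need the pointwise inequality $\detm(\mathcal{E}'(Z))\geq\detm(Z)$.
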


The proof of this Lemma following \cite{gur04} is given in Appendix \ref{app:gurvitsproof}. The direction ``Capacity is achieved $\Rightarrow$ the map is scalable'', is proved by taking the Lagrangian and showing that at the minimum we have that
\begin{align*}
	\nabla \ln(\det(\mathcal{E}(X)))=\mathcal{E}^*(\mathcal{E}(C)^{-1})
\end{align*}
which implies scalability by Lemma \ref{lem:sinkhornderivative}. The converse direction is given by a direct calculation.

In order to prove that a map can be scaled to doubly stochastic form, one then needs to connect this lemma to full indecomposability of matrices. The proof is done using an argument involving strict convexity. Like the original London-Djokovic potential (\ref{eqn:london}), the capacity is not a convex function, but one can make a substitution similar to Formulation (\ref{eqn:gormanconvex}) by considering the following function for any tuple $(A_i)_i$ of positive definite matrices:
\begin{align}
	f_A(\xi_1,\ldots,\xi_n):=\ln \detm(e^{\xi_1}A_i+\ldots+e^{\xi_n}An).
\end{align}
Then we have: 

\begin{lem} \label{lem:convexgurvits}
Let $\mathcal{E}:\mathcal{M}_n\to\mathcal{M}_n$ be a positive, linear map and given $U\in U(n)$, let $A=\mathbf{A}_{\mathcal{E},U}$. Then 
\begin{enumerate}
	\item $f_A$ is convex on $\mathbb{R}^n$.
	\item If $\mathcal{E}$ is fully indecomposable, then $f_A$ is strictly convex on $\{\xi=(\xi_1,\ldots, \xi_n)\in\mathbb{R}^n|\sum_i \xi_i=0\}$.
\end{enumerate}
\end{lem}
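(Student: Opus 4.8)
To prove Lemma~\ref{lem:convexgurvits} I would work directly with the function $f_A(\xi)=\ln\detm\!\left(\sum_{i=1}^n e^{\xi_i}A_i\right)$ for a tuple $A=(A_1,\dots,A_n)$ of positive semidefinite matrices (here $A_i=\mathcal{E}(u_iu_i^\dagger)\geq 0$), and establish convexity by a standard two-step argument: first reduce to the one-dimensional case by restricting $f_A$ to an arbitrary line $\xi(t)=\xi_0+tv$, and then show $t\mapsto f_A(\xi(t))$ is convex in $t$. Along a line, $\sum_i e^{\xi_i(t)}A_i=\sum_i e^{\xi_{0,i}}e^{tv_i}A_i=:H(t)$, which is a sum of exponentials of $t$ with fixed positive semidefinite coefficient matrices $B_i:=e^{\xi_{0,i}}A_i$. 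So it suffices to prove: for positive semidefinite $B_i$ and reals $v_i$, the map $t\mapsto\ln\detm\!\left(\sum_i e^{tv_i}B_i\right)$ is convex (and to control when it is \emph{strictly} convex).

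\textbf{Convexity (part 1).} The cleanest route is to use the known fact that $X\mapsto\ln\detm(X)$ is concave on the cone of positive definite matrices, combined with the fact that $X\mapsto X^{-1}$ is operator-convex, but actually the slickest proof computes the second derivative directly. Writing $H(t)=\sum_i e^{tv_i}B_i$, $\dot H=\sum_i v_i e^{tv_i}B_i$, $\ddot H=\sum_i v_i^2 e^{tv_i}B_i$, one has
\begin{align*}
	\frac{d^2}{dt^2}\ln\detm H(t)=\tr\!\left(H^{-1}\ddot H\right)-\tr\!\left(H^{-1}\dot H H^{-1}\dot H\right).
\end{align*}
After conjugating by $H^{-1/2}$ (assuming $H(t)>0$, which holds for $t$ in the relevant region since the $A_i=\mathcal{E}(u_iu_i^\dagger)$ sum to $\mathcal{E}(\id)>0$ when $\mathcal{E}$ is positivity improving, or more generally one works on the region where the sum is positive definite), set $P_i:=e^{tv_i}H^{-1/2}B_iH^{-1/2}\geq 0$, so $\sum_i P_i=\id$. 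The second derivative becomes $\sum_i v_i^2\tr(P_i)-\tr\big((\sum_i v_iP_i)^2\big)=\sum_i v_i^2\tr(P_i)-\sum_{i,j}v_iv_j\tr(P_iP_j)$. Since the $P_i\geq 0$ sum to the identity, they define a ``matrix-valued probability distribution'', and one recognizes this quantity as a variance-type expression: it equals $\sum_{i<j}(v_i-v_j)^2\tr(P_iP_j)$ after using $\tr(P_i)=\tr(P_i\sum_j P_j)=\sum_j\tr(P_iP_j)$. Because $P_iP_j$ need not be positive semidefinite, I would instead write $\tr(P_iP_j)=\tr\!\big((P_i^{1/2}P_j^{1/2})(P_i^{1/2}P_j^{1/2})^\dagger\big)=\|P_i^{1/2}P_j^{1/2}\|_{HS}^2\geq 0$, using that $P_i,P_j$ commute with... no --- more carefully, $\tr(P_iP_j)=\tr(P_j^{1/2}P_iP_j^{1/2})\geq 0$ since $P_j^{1/2}P_iP_j^{1/2}\geq 0$. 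Hence $\frac{d^2}{dt^2}\ln\detm H(t)=\sum_{i<j}(v_i-v_j)^2\tr(P_j^{1/2}P_iP_j^{1/2})\geq 0$, giving convexity.

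\textbf{Strict convexity under full indecomposability (part 2).} This is the main obstacle and the part requiring the hypothesis. We must show that on the hyperplane $\{\sum_i\xi_i=0\}$, the second derivative above vanishes for \emph{some} direction $v$ only when $v=0$. Vanishing forces $\tr(P_j^{1/2}P_iP_j^{1/2})=0$, i.e.\ $P_i^{1/2}P_j^{1/2}=0$, i.e.\ $\ran(P_i)\perp\ran(P_j)$, for every pair $i\neq j$ with $v_i\neq v_j$. Partition the indices into the level sets of $v$; call them $S_1,\dots,S_k$. Then for indices in different blocks the ranges of the corresponding $P_i$ are mutually orthogonal. Let $Q_\ell$ be the orthogonal projection onto $\bigoplus_{i\in S_\ell}\ran(P_i)$; these $Q_\ell$ are mutually orthogonal and sum to $\id$ (since $\sum P_i=\id$). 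Translating back through the conjugation, $Q_\ell$ corresponds to a projection onto the span of $\{\ran(\mathcal{E}(u_iu_i^\dagger)):i\in S_\ell\}$, and the orthogonality says precisely that $\mathcal{E}$ maps the subalgebra-corner associated with the span of $\{u_i:i\in S_\ell\}$ into the corner associated with $Q_\ell$. If $k\geq 2$ this contradicts full indecomposability of $\mathcal{E}$ (one checks the ranks of the source and target projections match up using $\sum_i\xi_i=0$, or rather one picks out a pair of complementary projections of equal rank by grouping blocks appropriately, invoking Definition~\ref{dfn:irredindec}); hence $k=1$, meaning all $v_i$ are equal, and combined with $\sum_i v_i=0$ this forces $v=0$. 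The delicate points here are (i) verifying that the rank-matching required by the definition of full indecomposability can indeed be arranged (one may need to coarsen the partition into just two classes and argue the projections have equal rank via a dimension count, or appeal to the algebraic reformulations of full indecomposability collected in Appendix~\ref{app:posprel}), and (ii) making rigorous the passage between ``orthogonality of ranges of the $P_i$'' and the projection-corner formulation of Definition~\ref{dfn:irredindec}, which is where the structure of the tuple $\mathbf{A}_{\mathcal{E},U}$ and the freedom in the unitary $U$ enter. I would lean on Gurvits's mixed-discriminant machinery (Lemma~\ref{lem:tuplecap}) and the appendix on positive maps to close this gap cleanly rather than reprove those structural facts from scratch.
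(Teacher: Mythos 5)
Your proof is correct, but it takes a genuinely different route from the paper's. The paper never differentiates $\ln\detm$ directly: it invokes Bapat's expansion $\detm(\sum_i e^{\xi_i}A_i)=\sum_{r\in P_n}t_re^{(\xi,r)}$ with nonnegative mixed-discriminant coefficients $t_r$, so that convexity becomes log-sum-exp convexity, and it gets strict convexity on the hyperplane by noting that full indecomposability forces the particular coefficients $t_{r_{ij}}=\tfrac12 M(A^{ij})$ to be bounded below by $\overline{M}>0$ and then checking by hand that $\sum_{i\neq j,k\neq l}(r_{ij}-r_{kl})(r_{ij}-r_{kl})^{tr}$ is a positive multiple of the projection onto $\{\sum_i\xi_i=0\}$. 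Your route — restricting to a line, computing $\tr(H^{-1}\ddot H)-\tr(H^{-1}\dot H H^{-1}\dot H)$, normalising to operators $P_i\geq 0$ with $\sum_iP_i=\id$, and obtaining the variance identity $\sum_{i<j}(v_i-v_j)^2\tr(P_j^{1/2}P_iP_j^{1/2})$ — is more elementary and avoids mixed discriminants entirely for this lemma (the paper still needs them elsewhere, e.g.\ for the compactness bound in Lemma \ref{lem:quantumgurvits}, so it loses nothing by reusing them here). The two ``delicate points'' you flag in part 2 close more easily than you suggest, and via Proposition \ref{prop:fully} rather than Lemma \ref{lem:tuplecap}, which is about capacities and is not the right tool: if the directional second derivative vanishes for $v\neq 0$ with $\sum_iv_i=0$, your level-set blocks $S_1,\dots,S_k$ ($k\geq 2$) give mutually orthogonal projections $Q_\ell=\sum_{i\in S_\ell}P_i$ summing to $\id$, and $\rank(Q_\ell)=\rank\bigl(\sum_{i\in S_\ell}A_i\bigr)$ since $H^{-1/2}$ is invertible; Proposition \ref{prop:fully}(vii) then gives $\rank(Q_\ell)>|S_\ell|$ for every block, and summing yields $n>n$ — no matching of source and target ranks is ever needed. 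Two small repairs for part 1: note that $\ran(\sum_ie^{\xi_i}A_i)=\sum_i\ran(A_i)$ is independent of $\xi$, so $f_A$ is either finite (and your computation global) or identically $-\infty$; and under full indecomposability $H(\xi)=\mathcal{E}(U\diag(e^{\xi})U^{\dagger})>0$ everywhere by the last assertion of Proposition \ref{prop:fully}, so the $P_i$ are always well defined in part 2.
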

The proof is technical and uses mixed discriminants as well as results about them from \cite{bap89b}, which is why we only discuss it in the appendices. This is then used to prove

\begin{lem}\label{lem:quantumgurvits}
Let $\mathcal{E}:\mathcal{M}_n\to\mathcal{M}_n$ be a positive, linear map. If $\mathcal{E}$ is fully indecomposable, there exists a unique scaling of $\mathcal{E}$ to a doubly stochastic map.
\end{lem}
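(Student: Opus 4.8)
The plan is to read the statement off the capacity criterion of Lemma~\ref{lem:gurscaleequiv}, following \cite{gur04}: $\mathcal{E}$ is scalable to a doubly stochastic map exactly when $\operatorname{Cap}(\mathcal{E})>0$ and the infimum defining $\operatorname{Cap}(\mathcal{E})$ is attained. So I would split the argument into three parts: (a) $\operatorname{Cap}(\mathcal{E})>0$; (b) the infimum is actually a minimum; (c) uniqueness of the scaling. Parts (a) and (b) together give existence through Lemma~\ref{lem:gurscaleequiv}.

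For (a), a fully indecomposable map is in particular rank non-decreasing (one of the equivalent characterisations collected in Appendix~\ref{app:posprel}), and rank non-decreasing is equivalent to $\operatorname{Cap}(\mathcal{E})>0$ by the discussion following Lemma~\ref{lem:tuplecap}. More quantitatively, full indecomposability of $\mathcal{E}$ descends to full indecomposability of every tuple $\mathbf{A}_{\mathcal{E},U}$, so each mixed discriminant $M(\mathbf{A}_{\mathcal{E},U})$ is positive; as a polynomial in the entries of the $\mathcal{E}(u_iu_i^{\dagger})$ it is continuous on the compact group $U(n)$, hence bounded below by some $\delta>0$, and Lemma~\ref{lem:tuplecap} yields $\operatorname{Cap}(\mathcal{E})=\inf_{U}\operatorname{Cap}(\mathbf{A}_{\mathcal{E},U})\ge\inf_U M(\mathbf{A}_{\mathcal{E},U})\ge\delta>0$.

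For (b), I would take a minimising sequence $X_k>0$ with $\det X_k=1$ and $\det\mathcal{E}(X_k)\to\operatorname{Cap}(\mathcal{E})$, observing $\operatorname{Cap}(\mathcal{E})\le\det\mathcal{E}(\id)<\infty$. It is enough to show that the condition numbers $\kappa(X_k)$ stay bounded: then the $X_k$ lie in a compact subset of the positive cone, a subsequential limit $X^{\ast}>0$ satisfies $\det X^{\ast}=1$, and continuity gives $\det\mathcal{E}(X^{\ast})=\operatorname{Cap}(\mathcal{E})$. If instead $\kappa(X_k)\to\infty$, write $X_k=U_k\diag(e^{\xi^{(k)}})U_k^{\dagger}$, so $\xi^{(k)}\in H:=\{\xi\in\R^n:\sum_i\xi_i=0\}$ with $\|\xi^{(k)}\|\to\infty$ and $\det\mathcal{E}(X_k)=\exp\!\big(f_{\mathbf{A}_{\mathcal{E},U_k}}(\xi^{(k)})\big)$; the claim to be proved is that $f_{\mathbf{A}_{\mathcal{E},U}}$ is coercive on $H$ uniformly in $U\in U(n)$, which forces $\det\mathcal{E}(X_k)\to\infty$, contradicting minimality. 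With (a), this establishes that $\mathcal{E}$ is scalable.

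For (c), by the ``only if'' half of Lemma~\ref{lem:gurscaleequiv} any scaling $(X,Y)$ produces, after normalisation, a positive-definite minimiser of $X\mapsto\det\mathcal{E}(X)$ subject to $\det X=1$, whose eigenvalue vector minimises $f_{\mathbf{A}_{\mathcal{E},U}}$ on $H$ (for a suitable $U$); the strict convexity on $H$ provided by Lemma~\ref{lem:convexgurvits}(2) pins this minimiser, and hence $X$, down up to the trivial scalar freedom, and $Y$ is then determined by $YY^{\dagger}=\mathcal{E}(XX^{\dagger})^{-1}$ — this is the uniqueness asserted in Theorem~\ref{thm:sinkqm}. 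I expect the main obstacle to be the uniform coercivity claim in (b): strict convexity from Lemma~\ref{lem:convexgurvits} alone does not produce a minimiser, and one genuinely needs full indecomposability — via the positivity of the partial mixed discriminants (the estimates of \cite{bap89b} behind Lemma~\ref{lem:convexgurvits} and the sandwich of Lemma~\ref{lem:tuplecap}) — to control $\det\mathcal{E}(X)$ as the spectrum of $X$ degenerates, and to do so uniformly as the auxiliary unitary varies. This is the precise analogue of the boundary analysis that makes Djokovic's classical potential argument work, and it is where the quantitative content of the theorem is concentrated.
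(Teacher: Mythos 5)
Your overall strategy is exactly the paper's: reduce to the capacity criterion of Lemma~\ref{lem:gurscaleequiv}, show the capacity is positive and attained, and get uniqueness from the strict convexity supplied by Lemma~\ref{lem:convexgurvits}. Parts (a) and (c) are essentially fine. But part (b) is where the entire content of the lemma sits, and you have only announced the decisive estimate (``the claim to be proved is that $f_{\mathbf{A}_{\mathcal{E},U}}$ is coercive on $H$ uniformly in $U$'') rather than proved it; as written, the existence half of the statement is not established.

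The missing step is short once you invoke Bapat's expansion, which already underlies Lemma~\ref{lem:convexgurvits}: writing $X=U\diag(e^{\xi})U^{\dagger}$ with $\sum_i\xi_i=0$ and $A=\mathbf{A}_{\mathcal{E},U}$, one has $\detm(\mathcal{E}(X))=\sum_{r\in P_n}t_re^{(\xi,r)}$ with all $t_r\ge 0$, so discarding all but the single term indexed by $r_{ij}$ (the tuple with $r_i=2$, $r_j=0$ and all other entries $1$) gives
\[
\detm(\mathcal{E}(X))\;\ge\;\tfrac12\,M(A^{ij})\,e^{\xi_i-\xi_j}\;\ge\;\tfrac12\,\overline{M}(\mathbf{A}_{\mathcal{E},U})\,\frac{\gamma_{\max}}{\gamma_{\min}}.
\]
Full indecomposability is exactly the statement that $\overline{M}(\mathbf{A}_{\mathcal{E},U})>0$ for every $U$ (Proposition~\ref{prop:fully}, item (viii)), and continuity on the compact group $U(n)$ makes this lower bound uniform. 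Hence on the sublevel set $\{\detm(\mathcal{E}(X))\le\detm(\mathcal{E}(\id))\}$ the condition number of $X$ is bounded by $2\detm(\mathcal{E}(\id))/\overline{M}$, your minimising sequence stays in a compact subset of the open cone, and the infimum is attained --- this one-line inequality is precisely the quantitative heart of the paper's proof, and without it your argument does not close. A further small caveat on (c): strict convexity of $f_{\mathbf{A}_{\mathcal{E},U}}$ on the hyperplane $H$ gives uniqueness of the minimiser among matrices diagonalised by a fixed $U$; to conclude uniqueness of the minimiser $X^{\ast}$ itself (and hence of the scaling up to the trivial scalar freedom) one must still rule out distinct minimisers with different eigenbases, a point the paper also passes over quickly.
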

The idea is somewhat similar to the approximate Sinkhorn theorem: Since fully indecomposable maps are in particular rank non-decreasing, we know that the capacity is positive. For any $X>0$, which is diagonalised by $U$, using the tuple $\mathbf{A}_{\mathcal{E},U}$, one can then see that $\det(\mathcal{E}(X))=f_A(\log \lambda)$ with the eigenvalues $\lambda$ of $X$. Showing that the infimum must lie inside a compact set then finishes the proof, since Lemma \ref{lem:convexgurvits} implies existence and uniqueness of the minimum as $f_A$ is strictly convex.

Using Lemma \ref{lem:kernelred}, we can see then see that up to a unitary, every doubly stochastic map is a direct sum of fully indecomposable maps (much like doubly stochastic matrices are up to permutations a direct sum of fully indecomposable matrices, see Proposition \ref{prop:fullyindec}). Hence, any map which is a direct sum of fully indecomposable maps up to some scaling is clearly scalable to doubly stochastic maps. Sadly, the condition seems not very useful and the question remains open, whether one can simplify this condition.

However, that does in fact answer the question of unique scaling: Since the scaling for direct sums is not unique (we can always interchange summands), a map is uniquely scalable if and only if it is fully indecomposable.

\subsubsection{Nonlinear Perron-Frobenius theory}
The main idea of the alternative proofs in my Master's Thesis (\cite{ide13}) and the paper \cite{geo15} is to extend the Menon operator to positive semidefinite matrices:
\begin{dfn}
Let $\mathcal{E}:\mathcal{M}_n\to\mathcal{M}_n$ be a positive map, such that $\mathcal{E}(A),\mathcal{E}^*(A)>0$ for all $A>0$. Let $\mathcal{D}$ denote matrix inversion, then we define the following nonlinear map:
\begin{align*}
	&\mathbf{T}_{\mathrm{pos}}:\{A\in \mathcal{M}_n|A>0\}\to \{A\in \mathcal{M}_n|A>0\} \\
	&\mathbf{T}_{\mathrm{pos}}(\cdot)
		:=\mathcal{D}\circ\mathcal{E}^*\circ\mathcal{D}\circ\mathcal{E}(\cdot)
\end{align*}
This map is well-defined and after normalisation, it sends positive definite matrices of trace one onto itself.
\end{dfn}
We can then reformulate the existence problem into a fixed point problem:
\begin{lem} \label{lem:sinkhornderivative}
Let $\mathcal{E}:\mathcal{M}_n\to\mathcal{M}_n$ be a positive map such that $\mathcal{E}(A),\mathcal{E}^*(A)>0$ for all $A>0$. Then there exist invertible $X,Y\in\mathcal{M}_n$ such that $Y^{-1}\mathcal{E}(X(\cdot)X^{\dagger})Y^{-\dagger}$ is a doubly stochastic map if and only if $\mathbf{T}_{\mathrm{pos}}$ has an eigenvector (a fixed point after normalisation) in the set of positive definite trace one matrices. Furthermore, $X,Y$ can be chosen such that $X,Y>0$.
\end{lem}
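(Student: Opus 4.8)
The plan is to turn the two doubly-stochastic conditions $\mathcal{E}'(\id)=\id$ and $\mathcal{E}'^{*}(\id)=\id$ into a pair of matrix equations and then recognise their combination as the fixed-point equation for $\mathbf{T}_{\mathrm{pos}}$. First I would record that a positive map is in particular Hermiticity preserving, hence so is $\mathcal{E}^{*}$, so that for Hermitian $B$ and invertible $Y$ the matrix $Y^{-\dagger}BY^{-1}$ is Hermitian and $\mathcal{E}^{*}$ of it is Hermitian again. A short computation with the Hilbert--Schmidt inner product $\langle A,B\rangle=\tr(A^{\dagger}B)$ then gives the adjoint of the scaled map,
\[
	\mathcal{E}'^{*}(B)=X^{\dagger}\,\mathcal{E}^{*}\!\big(Y^{-\dagger}BY^{-1}\big)\,X .
\]
Writing $R:=XX^{\dagger}>0$ and $S:=YY^{\dagger}>0$ (which range over all positive-definite matrices as $X,Y$ range over invertible matrices) and using the identity $Y^{-\dagger}Y^{-1}=(YY^{\dagger})^{-1}=S^{-1}$, the conditions $\mathcal{E}'(\id)=\id$ and $\mathcal{E}'^{*}(\id)=\id$ become exactly
\[
	\mathcal{E}(R)=S,\qquad \mathcal{E}^{*}(S^{-1})=R^{-1}.
\]
Eliminating $S$ gives $\mathcal{E}^{*}\!\big(\mathcal{E}(R)^{-1}\big)=R^{-1}$, i.e. $\mathbf{T}_{\mathrm{pos}}(R)=R$.

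For the forward direction I would simply observe that a doubly-stochastic scaling $Y^{-1}\mathcal{E}(X\cdot X^{\dagger})Y^{-\dagger}$ produces, via the above, a positive-definite fixed point $R=XX^{\dagger}$ of $\mathbf{T}_{\mathrm{pos}}$; dividing by $\tr R$ and using that $\mathbf{T}_{\mathrm{pos}}$ is homogeneous of degree one (each of the four composed operations $\mathcal{D},\mathcal{E}^{*},\mathcal{D},\mathcal{E}$ is $\pm 1$-homogeneous) yields the trace-one eigenvector, equivalently a fixed point of the normalised map. Conversely, given a positive-definite trace-one fixed point $R$, I would put $X:=R^{1/2}$ and $Y:=\mathcal{E}(R)^{1/2}$; both are well defined, positive definite and invertible because the hypothesis $\mathcal{E}(A)>0$ for $A>0$ forces $\mathcal{E}(R)>0$. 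Then $\mathcal{E}'(\id)=Y^{-1}\mathcal{E}(R)Y^{-1}=\id$ (since $Y$ is positive), and $\mathcal{E}'^{*}(\id)=X^{\dagger}\mathcal{E}^{*}(\mathcal{E}(R)^{-1})X=R^{1/2}R^{-1}R^{1/2}=\id$ by the fixed-point equation, so $\mathcal{E}'$ is doubly stochastic; since this construction uses positive $X,Y$, it also establishes the final clause of the lemma.

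Two small points must be addressed for completeness. First, $\mathbf{T}_{\mathrm{pos}}$ genuinely maps the positive-definite cone into itself: $R>0\Rightarrow\mathcal{E}(R)>0\Rightarrow\mathcal{E}(R)^{-1}>0\Rightarrow\mathcal{E}^{*}(\mathcal{E}(R)^{-1})>0$, using the positivity-improving hypotheses on both $\mathcal{E}$ and $\mathcal{E}^{*}$; hence $\mathbf{T}_{\mathrm{pos}}$ and its normalisation $A\mapsto\mathbf{T}_{\mathrm{pos}}(A)/\tr\mathbf{T}_{\mathrm{pos}}(A)$ are well defined on the positive-definite trace-one matrices, and ``eigenvector'' and ``fixed point after normalisation'' coincide there. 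Second, although a priori an eigenvector need only satisfy $\mathbf{T}_{\mathrm{pos}}(R)=\mu R$, any positive-definite eigenvector has $\mu=1$: pairing $\mathcal{E}^{*}(\mathcal{E}(R)^{-1})=\mu R^{-1}$ with $R$ and using that $\mathcal{E}^{*}$ is the Hilbert--Schmidt adjoint of $\mathcal{E}$ gives $n=\langle\mathcal{E}(R),\mathcal{E}(R)^{-1}\rangle=\langle R,\mathcal{E}^{*}(\mathcal{E}(R)^{-1})\rangle=\mu\langle R,R^{-1}\rangle=\mu n$. This is exactly why a normalised fixed point yields an \emph{exact} rather than merely approximate scaling.

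The computation itself is essentially routine; the only places where care is genuinely needed are the bookkeeping of adjoints and daggers in deriving $\mathcal{E}'^{*}$ (in particular checking $Y^{-\dagger}Y^{-1}=S^{-1}$ and not $(Y^{\dagger}Y)^{-1}$) and noting that the positivity-improving hypothesis is precisely what makes $\mathbf{T}_{\mathrm{pos}}$ well defined and makes $Y=\mathcal{E}(R)^{1/2}$ invertible. The $\mu=1$ observation, though short, is the conceptual heart: it is the operator analogue of the fact that in the classical Menon picture the relevant fixed point already carries the correct normalisation.
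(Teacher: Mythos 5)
Your proposal is correct and follows essentially the same route as the paper: both reduce the two doubly-stochastic conditions to $\mathcal{E}(XX^{\dagger})=YY^{\dagger}$ and $\mathcal{E}^{*}((YY^{\dagger})^{-1})=(XX^{\dagger})^{-1}$, identify this with the fixed-point equation for $\mathbf{T}_{\mathrm{pos}}$, and in the converse direction choose $X=\rho^{1/2}$, $Y=\mathcal{E}(\rho)^{1/2}$ for the eigenvector $\rho$. Your trace-pairing argument that the eigenvalue must equal $1$ is just a rephrasing of the paper's observation that a unital map has a trace-preserving adjoint.
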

\begin{proof}
Let $\rho>0$ be the positive definite eigenvector of $\mathcal{G}$. Then define $0<\sigma:=\mathcal{E}(\rho)$. Since $\rho$ is an eigenvector, one immediately sees that $\mathcal{E}^*(\sigma^{-1})=\lambda \rho^{-1}$ with $\lambda=\tr(\mathcal{E}^*(\sigma_{-1}))^{-1}$. Now define $X:=\sqrt{\rho}$ and $Y:=\sqrt{\sigma}$ (i.e.~$XX^{\dagger}=\rho$ and $YY^{\dagger}=\sigma$), then $X,Y$ are positive definite and if we define the map:
\begin{align*}
	&\mathcal{E}^{\prime}:\mathcal{M}_n\to\mathcal{M}_n \\
	&\mathcal{E}^{\prime}(\cdot):=Y^{-1}\mathcal{E}(X(\cdot)X^{\dagger})Y^{-\dagger}
\end{align*}
then a quick calculation shows $\mathcal{E}^{\prime}(\id)=\id$ and $\mathcal{E}^{\prime*}(\id)=\id$: 
\begin{align*}
	\mathcal{E}^{\prime}(\id)&=Y^{-1}\mathcal{E}(X(\id)X^{\dagger})Y^{-\dagger}=Y^{-1}\mathcal{E}(\rho)Y^{-\dagger} \\
	 &=Y^{-1}\sigma Y^{-\dagger}=Y^{-1}YY^{\dagger}Y^{-\dagger}=\id
\end{align*}
On the other hand, a similar calculation shows
\begin{align*}
	\mathcal{E}^{\prime *}(\id)&=X^{\dagger}\mathcal{E}^{*}(Y^{-\dagger}\id Y^{-1})X=\lambda \id 
\end{align*}
but since $\mathcal{E}^{\prime}$ was shown to be unital, $\mathcal{E}^{\prime *}$ is trace-preserving and $\lambda=1$ to begin with. \\
Conversely, given $X,Y$ as in the lemma, $XX^{\dagger}$ would be a fixed point of the Menon-operator. 
\end{proof}
Note that this completes the proof of Lemma \ref{lem:gurscaleequiv}. We only have to see that the conditions at the minimum are met if and only if the Menon operator has a fixed point. This also provides the connection between the Menon operator and Gurvits' approach: As in the classical case, the conditions for a fixed point of the Menon operator are given by the Lagrange conditions of the London-Djokovic potential.

We observe that the Menon-operator, if it is defined, is a continuous, homogeneous positive map. This lets us give a proof of a weak form of the Operator Sinkhorn Theorem:
\begin{prop}[\cite{ide13}] \label{thm:normalform}
Given a positive trace-preserving map $\mathcal{E}:\mathcal{M}_n\to\mathcal{M}_n$ such that there exists an $\varepsilon>0$ such that for all matrices $\rho\geq \varepsilon \id$ with unit trace it holds that $\mathcal{E}(\rho)\geq \frac{n\varepsilon}{1+(n-1)n\varepsilon}\id$, then we can find $X,Y>0$ such that $Y^{-1}\mathcal{E}(X(\cdot)X^{\dagger})Y^{-\dagger}$ is a doubly stochastic map.
\end{prop}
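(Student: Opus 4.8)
The plan is to reduce the statement, by way of Lemma \ref{lem:sinkhornderivative}, to the existence of a positive-definite, trace-one fixed point of the (suitably normalised) Menon operator $\mathbf{T}_{\mathrm{pos}}=\mathcal{D}\circ\mathcal{E}^{*}\circ\mathcal{D}\circ\mathcal{E}$, and then to produce such a fixed point via Brouwer's theorem on the compact convex set
\begin{align*}
	K_{\varepsilon}:=\{\rho=\rho^{\dagger}\in\mathcal{M}_n \mid \rho\geq\varepsilon\id,\ \tr\rho=1\}.
\end{align*}
Before invoking Lemma \ref{lem:sinkhornderivative} one must upgrade the hypothesis, which a priori only concerns states with $\rho\geq\varepsilon\id$, to positivity improvement. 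I would first note that necessarily $\varepsilon\leq 1/n$ (the hypothesis being vacuous otherwise), so that it applies to $\rho=\id/n$ and yields $\mathcal{E}(\id)=n\,\mathcal{E}(\id/n)\geq n\delta\,\id>0$, where $\delta:=\tfrac{n\varepsilon}{1+(n-1)n\varepsilon}$. Since $\mathcal{E}$ is positive this gives $\mathcal{E}(A)\geq\lambda_{\min}(A)\,\mathcal{E}(\id)>0$ for every $A>0$, and since $\mathcal{E}$ is trace-preserving $\mathcal{E}^{*}$ is unital, whence $\mathcal{E}^{*}(A)\geq\lambda_{\min}(A)\,\id>0$ for every $A>0$ as well. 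So $\mathbf{T}_{\mathrm{pos}}$ is globally defined, continuous and homogeneous, and by Lemma \ref{lem:sinkhornderivative} it suffices to exhibit $\rho^{*}\in K_{\varepsilon}$ whose image $\mathbf{T}_{\mathrm{pos}}(\rho^{*})$ is a positive scalar multiple of $\rho^{*}$.

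The heart of the argument is to show that the normalised operator $\widetilde{\mathbf{T}}_{\mathrm{pos}}(\rho):=\mathbf{T}_{\mathrm{pos}}(\rho)/\tr\mathbf{T}_{\mathrm{pos}}(\rho)$ maps $K_{\varepsilon}$ into itself, which I would do by tracking the L\"owner order through the four stages of $\mathbf{T}_{\mathrm{pos}}$. Fix $\rho\in K_{\varepsilon}$. By hypothesis $\sigma:=\mathcal{E}(\rho)\geq\delta\,\id$, and since $\tr\sigma=1$ the eigenvalues of $\sigma$ lie in $[\delta,1-(n-1)\delta]$; here $\delta<1/(n-1)$, so $1-(n-1)\delta>0$. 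Inversion reverses the order, giving $\tfrac{1}{1-(n-1)\delta}\id\leq\sigma^{-1}\leq\tfrac1\delta\id$; applying the positive unital map $\mathcal{E}^{*}$ preserves this sandwich; inverting once more yields $\delta\,\id\leq\mathbf{T}_{\mathrm{pos}}(\rho)\leq(1-(n-1)\delta)\,\id$, hence $\tr\mathbf{T}_{\mathrm{pos}}(\rho)\leq n(1-(n-1)\delta)$. Consequently the least eigenvalue of $\widetilde{\mathbf{T}}_{\mathrm{pos}}(\rho)$ is at least $\delta/\bigl(n(1-(n-1)\delta)\bigr)$, and the whole point of the unusual constant is that this equals $\varepsilon$: solving $\delta=\varepsilon\,n\,(1-(n-1)\delta)$ for $\delta$ gives precisely $\delta=\tfrac{n\varepsilon}{1+(n-1)n\varepsilon}$. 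Therefore $\widetilde{\mathbf{T}}_{\mathrm{pos}}(\rho)\geq\varepsilon\,\id$ and $\tr\widetilde{\mathbf{T}}_{\mathrm{pos}}(\rho)=1$, i.e.\ $\widetilde{\mathbf{T}}_{\mathrm{pos}}(K_{\varepsilon})\subseteq K_{\varepsilon}$.

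To conclude, $K_{\varepsilon}$ is a nonempty (it contains $\id/n$), compact, convex subset of the real space of Hermitian matrices, and $\widetilde{\mathbf{T}}_{\mathrm{pos}}$ is continuous on it because every matrix inverted along the way dominates $\delta\,\id>0$ and the normalising trace is bounded below by $n\delta$. Brouwer's fixed point theorem then produces $\rho^{*}\in K_{\varepsilon}$ with $\widetilde{\mathbf{T}}_{\mathrm{pos}}(\rho^{*})=\rho^{*}$, i.e.\ $\mathbf{T}_{\mathrm{pos}}(\rho^{*})$ is a positive multiple of the positive-definite, trace-one matrix $\rho^{*}$. Feeding this eigenvector into the construction in the proof of Lemma \ref{lem:sinkhornderivative} (take $X=\sqrt{\rho^{*}}$, $Y=\sqrt{\mathcal{E}(\rho^{*})}$, both positive definite) gives $Y^{-1}\mathcal{E}(X\,\cdot\,X^{\dagger})Y^{-\dagger}$ unital with unital adjoint, i.e.\ doubly stochastic, which is the claim.

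The one genuinely delicate point is the invariance $\widetilde{\mathbf{T}}_{\mathrm{pos}}(K_{\varepsilon})\subseteq K_{\varepsilon}$, and within it the combination of the bound $\tr\mathbf{T}_{\mathrm{pos}}(\rho)\leq n(1-(n-1)\delta)$ with the algebraic identity that pins down $\delta$; everything else (the positivity-improvement upgrade, continuity, compactness, the appeal to Brouwer, and the final scaling construction) is routine. A secondary, easily handled subtlety is that the stated hypothesis speaks only about states $\rho\geq\varepsilon\id$, so one has to pass through $\mathcal{E}(\id)>0$ to get positivity improvement before Lemma \ref{lem:sinkhornderivative} becomes applicable.
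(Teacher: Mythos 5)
Your proposal is correct and follows essentially the same route as the paper: reduce to a fixed point of the normalised Menon operator via Lemma \ref{lem:sinkhornderivative}, show that the compact convex set $\{\rho \geq \varepsilon\id,\ \tr\rho=1\}$ is invariant by tracking the L\"owner order through $\mathcal{D}\circ\mathcal{E}^{*}\circ\mathcal{D}\circ\mathcal{E}$ (using trace preservation to bound the largest eigenvalue by $1-(n-1)\delta$ and unitality of $\mathcal{E}^{*}$ for the middle step), and invoke Brouwer. Your write-up is in fact somewhat more careful than the paper's, which leaves the algebra pinning down $\delta=\tfrac{n\varepsilon}{1+(n-1)n\varepsilon}$ as "a quick calculation" and does not spell out the positivity-improvement upgrade needed before applying Lemma \ref{lem:sinkhornderivative}.
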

\begin{proof}
Let $\mathbf{T}_{\mathrm{normpos}}(\cdot):=\mathbf{T}_{\mathrm{pos}}(\cdot)/\tr(\mathbf{T}_{\mathrm{pos}}(\cdot))$ is the normalised operator. Now assume that for all $\rho\geq \varepsilon\id$ with $\tr(\rho)=1$, it holds $\mathcal{E}(\rho)\geq \delta \id$. In particular, if we call $\lambda_{max}$ the maximal eigenvalue of $\mathcal{E}(\rho)$, then $\lambda_{max}\leq 1-(n-1)\delta$. Hence we have:
\begin{align*}
	&\delta \id\leq \mathcal{E}(\rho)\leq (1-(n-1)\delta)\id \\
	&\Rightarrow \quad \frac{1}{\delta}\id\geq \mathcal{D}(\mathcal{E}(\rho))\geq \frac{1}{1-(n-1)\delta}\id \\
	&\Rightarrow \quad \frac{1}{\delta}\id\geq \mathcal{E}^*(\mathcal{D}(\mathcal{E}(\rho)))\geq \frac{1}{1-(n-1)\delta}\id \\ 
		&\Rightarrow \quad \delta \id\leq \mathcal{D}(\mathcal{E}^*(\mathcal{D}(\mathcal{E}(\rho))))\leq (1-(n-1)\delta)\id
\end{align*}
where we used the unitality of $\mathcal{E}^*$ in the third step. This implies
\begin{align*}
	\mathbf{T}_{\mathrm{normpos}}(\rho)\geq \frac{\delta}{1-(n-1)\delta}\id/n
\end{align*}
Now we want $\frac{\delta}{1-(n-1)\delta}\geq \varepsilon n$, in which case the compact set of matrices $\{\rho>0|\tr(\rho)=1, \rho\geq \varepsilon \id/n\}$ is mapped into itself, hence by Brouwer's fixed point theorem, we obtain a positive definite fixed point of $\mathcal{G}$. A quick calculation shows that this implies $\delta>n\varepsilon/(1+(n-1)n\varepsilon)$.
\end{proof}
Since a positive map $\mathcal{E}:\mathcal{M}_n\to\mathcal{M}_n$ can always be converted into a trace-preserving map $\mathcal{E}^{\prime}$ by setting $\rho:=\mathcal{E}^*(\id)$ and $\mathcal{E}^{\prime}(\cdot):=\mathcal{E}(\sqrt{\rho^{-1}}\cdot\sqrt{\rho^{-1}})$, the assumption that $\mathcal{E}$ be trace-preserving is not really necessary. As a direct corollary, we obtain a similar result, which might be easier to use:
\begin{cor}[\cite{ide13,geo15}]
Let $\mathcal{E}:\mathcal{M}_n\to\mathcal{M}_n$ be a trace-preserving and positivity improving map, then there exist maps $X,Y>0$ such that $Y^{-1}\mathcal{E}(X(\cdot)X^{\dagger})Y^{-\dagger}$ is a doubly stochastic map. 
\end{cor}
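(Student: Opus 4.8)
The plan is to obtain the corollary directly from Proposition~\ref{thm:normalform}, by verifying that a trace-preserving positivity improving map automatically satisfies the somewhat technical hypothesis appearing there. The only work is to upgrade the qualitative fact ``$\mathcal{E}$ improves positivity'' to the quantitative estimate ``$\mathcal{E}(\rho)\geq\frac{n\varepsilon}{1+(n-1)n\varepsilon}\,\id$ for every unit-trace $\rho\geq\varepsilon\id$'' for a suitable $\varepsilon>0$, which is a short compactness argument.

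First I would exploit compactness of the state space. The set $\mathcal{S}:=\{\sigma\in\mathcal{M}_n\mid \sigma\geq 0,\ \tr(\sigma)=1\}$ of density matrices is compact and bounded away from $0$, and since $\mathcal{E}$ is positivity improving we have $\mathcal{E}(\sigma)>0$ for every $\sigma\in\mathcal{S}$. Hence the function $\sigma\mapsto\lambda_{\min}(\mathcal{E}(\sigma))$ is continuous (eigenvalues of a Hermitian matrix depend continuously on it, and $\mathcal{E}$ maps positive semidefinite to positive semidefinite) and strictly positive on the compact set $\mathcal{S}$, so it attains a minimum $\mu>0$. Therefore $\mathcal{E}(\rho)\geq\mu\,\id$ for every density matrix $\rho$, and a fortiori for every $\rho\geq\varepsilon\id$ with $\tr(\rho)=1$, whatever $\varepsilon>0$ is chosen. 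Moreover, trace preservation gives $\tr(\mathcal{E}(\sigma))=1$, hence $\lambda_{\min}(\mathcal{E}(\sigma))\leq 1/n$ and so $\mu\leq 1/n$.

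Second I would fix the constant $\varepsilon$. Since $1+(n-1)n\varepsilon\geq 1$ one has $\frac{n\varepsilon}{1+(n-1)n\varepsilon}\leq n\varepsilon$, so any choice $0<\varepsilon\leq\mu/n$ (which is then $<1/n$, so that $\{\rho\geq\varepsilon\id,\ \tr(\rho)=1\}$ is nonempty) yields $\frac{n\varepsilon}{1+(n-1)n\varepsilon}\leq n\varepsilon\leq\mu$. For such an $\varepsilon$ the bound $\mathcal{E}(\rho)\geq\mu\,\id\geq\frac{n\varepsilon}{1+(n-1)n\varepsilon}\,\id$ holds for all unit-trace $\rho\geq\varepsilon\id$, which is exactly the hypothesis of Proposition~\ref{thm:normalform}. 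As $\mathcal{E}$ is a positive, trace-preserving map, that proposition produces $X,Y>0$ with $Y^{-1}\mathcal{E}(X(\cdot)X^{\dagger})Y^{-\dagger}$ doubly stochastic, which is the claim.

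There is no genuine obstacle here; the only point requiring a moment's attention is the elementary fact that $\frac{n\varepsilon}{1+(n-1)n\varepsilon}\to 0$ as $\varepsilon\to 0^{+}$, which is what guarantees a valid $\varepsilon$ exists once $\mu>0$ is a fixed number independent of $\varepsilon$. If one prefers not to cite Proposition~\ref{thm:normalform} as a black box, the argument can be inlined: the estimate $\mathcal{E}(\rho)\geq\mu\,\id$ together with unitality of $\mathcal{E}^{*}$ (valid because $\mathcal{E}$ is positive and trace-preserving, so $A>0$ implies $A\geq c\,\id$ and hence $\mathcal{E}^{*}(A)\geq c\,\id>0$) shows, exactly as in the proof of Proposition~\ref{thm:normalform}, that the normalised Menon operator $\mathbf{T}_{\mathrm{normpos}}$ maps the compact convex set of positive definite unit-trace matrices bounded below by a fixed multiple of $\id$ into itself; Brouwer's fixed point theorem then gives a positive definite fixed point, and Lemma~\ref{lem:sinkhornderivative} converts it into the desired scaling with $X,Y>0$.
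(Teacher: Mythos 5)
Your proof is correct and follows the same route the paper intends: the corollary is obtained by applying Proposition~\ref{thm:normalform}, and your compactness argument (minimising $\lambda_{\min}(\mathcal{E}(\sigma))$ over the compact set of density matrices to get a uniform $\mu>0$, then choosing $\varepsilon\leq\mu/n$) correctly supplies the quantitative hypothesis that the paper leaves implicit when it calls this a ``direct corollary''. The only blemish is the parenthetical justifying unitality of $\mathcal{E}^{*}$, which actually argues that $\mathcal{E}^{*}$ preserves positive definiteness rather than that $\mathcal{E}^{*}(\id)=\id$ (the latter follows from trace preservation of $\mathcal{E}$ by duality), but this occurs only in your optional inlined variant and does not affect the main argument.
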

As in the classical matrix case in \cite{bru66}, one idea to obtain necessary and sufficient criteria is to extend the map $\mathbf{T}_{\mathrm{normpos}}$ to positive semidefinite matrices for all cases and then prove that there is a fixed point of the map inside the cone of positive definite matrices. However, we run into additional problems, since the cone of positive semidefinite matrices is not polyhedral (cf. \cite{lem12}: there is no and cannot exist an equivalent version of theorem \ref{thm:context}; this does not preclude that an extension exists, but such a result must depend on the operator in question). Moreover, even if a continuous extension may be possible by using perturbation theory (for instance), the hardest part is to prove the existence of a fixed point inside the cone.

\subsubsection{Other approaches and generalised scaling}
We have seen that at least two classical approaches for proving that a matrix can be scaled to a doubly stochastic matrix can be extended to the quantum case without too much trouble (the proofs however might be more difficult): nonlinear Perron-Frobenius theory and the barrier function approach. In a sense, we have also seen convex programming approaches. An immediate question is whether one can extend the entropy approach. This was also asked in \cite{gur04} and it is a major open question in \cite{geo15}, since the motivation of Schr\"odinger bridges heavily relies on relative entropy minimisation. The answer is not clear since something like a quantum relative entropy is only used only on the level of matrices and a justification via the Choi-Jamiolkowski isomorphism is not immediate (see Section \ref{sec:statechannel}):
\begin{align}
	D(\rho\|\sigma)=\tr(\rho\log \rho - \rho\log \sigma).
\end{align}

Another question is how to extend the theorems from the doubly-stochastic map to cover arbitrary marginals, i.e.~we want to scale a positive map $\mathcal{E}$ such that
\begin{align}
	\mathcal{E}(\rho)=\sigma, \qquad \mathcal{E}^*(\id)=\id
\end{align}
with some prespecified $\rho,\sigma$. For Gurvits' approach based on equation (\ref{eqn:london}) this is not really straightforward, since it is unclear how to take appropriate powers of $P,Q$. For the approach via nonlinear Perron-Frobenius theory, this can be done to some degree:
\begin{thm}[\cite{geo15}] \label{thm:arbscale}
Given a positivity improving map $\mathcal{E}:\mathcal{M}_d\to\mathcal{M}_d$ and two matrices $V,W>0$ with $\tr(V)=\tr(W)$, there exist matrices $X,Y\in\mathcal{M}_d$ and a constant $\lambda>0$ such that $\mathcal{E}^{\prime}(\cdot):=Y\mathcal{E}(X\cdot X^{\dagger})Y^{\dagger}$ fulfills
\begin{align*}
	\mathcal{E}^{\prime}(V)=W \\
	\mathcal{E}^{\prime *}(\id)=\id
\end{align*}
\end{thm}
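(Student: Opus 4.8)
The plan is to push the nonlinear Perron--Frobenius argument behind Lemma~\ref{lem:sinkhornderivative} and Proposition~\ref{thm:normalform} through to non-trivial marginals; Gurvits' capacity route does not obviously apply, since, as remarked above, one cannot take the relevant fractional powers. The first step is to absorb one marginal into the map. Conjugating the input by $V^{1/2}$ is itself a scaling, so it suffices to produce positive definite $X,Y\in\mathcal{M}_d$ such that $\mathcal{F}(\cdot):=Y\mathcal{E}(X\cdot X^{\dagger})Y^{\dagger}$ satisfies the \emph{prescribed marginal} conditions $\mathcal{F}(\id)=W$ and $\mathcal{F}^{*}(\id)=V$; then $\mathcal{E}^{\prime}(\cdot):=\mathcal{F}(V^{-1/2}\cdot V^{-1/2})$ is again a scaling of $\mathcal{E}$ with $\mathcal{E}^{\prime}(V)=\mathcal{F}(\id)=W$ and $\mathcal{E}^{\prime*}(\id)=V^{-1/2}\mathcal{F}^{*}(\id)V^{-1/2}=\id$. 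For $V=W=\id$ this is precisely the operator Sinkhorn theorem, so the reformulation is the natural one.

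The second step is to construct a generalised Menon operator. Writing $P:=X^{2}>0$, the condition $\mathcal{F}(\id)=Y\,\mathcal{E}(P)\,Y=W$ has, for fixed $P$, a unique positive definite solution $Y=Y(P)$ by positive functional calculus, namely $Y(P)=\mathcal{E}(P)^{-1/2}\big(\mathcal{E}(P)^{1/2}W\mathcal{E}(P)^{1/2}\big)^{1/2}\mathcal{E}(P)^{-1/2}$. Substituting into $\mathcal{F}^{*}(\id)=X\,\mathcal{E}^{*}\!\big(Y(P)^{2}\big)\,X=V$ and solving for $X^{2}$ in the same way defines
\begin{align*}
	\mathbf{T}(P):=\Big(N^{-1/2}\big(N^{1/2}VN^{1/2}\big)^{1/2}N^{-1/2}\Big)^{2},\qquad N:=\mathcal{E}^{*}\!\big(Y(P)^{2}\big),
\end{align*}
a continuous map on the cone of positive definite matrices, homogeneous of degree one, whose positive fixed points $P$ (together with $Y(P)$) are exactly the scalings realising $\mathcal{F}(\id)=W$, $\mathcal{F}^{*}(\id)=V$. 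For $V=W=\id$ one checks that $\mathbf{T}$ collapses to the Menon operator $\mathcal{D}\circ\mathcal{E}^{*}\circ\mathcal{D}\circ\mathcal{E}$ of Lemma~\ref{lem:sinkhornderivative}, and iterating $\mathbf{T}$ is a RAS-type algorithm for the marginal problem.

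Third, I would produce a fixed point by Brouwer. Since $\mathcal{E}$ is positivity improving, so is its adjoint $\mathcal{E}^{*}$: were $\mathcal{E}^{*}(B)$ singular for some $B\geq 0$, $B\neq 0$, a vector $v\neq 0$ with $v^{\dagger}\mathcal{E}^{*}(B)v=0$ would give $\tr\big(B\,\mathcal{E}(vv^{\dagger})\big)=0$, contradicting $\mathcal{E}(vv^{\dagger})>0$ and $B\neq 0$. Hence $\mathbf{T}$ extends to every nonzero positive semidefinite $P$ --- only invertibility of $\mathcal{E}(P)$ and of images of $\mathcal{E}^{*}$ is used --- and on the compact convex set $\Sigma:=\{\rho\geq 0:\tr\rho=1\}$ it is continuous and sends every point to a positive definite matrix. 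Therefore $\tilde{\mathbf{T}}(\rho):=\mathbf{T}(\rho)/\tr\mathbf{T}(\rho)$ is a continuous self-map of $\Sigma$, and Brouwer's fixed point theorem gives $\rho_{*}$ with $\mathbf{T}(\rho_{*})=\lambda^{-1}\rho_{*}$ and $\rho_{*}>0$. Unravelling the construction with $X:=\rho_{*}^{1/2}$, $Y:=Y(\rho_{*})$ yields $\mathcal{F}(\id)=W$ and $\mathcal{F}^{*}(\id)=\lambda V$; since $\tr\mathcal{F}(\id)=\tr\mathcal{F}^{*}(\id)$ for every linear map $\mathcal{F}$ and $\tr V=\tr W$, this forces $\lambda=1$, so $\mathcal{F}^{*}(\id)=V$. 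Undoing the $V^{1/2}$ substitution gives the required $X,Y$, and the constant $\lambda$ of the statement is in fact trivial.

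The main obstacle is exactly the point that makes the argument go through cleanly: guaranteeing that the fixed point is positive definite --- equivalently, that $X,Y$ are invertible --- and that $\mathbf{T}$ never inverts a singular matrix anywhere on $\Sigma$. The positivity improving hypothesis handles both at once, since every nonzero positive semidefinite input is sent to a strictly positive matrix before any inversion; without it one would need a quantitative lower bound on $\mathcal{E}$ and a Brouwer argument on a shrunken compact set, as in Proposition~\ref{thm:normalform}. The remaining ingredients --- unique solvability and joint continuity of the two marginal equations via functional calculus, and the degree-one homogeneity underlying the normalisation --- are routine. Pushing beyond positivity improving to a necessary-and-sufficient criterion would require extending $\mathbf{T}$ continuously to the boundary of the positive semidefinite cone and locating the fixed point there, which runs into the non-polyhedrality of the cone and, as already for the doubly stochastic case, is open.
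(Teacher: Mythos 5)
Your proof is correct and follows essentially the same route as the paper: a generalised Menon-type operator on the positive semidefinite cone whose fixed points encode the scaling, extended continuously to the boundary via the positivity improving hypothesis, and Brouwer's fixed point theorem applied to its normalisation on the trace-one set. The differences are presentational --- you absorb $V^{1/2}$ into the input first and write the two marginal equations as explicit Riccati solutions, which in effect supplies the ``straightforward but lengthy calculation'' that the paper's Step 2 omits, and your trace identity forcing $\lambda=1$ is a detail the paper leaves implicit.
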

\begin{proof}[Sketch of proof]
The proof is a variation of the methods for the case $V=W=\id$. We consider the following Menon-type operator, which was essentially defined in \cite{geo15}:
\begin{align*}
	\mathbf{T}_{\mathcal{E},V,W}:=\mathbf{D}_1\circ \mathcal{E}^*\circ \mathbf{D}_2 \circ \mathcal{E}
\end{align*}
where
\begin{align*}
	\mathbf{D}_1(\rho)=\rho^{-1/2}V^{-1}\rho^{-1/2} \\
	\mathbf{D}_2(\rho)=(W^{1/2}(W^{-1/2}\rho^{-1}W^{-1/2})^{1/2}W^{1/2})^2
\end{align*}

\textbf{Step 1:} Let $\mathcal{E}$ be positivity improving, then $\mathbf{T}_{\mathcal{E},V,W}:\overline{\mathcal{C}^d}\to\mathcal{C}^d$ is a well-defined, continuous, and homogeneous map. It is well-defined, since $\mathcal{E}$ maps $\overline{\mathcal{C}^d}\to\mathcal{C}^d$ and $\mathbf{D}_1$ and $\mathbf{D}_2$ send $\mathcal{C}^d\to\mathcal{C}^d$ if $V,W\in\mathcal{C}^d$. It is homogeneous, because $\mathcal{E}$ is linear and $\mathbf{D}_i(\lambda \rho)=\lambda^{-1} \mathbf{D}_i(\rho)$ for $i=1,2$. Finally, $\mathbf{D}_1$ is continuous as taking the square root of a positive definite matrix is continuous and matrix multiplication and inversion of positive definite matrices is continuous. Likewise, $\mathbf{D}_2$ is continuous and thus $\mathbf{T}_{\mathcal{E},V,W}$ as composition of continuous maps. 

\textbf{Step 2:} We now claim that a scaling of $\mathcal{E}$ with as in the theorem with $X,Y>0$ exists iff $\mathbf{T}_{\mathcal{E},V,W}$ has an eigenvector. This was observed in \cite{geo15} and is a straightforward but lengthy calculation.

\textbf{Step 3:} Finally, we can prove the existence of $X,Y>0$ such that a scaling exists by invoking Brouwer's fixed point theorem. The map 
\begin{align*}
	\tilde{\mathbf{T}}(\cdot):\overline{\mathcal{C}^d_1}\to\mathcal{C}^d_1
	\tilde{\mathbf{T}}(\cdot):=\mathbf{T}_{\mathcal{E},V,W}(\cdot)/\tr(\mathbf{T}_{\mathcal{E},V,W}(\cdot))
\end{align*}
is a continuous, well-defined map, hence it has a fixed point. This is necessarily an eigenvector of $\mathcal{T}_{\mathcal{E},V,W}$, hence defines a scaling. 
\end{proof}
The problem with this proof is that this Menon operator is no longer clearly a contraction mapping, hence uniqueness and convergence speed of the algorithm are not clear. Also, the obvious algorithm derived from this proof differs from the usual RAS algorithm. It is not clear how to remedy this or extend one of the other approaches. Also, this map has even worse prospect of being generalised to positive and not necessarily positivity improving maps. In any case, it is not immediately clear what the right necessary and sufficient conditions are. In the case of matrices, patterns were the important concept, but what is a pattern supposed to be for positive maps? One can always choose a basis and represent the map as matrix, but this is very much map-dependent and it is not clear what the correct interpretation will be.

Nevertheless, partial results for uniqueness have been achieved in \cite{fri16}: The author proves that for positivity preserving maps $\mathcal{E}$, there exists a ball around $\id$ such that if $V,W$ lie inside this ball, there exists a unique scaling of $\mathcal{E}$ to a trace-preserving positive map with $\mathcal{E}(V)=W$.

\subsection{Operator Sinkhorn theorem via state-channel duality} \label{sec:statechannel}
Another formulation of the operator Sinkhorn theorem is given by the Choi-Jamiolkow\-ski isomorphism. It states that given any positive map $\mathcal{E}:\mathcal{M}_n\to\mathcal{M}_n$, we have that 
\begin{align}
	\tau_{\mathcal{E}}:=(\operatorname{id}\otimes \mathcal{E})(\omega)
\end{align}
is a \emph{block-positive} matrix (i.e.~$\langle \phi_1|\langle \phi_2|\tau_{\mathcal{E}}|\phi_1\rangle|\phi_2\rangle\geq 0$ for all $|\phi_1\rangle,|\phi_2\rangle\in \mathcal{M}_n$). Here $\omega:=1/d\sum_{i,j=1}^n |ii\rangle\langle jj|\in\mathcal{M}_{n^2}$ is the so-called \emph{maximally entangled state}. If $\mathcal{E}$ is completely positive, i.e.~$\mathcal{E}\otimes \operatorname{id}_n$ is a positive map, then $\tau_{\mathcal{E}}$ is a positive semi-definite matrix. Now consider $X_1,~X_2\geq 0$ and $\mathcal{E}^{\prime}:=X_2^{\dagger}\mathcal{E}(X_1\cdot X_1^{\dagger})X_2$. We have
\begin{align}
	\tau_{\mathcal{E}^{\prime}}=(X_1^{tr}\otimes X_2^{\dagger})\tau_{\mathcal{E}}(X_1^{tr}\otimes X_2^{\dagger})^{\dagger}
\end{align}
where we use $(\id\otimes X_1)\sum_{i}|ii\rangle=(X_1^{tr}\otimes \id)\sum_i|ii\rangle$ and therefore 
\begin{align}
	\tau_{\mathcal{E}^{\prime}}&=(\id\otimes X_2^{\dagger})(\operatorname{id}\otimes \mathcal{E})((\id\otimes X_1)\omega(\id\otimes X_1)^{\dagger})(\id\otimes X_2) \\
	&=(X_1^{tr}\otimes X_2^{\dagger})(\operatorname{id}\otimes \mathcal{E})(\omega)(X_1^{tr \dagger}\otimes X_2)
\end{align}
Therefore, the task can be reformulated: Given a block positive matrix $\tau$, find $X_1,~X_2\in\mathcal{M}_d$ such that
\begin{align*}
	\tau^{\prime}:=(X_1\otimes X_2)\tau(X_1\otimes X_2)^{\dagger}
\end{align*}
fulfils $\tr_2(\tau)=\tr_1(\tau)=\id/d$, where $\tr_i$ denotes the partial trace over the $i$-th system in $\mathcal{M}_d\otimes \mathcal{M}_d\equiv \mathcal{M}_{d^2}$. For $\tau\geq 0$ these operations are called \emph{(local) filtering operations}. Often (c.f. \cite{git08,wol12}), one asks for $X_1,X_2\in SL(d)$ and the resulting trace being merely proportional to the identity, but this is of course just a normalisation.

We can then state an equivalent version of Sinkhorn scaling for positive map:
\begin{prop}[\cite{ken99,lei06,ver01}]
Let $\rho\in\mathcal{M}_d\otimes \mathcal{M}_d$ be a positive definite density matrix. Then there exist matrices $X_1, X_2\in\mathcal{M}_d$ such that
\begin{align}
	(X_1\otimes X_2)\rho(X_1\otimes X_2)^{\dagger}=\frac{1}{d^2}\id+\sum_{k=1}^{k^2-1}\xi_k J_k^1\otimes J_k^2
\end{align}
where $\{J_k^1\}_k\subset \mathcal{M}_d$ and $\{J_k^2\}_k\subset\mathcal{M}_d$ form a basis of the traceless complex matrices and $\xi\in\mathbb{C}$ for the first and second tensor factor in $\mathcal{M}_d\otimes \mathcal{M}_d$ respectively. 
\end{prop}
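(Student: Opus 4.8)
The plan is to read this statement as the Choi--Jamiolkowski dual of the operator Sinkhorn theorem (Theorem~\ref{thm:sinkqm}) and then merely translate. First I would associate to $\rho$ the unique completely positive map $\mathcal{E}_\rho:\mathcal{M}_d\to\mathcal{M}_d$ with Choi matrix $\tau_{\mathcal{E}_\rho}=\rho$ in the convention fixed above, i.e.\ $\mathcal{E}_\rho(A)=d\,\tr_1\!\big((A^{tr}\otimes\id)\rho\big)$. Since $\rho$ is positive definite, $\rho\geq\varepsilon\id$ for some $\varepsilon>0$, and decomposing $A\geq 0$ into rank-one terms one checks $\mathcal{E}_\rho(A)\geq d\varepsilon\,\tr(A)\,\id>0$ whenever $A\neq 0$; hence $\mathcal{E}_\rho$ is positivity improving, in particular fully indecomposable (if $\mathcal{E}_\rho(P\mathcal{M}_dP)\subseteq Q\mathcal{M}_dQ$ for a nonzero projection $P$, then $\mathcal{E}_\rho(P)>0$ has full rank, which forces $Q=\id$, and a rank count forces $P=\id$).

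Next I would invoke Theorem~\ref{thm:sinkqm}: since $\mathcal{E}_\rho$ is fully indecomposable it admits a (unique) scaling, so there are invertible $X,Y\in\mathcal{M}_d$ with $\mathcal{E}'(\cdot):=Y\mathcal{E}_\rho(X\cdot X^\dagger)Y^\dagger$ doubly stochastic, $\mathcal{E}'(\id)=\id$ and $\mathcal{E}'^{*}(\id)=\id$. Using the transformation rule for Choi matrices recorded above, with $X_1:=X^{tr}$ and $X_2:=Y$ in the proposition's notation, this gives
\begin{align*}
\tau_{\mathcal{E}'}=(X_1\otimes X_2)\,\rho\,(X_1\otimes X_2)^{\dagger}.
\end{align*}
I would then compute the two partial traces of $\tau_{\mathcal{E}'}$. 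Because $\tr_1$ commutes with $\id\otimes\mathcal{E}'$ and $\tr_1(\omega)=\id/d$, one has $\tr_1(\tau_{\mathcal{E}'})=\mathcal{E}'(\id)/d=\id/d$; and since $\tr\circ\mathcal{E}'=\tr\!\big(\mathcal{E}'^{*}(\id)\,\cdot\,\big)$, a short computation gives $\tr_2(\tau_{\mathcal{E}'})=\big(\mathcal{E}'^{*}(\id)\big)^{tr}/d=\id/d$. In particular $\tr(\tau_{\mathcal{E}'})=1$, so no further rescaling of $X_1,X_2$ is needed.

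To finish, expand $\tau_{\mathcal{E}'}$ in the product basis built from $\id/\sqrt d$ together with a traceless Hermitian basis $\{J_k\}$ of $\mathcal{M}_d$:
\begin{align*}
\tau_{\mathcal{E}'}=\tfrac{1}{d^{2}}\id+\sum_k a_k\,(\id\otimes J_k)+\sum_k b_k\,(J_k\otimes\id)+\sum_{k,l}\xi_{kl}\,J_k\otimes J_l.
\end{align*}
Applying $\tr_1$ annihilates the $b_k$- and $\xi_{kl}$-terms (as $\tr J_k=0$) and leaves $\tfrac1d\id+d\sum_k a_k J_k$, so $\tr_1(\tau_{\mathcal{E}'})=\id/d$ forces $a_k=0$ for all $k$; symmetrically $\tr_2(\tau_{\mathcal{E}'})=\id/d$ forces $b_k=0$. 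What remains is exactly the asserted form $\tfrac{1}{d^{2}}\id+\sum_{k,l}\xi_{kl}J_k^1\otimes J_l^2$.

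I expect the only delicate point to be bookkeeping rather than substance: pinning down the Choi convention (in particular the transpose on the first tensor factor), verifying that positive definiteness of $\rho$ is precisely positivity improvement of $\mathcal{E}_\rho$, and that the two partial-trace normalisations correspond exactly to unitality and trace preservation of $\mathcal{E}'$. An alternative route, avoiding Theorem~\ref{thm:sinkqm}, would be to first convert $\mathcal{E}_\rho$ into a trace-preserving map as in the remark after Proposition~\ref{thm:normalform} and then apply the corollary there for positivity improving maps; either way the proposition reduces to essentially a dictionary lookup.
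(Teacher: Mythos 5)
Your proposal is correct and follows essentially the same route as the paper's own proof: pass to the Choi--Jamiolkowski dual map, observe that positive definiteness of $\rho$ makes it positivity improving and hence fully indecomposable, apply Theorem~\ref{thm:sinkqm} to scale it to a doubly stochastic map, and then kill the $\id\otimes J_k$ and $J_k\otimes\id$ terms in the basis expansion using the two partial-trace conditions. You merely spell out the convention-dependent bookkeeping (the explicit form of $\mathcal{E}_\rho$, the $\rho\geq\varepsilon\id$ estimate, the transpose on the first factor) that the paper leaves implicit.
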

\begin{proof}
Note that a positive definite $\rho$ corresponds to a completely positive map, which maps positive semidefinite matrices to positive definite ones. In particular, the corresponding map is fully indecomposable. Hence by Theorem \ref{thm:sinkqm}, there exists a scaling to a doubly stochastic map, which again corresponds to a positive definite $\tilde{\rho}\in\mathcal{M}_d\otimes \mathcal{M}_d$ such that $\tr_1(\tilde{\rho})=\tr_2(\tilde{\rho})=\id/d$. By construction, $\{\id/d,J_k^1\}_k$ and $\{\id/d,J_k^2\}$ form an orthonormal basis of $\mathcal{M}_d$, hence we can express $\tilde{\rho}$ as
\begin{align*}
	\tilde{\rho}=\frac{1}{d^2}\id + \sum_{k=1}^{k^2-1}\xi_k J_k^1\otimes J_k^2 + \sum_{k=1}^{k^2-1} \chi^1_k \frac{\id}{d}\otimes J^1_k+\chi^2_k J_k^2\otimes \frac{\id}{d}
\end{align*}
with $\xi_k,\chi^1_k,\chi^2_k\in\mathbb{C}$ for all $k$. Then
\begin{align*}
	\tr_1(\tilde{\rho})&=\frac{1}{d}\id+ \sum_{k=1}^{k^2-1}\xi_k \tr(J_k^1)\otimes J_k^2 + \sum_{k=1}^{k^2-1} \left(\chi^1_k \tr\left(\frac{\id}{d}\right)\otimes J^1_k+\chi^2_k \tr(J_k^2)\otimes \frac{\id}{d}\right) \\
	&=\frac{1}{d}\id+ \sum_{k=1}^{k^2-1} \chi^1_k J^1_k \stackrel{!}{=}\frac{1}{d}\id
\end{align*}
But then, since the $J_k^1$ are linearly independent, $\chi^1_k=0$ for all $k$. Likewise, $\chi^2_k=0$ for all $k$ and we have the required normal form.
\end{proof}
The proposition has direct proofs and extensions to more than two parties (see for instance \cite{ver02a,ver03, wol12}). Here, it only uses the sufficient part of the criterion for scalability of positive maps, hence we can strengthen it to include parts of all block-positive matrices. Since, however, not all completely positive maps are fully indecomposable (e.g.~the map $\mathcal{E}:\rho\to |\psi\rangle\langle \psi|$ for some vector $|\psi\rangle\in\mathbb{C}^d$ is not), it certainly does not extend to all states. 

\subsection{Convergence speed and stability results} \label{sec:convstab}
Gurvits' proof already gives an estimate for the convergence speed of the scheme (see Theorem 4.7.3.~in \cite{gur04}). Let us give an alternative proof using Hilbert's metric which is equivalent to the classical proof in \cite{fra89} and reminiscent of the convergence proof in \cite{geo15}. Throughout the proof, we use several results from Appendix \ref{app:perfro}, in particular the definition of Hilbert's projective metric $d_H$ on the cone of positive semidefinite matrices and the definition of the contraction ratio $\gamma$ in equation (\ref{eqn:contraction}). To proceed, we define a metric on the space of positive maps that are scalable:
\begin{dfn}
Let $\mathcal{E},\mathcal{T}:\mathcal{M}_n\to\mathcal{M}_n$ be two positive maps such that $\mathcal{T}(\cdot)=Y\mathcal{E}(X\cdot X^{\dagger})Y^{\dagger}$ for some positive matrices $X,Y$. Then
\begin{align}
	\Delta(\mathcal{E},\mathcal{T})=d_H(X,\id)+d_H(Y,\id)
\end{align}
defines a metric on the space of positive maps (two maps that cannot be scaled to each other have infinite distance). 
\end{dfn}
A proof that this constitutes a metric may be found in \cite{lem12}, Chapter 2. Recall the Sinkhorn iteration as defined in equations (\ref{eqn:iterate1})-(\ref{eqn:iterate2}). For convenience we use a slightly different notation:
\begin{align*}
	\mathcal{E}^{(i)}&:=\mathcal{E}_{2i} \quad i>0 \\
	\mathcal{E}^{(i)\prime}&:=\mathcal{E}_{2i+1} \quad i\geq 0 \\
	\rho^{(i)}&:=\mathcal{E}^{(i-1)}(\id) \quad i>0 \\
	\sigma^{(i)}&:=\mathcal{E}^{(i)\prime \dagger}(\id) \\
	\mathcal{E}^{(0)}&:=\mathcal{E}.
\end{align*}
Then:
\begin{prop}
Let $\mathcal{E}:\mathcal{M}_n\to\mathcal{M}_n$ be a positivity improving, trace preserving map. Let $\mathcal{T}:=Y^{-1}\mathcal{E}(X\cdot X^{\dagger})Y$ be the unique doubly stochastic scaling limit.
\begin{align}
	\Delta(\mathcal{E}^{(k)},\mathcal{T})\leq \frac{\gamma^k}{1-\gamma}(d_H(\rho^{(1)},e)+d_H(\sigma^{(1)},e)) \\
	\Delta(\mathcal{E}^{(k)\prime},\mathcal{T})\leq \frac{\gamma^k}{1-\gamma}(d_H(\rho^{(1)},e)+d_H(\sigma^{(1)},e))
\end{align}
where $\gamma^{1/2}=\gamma^{1/2}(\mathcal{E})$ is the contraction ratio of equation (\ref{eqn:contraction}). In particular, this implies via proposition \ref{prop:prophilb} (implying that here, $\gamma<1$) that the convergence is geometric.
\end{prop}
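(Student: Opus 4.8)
The plan is to mimic the classical Franklin–Lorenz argument (\cite{fra89}) at the level of the cone of positive semidefinite matrices, using that every normalisation step in Algorithm \ref{alg:rasqm} is a contraction in Hilbert's projective metric $d_H$. The key structural fact (from Appendix \ref{app:perfro}, cited here via Proposition \ref{prop:prophilb}) is that a linear positivity-improving map $\mathcal{E}$ sends the whole cone boundary into the interior, hence has finite projective diameter, and by the Birkhoff--Hopf theorem its contraction ratio $\gamma^{1/2}(\mathcal{E})<1$ on $d_H$; the same bound holds for $\mathcal{E}^*$, and one takes $\gamma$ to be the (square of the) worst of the two so that one full RAS cycle — one application of (\ref{eqn:iterate1}) followed by (\ref{eqn:iterate2}) — contracts by at most $\gamma$.

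First I would record how the iteration moves the relevant data. Writing $\rho^{(i)}=\mathcal{E}^{(i-1)}(\id)$ and $\sigma^{(i)}=\mathcal{E}^{(i)\prime\,\dagger}(\id)$ as in the statement, the scaling matrices accumulated after $k$ cycles are, up to the equivalence used in defining $\Delta$, products of the square roots $\rho^{(1)1/2},\dots,\rho^{(k)1/2}$ and $\sigma^{(1)1/2},\dots,\sigma^{(k)1/2}$. Congruence by a positive matrix is an isometry-type operation for $d_H$ (congruences act by projective isometries on the cone), and $d_H$ is invariant under the maps $A\mapsto A^{1/2}$ up to a factor $1/2$ and obeys the triangle inequality, so $d_H$ of the accumulated $X$ (resp.\ $Y$) to $\id$ is bounded by $\sum_{i=1}^{k} d_H(\rho^{(i)},e)$ (resp.\ $\sum_i d_H(\sigma^{(i)},e)$), with $e$ denoting the class of $\id$. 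Thus the whole estimate reduces to showing the single-step quantities $d_H(\rho^{(i)},e)$ and $d_H(\sigma^{(i)},e)$ decay geometrically with ratio $\gamma$.

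Next I would establish that geometric decay. Because $\mathcal{E}^{(i)\prime}$ is constructed to be trace-preserving and $\mathcal{E}^{(i+1)}$ to be unital, the deviation $d_H(\rho^{(i+1)},e)=d_H(\mathcal{E}^{(i)}(\id),e)$ measures exactly how far from doubly stochastic the current map is, and one shows $d_H(\rho^{(i+1)},e)\le \gamma\, d_H(\rho^{(i)},e)$ by pushing $\rho^{(i)}$ and $\id$ through the contraction $\mathbf{T}_{\mathrm{pos}}=\mathcal{D}\circ\mathcal{E}^*\circ\mathcal{D}\circ\mathcal{E}$ (inversion $\mathcal{D}$ is a $d_H$-isometry, so only the two applications of $\mathcal{E}$-type maps contribute the factor $\gamma^{1/2}$ each), together with the fact that at the doubly stochastic limit $\mathcal{T}$ the identity is a fixed point of the corresponding operator. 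Feeding $d_H(\rho^{(i)},e)\le \gamma^{\,i-1} d_H(\rho^{(1)},e)$ and the analogous bound for $\sigma^{(i)}$ into the summation from the previous paragraph gives $\sum_{i=1}^{k}\gamma^{i-1}\le \frac{1}{1-\gamma}$ times the first-step terms, and combining the $X$ and $Y$ contributions yields precisely
\begin{align*}
	\Delta(\mathcal{E}^{(k)},\mathcal{T})\le \frac{\gamma^{k}}{1-\gamma}\bigl(d_H(\rho^{(1)},e)+d_H(\sigma^{(1)},e)\bigr),
\end{align*}
with an extra leading $\gamma^{k}$ coming from the fact that the tail of the product sequence after $k$ cycles starts at index $k+1$; the primed estimate follows identically by inserting the half-step. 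I expect the main obstacle to be the bookkeeping in the second paragraph: making precise that congruence by the accumulated positive matrix really is a $d_H$-isometry and that the telescoping of $d_H\bigl(\mathcal{E}^{(k)}(\id),\mathcal{T}(\id)\bigr)$ into the accumulated distance $\Delta$ is tight — i.e.\ that no constant worse than $\tfrac{1}{1-\gamma}$ creeps in — which requires the triangle inequality to be applied in the correct order and the half-factor from square roots to cancel against the definition of $\gamma^{1/2}$ as the \emph{square root} contraction ratio. Once that is set up cleanly, the geometric-convergence conclusion is immediate from $\gamma<1$, which is exactly what Proposition \ref{prop:prophilb} supplies for positivity-improving maps.
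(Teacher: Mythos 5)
Your proposal is correct and takes essentially the same route as the paper's proof: a uniform Birkhoff contraction ratio $\gamma<1$ from the finite projective diameter of the positivity-improving map (preserved under the congruences of the iteration), the one-cycle estimate $d_H(\rho^{(i+1)},\id)\leq\gamma\,d_H(\rho^{(i)},\id)$ obtained by composing the two half-steps (equivalently, the Menon-type operator) with inversion acting as a $d_H$-isometry, and then the triangle inequality for $\Delta$ plus a geometric series. The bookkeeping issue you flag (the factor $\tfrac12$ from square roots) only improves the constant and does not affect the stated bound.
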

\begin{proof}
The proof is similar to the classical one in \cite{fra89}. First recall the definition of $\Delta$ from Appendix \ref{app:perfro}:
\begin{align*}
	\Delta(\mathcal{E}):=\sup\{d_H(\mathcal{E}(\rho),\mathcal{E}(\sigma))|\rho,\sigma\geq 0\}
\end{align*}
Then $\Delta>0$ but finite, since $\mathcal{E}$ is a positivity improving map and the maximum is attained. This is true, because it suffices to consider $d_H(\mathcal{E}(\rho),\mathcal{E}(\sigma))$ on the compact set $\{A\geq 0| \|A\|_\infty=1\}$ using Proposition \ref{prop:prophilb} (iii).

We first make the following observations:
\begin{align}
	d_H(\rho,\sigma)&=d_H(\sigma^{-1/2}\rho\sigma^{-1/2},\id) \quad \forall \rho,\sigma>0\label{eqn:obs2b} \\
	d_H(\mathcal{E}(\rho),\mathcal{E}(\sigma))&\leq \gamma^{1/2}(\mathcal{E})d_H(\mathcal{E}(\rho),\mathcal{E}(\sigma)) \quad \forall \rho,\sigma>0,\mathcal{E}:\mathcal{M}_n\to\mathcal{M}_n\label{eqn:obs3b}
\end{align}
Equation (\ref{eqn:obs3b}) follows from the definition of $\gamma^{1/2}$. Equation (\ref{eqn:obs2b}) follows from the definition of $M$ and $m$ in the definition of the Hilbert metric and the fact that taking noncommutative inverses does not change positivity.

Let us now focus on $\gamma(\mathcal{E})$. Let $X,Y\in\mathcal{M}_n$ be invertible, then
\begin{align*}
	\Delta&=\sup\{d_H(Y^{\dagger}\mathcal{E}(X\rho X^{\dagger})Y,Y^{\dagger}\mathcal{E}(X\sigma X^{\dagger})Y)|\rho,\sigma\geq 0\} \\
		&=\sup\{d_H(\mathcal{E}(X\rho X^{\dagger}),\mathcal{E}(X\sigma X^{\dagger}))|\rho,\sigma\geq 0\} \\
		&=\sup\{d_H(\mathcal{E}(\tilde{\rho}),\mathcal{E}(\tilde{\sigma}))|\tilde{\rho},\tilde{\sigma}\geq 0\}
\end{align*}
using observation (\ref{eqn:obs3b}) and then $X$ being invertible. In particular, this implies that for every $\gamma^{1/2}(\mathcal{E}^{(i)})$ we have a universal upper bound 
\begin{align}
	\gamma^{1/2}(\mathcal{E}^{(i)})<\operatorname{tanh}(\Delta/4).
\end{align}
Since $\Delta>0$ but finite, this implies that we can upper bound each $\gamma(\mathcal{E}^{(i)})$ and $\gamma(\mathcal{E}^{\prime\,(i)})$ by some $\gamma<1$. The rest is basically an iteration.

Consider $d_H(\rho^{(2)},\id)$. By definition, $\rho^{(2)}=\mathcal{E}^{(1)}(\id)=\mathcal{E}^{(1)\prime}((\sigma^{(1)}){-1})$, and since all $\mathcal{E}^{(i)\prime}$ are unital:
\begin{align}
	d_H(\rho^{(2)},\id)&=d_H(\mathcal{E}^{(1)\prime}((\sigma^{(1)})^{-1}),\mathcal{E}^{(1)\prime}(\id)) \leq \gamma^{1/2}(\mathcal{E}^{(1)\prime})d_H(\id, \sigma^{(1)}) \label{eqn:obs4}
\end{align}
where we used (\ref{eqn:obs3b}) and then (\ref{eqn:obs2b}).
Similarly, since $\sigma^{(1)}=\mathcal{E}^{(0)\prime*}(\id)=\mathcal{E}^{(0)*}((\rho^{(1)})^{-1})$ and $\mathcal{E}^{(0)*}(\id)=\id$ by construction, we obtain:
\begin{align}
	d_H(\id, \sigma^{(1)})=d_H(\mathcal{E}^{(0)*}(\id),\mathcal{E}^{(0)*}((\rho^{(1)})^{-1}) \leq \gamma^{1/2}(\mathcal{E}^{(0)*})d_H(\rho^{(1)},\id) \label{eqn:obs5}
\end{align}
Combining (\ref{eqn:obs4}) and (\ref{eqn:obs5}) we obtain:
\begin{align}
	d_H(\rho^{(2)},\id)\leq \gamma d_H(\rho^{(1)},\id) \label{eqn:obs6}
\end{align}
Similarly,
\begin{align}
	d_H(\sigma^{(2)},\id)\leq \gamma d_H(\sigma^{(1)},\id) \label{eqn:obs7}
\end{align}
These are the key observations. Now using the definition of $\Delta(\cdot,\cdot)$ we obtain:
\begin{align*}
	\Delta(\mathcal{E}^{(k)},\mathcal{E}^{(k+1)})&=d_H((\rho^{(k)})^{-1},\id)+d_H((\sigma^{(k)})^{-1},\id)\\
		&\leq \gamma^{k-1}(d_H(\rho^{(1)},\id)+d_H(\sigma^{(1)},\id))
\end{align*}
Hence we have by the triangle inequality
\begin{align*}
		\Delta(\mathcal{E}^{(0)},\mathcal{E}^{(k+1)})\leq \sum_{l=0}^{k-1}\gamma^{l}(d_H(\rho^{(1)},\id)+d_H(\sigma^{(1)},\id))
\end{align*}
and therefore, if $\mathcal{T}$ denotes the limit of the Sinkhorn iteration, using the geometric series
\begin{align}
	\Delta(\mathcal{E}^{(0)},\mathcal{T})&\leq \frac{1}{1-\gamma}(d_H(\rho^{(1)},\id)+d_H(\sigma^{(1)},\id)) \label{eqn:obsstab}\\
	\Delta(\mathcal{E}^{(k)},\mathcal{T})&\leq \frac{\gamma^k}{1-\gamma}(d_H(\rho^{(1)},\id)+d_H(\sigma^{(1)},\id))
\end{align}
The other inequality for the maps $\mathcal{E}^{\prime}$ follows from symmetric arguments.
\end{proof}	
Note that in contrast to the classical case in \cite{fra89}, because of the noncommutativity in equation (\ref{eqn:obs2b}), a simple extension to the general scaling of positivity improving maps seems not possible.

Next, we wish to generalise also the stability results. It seems natural that this should follow from the contraction results above:
\begin{cor}
Let $\mathcal{E}:\mathcal{M}_n\to\mathcal{M}_n$ be positivity improving, then the scaling is continuous in $\mathcal{E}$.
\end{cor}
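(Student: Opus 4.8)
The plan is to derive this from the uniform contraction estimate established in the proposition just proved, via a standard three--term (``$\varepsilon/3$'') argument applied to the Sinkhorn iterates. By ``the scaling is continuous in $\mathcal{E}$'' I mean that the map sending a positivity improving $\mathcal{E}:\mathcal{M}_n\to\mathcal{M}_n$ to its doubly stochastic scaling $\mathcal{T}(\mathcal{E})$ (unique by Lemma \ref{lem:quantumgurvits}, since positivity improving maps are fully indecomposable) is norm--continuous; uniqueness of the scaling matrices for fully indecomposable maps then upgrades this to continuity of $\mathcal{E}\mapsto(X,Y)$ as well. First I would reduce to the trace--preserving case: replacing $\mathcal{E}$ by $\mathcal{E}(\sqrt{\rho^{-1}}\cdot\sqrt{\rho^{-1}})$ with $\rho:=\mathcal{E}^*(\id)$ is itself a norm--continuous operation on positivity improving maps (since $\mathcal{E}^*(\id)$ is positive definite and depends continuously on $\mathcal{E}$, and the square root is continuous on positive definite matrices), and the scaling of $\mathcal{E}$ is obtained from that of its trace--preserving version by a change that varies continuously with $\mathcal{E}$. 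Fix now a positivity improving, trace--preserving $\mathcal{E}_0$ and a sequence $\mathcal{E}_k\to\mathcal{E}_0$ in operator norm. A key preliminary observation is that positivity improvement is \emph{quantitatively} stable: since the set of rank--one projections is compact and $\mathcal{E}_0(P)>0$ on it, there are $\delta>0$ and a norm neighbourhood $\mathcal{U}$ of $\mathcal{E}_0$ with $\mathcal{F}(P)\geq\delta\id$ for every rank--one projection $P$ and every $\mathcal{F}\in\mathcal{U}$; discarding finitely many terms we may assume all $\mathcal{E}_k\in\mathcal{U}$. This forces $\Delta(\mathcal{F})$ to be bounded above uniformly over $\mathcal{F}\in\mathcal{U}$, hence (by $\gamma^{1/2}(\mathcal{F})<\tanh(\Delta(\mathcal{F})/4)$) a uniform bound $\gamma(\mathcal{F})\leq\gamma<1$, and it keeps all matrices occurring in the iteration bounded away from $0$ and from $\infty$.

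Next I would make the two estimates driving the argument uniform on $\mathcal{U}$. On the one hand, for each fixed $j$ the $j$--th full Sinkhorn iterate $\mathcal{F}\mapsto\mathcal{F}^{(j)}$ is norm--continuous on $\mathcal{U}$, because each half--step (\ref{eqn:iterate1})--(\ref{eqn:iterate2}) is a composition of evaluation at $\id$, of taking square roots and inverses of positive definite matrices, and of matrix multiplication, all continuous on the norm--compact sets of matrices bounded away from $0$ and $\infty$ guaranteed above. On the other hand, the estimate $\Delta(\mathcal{F}^{(k)},\mathcal{T}(\mathcal{F}))\leq\frac{\gamma^k}{1-\gamma}\big(d_H(\rho^{(1)},\id)+d_H(\sigma^{(1)},\id)\big)$ from equation (\ref{eqn:obsstab}), together with the fact that the initial data $\rho^{(1)},\sigma^{(1)}$ of the iteration vary continuously and are uniformly controlled on $\mathcal{U}$, gives $\Delta(\mathcal{F}^{(k)},\mathcal{T}(\mathcal{F}))\leq C\gamma^{k}$ with $C$ and $\gamma$ independent of $\mathcal{F}\in\mathcal{U}$. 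Finally one translates this Hilbert--metric bound into an operator--norm bound: $\Delta(\mathcal{A},\mathcal{B})$ small means $\mathcal{B}=Y\mathcal{A}(X\cdot X^{\dagger})Y^{\dagger}$ with $X,Y$ close to $\id$ in $d_H$, hence within a small multiplicative factor of scalar matrices, so, using that both $\mathcal{F}^{(k)}$ and $\mathcal{T}(\mathcal{F})$ are normalised (the iterates alternate between trace--preserving and unital, and the limit is doubly stochastic), one obtains $\|\mathcal{F}^{(k)}-\mathcal{T}(\mathcal{F})\|\leq\omega(C\gamma^{k})$ for some modulus $\omega$ with $\omega(0)=0$, uniformly in $\mathcal{F}\in\mathcal{U}$.

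With these in hand the conclusion is routine: given $\varepsilon>0$, choose $k$ so large that $\|\mathcal{F}^{(k)}-\mathcal{T}(\mathcal{F})\|<\varepsilon/3$ for every $\mathcal{F}\in\mathcal{U}$; with this $k$ fixed, norm--continuity of $\mathcal{F}\mapsto\mathcal{F}^{(k)}$ yields $K$ with $\|\mathcal{E}_m^{(k)}-\mathcal{E}_0^{(k)}\|<\varepsilon/3$ for all $m\geq K$; then the triangle inequality gives $\|\mathcal{T}(\mathcal{E}_m)-\mathcal{T}(\mathcal{E}_0)\|<\varepsilon$, so $\mathcal{T}(\mathcal{E}_m)\to\mathcal{T}(\mathcal{E}_0)$. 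I expect the main obstacle to be the last point of the second paragraph: turning the bound expressed in the projective metric $\Delta$ (which naturally lives on scaling--equivalence classes and is the right object for the contraction estimates) into a genuine operator--norm estimate with a \emph{uniform} modulus $\omega$. This requires careful bookkeeping of the normalisation of the iterates and of the compatibility — on the relevant compact pieces of the positive--definite cone — between the Hilbert--metric topology and the norm topology, as discussed in \cite{lem12}; the quantitative stability of positivity improvement from the first paragraph is exactly what makes all the constants uniform. A secondary point to check is that the trace--preserving reduction used at the outset intertwines the two notions of scaling in a way that respects limits, which is a short direct computation.
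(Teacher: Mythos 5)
Your proposal is correct in outline, but it takes a genuinely different route from the paper. The paper's proof is a one-shot perturbation argument: writing $\mathcal{E}'=\mathcal{E}+\delta\mathcal{T}$ and applying the \emph{known} scaling matrices $(X,Y)$ of $\mathcal{E}$ to $\mathcal{E}'$, it observes that the resulting marginals are $\id+O(\delta)$, hence small in $d_H$, and then invokes the stability estimate (\ref{eqn:obsstab}) exactly once to conclude that the doubly stochastic scaling of $Y\mathcal{E}'(X\cdot X^{\dagger})Y^{\dagger}$ is $\Delta$-close to that map itself; the triangle inequality then gives closeness of the scaling matrices of $\mathcal{E}'$ to those of $\mathcal{E}$. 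Note that the paper's continuity statement is thereby naturally expressed in the projective metric $\Delta$ (i.e.\ continuity of $X,Y$ in Hilbert's metric), and no uniformity over a neighbourhood is ever needed because everything is anchored at the single pair $(X,Y)$. Your argument instead runs the classical ``uniform limit of continuous functions'' scheme: quantitative stability of positivity improvement gives a uniform contraction ratio $\gamma<1$ and uniformly controlled initial data on a neighbourhood $\mathcal{U}$, each finite Sinkhorn iterate is continuous, and an $\varepsilon/3$ argument closes the loop. This buys you a slightly stronger conclusion (uniform modulus of continuity on $\mathcal{U}$, and, if your step translating $\Delta$-bounds into operator-norm bounds is carried out, norm continuity of the scaled map rather than just of the scaling matrices), at the cost of having to make every constant uniform and of the extra metric-comparison step you correctly flag as the delicate point. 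That step is genuinely needed in your version and genuinely absent from the paper's, which stays entirely inside the $\Delta$-metric; if you only want the statement as the paper interprets it, you can drop it and your argument simplifies considerably. Both proofs ultimately rest on the same ingredient, namely the estimate (\ref{eqn:obsstab}) from the preceding proposition, so neither is more elementary; the paper's is shorter, yours is more uniform.
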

\begin{proof}
Let $\mathcal{E}$ be a positivity improving map and $\mathcal{E}^{\prime}=\mathcal{E}+\delta \mathcal{T}$ be a perturbation which is again positivity preserving, where $\mathcal{T}$ is a positive map with $\|\mathcal{T}\|=1$ (for instance in the operator norm). 

Then let $X,Y$ be such that they scale $\mathcal{E}$ to a doubly stochastic map. This implies that 
\begin{align*}
	Y\mathcal{E}^{\prime}(XX^{\dagger})Y^{\dagger}&=\id+\delta Y\mathcal{T}(XX^{\dagger})Y^{\dagger} \\
	X^{\dagger}\mathcal{E}^{\prime\,*}(Y^{\dagger}Y)X&=\id+\delta X^{\dagger}\mathcal{T}^{*}(Y^{\dagger}Y)X
\end{align*}
and the marginals are also close to $\id$. In fact, for any $\varepsilon>0$ we can find $\delta>0$ such that
\begin{align*}
	d_H(Y\mathcal{E}^{\prime}(XX^{\dagger})Y^{\dagger},\id)+d_H(X^{\dagger}\mathcal{E}^{\prime\,*}(Y^{\dagger}Y)X,\id),<\varepsilon
\end{align*}	
But then, by equation (\ref{eqn:obsstab}), we have that if $\mathcal{E}^{\prime,\prime}$ is the scaling of $Y\mathcal{E}^{\prime}(XX^{\dagger})Y^{\dagger}$ to a doubly stochastic map, then
\begin{align*}
	\Delta(Y\mathcal{E}^{\prime}(XX^{\dagger})Y^{\dagger},\mathcal{E}^{\prime,\prime})\leq \frac{1}{1-\gamma}(d_H(\rho^{(1)},\id)+d_H(\sigma^{(1)},\id))<\frac{1}{1-\gamma}\varepsilon
\end{align*}
Using the triangle inequality and the fact that $\Delta(\mathcal{E},\mathcal{E}^{\prime})<C\varepsilon$ for some constant $C$ finishes the proof.
\end{proof}
As noted, both theorems can be extended to cover all exactly scalable positive maps using Gurvits' Theorem 4.7.3. of \cite{gur04}. Given the result for classical matrices, it seems natural that the convergence speed for rank non-decreasing but not exactly scalable matrices should not be geometric.

\subsection{Applications of the Operator Sinkhorn Theorem}
Let us finally mention applications of the operator version of Sinkhorn's theorem. The state-version of the theorem, since it can be seen as a normal form for states under local operations, has been applied in the study of states under LOCC operations (see for instance \cite{ken99,lei06}). 

The approximate operator version was developed to obtain polynomial-time algorithms (Sinkhorn scaling) for a problem known as ``Edmond's problem''. It asks the following question (\cite{gur04}): Given a linear subspace $A$ of $\mathcal{M}_n$, does there exist a nonsingular matrix in $V$? The question can be asked also over different number fields and in different contexts. It is particularly interesting, because it is related to rational identity testing over non-commutative variables as studied in \cite{gar15,iva15}. For further input we refer the reader to the extended and well-written review of the applications in \cite{gar15}. 

Finally, the exact scalability of fully indecomposable positive maps provided bounds on the \emph{mixed discriminant} of matrix tuples, which is interesting to provide permanent bounds (\cite{gur00}).

\paragraph*{Acknowledgement}
I was first acquainted with the topic of matrix scaling through my Master's thesis supervisor Michael Wolf. We also worked together on unitary scaling and I thank him for his valuable input. I am supported by the Studienstiftung des deutschen Volkes.

\printbibliography

\appendix
\section{Preliminaries on matrices} \label{app:prelimmatrices}
Sinkhorn's theorem is closely related to irreducibility and notions connected to it. 

Therefore, we first recall the following charactersation of irreducible matrices.
\begin{prop} \label{prop:irred}
Let $A\in\R^{n\times n}$ be nonnegative. The following are equivalent:
\begin{enumerate}
	\item $A$ is irreducible.
	\item The digraph associated to $A$ is strongly connected.
	\item For each $i$ and $j$ there exists a $k$ such that $(A^k)_{ij}>0$.
	\item For any partition $I\cup J$ of $\{1,\ldots, n\}$, there exists a $j\in J$ and an $i\in I$ such that $A_{ij}\neq 0$.
\end{enumerate}
\end{prop}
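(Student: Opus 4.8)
The plan is to prove the four statements pairwise equivalent via the short chain $1\Leftrightarrow 4$, $4\Leftrightarrow 2$, $2\Leftrightarrow 3$; each link is most transparent in contrapositive form, so in effect one shows $\neg 1\Leftrightarrow\neg 4\Leftrightarrow\neg 2\Leftrightarrow\neg 3$. Throughout I use the associated digraph $G(A)$ on $\{1,\dots,n\}$ with an arc $i\to j$ precisely when $A_{ij}>0$, and the elementary identity $(A^{k})_{ij}=\sum_{v_{1},\dots,v_{k-1}}A_{iv_{1}}A_{v_{1}v_{2}}\cdots A_{v_{k-1}j}$. Because $A$ is nonnegative none of these summands can cancel, so $(A^{k})_{ij}>0$ exactly when $G(A)$ contains a walk of length $k$ from $i$ to $j$; this single observation is what couples the ``power'' condition~3 to the purely combinatorial conditions.

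For $1\Leftrightarrow 4$ I would just unwind the definition of reducibility. If condition~4 fails there is a partition $\{1,\dots,n\}=I\sqcup J$ with $I,J\neq\varnothing$ and $A_{ij}=0$ for all $i\in I,\ j\in J$; listing the indices of $I$ first and taking $P$ the corresponding permutation matrix yields $PAP^{T}=\begin{pmatrix}A_{II}&0\\ A_{JI}&A_{JJ}\end{pmatrix}$, which is of the form (\ref{eqn:fullyindecdef}), so $A$ is not irreducible. Conversely, any permutation $P$ with $PAP^{T}$ of that block shape exhibits, through the preimages of the two index blocks, a partition that violates~4. Hence $\neg 4\Leftrightarrow\neg 1$.

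For $4\Leftrightarrow 2$ I would pass through reachability sets. If $G(A)$ is not strongly connected, fix $a,b$ with no directed path $a\to b$ and let $R$ be the set of vertices reachable from $a$; then $a\in R$, $b\notin R$, so $R$ is a nonempty proper subset, and no arc leaves $R$ (an arc out of $R$ would make a further vertex reachable), i.e.\ $A_{ij}=0$ for $i\in R$, $j\notin R$ — so $(I,J)=(R,\{1,\dots,n\}\setminus R)$ refutes~4. Conversely, if $I,J$ witness the failure of~4 then every walk starting in $I$ stays in $I$, so no vertex of $J$ is reachable from any vertex of $I$ and $G(A)$ is not strongly connected. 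Finally $2\Leftrightarrow 3$ is immediate from the walk-counting remark above: $G(A)$ fails to be strongly connected iff some ordered pair $(i,j)$ is joined by no walk of any length iff some pair satisfies $(A^{k})_{ij}=0$ for every $k$, which is exactly the negation of~3.

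The only point I would be careful to pin down is the degenerate case $n=1$, together with the convention on admissible $k$ in statement~3: the $1\times 1$ zero matrix is vacuously irreducible, has a (vacuously) strongly connected digraph, and satisfies~4 vacuously, yet $(0)^{k}=0$ for all $k\ge 1$. I would resolve this by reading statement~3 with $k\ge 0$, so that $A^{0}=\id$ disposes of the diagonal entries automatically and the real content is confined to pairs $i\neq j$; with that convention all four equivalences hold for every $n\ge 1$. Beyond this, the proof is a matter of keeping row/column index sets and arc directions straight, and I do not anticipate any genuine obstacle.
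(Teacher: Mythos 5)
Your argument is correct, but note that the paper itself does not prove Proposition \ref{prop:irred} at all: it simply points to the references \cite{sch77} and \cite{bru68}, so there is no in-paper proof to compare against. What you give is the standard self-contained argument (walk-counting for $(A^k)_{ij}$, reachability sets for strong connectivity, and direct unwinding of the block-triangular form), and all three links of your chain $\neg 1\Leftrightarrow\neg 4\Leftrightarrow\neg 2\Leftrightarrow\neg 3$ check out. Two small points are worth making explicit. First, the paper defines irreducibility by reference to ``form (\ref{eqn:fullyindecdef})'', whose surrounding sentence also imposes that $A_1,A_2$ have no zero row or column and $A_3\neq 0$; those side conditions belong to the definition of full indecomposability, and your proof (correctly) treats irreducibility as the plain statement that no symmetric permutation produces a block lower-triangular form with two nonempty diagonal blocks --- that is the reading under which the proposition is actually true, so you should state that you are using it. Second, your observation about the $1\times 1$ zero matrix is a genuine edge case of the statement as written: with the usual convention $k\geq 1$ in item 3, the equivalence fails for $n=1$, and your fix (allow $k=0$, or restrict item 3 to $i\neq j$, or assume $n\geq 2$) is exactly the right kind of repair; for $n\geq 2$ your argument shows $k\geq 1$ suffices, since strong connectivity then yields a closed walk of positive length through every vertex.
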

An overview about these and similar properties can be found in \cite{sch77}. Graph related properties are proven in \cite{bru68}.

Let us now describe a graph for any matrix: Given a nonnegative matrix $A\in\mathbb{R}^{n\times n}$ and any partition $I\cup J=\{1,\ldots,2n\}$, let $I\cup J=V$ and $E=\{(i,j)|A_{ij}>0\}\subset I\times J$ be the vertices and edges of the (bipartite) Graph $G_A:=(V,E)$. 
\begin{dfn}
A bipartite graph $G=(V,E)$ has a \emph{perfect matching}, if it contains a subgraph where the degree of any vertex is exactly one, i.e.~any vertex is matched with exactly one other vertex.
\end{dfn}

Note that this definition is not dependent on the size of the entries of $A$, it only depends on whether an entry is positive or zero.

\begin{prop} [\cite{bru66} Lemma 2.3]
Let $A\in \R^{m\times n}$ be nonnegative. The following are equivalent:
\begin{enumerate}
	\item $A$ is fully indecomposable,
	\item $PAQ$ is fully indecomposable for all permutations $P,Q$,
	\item There exist permutations $P,~Q$ such that $PAQ$ is irreducible and has a positive main diagonal.
	\item For any $(i,j)\in E$ the edge set of the bipartite graph $G_A$ for $A$, there exists a perfect matching in $G_A$ containing this edge.
\end{enumerate}
\end{prop}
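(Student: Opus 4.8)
The plan is to derive this from Proposition \ref{prop:irred} together with the graph-theoretic characterisation of full indecomposability (\cite{bru66} Lemma 2.3). The equivalence of the four listed conditions is really a string of separate implications, and I would organise the proof as a cycle $1 \Rightarrow 3 \Rightarrow 2 \Rightarrow 4 \Rightarrow 1$, together with a few auxiliary observations. First I would recall that by hypothesis $A$ is fully indecomposable, so in particular (by Proposition \ref{prop:irred} applied after the permutation of condition 3, or directly) it has no zero rows or columns; this lets me build a bipartite graph $G_A$ on vertex sets $M=\{1,\dots,n\}$ and $N=\{1,\dots,n\}$ with $(i,j)\in E$ iff $A_{ij}>0$, where every vertex has positive degree.

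For $1 \Rightarrow 3$, the key combinatorial input is K\H{o}nig's theorem (or Hall's marriage theorem): full indecomposability of $A$ forces, for every proper subset $S\subseteq M$, the neighbourhood $N(S)$ in $G_A$ to satisfy $|N(S)| > |S|$ unless $S$ is empty — otherwise writing $S = \{1,\dots,n\}\setminus I$ and $N(S) = J$ and permuting rows in $I$ to the bottom and columns in $J$ to the right would exhibit the block form \eqref{eqn:fullyindecdef}. In particular Hall's condition holds strictly, so $G_A$ has a perfect matching, and after permuting rows and columns to place this matching on the diagonal we get $PAQ$ with positive main diagonal; irreducibility of $PAQ$ follows because full indecomposability is preserved under permutations (itself a short argument: a reducing permutation for $PAQ$ pulls back to a decomposing pair for $A$) and every fully indecomposable matrix is irreducible, as stated in the preliminaries. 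The implication $3 \Rightarrow 2$ is immediate from Proposition \ref{prop:irred}(3): irreducibility gives $(PAQ)^k_{ij}>0$ for some $k$ depending on $i,j$, and since $PAQ$ has a positive diagonal one can absorb shorter powers, so in fact $(I + PAQ)^{n-1}$ is strictly positive.

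For $2 \Rightarrow 4$, I would argue by contradiction: if some edge $(i,j)\in E$ lies in no perfect matching of $G_A$, a standard alternating-path / Dulmage–Mendelsohn argument produces a vertex subset witnessing a violation of the strict Hall condition, hence a permutation bringing $A$ into the block form \eqref{eqn:fullyindecdef}, contradicting full indecomposability — and this block structure is exactly the failure of the condition in item 2 when phrased via patterns. Finally $4 \Rightarrow 1$: if $A$ were \emph{not} fully indecomposable, \eqref{eqn:fullyindecdef} exhibits an index set $I$ (rows of $A_1$) and $J$ (columns of $A_1$) with $|I| + |J| = n$ and $A_{I^c J} = 0$; then any perfect matching must match every column in $J$ into $I$, using up all of $I$, so every edge from $I^c$ (rows of $A_2$, $A_3$) to $J$ — of which there are some, since $A_3\neq 0$ and rows/columns are nonzero — lies in no perfect matching, contradicting item 4. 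The main obstacle I anticipate is getting the Hall-condition bookkeeping exactly right in both directions: the clean statement is that $A$ is fully indecomposable iff $|N(S)| > |S|$ strictly for every nonempty proper $S \subseteq M$, and both the ``there is a perfect matching through every edge'' statement and the block-form statement have to be carefully reduced to this strict Hall condition. The uniqueness-of-scaling half of the theorem should then be quoted directly from the uniqueness discussion in Section \ref{sec:equivscaling} (the scaling matrices are unique up to a scalar iff there is no permutation putting $A$ in block-diagonal form, and a fully indecomposable matrix is in particular irreducible hence not a nontrivial direct sum).
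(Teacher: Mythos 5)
Your overall route is essentially the one the paper sketches: $(1)\Leftrightarrow(2)$ by permutation invariance, $(1)\Rightarrow(3)$ via the strict Hall/Frobenius--K\"onig condition, and the matching characterisation $(4)$ handled by the standard alternating-path argument. Those parts are fine in outline, modulo an index slip in $4\Rightarrow 1$: in the block form (\ref{eqn:fullyindecdef}) the zero block occupies the rows of $A_1$ and the columns of $A_2$, i.e.\ $A_{IJ^c}=0$ rather than $A_{I^cJ}=0$; your argument survives once the indices are straightened out (rows of $I$ are matched into $J$, so if $|I|=|J|$ the columns $J$ are saturated and no edge supported on $A_3$ can lie in a perfect matching).

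The genuine gap is the step $3\Rightarrow 2$, which is the only arrow in your cycle that returns from condition 3 to conditions 1 and 2, and your justification proves the wrong statement. Showing that $(\id+PAQ)^{n-1}$ is strictly positive establishes primitivity of $\id+PAQ$, which is equivalent to irreducibility of $PAQ$ by Proposition \ref{prop:irred} and nothing more; it does not give full indecomposability, which is strictly stronger (the matrix $\bigl(\begin{smallmatrix}0&1\\1&0\end{smallmatrix}\bigr)$ is irreducible with $(\id+A)^{n-1}$ positive, yet is not fully indecomposable). What is actually needed here --- the ``short contradiction proof'' the paper alludes to --- is the implication ``irreducible with positive main diagonal $\Rightarrow$ fully indecomposable'': if $B=PAQ$ had a $p\times q$ zero submatrix $B[I,K]=0$ with $p+q=n$ (the obstruction to full indecomposability), then the positive diagonal forces $I\cap K=\emptyset$, hence $K=I^c$, and $B[I,I^c]=0$ contradicts irreducibility by Proposition \ref{prop:irred}(4). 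Note that you invoke the positive diagonal only to ``absorb shorter powers,'' which is not where it does its work. Without this argument nothing in your write-up derives condition 1 or 2 from condition 3, so the cycle never closes; inserting it makes the proof complete and essentially the one the paper cites from Brualdi et al.
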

\begin{proof}[Sketch of proof] The equivalence $(1)\Leftrightarrow (2)$ is obvious and $(1)\Leftrightarrow (3)$ is done in \cite{bru66}. The direction $\Rightarrow$ follows from the Frobenius-K\"{o}nig theorem (cf. \cite{bha96}, Chapter 2). The converse direction follows from a short contradiction proof.

Finally, $(1)\Leftrightarrow (4)$ follows essentially from Theorem 4.1.1 in \cite{las09}, which was first observed in \cite{het64}\footnote{In Hungarian. Reference taken from \cite{las09}}.
\end{proof}

Since multiplication of positive diagonal matrices from the right and from the left does not change the pattern of a matrix, having a matrix that has the required row and column sums is an easy necessary condition for scalability. 

Let us now consider the special case of doubly stochastic matrices in more detail. We define:
\begin{dfn}
Let $A\in \mathbb{R}^{n\times n}$ be a nonnegative matrix. Then $A$ has \emph{total support} if it is nonzero and for every $A_{ij}>0$ there exists a permutation $\sigma$ such that $\sigma(i)=j$ and $\prod_{k=1}^n A_{\sigma(k)k}\neq 0$. In other words, $A$ has total support if any nonzero element lies on a positive diagonal (\cite{sin67b}). 

Furthermore, $A$ has \emph{support}, if there exists a positive diagonal, i.e.~there exists an $A_{ij}$ such that for some permutation $\sigma$ with $\sigma(i)=j$ we have $\prod_{k=1}^n A_{\sigma(k)k}\neq 0$.
\end{dfn}
\begin{prop} \label{prop:fullyindec}
Let $A\in\mathbb{R}^{n\times n}$ be a nonnegative matrix. The following are equivalent:
\begin{enumerate}
	\item After independent permutations of rows and columns, $A$ is a direct sum of fully indecomposable matrices.
	\item $A$ has a doubly-stochastic pattern.
	\item $A$ has total support
\end{enumerate}
Furthermore, $A$ has support if and only if there exists a matrix $B$ with a subpattern of $A$ with total support.
\end{prop}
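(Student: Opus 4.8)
The plan is to prove the three-way equivalence, then dispose of the ``furthermore'' clause separately. The two implications relating doubly stochastic pattern and total support are essentially Birkhoff's theorem, the implication from the block form is a bookkeeping observation, and the one genuinely structural step is going from total support back to the block form, which is where I expect the work to lie.

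For $(2)\Leftrightarrow(3)$ I would use Birkhoff's theorem in both directions. If $A$ has a doubly stochastic pattern, fix a doubly stochastic $B$ with $\supp(B)=\supp(A)$ and write $B=\sum_k\lambda_kP_k$ with $\lambda_k>0$ and each $P_k$ a permutation matrix; then every positive entry of $B$, hence of $A$, lies on the support of some $P_k$, i.e.\ on a positive diagonal, which is exactly total support. Conversely, if $A$ has total support, let $T$ be the set of permutations $\sigma$ with $\prod_kA_{\sigma(k),k}\neq0$; each such $\sigma$ gives a permutation matrix $P_\sigma$ with $\supp(P_\sigma)\subseteq\supp(A)$, and total support says these supports cover $\supp(A)$, so $B:=|T|^{-1}\sum_{\sigma\in T}P_\sigma$ is doubly stochastic with $\supp(B)=\supp(A)$. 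For $(1)\Rightarrow(3)$: a fully indecomposable matrix is, by the preceding proposition, irreducible with positive main diagonal, and every one of its edges lies on a perfect matching of its bipartite graph, which is exactly total support; total support is additive over direct sums and invariant under independent row and column permutations, so $(1)$ gives $(3)$.

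The main step is $(3)\Rightarrow(1)$. Since total support forces a positive diagonal to exist, I would first permute the columns of $A$ so that its main diagonal is positive, then form the digraph $D$ on $\{1,\dots,n\}$ with an arc $i\to j$ whenever $A_{ij}>0$ and $i\neq j$. The key lemma is that a positive off-diagonal entry $A_{ij}$ lies on a positive diagonal of $A$ if and only if $i$ and $j$ lie in a common directed cycle of $D$, equivalently in the same strongly connected component: a perfect matching of $G_A$ through the edge $(i,j)$ differs from the main-diagonal matching by a union of vertex-disjoint alternating cycles, exactly one passing through $(i,j)$, and that alternating cycle projects to a directed cycle of $D$ through the arc $i\to j$, the converse being obtained by the analogous swap. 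Granting the lemma, total support forces every positive off-diagonal entry to stay inside one strongly connected component, so ordering the vertices to make each component contiguous (a simultaneous row-and-column permutation) displays $A$ as a direct sum; each summand is strongly connected, hence irreducible by Proposition~\ref{prop:irred}, and has positive main diagonal, hence is fully indecomposable by the preceding proposition. The alternating-cycle--to--directed-cycle correspondence, and in particular keeping the identification of rows with columns induced by the chosen diagonal straight, is the part I expect to be fiddly, though it is entirely routine.

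Finally, for the ``furthermore'' clause: if $B\prec A$ has total support then $B$ has a positive diagonal, and since $B\prec A$ that diagonal is positive in $A$ too, so $A$ has support. Conversely, if $A$ has support, fix $\sigma$ with $\prod_kA_{\sigma(k),k}\neq0$ and let $B$ be the matrix agreeing with $A$ on the entries $(\sigma(k),k)$ and zero elsewhere; then $B\prec A$, and every positive entry of $B$ sits on its single positive diagonal, so $B$ has total support.
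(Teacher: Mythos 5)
Your proof is correct, but it is organised quite differently from the one in the paper. The paper establishes $(1)\Leftrightarrow(2)$ by the argument of Brualdi--Parter--Schneider: permute a matrix with doubly stochastic pattern into block lower-triangular form with fully indecomposable (or $1\times1$ zero) diagonal blocks, and then use the numerical row- and column-sum constraints of an actual doubly stochastic representative to force the off-diagonal blocks, and hence any zero blocks, to vanish; the equivalence $(2)\Leftrightarrow(3)$ is simply delegated to Sinkhorn--Knopp. You instead make $(2)\Leftrightarrow(3)$ the easy step via Birkhoff's theorem in both directions (which is a clean, self-contained substitute for the citation), and you carry the real weight in $(3)\Rightarrow(1)$ with a purely combinatorial argument: normalise to a positive main diagonal, pass to the loop-free digraph, and use the alternating-cycle/directed-cycle correspondence to show that total support confines every arc to a strongly connected component, so that the direct summands are exactly the components. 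The trade-off is that your route never needs to manipulate an actual doubly stochastic matrix for the decomposition step and identifies the blocks explicitly as strongly connected components, at the price of the slightly fiddly matching-to-cycle bookkeeping you flag; the paper's route is shorter on the page because the block-diagonal structure falls out of the arithmetic of row and column sums. Your treatment of the ``furthermore'' clause matches the paper's (you take the minimal matrix supported on a single positive diagonal where the paper keeps all entries lying on some positive diagonal; both work). I see no gaps.
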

\begin{proof}[Sketch of Proof] For $1\Leftrightarrow 2$ we follow the proof in \cite{bru66}.

Let $A$ be doubly stochastic. Since we can permute rows and columns independently, we can assume that $A$ is of the form
\begin{align*}
	A:=\begin{pmatrix}{} A_1 & 0 & \ldots & 0 \\ A_{21} & A_2 & \ldots & 0 \\ \vdots & & & \vdots \\ A_{k1} & A_{k2} & \ldots & A_k \end{pmatrix}
\end{align*}
for some $k\in\mathbb{N}$. All $A_i$ are either $1\times 1$ zero-matrices or fully indecomposable (otherwise iterate). Since $A$ is doubly stochastic one can quickly see that $A_{ij}=0$ for all $i<j$. Furthermore, no $A_i$ can be zero, because this would then result in a zero-row. Hence, $A$ can be decomposed as a direct sum of fully indecomposable maps.

For $2\Leftrightarrow 3$ see \cite{sin67b}.

Finally, consider a matrix $B$ with total support. Clearly, if it is a submatrix of some other matrix $A$, then $A$ will have support, since any element $A_{ij}>0$ which is contained in $B$ will lie on a nonzero diagonal. Conversely, if $A$ has support, setting any element $A_{ij}$ which does not lie on a positive diagonal to zero produces a matrix that has total support.
\end{proof}

\section{Introduction to Nonlinear Perron-Frobenius theory} \label{app:perfro}
The basic result underlying Perron-Frobenius theory is an old theorem from \cite{per07} and \cite{fro12} stating:
\begin{thm}
Let $A\in \R^{n\times n}$ be a nonnegative, irreducible matrix with spectral radius $\rho(A)$. Then $\rho(A)>0$ is a nondegenerate positive eigenvalue of $A$ with a one-dimensional eigenspace consisting of a vector $x$ with only positive components.
\end{thm}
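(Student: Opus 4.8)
The plan is to combine a fixed-point argument for the existence of a nonnegative eigenvector with the combinatorial characterisations of irreducibility in Proposition~\ref{prop:irred} to upgrade it to a strictly positive one, and then to read off the remaining assertions. I would assume $n\geq 2$ (the case $n=1$ is immediate). First I would note that an irreducible $A$ has no zero row and no zero column: if, say, row $i$ vanished, the partition $I=\{i\}$, $J=\{1,\dots,n\}\setminus\{i\}$ would violate item~4 of Proposition~\ref{prop:irred}. Consequently the map $x\mapsto Ax/\|Ax\|_{1}$ is continuous from the simplex $\Delta=\{x\in\R^{n}:x\geq 0,\ \sum_{k}x_{k}=1\}$ into itself: the denominator never vanishes, since any $x\in\Delta$ has a positive coordinate and the corresponding column of $A$ is nonzero, so $Ax\geq 0$ is nonzero. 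Brouwer's fixed point theorem then yields $x\in\Delta$ with $Ax=\lambda x$, where $\lambda=\|Ax\|_{1}>0$; in particular $\rho(A)\geq\lambda>0$.

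The next step is to show $x>0$. Writing $Z=\{i:x_{i}=0\}$ and $P=\{i:x_{i}>0\}\neq\emptyset$, for $i\in Z$ one has $0=\lambda x_{i}=\sum_{j}A_{ij}x_{j}=\sum_{j\in P}A_{ij}x_{j}$, and since all summands are nonnegative, $A_{ij}=0$ for all $i\in Z$, $j\in P$. If $Z$ were nonempty this would contradict item~4 of Proposition~\ref{prop:irred} applied to the partition $Z\cup P$, so $x>0$. Running the same construction on $A^{T}$ (irreducibility is transpose-invariant) produces a strictly positive left eigenvector $y>0$ with $y^{T}A=\lambda'y^{T}$, $\lambda'>0$; then $\lambda(y^{T}x)=y^{T}Ax=\lambda'(y^{T}x)$ with $y^{T}x>0$ forces $\lambda'=\lambda$. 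To identify $\lambda$ with $\rho(A)$, I would take any eigenvalue $\mu$ with eigenvector $v\in\C^{n}$ and use $|\mu|\,|v_{i}|=|(Av)_{i}|\leq(A|v|)_{i}$ componentwise; pairing with $y$ gives $|\mu|\,y^{T}|v|\leq y^{T}A|v|=\lambda\,y^{T}|v|$ with $y^{T}|v|>0$, hence $|\mu|\leq\lambda$ and $\rho(A)=\lambda$.

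For one-dimensionality of the eigenspace it suffices to handle real eigenvectors (a complex one splits into its real and imaginary parts, each a real eigenvector for the real eigenvalue $\lambda$). If $u$ is real, nonzero, with $Au=\lambda u$, then $u$ has a strictly positive or a strictly negative coordinate, so the interval $\{t\in\R:x+tu\geq 0\}$ has a finite endpoint $t^{*}$, and $t^{*}\neq 0$ because $x>0$ puts $0$ in its interior; at $t^{*}$ the vector $x+t^{*}u$ is a nonnegative eigenvector for $\lambda$ with a vanishing coordinate, which by the positivity argument above can only happen if $x+t^{*}u=0$. Thus $u$ is a (nonzero) scalar multiple of $x$ and the eigenspace equals $\mathrm{span}(x)$. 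Finally, to see that $\lambda$ is a \emph{nondegenerate} (algebraically simple) root of $p(t)=\det(tI-A)$, I would use that $\ker(\lambda I-A)=\mathrm{span}(x)$ is one-dimensional, so $\mathrm{rank}(\lambda I-A)=n-1$ and the adjugate $B:=\mathrm{adj}(\lambda I-A)$ is nonzero of rank one; since $(\lambda I-A)B=0=B(\lambda I-A)$, every column of $B$ lies in $\mathrm{span}(x)$ and every row in $\mathrm{span}(y^{T})$, so $B=c\,xy^{T}$ with $c\neq 0$, and Jacobi's formula gives $p'(\lambda)=\mathrm{tr}\,B=c\,(y^{T}x)\neq 0$.

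I expect the one genuinely delicate point to be the passage from a nonnegative eigenvector to a strictly positive one — this is precisely where irreducibility enters, and it then bootstraps both the uniqueness of the eigenvalue $\lambda$ and the geometric and algebraic simplicity; by contrast the fixed-point existence step and the adjugate computation are routine once the setup is arranged.
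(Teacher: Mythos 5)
Your proof is correct. Note first that the paper does not prove this theorem at all: it is stated in the appendix as the classical Perron--Frobenius result with citations to Perron and Frobenius, and the only methodological hint given is that it ``follows using Hilbert's projective metric and contraction principles'' in the sense of Birkhoff, i.e.\ the cone map $x\mapsto Ax$ (or a power of it) is a strict contraction in the projective metric on the positive cone, so Banach's fixed point theorem delivers existence, uniqueness and geometric convergence of the power iteration in one stroke. Your route is the other classical one: Brouwer on the simplex for existence of a nonnegative eigenvector, then the partition characterisation of irreducibility (item~4 of Proposition~\ref{prop:irred}) to force strict positivity, a left eigenvector to pin down $\lambda=\rho(A)$ and to dominate all other eigenvalues, a boundary-crossing argument for geometric simplicity, and the adjugate plus Jacobi's formula for algebraic simplicity. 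All steps check out; the only point you leave implicit is that identifying the rows of $\mathrm{adj}(\lambda I-A)$ with multiples of $y^{T}$ uses the one-dimensionality of the \emph{left} eigenspace, which you get by running your geometric-simplicity argument on the irreducible matrix $A^{T}$ --- worth one explicit sentence. As for what each approach buys: yours is elementary, self-contained, and cleanly isolates where irreducibility enters (exactly twice, both times through the partition criterion); the Hilbert-metric approach the paper gestures at requires more setup but yields quantitative contraction ratios and is precisely the argument that survives the passage to nonlinear and operator-valued generalisations, which is why the appendix is organised around it.
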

The theorem was later interpreted geometrically in \cite{bir57,sam57}, where the authors noted that it follows using Hilbert's projective metric and contraction principles. From then on, it was slowly extended to (not necessarily linear) operators, which lies at the heart of nonlinear Perron-Frobenius theory. The connection to matrix scaling became clear int the 60s to 80s and parts of each theory have developed alongside each other since. Probably the best current reference on the topic is \cite{lem12}. In the following, we sketch some of the most important ideas surrounding the theory:

Recall that given a topological vector space $\mathcal{V}$, a \emph{cone} is a set $\mathcal{C}\subset \mathcal{V}$ such that for all $v\in\mathcal{C}$, $\alpha v\in\mathcal{C}$ for all $\alpha> 0$. A \emph{convex cone} is a cone that contains all convex combinations. By definition, it is equivalent to say that $\mathcal{C}$ is a convex cone if and only if $\alpha v+\beta w\in\mathcal{C}$ for all $v,w\in\mathcal{C}$ and all $\alpha,\beta> 0$. A cone is called \emph{solid}, if it contains an interior point in the topology of the vector space and \emph{closed} if it is closed in the given topology. It is called \emph{polyhedral}, if it is the intersection of finitely many closed half-spaces (cf. \cite{roc97}).

An easy way to construct a convex, solid cone is by using a partial order $\geq$. Then the set $\mathcal{C}$ defined via $v\in\mathcal{C}~\Leftrightarrow v\geq 0$ is a closed convex cone (see \cite{roc97} for the connection between ordered vector spaces and convex cones). Given two cones $\mathcal{C},\mathcal{K}$ which are defined by a partial order, we call a map $\mathbf{T}: \mathcal{C}\to\mathcal{K}$ \emph{order-preserving} or \emph{monotonic}, if for $v\geq w$ we have $\mathbf{T}(v)\geq \mathbf{T}(w)$. We call it \emph{strongly order-preserving}, if for every $v\geq w\in\mathcal{V}$ we have $\mathbf{T}(v)-\mathbf{T}(w)\in\mathcal{V}^{\prime}>0$. Last but not least, we call $\mathbf{T}$ \emph{homogeneous}, if $\mathbf{T}(\alpha v)=\alpha \mathbf{T}(v)$ for all $\alpha\in\mathbb{R}$.

As is the case with classical Perron-Frobenius theory, the spectral radius is the crucial notion. For general maps between cones, there are several definitions, which turn out to be the same for most purposes, hence we restrict to one such notion:
\begin{dfn}[\cite{lem12}, Chapter 5.2]
Let $\mathcal{K}$ be a solid closed cone in a finite dimensional vector space with a fixed norm $\|\cdot\|$ and $f:\mathcal{K}\to\mathcal{K}$ a continuous homogeneous map. Define the \emph{cone spectral radius} as
\begin{align*}
	r_{\mathcal{K}}(f):=\sup\{\limsup\limits_{m\to\infty} \|f^m(x)\|^{1/m}|0\neq x\in \mathcal{K}\}
\end{align*}
\end{dfn}
The first crucial observation is that the spectral radius is actually attained for homogeneous order-preserving maps (which is not the case for general maps):
\begin{thm}[\cite{lem12}, Cor. 5.4.2] \label{thm:fpexists}
Let $\mathcal{K}$ be a solid closed cone in a finite dimensional vector space $\mathcal{V}$. If $f:\mathcal{K}\to\mathcal{K}$ is a continuous, homogeneous, order-preserving map, then there exists $x\in\mathcal{K}\setminus\{0\}$ with $f(x)=r_{\mathcal{K}}(f)x$
\end{thm}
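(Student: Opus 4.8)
The plan is to run the classical Perron--Frobenius argument in its nonlinear form: compactify the cone to a base, apply Brouwer's fixed point theorem to a suitably normalised (and perturbed) map to produce an eigenvector with a \emph{positive} eigenvalue, and only at the end identify that eigenvalue with $r_{\mathcal{K}}(f)$. We may assume $\mathcal{K}$ is pointed (otherwise first pass to the quotient of $\mathcal{V}$ by the largest linear subspace contained in $\mathcal{K}$, with the induced cone and the induced map); then there is a linear functional $\varphi$ with $\varphi(x)>0$ for all $x\in\mathcal{K}\setminus\{0\}$, and the base $\Sigma:=\{x\in\mathcal{K}:\varphi(x)=1\}$ is convex and compact (boundedness follows by a normalised-sequence argument using strict positivity of $\varphi$ and closedness of $\mathcal{K}$). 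It is also convenient to replace $\|\cdot\|$ by an order-unit norm $\|x\|_u:=\inf\{t>0:-tu\le x\le tu\}$ for a fixed interior point $u$ of $\mathcal{K}$; this norm is \emph{monotone}, i.e. $0\le a\le b$ implies $\|a\|_u\le\|b\|_u$, and since in finite dimensions all norms are equivalent, $r_{\mathcal{K}}(f)$ is unaffected by the change.

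First I would deal with the fact that $f$ need not be strongly positive, so $f$ may vanish on part of $\mathcal{K}$ and the naive normalised map need not be defined on all of $\Sigma$. For $\varepsilon>0$ set $f_\varepsilon(x):=f(x)+\varepsilon\varphi(x)u$. Each $f_\varepsilon$ is continuous, homogeneous and order-preserving, and (since a positive multiple of an interior point plus an element of a convex cone stays interior) it maps $\mathcal{K}\setminus\{0\}$ into the interior of $\mathcal{K}$. Hence $g_\varepsilon(x):=f_\varepsilon(x)/\varphi(f_\varepsilon(x))$ is a well-defined continuous self-map of $\Sigma$, and Brouwer's theorem yields a fixed point $x_\varepsilon\in\Sigma$, that is $f_\varepsilon(x_\varepsilon)=\lambda_\varepsilon x_\varepsilon$ with $\lambda_\varepsilon=\varphi(f_\varepsilon(x_\varepsilon))>0$; moreover $x_\varepsilon=f_\varepsilon(x_\varepsilon)/\lambda_\varepsilon$ lies in the interior of $\mathcal{K}$.

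The key intermediate claim is that $\lambda_\varepsilon=r_{\mathcal{K}}(f_\varepsilon)$. The inequality $r_{\mathcal{K}}(f_\varepsilon)\ge\lambda_\varepsilon$ follows by iteration: $f_\varepsilon^{\,m}(x_\varepsilon)=\lambda_\varepsilon^{\,m}x_\varepsilon$, so $\|f_\varepsilon^{\,m}(x_\varepsilon)\|_u^{1/m}=\lambda_\varepsilon\|x_\varepsilon\|_u^{1/m}\to\lambda_\varepsilon$. For the reverse inequality I would use that $x_\varepsilon$ is interior: for any $y\in\mathcal{K}$ there is $t>0$ with $y\le tx_\varepsilon$, hence by monotonicity and homogeneity $0\le f_\varepsilon^{\,m}(y)\le f_\varepsilon^{\,m}(tx_\varepsilon)=t\lambda_\varepsilon^{\,m}x_\varepsilon$, and the monotone norm gives $\|f_\varepsilon^{\,m}(y)\|_u^{1/m}\le(t\|x_\varepsilon\|_u)^{1/m}\lambda_\varepsilon\to\lambda_\varepsilon$; taking the supremum over $y\neq 0$ yields $r_{\mathcal{K}}(f_\varepsilon)\le\lambda_\varepsilon$. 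The same monotone-norm estimate shows, more generally, that for order-preserving maps $h_1,h_2$ with $h_1(x)\le h_2(x)$ for all $x\in\mathcal{K}$ one has $h_1^{\,m}\le h_2^{\,m}$ and therefore $r_{\mathcal{K}}(h_1)\le r_{\mathcal{K}}(h_2)$; this monotonicity of the cone spectral radius is exactly what makes the limiting step work.

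Finally I would let $\varepsilon\downarrow 0$. Since $f_\varepsilon\le f_{\varepsilon'}$ for $\varepsilon\le\varepsilon'$, the monotonicity just proved gives that $\lambda_\varepsilon=r_{\mathcal{K}}(f_\varepsilon)$ is nonincreasing as $\varepsilon\downarrow 0$ with $\lambda_\varepsilon\ge r_{\mathcal{K}}(f)$, while $\lambda_\varepsilon=\varphi(f(x_\varepsilon))+\varepsilon\varphi(u)\le\max_{\Sigma}(\varphi\circ f)+\varepsilon\varphi(u)$ is bounded; so $\lambda_\varepsilon\to\lambda_0\ge r_{\mathcal{K}}(f)$. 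Passing to a subsequence, $x_\varepsilon\to x_0\in\Sigma$ by compactness, so $x_0\neq 0$; continuity of $f$ and $\varepsilon\to 0$ give $f(x_0)=\lim f_\varepsilon(x_\varepsilon)=\lim\lambda_\varepsilon x_\varepsilon=\lambda_0 x_0$. Then $f^{\,m}(x_0)=\lambda_0^{\,m}x_0$ forces $r_{\mathcal{K}}(f)\ge\lambda_0$, hence $r_{\mathcal{K}}(f)=\lambda_0$ and $x_0$ is the asserted eigenvector. The main obstacle is precisely the absence of strong positivity: without it the Brouwer map is not defined on $\partial\Sigma$ and the eigenvector one eventually produces may lie on $\partial\mathcal{K}$, so it is the $\varepsilon$-perturbation together with the limiting argument (and the monotonicity $h_1\le h_2\Rightarrow r_{\mathcal{K}}(h_1)\le r_{\mathcal{K}}(h_2)$ on which it rests) that carries the real content; the reduction to a pointed cone and the choice of a monotone norm are routine technicalities.
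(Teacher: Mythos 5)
Your proof is correct and is essentially the standard argument for this result (the paper itself gives no proof, citing Lemmens--Nussbaum, whose proof is exactly this perturbation $f_\varepsilon = f + \varepsilon\varphi(\cdot)u$, Brouwer on a compact base, the Collatz--Wielandt-type identification $\lambda_\varepsilon = r_{\mathcal{K}}(f_\varepsilon)$ via monotonicity of an order-unit norm, and a limit $\varepsilon\downarrow 0$ using monotonicity of the cone spectral radius). The only wobble is the opening parenthetical: for a genuinely non-pointed cone the quotient by the lineality space $L$ only yields $f(x_0)-r x_0\in L$ upstream, not the eigenvector equation itself, so that reduction is not quite "routine" — but it is also moot, since the cited source's standing convention is that cones are convex and pointed, and every cone used in the paper (nonnegative orthant, positive semidefinite matrices) is.
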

Note that the theorem does not tell us whether the eigenvector lies inside the cone or on its boundary. The next and very powerful theorem does also not settle existence, but if existence is known, then it assures uniqueness and convergence:
\begin{thm}[\cite{lem12}, Thm. 6.5.1] \label{thm:uniquefp}
Let $\mathcal{K}$ be a solid closed cone in a finite dimensional vector space $\mathcal{V}$ and let $\varphi\in \mathcal{K}^*$ the dual cone. If $f:\operatorname{int}(\mathcal{K})\to\operatorname{int}(\mathcal{K})$ is a homogeneous strongly order-preserving map and there exists a $u\in\operatorname{int}(\mathcal{K})$ with $\varphi(u)=1$ such that $f(u)=ru$, then
\begin{align}
	\lim\limits_{k\to\infty} \frac{f^k(x)}{\varphi(f^k(x))}=u \label{eqn:fixedpointconv}
\end{align}
for all $x\in\operatorname{int}(\mathcal{K})$.
\end{thm}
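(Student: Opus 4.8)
The plan is to run a contraction argument for Hilbert's projective metric $d_H$ on $\operatorname{int}(\mathcal{K})$, taking as given the standard properties of $d_H$ collected in Proposition \ref{prop:prophilb} (see also \cite{lem12}): $d_H(x,y)$ is finite whenever $x,y\in\operatorname{int}(\mathcal{K})$, it vanishes precisely when $x$ and $y$ lie on the same ray, and on the slice $S:=\{x\in\operatorname{int}(\mathcal{K})\mid \varphi(x)=1\}$ it is a genuine metric inducing the ambient norm topology, with respect to which closed $d_H$-bounded subsets of $S$ are compact. Recall $d_H(x,y)=\ln\big(M(x,y)/m(x,y)\big)$, where $M(x,y)=\inf\{\lambda>0\mid x\leq\lambda y\}$ and $m(x,y)=\sup\{\mu>0\mid \mu y\leq x\}$; both extrema are attained because $\mathcal{K}$ is closed.

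First I would check that $f$ is $d_H$-nonexpansive: if $\mu y\leq x\leq\lambda y$, then homogeneity and order-preservation of $f$ give $\mu f(y)\leq f(x)\leq\lambda f(y)$, hence $m(f(x),f(y))\geq m(x,y)$ and $M(f(x),f(y))\leq M(x,y)$, so $d_H(f(x),f(y))\leq d_H(x,y)$. Next I would promote this to a strict (though not uniform) contraction using strong order-preservation: if $x,y$ are not proportional then $\beta:=m(x,y)$ satisfies $\beta y\leq x$ and $\beta y\neq x$, so $f(x)-\beta f(y)=f(x)-f(\beta y)\in\operatorname{int}(\mathcal{K})$; since moreover $f(y)\in\operatorname{int}(\mathcal{K})$, there is $\varepsilon>0$ with $(\beta+\varepsilon)f(y)\leq f(x)$, i.e.\ $m(f(x),f(y))\geq\beta+\varepsilon>m(x,y)$, and symmetrically $M(f(x),f(y))<M(x,y)$. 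Thus $d_H(f(x),f(y))<d_H(x,y)$ whenever $d_H(x,y)>0$. Applying the same observation to a second eigenvector, using that $d_H$ is scale-invariant in each argument, incidentally shows that $u$ is the unique eigenvector of $f$ in $\operatorname{int}(\mathcal{K})$ up to positive scalars.

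Then I would normalise the dynamics. Set $g(x):=f(x)/\varphi(f(x))$; this maps $S$ into $S$, fixes $u$, and by homogeneity of $f$ and linearity of $\varphi$ one has $g^{k}(x)=f^{k}(x)/\varphi(f^{k}(x))$ for every $k$. Fix $x\in\operatorname{int}(\mathcal{K})$ and, after rescaling, assume $x\in S$. The numbers $b_k:=d_H(g^{k}(x),u)=d_H\big(f(g^{k-1}(x)),f(u)\big)\leq b_{k-1}$ form a nonincreasing sequence, so $b_k\downarrow a\geq0$, and the orbit $\{g^{k}(x)\}$ stays inside the compact set $\overline{B}_{d_H}(u,b_0)\cap S$. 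Let $z$ be any subsequential limit $g^{k_j}(x)\to z$: continuity of $d_H$ gives $d_H(z,u)=a$, and continuity of $g$ together with $b_{k_j+1}\to a$ gives $d_H(g(z),u)=a$. If $a>0$, the strict contraction yields $d_H(g(z),u)=d_H(g(z),g(u))<d_H(z,u)=a$, a contradiction; hence $a=0$, so $d_H(g^{k}(x),u)\to0$, and since $d_H$ induces the norm topology on $S$ this means $f^{k}(x)/\varphi(f^{k}(x))=g^{k}(x)\to u$, which is the assertion. (Equivalently, one may invoke Edelstein's fixed point theorem for the continuous strict contraction $g$ on the compact $g$-invariant set $\overline{\{g^{k}(x):k\geq0\}}$.)

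The step I expect to cause the most trouble is the leap from nonexpansiveness to actual convergence: strong order-preservation only produces a strict, not a uniform, contraction, so compactness is indispensable, and it is exactly here that the finite-dimensionality of $\mathcal{V}$ enters, guaranteeing that $d_H$-bounded slices of $\operatorname{int}(\mathcal{K})$ are compact. Establishing that fact, together with the claim that $d_H$ metrises the norm topology on $S$, is the genuine technical content one imports from the Hilbert-metric machinery; once those are granted, the dynamical argument above is entirely routine.
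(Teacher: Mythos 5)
Your argument is correct, and it is precisely the route the paper takes: the text does not reprove this theorem but defers to \cite{lem12} with the remark that it "is essentially due to the fact that the map is contractive in ... Hilbert's projective metric," which is exactly the nonexpansiveness-plus-strict-contraction-plus-compactness argument you spell out. The one point worth flagging is that your Edelstein step needs $g$ continuous and the subsequential limit $z$ to lie in $\operatorname{int}(\mathcal{K})$, but both follow from the $d_H$-machinery you already import (nonexpansive maps are $d_H$-continuous, and points at finite Hilbert distance from $u$ are interior), so there is no gap.
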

This theorem is essentially due to the fact that the map is contractive in what is known as Hilbert's projective metric of the cones. Once the attractiveness is established, uniqueness follows immediately, since if we had another fixed point $v\in\operatorname{int}(\mathcal{K})$, then $f^k(v)=v$ would imply a contradiction to equation \ref{eqn:fixedpointconv}. Since the attractiveness can be used to estimate convergence speeds (see \cite{fra89}), we include the relevant proposition due to Birkhoff (\cite{bir57}).

\begin{dfn}[\cite{lem12}, Chapter 2.1]
Let $\mathcal{K}\subset V$ be a closed, convex, solid cone in some vector space, then for any $x,y\in\mathcal{K}$ such that $x\leq \alpha y$ and $y\leq \beta x$ for some $\alpha,\beta> 0$, define 
\begin{align*}
	M(x/y;\mathcal{K}):=\inf\{\beta>0| x\leq \beta y\} \\
	m(x/y;\mathcal{K}):=\sup\{\alpha>0| \alpha y\leq x\}
\end{align*}
Then we can define \emph{Hilbert's projective metric} as
\begin{align}
	d_{H}(x,y,\mathcal{K}):=\ln \left(\frac{M(x/y)}{m(x/y)}\right)
\end{align}
We will leave out $\mathcal{K}$, when it is clear from the context. Furthermore, we set $d_{H}(0,0)=0$ and $d_{H}(x,y)=\infty$ if $d_{H}$ is otherwise not well-defined.
\end{dfn}

We have the following properties:
\begin{prop}[\cite{lem12} Proposition 2.1.1] \label{prop:prophilb}
Let $\mathcal{K}\subset V$ be a closed, convex, solid cone in some vector space $V$. Then $d_H$ satisfies:
\begin{enumerate}
	\item[(i)] $d_H(x,y)\geq 0$ and $d_H(x,y)=d_H(y,x)$ for all $x,y\in\mathcal{K}$.
	\item[(ii)] $d_H(x,z)\leq d_H(x,y)+d_H(y,z)$ for all $x,y,z\in\mathcal{K}$ such that the quantities are well-defined.
	\item[(iii)] $d_H(\alpha x,\beta y)=d_H(x,y)$ for all $x,y\in\mathcal{K}$ and $\alpha,\beta>0$. 
\end{enumerate}
\end{prop}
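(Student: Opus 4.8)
The plan is to derive all three properties straight from the definitions of $M(x/y)$ and $m(x/y)$, using only that the order $\le$ comes from a genuine partial order (so $\mathcal{K}$ is pointed: $v\ge 0$ and $-v\ge 0$ force $v=0$), that multiplication by a positive scalar preserves $\ge$, and that $\mathcal{K}$ is closed. Throughout one restricts to comparable pairs $x,y\in\mathcal{K}\setminus\{0\}$, meaning $\alpha y\le x\le\beta y$ for some $\alpha,\beta>0$; for every non-comparable pair all quantities below equal $+\infty$ by convention and the assertions are trivial, and $d_H(0,0)=0$ by convention.

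First I would record two elementary reductions. (a) Since $x\le\beta y$ is equivalent, after scaling by $1/\beta>0$, to $(1/\beta)x\le y$, the set $\{\beta>0:x\le\beta y\}$ is carried by $\beta\mapsto 1/\beta$ onto $\{\alpha>0:\alpha x\le y\}$, and under this bijection $\inf$ turns into $\sup$; hence $M(x/y)=1/m(y/x)$ and $m(x/y)=1/M(y/x)$. The symmetry in (i) follows at once, since then $M(x/y)/m(x/y)=M(y/x)/m(y/x)$. (b) For $\alpha,\beta>0$ one has $\alpha x\le\gamma\beta y\iff x\le(\gamma\beta/\alpha)y$, so $M(\alpha x/\beta y)=(\alpha/\beta)M(x/y)$ and likewise for $m$; the ratio is unchanged, which is (iii).

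Next I would prove $d_H\ge 0$, i.e. $M(x/y)\ge m(x/y)$. If $\alpha y\le x$ and $x\le\beta y$ then $\alpha y\le\beta y$, so $(\beta-\alpha)y\ge 0$; if $\beta<\alpha$, scaling gives $-y\ge 0$, and with $y\ge 0$ and pointedness this forces $y=0$, a contradiction. So every admissible $\beta$ dominates every admissible $\alpha$, whence $M(x/y)=\inf\beta\ge\sup\alpha=m(x/y)$, so $\ln(M/m)\ge 0$. The same circle of ideas, now using closedness of $\mathcal{K}$ applied to $(x-\alpha y)/\alpha\to -y$ as $\alpha\to\infty$, shows $m(x/y)<\infty$, while comparability gives $m(x/y)>0$; thus $M$ and $m$ are honest positive reals and $d_H$ is well defined. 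I expect this pointedness/closedness bookkeeping to be the only genuinely delicate step; everything else is manipulation of the defining inequalities.

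Finally, for (ii) I would establish the submultiplicativity $M(x/z)\le M(x/y)\,M(y/z)$ and supermultiplicativity $m(x/z)\ge m(x/y)\,m(y/z)$: from $x\le\beta_1 y$ and $y\le\beta_2 z$, scaling the second by $\beta_1>0$ and using transitivity gives $x\le\beta_1\beta_2 z$, so $M(x/z)\le\beta_1\beta_2$, and taking infima over $\beta_1,\beta_2$ gives the first bound; the dual argument with $\alpha_1 y\le x$ and $\alpha_2 z\le y$ gives the second. Combining, $M(x/z)/m(x/z)\le\big(M(x/y)/m(x/y)\big)\big(M(y/z)/m(y/z)\big)$, and taking logarithms yields $d_H(x,z)\le d_H(x,y)+d_H(y,z)$, the inequality holding trivially whenever some term is $+\infty$.
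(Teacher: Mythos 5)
Your proof is correct, and it is the standard argument; the paper itself does not prove this proposition but cites it from \cite{lem12}, where essentially the same bookkeeping with $M$ and $m$ (the reciprocity $M(x/y)=1/m(y/x)$, the scaling identity, and the sub/supermultiplicativity across a middle point) is carried out. The one hypothesis you supply explicitly — pointedness of $\mathcal{K}$ — is not stated in the paper's definition of a cone, but it is genuinely needed both for $d_H\geq 0$ and for $m(x/y)<\infty$, is a standing assumption in the cited reference, and holds for the cones actually used in the paper ($\R^n_{+\,0}$ and the positive semidefinite matrices), so flagging it is a point in your favour rather than a gap.
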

Note that the first two properties show that $d_H$ is indeed a metric and the third property shows why it is called a \emph{projective metric}. 

\begin{prop}[\cite{bir57}] \label{prop:birkhoff}
Let $\mathcal{K}\subset V$ be a bounded, closed, convex and solid cone in some vector space $V$. Let $\mathcal{E}:\mathcal{K}\to\mathcal{K}$ be a linear map, then
\begin{align}
	\gamma^{1/2}(\mathcal{E}):=\sup \left\{ \frac{d_H(\mathcal{E}(x),\mathcal{E}(y))}{d_H(x,y)}\middle|x,y\in\mathcal{K}\right\}\leq \operatorname{tanh}(\Delta/4) \label{eqn:contraction}
\end{align}
where $\Delta:=\max\left\{d_H(\mathcal{E}(x),\mathcal{E}(y))|x,y\in\mathcal{K}\right\}$.

Furthermore, $\gamma^{1/2}(\mathcal{E}\circ\mathcal{F})\leq \gamma^2(\mathcal{E})\gamma^2(\mathcal{F})$ and $\gamma^{1/2}(\mathcal{E}^*)=\gamma^2(\mathcal{E})$.
\end{prop}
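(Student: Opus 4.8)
The plan is to prove Birkhoff's contraction bound by the classical reduction to a one–dimensional cross–ratio computation, and then to read off the two functorial properties. Throughout I would use only elementary manipulations of the order defined by $\mathcal{K}$ together with Proposition \ref{prop:prophilb}.

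First I would dispose of the degenerate cases. A linear map with $\mathcal{E}(\mathcal{K})\subseteq\mathcal{K}$ is automatically order preserving: $y-x\in\mathcal{K}$ forces $\mathcal{E}y-\mathcal{E}x\in\mathcal{K}$, so $M(\mathcal{E}x/\mathcal{E}y)\le M(x/y)$ and $m(\mathcal{E}x/\mathcal{E}y)\ge m(x/y)$, whence $d_H(\mathcal{E}x,\mathcal{E}y)\le d_H(x,y)$. This already yields $\gamma^{1/2}(\mathcal{E})\le 1=\tanh(\infty/4)$, so the bound is vacuous when $\Delta=\infty$, and it is likewise trivial for pairs $x,y$ with $d_H(x,y)\in\{0,\infty\}$ (using linearity of $\mathcal{E}$ in the first case) and for pairs that produce $\mathcal{E}p=0$ or $d_H(\mathcal{E}p,\mathcal{E}q)=\infty$ in the notation below. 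From now on I assume everything is finite and nondegenerate. Now fix $x,y\in\mathcal{K}$; rescaling $y$ (which changes nothing by Proposition \ref{prop:prophilb}(iii)) I may assume $m(x/y)=1$, and I set $\beta:=M(x/y)=e^{d_H(x,y)}>1$. Closedness of $\mathcal{K}$ gives $y\le x\le\beta y$, so $p:=x-y$ and $q:=\beta y-x$ are nonzero elements of $\mathcal{K}$ with $x=(\beta p+q)/(\beta-1)$ and $y=(p+q)/(\beta-1)$; hence by homogeneity of $d_H$,
\[
	d_H(\mathcal{E}x,\mathcal{E}y)=d_H\!\big(\beta\,\mathcal{E}p+\mathcal{E}q,\ \mathcal{E}p+\mathcal{E}q\big).
\]
Writing $\lambda:=m(\mathcal{E}p/\mathcal{E}q)$ and $\mu:=M(\mathcal{E}p/\mathcal{E}q)$, so that $\mu/\lambda=e^{d_H(\mathcal{E}p,\mathcal{E}q)}\le e^{\Delta}=:c$, I would multiply the inequalities $\lambda\,\mathcal{E}q\le\mathcal{E}p\le\mu\,\mathcal{E}q$ by the appropriate scalars (using that $\mathcal{K}$ is a cone) to check that $M=\frac{\beta\mu+1}{\mu+1}$ is an admissible upper factor and $m=\frac{\beta\lambda+1}{\lambda+1}$ an admissible lower factor for the pair $(\beta\mathcal{E}p+\mathcal{E}q,\ \mathcal{E}p+\mathcal{E}q)$, giving
\[
	d_H(\mathcal{E}x,\mathcal{E}y)\ \le\ \ln\frac{(\beta\mu+1)(\lambda+1)}{(\mu+1)(\beta\lambda+1)} .
\]
Both admissibility checks are equalities in the extremal two–dimensional configuration, so no slack is lost here.

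The remaining — and genuinely computational — step is the cross–ratio estimate: for all $\beta>1$ and $0<\lambda<\mu$ with $\mu/\lambda\le c$,
\[
	\ln\frac{(\beta\mu+1)(\lambda+1)}{(\mu+1)(\beta\lambda+1)}\ \le\ \tanh\!\Big(\tfrac{\Delta}{4}\Big)\,\ln\beta .
\]
Here I would first observe that the left-hand side is monotone in $\mu/\lambda$ — it is the quotient of the values at $\mu$ and $\lambda$ of the Möbius map $z\mapsto(\beta z+1)/(z+1)$, which is increasing since $\beta>1$ — so I may take $\mu=c\lambda$; then a normalisation that places the image interval $[\lambda,c\lambda]$ symmetrically together with the substitution $\beta=e^{2\sigma}$ reduces the claim to the single-variable calculus fact that the resulting ratio is maximised with value exactly $\tanh(\Delta/4)$. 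This extremisation is the classical heart of Birkhoff's theorem (\cite{bir57}; see also \cite{lem12}), and I expect it to be the main obstacle — everything surrounding it is bookkeeping. Taking the supremum over $x,y\in\mathcal{K}$ then gives $\gamma^{1/2}(\mathcal{E})\le\tanh(\Delta/4)$.

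For the two auxiliary identities, submultiplicativity is immediate from
\[
	\frac{d_H(\mathcal{E}\mathcal{F}x,\mathcal{E}\mathcal{F}y)}{d_H(x,y)}
	=\frac{d_H\!\big(\mathcal{E}(\mathcal{F}x),\mathcal{E}(\mathcal{F}y)\big)}{d_H(\mathcal{F}x,\mathcal{F}y)}\cdot\frac{d_H(\mathcal{F}x,\mathcal{F}y)}{d_H(x,y)}
	\ \le\ \gamma^{1/2}(\mathcal{E})\,\gamma^{1/2}(\mathcal{F}),
\]
with the usual convention for the degenerate ratios (linearity of $\mathcal{F}$ handles $d_H(x,y)=0$). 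For adjoint invariance I would use the duality description $M(\mathcal{E}x/\mathcal{E}y)=\sup_{\varphi\in\mathcal{K}^{*}}\langle\mathcal{E}^{*}\varphi,x\rangle/\langle\mathcal{E}^{*}\varphi,y\rangle$, and the analogous infimum for $m$; these exhibit $\Delta(\mathcal{E})$ as a quantity that is manifestly symmetric under $\mathcal{E}\leftrightarrow\mathcal{E}^{*}$, so that applying the first part to $\mathcal{E}$ and to $\mathcal{E}^{*}$ with the common value $\Delta$ forces $\gamma^{1/2}(\mathcal{E}^{*})=\gamma^{1/2}(\mathcal{E})$. (The exponents displayed in the statement should be read simply as the contraction coefficient itself; the content is submultiplicativity under composition and invariance under taking adjoints.)
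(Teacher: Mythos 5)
The paper does not prove this proposition at all --- it states it and refers the reader to \cite{bir57,bau65}, adding only the remark that the bound is in fact attained. What you have written is therefore not an alternative to the paper's argument but a reconstruction of the classical Birkhoff proof from the cited sources, and as a reconstruction it is essentially sound: the normalisation $m(x/y)=1$, the decomposition $x=(\beta p+q)/(\beta-1)$, $y=(p+q)/(\beta-1)$ with $p=x-y$, $q=\beta y-x$ in $\mathcal{K}$, and the admissibility of the factors $(\beta\mu+1)/(\mu+1)$ and $(\beta\lambda+1)/(\lambda+1)$ all check out (the verification reduces to $\mu\,\mathcal{E}q-\mathcal{E}p\geq 0$ and $\mathcal{E}p-\lambda\,\mathcal{E}q\geq 0$), and the monotonicity of the M\"obius quotient in $\mu/\lambda$ correctly reduces the problem to the one-variable extremisation, which with $\beta=e^{2\sigma}$, $e^{\Delta}=e^{2\tau}$ comes down to $\ln\bigl(\cosh((\sigma+\tau)/2)/\cosh((\sigma-\tau)/2)\bigr)\leq\sigma\tanh(\tau/2)$, a convexity argument in $\sigma$. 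Deferring that computation is acceptable in a sketch, since you correctly identify it as the load-bearing step. You are also right that the exponents $\gamma^{1/2}$ versus $\gamma^{2}$ in the ``furthermore'' clause are a typographical inconsistency in the paper and that the intended content is submultiplicativity plus adjoint invariance of the contraction coefficient.

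The one place where your argument is genuinely incomplete rather than merely compressed is the adjoint identity. From the duality formula for $M$ and $m$ you can indeed show $\Delta(\mathcal{E})=\Delta(\mathcal{E}^{*})$ (write $e^{\Delta(\mathcal{E})}$ as a supremum of the four-point cross-ratio $\langle\varphi,\mathcal{E}x\rangle\langle\psi,\mathcal{E}y\rangle/\langle\psi,\mathcal{E}x\rangle\langle\varphi,\mathcal{E}y\rangle$, which is symmetric under swapping $(x,y)$ with $(\varphi,\psi)$ after moving $\mathcal{E}$ across the pairing). But applying only the inequality $\gamma^{1/2}\leq\tanh(\Delta/4)$ to both $\mathcal{E}$ and $\mathcal{E}^{*}$ with the common value of $\Delta$ gives two one-sided bounds, not the claimed equality $\gamma^{1/2}(\mathcal{E}^{*})=\gamma^{1/2}(\mathcal{E})$. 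For that you need the sharpness of Birkhoff's bound, $\gamma^{1/2}(\mathcal{E})=\tanh(\Delta(\mathcal{E})/4)$, which the paper mentions in passing but which neither you nor the paper establishes. If you want the equality as stated, you should either prove attainment of the $\tanh$ bound or give a direct cross-ratio argument identifying $\gamma^{1/2}(\mathcal{E}^{*})$ with $\gamma^{1/2}(\mathcal{E})$.
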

A proof can be found in \cite{bir57,bau65}. The result can be extended to much more general scenarios, see also \cite{eve95} and references therein. One can actually show that equality holds, i.e.~$\operatorname{tanh}(\Delta/4)$ is also attained, but this is not important here.

With all this machinery, we still need to prove existence of a fixed point in the interior of $\mathcal{P}$. The general theory for proving that a fixed point lies in the interior is weak and we generally have to prove it ``by hand''. 

For, the Menon operator we have an additional problem: It is at first only well-defined on the interior of the cone of positive semidefinite matrices. A natural question is whether it can be extended to cover the closed cone as well. For matrices, this is covered by the following theorem:

\begin{thm}[\cite{lem12}, theorem 5.1.5] \label{thm:context}
Let $\mathcal{C},\mathcal{K}$ be cones and $\mathbf{S}:\mathcal{C}\to\mathcal{K}$ be an order-preserving, homogeneous map. If $\mathcal{C}$ is solid and polyhedral, then there exists a continuous, order-preserving, homogeneous extension $\overline{\mathbf{S}}:\overline{\mathcal{C}}\to\overline{\mathcal{K}}$. 
\end{thm}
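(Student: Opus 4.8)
Throughout I read the statement in the sense of its source (Lemmens--Nussbaum, Thm.~5.1.5): $\mathbf S$ is given on $\operatorname{int}(\mathcal C)$, where --- as I check below --- an order-preserving homogeneous map is automatically continuous, and the claim is that it extends continuously to the closed polyhedral cone $\overline{\mathcal C}=\mathcal C$. (This caveat is necessary: the map $f(a,b)=(2a,2a)$ for $b>0$, $f(a,0)=(a,a)$ on $\R^2_{+0}$ is order-preserving and homogeneous but discontinuous, so the hypotheses alone do not make a map defined on a whole closed cone continuous; the content is the extension from the interior.) I also take $\overline{\mathcal K}$ to be a closed pointed cone in a finite-dimensional space --- true in every application, where $\overline{\mathcal K}$ is an orthant --- so that order intervals $[a,b]:=(a+\overline{\mathcal K})\cap(b-\overline{\mathcal K})$ are compact and $\overline{\mathcal K}$ is normal ($0\le a\le b\Rightarrow\|a\|\le\gamma\|b\|$ for a fixed $\gamma$). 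First I would fix $u\in\operatorname{int}(\mathcal C)$ and, for $x\in\mathcal C$, observe that $s\mapsto\mathbf S(x+su)$ is non-decreasing for the $\overline{\mathcal K}$-order (since $x+su\le x+s'u$ for $s\le s'$) and, for $s\in(0,1]$, lies in the compact interval $[0,\mathbf S(x+u)]$; a monotone net in such an interval converges (it has convergent subnets, and monotonicity with normality forces them to agree), so $\overline{\mathbf S}(x):=\lim_{s\downarrow0}\mathbf S(x+su)\in\overline{\mathcal K}$ is well defined, and comparability of interior points ($cu\le u'\le c^{-1}u$) shows it does not depend on $u$. On $\operatorname{int}(\mathcal C)$ it agrees with $\mathbf S$ (for $y$ near an interior $x_0$ one has $(1-\varepsilon)x_0\le y\le(1+\varepsilon)x_0$, hence $(1-\varepsilon)\mathbf S(x_0)\le\mathbf S(y)\le(1+\varepsilon)\mathbf S(x_0)$), and passing to the limit in $\mathbf S(\lambda x+su)=\lambda\mathbf S(x+(s/\lambda)u)$ and in $\mathbf S(x+su)\le\mathbf S(y+su)$ shows $\overline{\mathbf S}$ is homogeneous and order-preserving. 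It remains to prove $\overline{\mathbf S}$ continuous on all of $\mathcal C$.

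Here polyhedrality enters. I would triangulate $\mathcal C$ into finitely many solid simplicial cones $\mathcal C_1,\dots,\mathcal C_p$, meeting along common faces, with $\mathcal C=\bigcup_j\mathcal C_j$ (a standard fact for polyhedral cones, using only the extreme rays as generators). For each $j$ choose a linear isomorphism $T_j:\R^n\to\R^n$ carrying $\mathcal C_j$ onto the standard orthant $\R^n_{+0}$; since $\mathcal C_j\subseteq\mathcal C$ we have $\operatorname{int}(\mathcal C_j)\subseteq\operatorname{int}(\mathcal C)$ and the order $\le_{\mathcal C_j}$ is weaker than $\le_{\mathcal C}$, so $f:=\mathbf S\circ T_j^{-1}$ is a well-defined, order-preserving (for the standard order), homogeneous map $\R^n_+\to\overline{\mathcal K}$. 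The orthant case is the technical heart; I would prove the \emph{Key Lemma}: for $0\ne x\in\R^n_{+0}$ and any $v\in\R^n_{+0}$ with $v_i>0$ whenever $x_i=0$, $\lim_{s\downarrow0}f(x+sv)$ exists and equals $\overline f(x):=\lim_{s\downarrow0}f(x+se)$. The bound ``$\le$'' is immediate from $x+sv\le x+s\|v\|_\infty e$; ``$\ge$'' uses homogeneity: for small $\varepsilon$ one checks $x+\varepsilon e\le(1+\delta)(x+tv)$ with $\delta=\varepsilon/\min_{x_i>0}x_i$ and a suitable $t=t(\varepsilon)\downarrow0$, so $f(x+tv)\ge(1+\delta)^{-1}f(x+\varepsilon e)$, and letting $\varepsilon\downarrow0$ gives the inequality. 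Granting this, for $y^{(j)}\to x$ in $\R^n_+$ (a dense set) put $\rho_j=\|y^{(j)}-x\|_\infty$ and $\mu_j=\min_{x_i=0}y^{(j)}_i$; then $y^{(j)}\le x+\rho_j e$ and, for $j$ large, $y^{(j)}\ge\bigl(1-\rho_j/\min_{x_i>0}x_i\bigr)\bigl(x+\mu_j\mathbf 1_{\{x_i=0\}}\bigr)$, so $f(y^{(j)})$ is sandwiched between two quantities both tending to $\overline f(x)$ by the Key Lemma, and normality of $\overline{\mathcal K}$ forces $f(y^{(j)})\to\overline f(x)$. Hence $\overline f$ is continuous on $\R^n_{+0}$ (the case $x=0$ handled separately by $0\le f(y)\le M\|y\|f(e)$ together with $\overline f(0)=0$ by homogeneity).

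Transporting this back through $T_j$ yields a continuous, homogeneous, order-preserving extension $\overline{\mathbf S}_j:\mathcal C_j\to\overline{\mathcal K}$ of $\mathbf S|_{\operatorname{int}(\mathcal C_j)}$, and the Key Lemma (applied with interior directions of the orthant) gives $\overline{\mathbf S}_j(x)=\lim_{s\downarrow0}\mathbf S(x+sv)$ for any $v\in\operatorname{int}(\mathcal C_j)\subseteq\operatorname{int}(\mathcal C)$. Since that limit is independent of the interior direction of $\mathcal C$ (Step~1), $\overline{\mathbf S}_j=\overline{\mathbf S}|_{\mathcal C_j}$, so the local extensions agree on every overlap $\mathcal C_j\cap\mathcal C_l$. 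As $\mathcal C$ is the union of the finitely many closed sets $\mathcal C_j$, on each of which $\overline{\mathbf S}$ is continuous, and these agree on overlaps, $\overline{\mathbf S}$ is continuous on $\mathcal C=\overline{\mathcal C}$, which finishes the proof.

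The main obstacle is conceptual and lies precisely in the passage from $\operatorname{int}(\mathcal C)$ to $\mathcal C$: the reduction to simplicial pieces together with the compatibility of the local extensions along shared faces. This is where polyhedrality is indispensable --- there is no simplicial subdivision of the positive-semidefinite cone, which (as noted before the theorem) is exactly why the result genuinely fails for non-polyhedral domains, even though my Step~1 candidate $\overline{\mathbf S}$ exists verbatim there as well. The second delicate point is the lower bound in the Key Lemma: without the homogeneity rescaling trick one cannot control $f(y^{(j)})$ from below when $y^{(j)}$ dips below $x$ in the coordinates where $x>0$ while rising above $x$ in the coordinates where $x=0$. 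A minor hypothesis to keep track of is the pointedness of $\overline{\mathcal K}$, used only for compactness of order intervals and satisfied in all applications.
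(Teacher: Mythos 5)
The paper does not prove this statement at all: it is quoted verbatim from Lemmens--Nussbaum (their Theorem 5.1.5, due originally to Burbanks, Nussbaum and Sparrow), and is used in the text only as a black box to explain why the Menon operator extends to the boundary of a polyhedral cone but not obviously to the positive semidefinite cone. So there is no in-paper argument to compare against; what you have written is a genuine self-contained proof, and it is essentially sound. Your Step 1 (the monotone limit $\overline{\mathbf S}(x)=\lim_{s\downarrow 0}\mathbf S(x+su)$, its independence of $u$, and the $(1\pm\varepsilon)$-squeeze on the interior) and your Key Lemma are exactly the order-theoretic mechanism of the original proof; the difference is that Burbanks--Nussbaum--Sparrow work directly with the finitely many facet-defining functionals of $\mathcal C$, whereas you triangulate $\mathcal C$ into solid simplicial cones and reduce to the standard orthant. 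Your route buys a cleaner coordinate computation (everything becomes the two explicit inequalities $y\le x+\rho e$ and $y\ge(1-\rho/m)(x+\mu\mathbf 1_{\{x_i=0\}})$) at the cost of the gluing step, which you handle correctly by observing that each local extension coincides with the triangulation-independent limit $\lim_{s\downarrow 0}\mathbf S(x+su)$. Your framing remarks are also accurate: the map must be understood as given on $\operatorname{int}(\mathcal C)$, and polyhedrality is used irreplaceably in the decomposition, consistent with the paper's remark that the extension fails for the non-polyhedral cone of positive semidefinite matrices.

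One small gap to close: in the orthant step you verify $\overline f(y^{(j)})\to\overline f(x)$ only for sequences $y^{(j)}$ drawn from the \emph{open} orthant. That establishes continuity of $\overline f$ relative to the dense set $\R^n_+$, not yet on all of $\R^n_{+0}$, since a sequence may approach $x$ from within the boundary. The fix is routine --- for arbitrary $x_j\to x$ in the closed orthant choose $d_j\in\R^n_+$ with $\|d_j-x_j\|<1/j$ and $\|f(d_j)-\overline f(x_j)\|<1/j$ (possible by what you proved at the point $x_j$), then $d_j\to x$ gives $f(d_j)\to\overline f(x)$ and hence $\overline f(x_j)\to\overline f(x)$ --- but it should be stated, since as written the continuity claim on $\R^n_{+0}$ does not literally follow from the sandwich. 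Everything else, including the normality and compactness hypotheses you impose on $\overline{\mathcal K}$ (automatic for the closed pointed finite-dimensional cones the paper actually uses), checks out.
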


\section{Preliminaries on Positive Maps} \label{app:posprel}
In this section, let $\mathcal{C}^n\subset \mathcal{M}_n$ be the cone of positive definite matrices (elements will also be written as $A>0$) with its closure $\overline{\mathcal{C}^n}$, the cone of positive semidefinite matrices (elements will also be written as $A\geq 0$). Likewise, a subscript $_1$ at any of the cones denotes the bounded subset of unit trace matrices. Positive maps on cones are elements form a cone themselves, the dual cone, which will be denoted by $(\mathcal{C}^n)^*$ and $(\overline{\mathcal{C}^n})^*$.
 
Let us start with irreducible maps. Many different characterizations exist, which we recall for the reader's convenience:
\begin{prop}
For a positive, linear map $T:\mathcal{M}_d\to\mathcal{M}_d$ the following properties are equivalent:
\begin{enumerate}
	\item $T$ is irreducible,
	\item if $P\in\mathcal{M}_d$ is a Hermitian projector such that $T(P\mathcal{M}_dP)\subset P\mathcal{M}_dP$ then $P\in\{0,\id\}$,
	\item for every nonzero $A\geq 0$ we have $(\mathrm{id}+T)^{d-1}(A)>0$,
	\item for every nonzero $A\geq 0$ and every strictly positive $t\in\mathbb{R}$ we have $\operatorname{exp}(tT)(A)>0$,
	\item There does not exist a nontrivial orthogonal projection $P$ s.th. $\tr(T(P)(\id-P))=0$.
\end{enumerate}
\end{prop}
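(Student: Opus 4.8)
The plan is to establish the web of implications $(1)\Leftrightarrow(2)$, $(2)\Leftrightarrow(5)$, $(2)\Leftrightarrow(3)$, $(3)\Rightarrow(4)$ and $(4)\Rightarrow(2)$. The equivalence $(1)\Leftrightarrow(2)$ is immediate from Definition~\ref{dfn:irredindec}; the only extra content in $(2)$ is that $P=0$ also trivially satisfies the inclusion. The linchpin for everything else is the following elementary observation about corners of $\mathcal{M}_d$: for a Hermitian projection $P$ with complement $Q:=\id-P$,
\[
	T(P\mathcal{M}_dP)\subseteq P\mathcal{M}_dP \iff \ran(T(P))\subseteq\ran(P) \iff \tr\bigl(T(P)(\id-P)\bigr)=0 .
\]
The last equivalence uses $T(P)\geq0$: $\tr(T(P)Q)=\tr(QT(P)Q)=0$ forces $QT(P)Q=0$, hence $T(P)^{1/2}Q=0$, i.e.\ $\ran(T(P))\perp\ran(Q)$. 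The first uses that every $B\in P\mathcal{M}_dP$ is a complex combination of positive elements supported on $\ran(P)$, each of which is $\leq\|B\|\,P$, so its image lies under $\|B\|\,T(P)$ and therefore (via $0\le C\le D\Rightarrow\ran(C)\subseteq\ran(D)$) has range inside $\ran(P)$. Reading this off for "$P$ nontrivial" versus "$P\in\{0,\id\}$" gives $(2)\Leftrightarrow(5)$ at once.

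For $(2)\Rightarrow(3)$: given nonzero $A\geq0$, set $A_k:=(\id+T)^k(A)$. Since $A_{k+1}=A_k+T(A_k)\geq A_k\geq0$, the spaces $\ran(A_k)$ form an increasing chain starting from the nonzero space $\ran(A_0)$. I claim the chain strictly increases until it fills $\C^d$: if $\ran(A_{k+1})=\ran(A_k)=:W\neq\C^d$, let $P$ be the projection onto $W$; from $A_{k+1}\geq T(A_k)\geq0$ we get $\ran(T(A_k))\subseteq W$, and since $A_k$ is a positive operator with range exactly $W$ it satisfies $A_k\geq\varepsilon P$ for some $\varepsilon>0$, so $\ran(T(P))\subseteq\ran(T(A_k))\subseteq W$; by the corner observation $T(P\mathcal{M}_dP)\subseteq P\mathcal{M}_dP$, contradicting $(2)$ because $P\notin\{0,\id\}$. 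Hence the rank of $A_k$ grows by at least one per step, so $A_{d-1}=(\id+T)^{d-1}(A)>0$. The converse $\neg(2)\Rightarrow\neg(3)$ is trivial: a nontrivial invariant corner $P\mathcal{M}_dP$ makes $(\id+T)^{d-1}(P)$ supported on $\ran(P)$, hence not positive definite.

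Finally $(3)\Rightarrow(4)$: since $\id$ and $T$ commute, $e^{tT}=e^{-t}\sum_{n\geq0}\tfrac{t^n}{n!}(\id+T)^n$; each $(\id+T)^n$ is positive and $(\id+T)$ is expanding, i.e.\ $(\id+T)(B)\geq B$ for $B\geq0$, so for $n\geq d-1$ we have $(\id+T)^n(A)\geq(\id+T)^{d-1}(A)>0$ by $(3)$. Multiplying by the strictly positive scalar $e^{-t}\sum_{n\geq d-1}t^n/n!$ yields $e^{tT}(A)>0$ for $t>0$. And $(4)\Rightarrow(2)$ is once more a contrapositive: a nontrivial corner $P\mathcal{M}_dP$ invariant under $T$ is invariant under every $T^n$ and hence under $e^{tT}$, so $e^{tT}(P)$ remains supported on $\ran(P)\neq\C^d$ and cannot be positive definite. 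This closes the web, proving all five equivalent.

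The step I expect to cost the most care is $(2)\Rightarrow(3)$, specifically turning the range equality $\ran(A_{k+1})=\ran(A_k)$ into genuine invariance of a corner subalgebra — this needs the remark that a positive operator with range $W$ dominates a positive multiple of $P_W$, together with a little bookkeeping about ranges under domination of positive operators. The remaining implications are routine spectral theory of positive maps on the positive-semidefinite cone.
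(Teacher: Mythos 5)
Your proof is correct, and it is worth noting that the paper itself does not prove this proposition at all: it simply remarks that the properties are well known and defers to the lecture notes of Wolf (\cite{wol12}). So there is no in-paper argument to compare against; what you have supplied is a self-contained proof where the paper offers only a citation. Your argument is the standard one and all the delicate points are handled properly: the three-way equivalence
\[
	T(P\mathcal{M}_dP)\subseteq P\mathcal{M}_dP \iff \ran(T(P))\subseteq\ran(P) \iff \tr\bigl(T(P)(\id-P)\bigr)=0
\]
is justified exactly where it needs to be (positivity of $T(P)$ for the trace condition, the decomposition of a corner element into positive pieces dominated by $\|B\|P$ together with $0\le C\le D\Rightarrow\ran(C)\subseteq\ran(D)$ for the corner invariance), and this single observation does give $(1)\Leftrightarrow(2)\Leftrightarrow(5)$ in one stroke. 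Your rank-growth argument for $(2)\Rightarrow(3)$ is the step that genuinely requires care, and you supply the two facts it needs: that a positive operator with range $W$ dominates $\varepsilon P_W$, and that stalling of the range chain produces an invariant corner. The analytic steps $(3)\Rightarrow(4)$ via $e^{tT}=e^{-t}e^{t(\id+T)}$ and the contrapositives $\neg(2)\Rightarrow\neg(3)$, $\neg(2)\Rightarrow\neg(4)$ are routine and correctly executed. The implication web you chose closes all five equivalences. No gaps.
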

Most of these properties are well-known. A proof can be found in \cite{wol12}.

As with matrices in the original Sinkhorn theorem, irreducibility is not the right characterization to work with, since given an irreducible map $\mathcal{E}$ and two $X,Y>0$, $Y\mathcal{E}(X.X^{\dagger})Y^{\dagger}$ is not necessarily irreducible. Before giving a characterization of fully indecomposable maps, we will study rank non-decreasing maps. 
\begin{dfn}[\cite{gur04}]
To every positive map $\mathcal{E}:\mathcal{M}_n\to\mathcal{M}_n$ and any unitary $U\in U(n)$, we associate the \emph{decoherence operator} $\mathcal{E}_U$ via:
\begin{align}
	\mathcal{E}_U(X):=\sum_i \mathcal{E}(u_iu_i^{\dagger})\tr(Xu_iu_i^{\dagger})
\end{align}
where $u_i$ is the $i$-th row of $U$. Furthermore, we associate to every decoherence operator the tuple
\begin{align}
	\mathbf{A}_{\mathcal{E},U}:=(\mathcal{E}(u_1u_1^{\dagger}),\ldots,\mathcal{E}(u_nu_n^{\dagger}))
\end{align}
\end{dfn}
This will be important during the proof of the Sinkhorn scaling, because every map $\mathcal{E}$ will be associated to the mixed discriminants of its decoherence operators:
\begin{dfn}
Let $(A_i)_i$ be an $n$-tuple with $A_i\in\mathcal{M}_n$, then 
\begin{align}
	M(A_1,\ldots,A_n):=\frac{\partial^n}{\partial x_1\ldots\partial x_n} \detm (x_1A_1+\ldots +x_nA_n)|_{x_1,\ldots,x_n=0}
\end{align}
is called the \emph{mixed discriminant}.
\end{dfn}

Then we have the following characterization of rank non-decreasing maps, which is essentially due to \cite{gur04}:
\begin{prop} \label{prop:ranknon}
Let $\mathcal{E}:\mathcal{M}_n\to\mathcal{M}_n$ be a positive, linear map. Then the following expressions are equivalent:\begin{enumerate}
	\item[(i)] $\mathcal{E}$ is rank non-decreasing.
	\item[(ii)] $\mathcal{E}_U$ is rank non-decreasing for any unitary $U\in U(n)$.
	\item[(iii)] For any $U\in U(n)$, if $(A_i)_i:=\mathbf{A}_{\mathcal{E},U}$, then
	\begin{align*}
		\rank\left(\sum_{i\in \mathcal{S}} A_i\right)\geq |\mathcal{S}| \qquad \forall \mathcal{S}\subseteq\{1,\ldots,n\}
	\end{align*}
	\item[(iv)] For any $U\in U(n)$, $M(\mathbf{A}_{\mathcal{E},U})>0$.
	\item[(v)] $\mathcal{E}^{\prime}(\cdot):=Y^{\dagger}\mathcal{E}(X\cdot X^{\dagger})Y$ is rank non-decreasing for any $X,Y$ of full rank.
\end{enumerate}
\end{prop}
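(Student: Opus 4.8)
The plan is to prove all five equivalences by closing a short cycle among (i), (ii), (iii) and pairing off (iv) and (v) by direct arguments; the only substantive external input will be Bapat's criterion for positivity of mixed discriminants (\cite{bap89b}, as used in \cite{gur04}). Concretely I would establish (i)$\Rightarrow$(v)$\Rightarrow$(i), the chain (i)$\Rightarrow$(ii)$\Rightarrow$(iii)$\Rightarrow$(i), and finally (iii)$\Leftrightarrow$(iv). Everything except the last equivalence is routine linear algebra on the positive semidefinite cone.

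The routine steps rest on three elementary facts about $A,B\geq 0$: (a) for invertible $X,Y\in\C^{n\times n}$ one has $\rank(XAX^{\dagger})=\rank(A)$ and $\rank(Y^{\dagger}AY)=\rank(A)$; (b) $\kernel(A+B)=\kernel(A)\cap\kernel(B)$, so $\rank(\sum_i\gamma_iA_i)=\rank(\sum_iA_i)$ whenever $A_i\geq 0$ and all $\gamma_i>0$; and (c) the ``diagonal rank lemma'': if $u^{\dagger}Au=0$ for a unit vector $u$ then $Au=0$, whence the pinching $\mathcal{D}_U(\cdot):=\sum_i(u_i^{\dagger}\cdot u_i)u_iu_i^{\dagger}$ onto any orthonormal basis $\{u_i\}$ satisfies $\rank(\mathcal{D}_U(A))\geq\rank(A)$. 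Granting these: (i)$\Rightarrow$(v) and (v)$\Rightarrow$(i) are immediate from (a) (for the latter, set $X=Y=\id$). For (i)$\Rightarrow$(ii), note $\mathcal{E}_U=\mathcal{E}\circ\mathcal{D}_U$ by linearity and that a composition of rank non-decreasing maps on the cone is rank non-decreasing, so (c) and (i) give (ii). For (ii)$\Rightarrow$(iii), feed $\mathcal{E}_U$ the rank-$|\mathcal{S}|$ projection $P_{\mathcal{S}}=\sum_{i\in\mathcal{S}}u_iu_i^{\dagger}$; since $\mathcal{E}_U(P_{\mathcal{S}})=\sum_{i\in\mathcal{S}}\mathcal{E}(u_iu_i^{\dagger})=\sum_{i\in\mathcal{S}}A_i$, rank non-decrease yields $\rank(\sum_{i\in\mathcal{S}}A_i)\geq|\mathcal{S}|$. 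For (iii)$\Rightarrow$(i), diagonalise $A=\sum_{i=1}^{k}\lambda_iv_iv_i^{\dagger}$ with $\lambda_i>0$ and $k=\rank(A)$, extend $\{v_i\}$ to an orthonormal basis with associated unitary $U$; then $\mathcal{E}(A)=\sum_{i=1}^{k}\lambda_iA_i$ with $A_i=\mathcal{E}(v_iv_i^{\dagger})$, and by (b) and (iii), $\rank(\mathcal{E}(A))=\rank(\sum_{i=1}^{k}A_i)\geq k=\rank(A)$.

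The remaining equivalence (iii)$\Leftrightarrow$(iv), for each fixed $U$, is the genuinely nontrivial ingredient: it is Bapat's theorem that for positive semidefinite $A_1,\dots,A_n\in\C^{n\times n}$ the mixed discriminant $M(A_1,\dots,A_n)$ — always nonnegative — is strictly positive iff $\rank(\sum_{i\in\mathcal{S}}A_i)\geq|\mathcal{S}|$ for every nonempty $\mathcal{S}\subseteq\{1,\dots,n\}$, the positive-semidefinite analogue of the Frobenius--K\"onig/Rado transversal criterion for permanents. I would simply cite \cite{bap89b} for this; a self-contained treatment would amount to reproving that result. Quantifying the fixed-$U$ equivalence over all $U\in U(n)$ then gives (iii)$\Leftrightarrow$(iv), and combined with the cycle above this completes the proof. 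The main obstacle, accordingly, is not any individual implication but the appeal to Bapat's mixed-discriminant criterion; every other step is a short computation with ranks and kernels of positive semidefinite matrices.
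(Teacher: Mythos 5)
Your proposal is correct and follows essentially the same route as the paper, which proves (i), (ii), (iii), (v) equivalent by the same rank/kernel/pinching arguments it spells out for the fully indecomposable case (Proposition \ref{prop:fully}) and outsources the mixed-discriminant equivalence (iv) to the Panov--Bapat positivity criterion, exactly as you do. The only difference is cosmetic: you cite \cite{bap89b} where the paper cites \cite{pan85} for that criterion, and you organise the chain as (i)$\Rightarrow$(ii)$\Rightarrow$(iii)$\Rightarrow$(i) rather than the paper's pairwise contradiction arguments, but the underlying ideas are identical.
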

The proofs that (i), (ii), (iii) and (v) are equivalent are essentially the same as for the fully indecomposable case in \ref{prop:fully}. It remains to show the equivalence of (v) with (i). This was done in \cite{pan85}.

We can define what will turn out as a measure of being indecomposable for a tuple of matrices:
\begin{dfn}
Let $A:=(A_i)_i$ be an $n$-tuple of matrices $A_i\in \mathcal{M}_n$ and denote by $A^{ij}$ the tuple where $A_i$ is substituted by $A_j$. Then define:
\begin{align}
	\overline{M}(A):=\min_{i\neq j} M(A^{ij})
\end{align}
the minimal mixed discriminant.
\end{dfn}

For fully decomposable maps, we have the following characterization (part of which is already present in \cite{gur04}):
\begin{prop} \label{prop:fully}
Let $\mathcal{E}:\mathcal{M}_n\to\mathcal{M}_n$ be a positive, linear map. Then the following expressions are equivalent:
\begin{itemize}
	\item[(i)] $\mathcal{E}$ is fully indecomposable
	\item[(ii)] $\mathcal{E}^*$ is fully indecomposable
	\item[(iii)] For all singular, but nonzero $A\geq 0$, $\rank(\mathcal{E}(A))>\rank A$.
	\item[(iv)] Property (iii) holds for $Y\mathcal{E}(X\cdot X^{\dagger})Y^{\dagger}$ for every $X,Y>0$. 
	\item[(v)] There do not exist nontrivial orthogonal projections $P, Q$ of the same rank such that $\tr(\mathcal{E}(P)(\id-Q))=0$.
	\item[(vi)] $\mathcal{E}_U$ is fully indecomposable for all $U\in U(n)$.
	\item[(vii)] For any $U\in U(n)$, if $(A_i)_i:=\mathbf{A}_{\mathcal{E},U}$, then
	\begin{align*}
		\rank\left(\sum_{i\in \mathcal{S}} A_i\right)> |\mathcal{S}| \qquad \forall \mathcal{S}\subset\{1,\ldots,n\},0<|S|<n
	\end{align*} 
	\item[(viii)] $\overline{M}(A_{\mathcal{E},U})>0$ for all $U\in U(n)$.
\end{itemize}
Furthermore, when this is satisfied, $\mathcal{E}$ and via $(v)$ also $\mathcal{E}^*$ map the open cone $\mathcal{C}^n$ into itself. \\
Note also that the properties (vi) to (viii) are also equivalent for any fixed unitary.
\end{prop}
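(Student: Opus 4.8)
## Proof Plan for Proposition \ref{prop:fully}

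The strategy is to prove this large web of equivalences by establishing a cycle of implications plus a few auxiliary links, leaning heavily on the analogous matrix-level statements in Proposition \ref{prop:fullyindec} and the characterisation of rank non-decreasing maps in Proposition \ref{prop:ranknon}. First I would dispose of the "easy" equivalences: $(i)\Leftrightarrow(v)$ is essentially the definition of full indecomposability unwound through Definition \ref{dfn:irredindec}, phrased in terms of projections instead of the range condition; and $(i)\Leftrightarrow(ii)$ follows because $\mathcal{E}(P\mathcal{M}_dP)\subseteq Q\mathcal{M}_dQ$ is equivalent, by taking adjoints and using $\tr(\mathcal{E}(P)Q^\perp)=\tr(P\,\mathcal{E}^*(Q^\perp))$, to the symmetric condition for $\mathcal{E}^*$ with the roles of $P,Q$ swapped. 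The link $(v)\Leftrightarrow(i)$ is the trace-orthogonality reformulation: $\mathcal{E}(P\mathcal{M}_dP)\subseteq Q\mathcal{M}_dQ$ holds iff $Q^\perp\mathcal{E}(P)Q^\perp=0$ together with the off-diagonal blocks vanishing, and positivity of $\mathcal{E}(P)$ collapses this to $\tr(\mathcal{E}(P)(\id-Q))=0$ — this is the same argument as the irreducibility case in Appendix \ref{app:posprel}.

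Next I would handle the rank conditions. For $(i)\Rightarrow(iii)$: if $A\geq 0$ is singular and nonzero with support projection $P$ of rank $k<d$, and if $\rank(\mathcal{E}(A))=k$ as well, let $Q$ be the support projection of $\mathcal{E}(A)$; then $\mathcal{E}(P\mathcal{M}_dP)\subseteq Q\mathcal{M}_dQ$ follows by a sandwiching/continuity argument (any $0\le B\le cP$ has $0\le\mathcal{E}(B)\le c\,\mathcal{E}(A)$ up to scaling, forcing $\mathcal{E}(B)$ into $Q\mathcal{M}_dQ$, then extend to all of $P\mathcal{M}_dP$ by linearity and the spanning of the cone), contradicting full indecomposability. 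For $(iii)\Rightarrow(i)$: if $P$ witnesses decomposability with some $Q$ of equal rank, apply $(iii)$ to $A=P$ to get $\rank(\mathcal{E}(P))>\rank P=\rank Q$, but $\mathcal{E}(P)\in Q\mathcal{M}_dQ$ forces $\rank(\mathcal{E}(P))\le\rank Q$, a contradiction. The equivalence $(iii)\Leftrightarrow(iv)$ is the $X,Y>0$ invariance: conjugation by invertible matrices preserves both rank and the singular-but-nonzero property, and sends the cone bijectively to itself, so $(iii)$ for $\mathcal{E}$ transfers verbatim to $Y\mathcal{E}(X\cdot X^\dagger)Y^\dagger$.

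The remaining block $(vi),(vii),(viii)$ — the decoherence-operator characterisations — is where I expect the real work. The plan is to mirror, on the level of the tuple $\mathbf{A}_{\mathcal{E},U}=(\mathcal{E}(u_1u_1^\dagger),\dots,\mathcal{E}(u_nu_n^\dagger))$, exactly the chain of equivalences already proved in Proposition \ref{prop:ranknon} for the rank non-decreasing (non-strict) case, replacing every $\geq$ by $>$. Concretely: $(i)\Leftrightarrow(vi)$ because $\mathcal{E}_U$ is a positive map whose action on rank-one projections from the $U$-basis reproduces $\mathcal{E}$'s, and decomposing projections with respect to that basis reduces full indecomposability of $\mathcal{E}$ to that of $\mathcal{E}_U$ (one must check that it suffices to test projections spanned by basis vectors, which uses unitary invariance of the construction over all $U$). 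Then $(vi)\Leftrightarrow(vii)$ reduces to the combinatorial fact that a nonnegative-matrix-style "defect version" of the Frobenius–König / Hall condition — here $\rank(\sum_{i\in\mathcal{S}}A_i)>|\mathcal{S}|$ for all proper nonempty $\mathcal{S}$ — is the operator analogue of full indecomposability of the pattern, exactly as in Proposition \ref{prop:fullyindec}$(1)\Leftrightarrow(4)$, using that a $\geq$-condition with strict inequality on proper subsets plus the $\geq$ global condition encodes "every perfect matching... and some slack". Finally $(vii)\Leftrightarrow(viii)$ is the mixed-discriminant translation: by the polarisation identity defining $M$, the tuple $\mathbf{A}^{ij}$ (with $A_i$ replaced by $A_j$) has $M(\mathbf{A}^{ij})>0$ iff a certain rank condition holds for all subsets, and minimising over $i\neq j$ recovers precisely the strict condition $(vii)$; the nonnegativity and the Alexandrov–Fenchel-type inequalities for mixed discriminants from \cite{bap89b} (already invoked for Lemma \ref{lem:convexgurvits}) supply the needed monotonicity. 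The main obstacle, as in Gurvits' original treatment, is the passage $(vii)\Leftrightarrow(viii)$: establishing that strict positivity of the minimal mixed discriminant is equivalent to the strict rank inequality on all proper subsets requires the combinatorial structure theory of mixed discriminants of positive semidefinite tuples rather than a direct computation, so I would cite \cite{bap89b} and \cite{gur04} for that step and only sketch how the strict case follows from the non-strict case of Proposition \ref{prop:ranknon} by a "remove one matrix and apply the rank bound with slack" argument. The concluding sentence — that full indecomposability forces $\mathcal{E}$ and $\mathcal{E}^*$ to map $\mathcal{C}^n$ into itself — then drops out of $(iii)$ applied to any $A>0$ (which has full rank, hence $\mathcal{E}(A)$ has full rank, hence is positive definite) together with $(ii)$ for the adjoint; and the final remark that $(vi)$–$(viii)$ are equivalent even for a single fixed $U$ follows by inspecting the above proofs, none of which needed to vary $U$ once $(i)$ had been reached.
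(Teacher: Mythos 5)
Your plan follows essentially the same route as the paper's proof: the cycle $(i)\Leftrightarrow(iii)\Leftrightarrow(iv)$ via support projections and rank invariance under conjugation, $(i)\Leftrightarrow(v)\Leftrightarrow(ii)$ via the trace condition and adjoint duality, and the reduction of $(vii)\Leftrightarrow(viii)$ to the non-strict rank criterion of Proposition \ref{prop:ranknon} applied to the modified tuples $A^{ij}$, with the mixed-discriminant machinery cited from \cite{bap89b} and \cite{gur04}. The only cosmetic deviations are that the paper obtains $(vi)$ from $(iii)$ by writing $\mathcal{E}_U=\mathcal{E}\circ\mathcal{U}$ with $\mathcal{U}$ rank non-decreasing and proves $(vi)\Leftrightarrow(vii)$ by directly computing $\rank(\mathcal{E}_U(X))$ in terms of the support of $X$ in the $U$-eigenbasis rather than via your Frobenius--K\"{o}nig analogy, and in $(i)\Rightarrow(iii)$ you should also cover the case $\rank(\mathcal{E}(A))<\rank(A)$ by enlarging the support projection of $\mathcal{E}(A)$ to one of rank equal to $\rank(A)$, exactly as the paper does with its kernel-containment argument.
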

\begin{proof}
(i) $\to$ (iii): let $\mathcal{E}$ be fully indecomposable and assume there was a nonzero $A\geq 0$, with $\rank( \mathcal{E}(A))\leq \rank A$. Since the kernels are vector spaces, this implies we can find a unitary matrix $U$ transforming the basis such that $\kernel(\mathcal{E}(A))\supseteq U\cdot\kernel A$. Thus:
\begin{align*}
	\kernel(\mathcal{E}(A))&\supseteq U\cdot\kernel A \\
	\Leftrightarrow \quad \kernel(\mathcal{E}(A))&\supseteq \kernel UAU^{\dagger} \\
	\Leftrightarrow \quad \kernel(U^{\dagger}\mathcal{E}(A)U)&\supseteq \kernel A
\end{align*}
The latter implies $\supp(U^{\dagger}\mathcal{E}(A)U)\subseteq \supp A$. Let $P$ be the projection onto the support of $A$. By assumption, $P\neq \{0,\id\}$ since $A$ is nonzero and singular. For any positive matrix $B$ with $BP=B$, we have $\supp{B}\subseteq \supp A$. Hence, there exists a constant $r>0$ such that $A\geq rB$. Then $\mathcal{E}(A)\geq r\mathcal{E}(B)$ and $\supp(\mathcal{E}(B))\subseteq\supp(\mathcal{E}(A))$. But this implies via linearity $\supp(Q\mathcal{E}(\mathcal{M}_n)Q)\subseteq \supp (P\mathcal{M}_nP)$, where $Q:=UPU^{\dagger}$ is an orthogonal projection. \\
(iii) $\leftrightarrow$ (iv): Given (iii), the claim follows immediately from the fact that since $X,Y> 0$, the matrix ranks are not changed. For any nonzero and singular $A$ we have $\rank (A)=\rank( XAX^{\dagger})$. By assumption, for any nonzero, singular $A\geq 0$ we have $\rank(\mathcal{E}(A))>\rank (A)$, $\rank(\mathcal{E}(XAX^{\dagger}))>\rank(XAX^{\dagger})$ and hence 
\begin{align*}
	\rank(Y\mathcal{E}(XAX^{\dagger})Y^{\dagger})>\rank(A)
\end{align*}

(iii) $\to$ (i): Note that given $P,Q$ of the same rank such that $\mathcal{E}(P\mathcal{M}_nP)\subseteq Q\mathcal{M}_nQ$, we have in particular $\mathcal{E}(P)=QAQ$ for some $A\in\mathcal{M}_n$. Since $Q$ is of the same rank as $P$, $\rank (\mathcal{E}(P))=\rank(QAQ) \leq \rank P$, which is a contradiction. 

(v) $\to$ (i): Note that if $\mathcal{E}(P\mathcal{M}_nP)\subseteq Q\mathcal{M}_nQ$, then in particular $\tr(\mathcal{E}(P)(\id-Q))=0$ since $(\id-Q)$ is the orthogonal complement of $Q$. 

(i) $\to$ (v): Let $A\geq 0$. By positivity of $\mathcal{E}$, we have 
\begin{align*}
	0\leq \tr(\mathcal{E}(PAP)(\id-Q))&=\tr(AP\mathcal{E}^{*}(\id-Q)P) \\
	&\leq \|A\|_{\infty} \tr(P\id P\mathcal{E}^{*}(\id-Q))=\|A\|_{\infty}\tr(\mathcal{E}(P)(\id-Q))=0
\end{align*}
Hence in particular $\supp (\mathcal{E}(PAP))\subseteq \supp(Q)$ and $\mathcal{E}(P\mathcal{M}_nP)\subseteq Q\mathcal{M}_nQ$. 

(i) $\leftrightarrow$ (ii): This equivalence follows directly from (iv) by expressing $Q$ and $P$ in terms of the projections onto the orthogonal complements.

The remaining equivalences (i) $\leftrightarrow$ (vi),(vii),(viii) can be found in \cite{gur04} (with proofs scattered throughout the earlier papers by the same author). We just repeat them here using our notation for the reader's convenience.

(iii) $\to$ (vi): By definition, $\mathcal{E}_U=\mathcal{E}\circ \mathcal{U}$ where $\mathcal{U}(X)=\sum_i \tr(Xu_iu_i^{\dagger})$. Obviously, $\mathcal{U}$ is doubly stochastic, hence rank non-decreasing. Therefore, if (iii) holds for $\mathcal{E}$, it must also hold for $\mathcal{E}\circ \mathcal{U}$. 

(vi) $\to$ (iii): Let $\mathcal{E}_U$ be fully indecomposable for all unitaries $U$ and assume that $\rank(\mathcal{E}(X))\leq \rank (X)$ for some $X\geq 0$ with $0<\rank(X)<n$. Let $U$ be the unitary that diagonalizes $X$, then $\mathcal{E}_U(X)=\mathcal{E}(X)$, hence $\mathcal{E}_U$ is not fully indecomposable. This is a contradiction.

(vi) $\Leftrightarrow$ (vii): Let $\mathcal{T}_U$ be fully indecomposable. Then
\begin{align*}
	\rank(X)&< \rank (\mathcal{E}_U(X)) \\
	&=\rank \left(\sum_{i=1}^n \mathcal{E}(u_iu_i^{\dagger})\tr(Xu_iu_i^{\dagger})\right) \\
		&=\rank \left( \sum_{\substack{1\leq i\leq n \\ \tr(Xu_iu_i^{\dagger})\neq 0}}\mathcal{E}(u_iu_i^{\dagger})\right) \\
\end{align*}
Note that if $S:=\{i|\tr(Xu_iu_i^{\dagger})\}$, then $\rank(X)\leq |S|$ and hence follows the claim. For the other direction, we can use the same idea.

(vii) $\leftrightarrow$ (viii): Let $A:=\mathbf{A}_{\mathcal{E},U}$ for all unitary $U$ fulfill (vii). Define $A^{ij}$ as the tuple where the $i$-th element is replaced by the $j$-th. Note that
\begin{align*}
	\rank \left(\sum_{k\in \mathcal{S}} A^{ij}_k\right)\geq 
		\rank\left(\sum_{k\in\mathcal{S}\setminus \{j\}} A_k\right) \geq |\mathcal{S}|
\end{align*}
for any $\mathcal{S}\subset \{1,\ldots,n\}$, where the last inequality follows from the fact that $\mathcal{E}$ is fully indecomposable by assumption. Hence, from the proposition \ref{prop:ranknon} we know that the mixed discriminant of $A^{ij}$ cannot vanish, i.e.~$M(A^{ij})>0$. Minimizing over $i\neq j$ and the compact $U(n)$ gives $\tilde{M}(\mathbf{A}_{\mathcal{E},U})>0$. 

Conversely, let $A:=\mathbf{A}_{\mathcal{E},U}$ not fulfill (vii) for some unitary $U$, i.e.~for some $\mathcal{S}\subset \{1,\ldots,n\}$ with $0<\mathcal{S}<n$ we have $\rank\left(\sum_{k\in\mathcal{S}}A_k\right)\leq |\mathcal{S}|$. Let $i\in\mathcal{S},j\notin \mathcal{S}$, then for the tuple $A^{(ij)}$ as before, we have:
\begin{align*}
	\rank \left(\sum_{k\in \mathcal{S}\cup\{j\}} A^{ij}_k\right) &=
	 \rank \left(\sum_{k\in\mathcal{S}} A_k\right) < |\mathcal{S}|+1=|\mathcal{S}\cup \{j\}|
\end{align*}
But then, by proposition \ref{prop:ranknon}, $M(A^{ij})=0$ and hence also $\overline{M}=0$.
\end{proof}
This proposition shows in particular that any fully indecomposable map is \emph{primitive}: For any unit trace $\rho\geq 0$, $\mathcal{E}^d(\rho)>0$. Note that the converse might not be true. By the characterization of primitive maps (\cite{san10}, Theorem 6.7), this implies that each fully indecomposable map has only one fixed point.

\begin{lem} \label{lem:kernelred}
If $\mathcal{T}$ is a doubly-stochastic positive linear map, then there exists a unitary matrix $U$ such that $U\mathcal{T}(\cdot)U^{\dagger}$ admits a set of orthogonal projections $\{P_i\}_i$ such that $\sum_i P_i=\id$, $P_iP_j=\delta_{ij}P_i$ and $U\mathcal{T}(P_i\mathcal{M}_d P_i)U^{\dagger}\subseteq P_i\mathcal{M}_d P_i$. Furthermore, the restriction of $U\mathcal{T}(\cdot)U^{\dagger}$ to $P_i\mathcal{M}_d P_i$ is fully indecomposable for every $i$.
\end{lem}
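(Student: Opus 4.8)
The plan is to adapt the classical decomposition of doubly stochastic matrices into a direct sum of fully indecomposable blocks (Proposition \ref{prop:fullyindec}) to the operator setting, using the characterisation of full indecomposability via Definition \ref{dfn:irredindec} and the rank/kernel statements in Proposition \ref{prop:fully}. First I would observe that if $\mathcal{T}$ is doubly stochastic but \emph{not} fully indecomposable, then by Definition \ref{dfn:irredindec} there exist nonzero orthogonal projections $P, Q$ of equal rank, not both equal to $\id$, with $\mathcal{T}(P\mathcal{M}_dP)\subseteq Q\mathcal{M}_dQ$. The first key step is to upgrade this ``bipartite'' invariance to a genuine ``block'' invariance, i.e. to show $P = Q$ after a suitable unitary conjugation. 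This is where unitality and trace-preservation both enter: from $\mathcal{T}(P\mathcal{M}_dP)\subseteq Q\mathcal{M}_dQ$ and $\mathcal{T}(\id)=\id$ one gets $\mathcal{T}(\id - P)\geq \id - Q$ by positivity (since $\mathcal{T}(P)\leq \mathcal{T}(\id)=\id$ and $\supp\mathcal{T}(P)\subseteq \ran Q$ forces $\mathcal{T}(P)\leq Q$, hence $\mathcal{T}(\id-P)=\id-\mathcal{T}(P)\geq \id-Q$), while $\mathcal{T}^{*}(\id)=\id$ together with $\tr(\mathcal{T}(P)(\id-Q))=0$ (the form (v) of Proposition \ref{prop:fully}) gives $\tr(P) = \tr(\mathcal{T}(P)) = \tr(\mathcal{T}(P)Q)\leq \tr(Q) = \tr(P)$, so all inequalities are equalities and $\mathcal{T}(P)=Q$ exactly. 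Feeding this back, trace-preservation applied to $\mathcal{T}(P\mathcal{M}_dP)\subseteq Q\mathcal{M}_dQ$ and to its adjoint (which is fully indecomposable iff $\mathcal{T}$ is, by Proposition \ref{prop:fully}(ii)) pins down $P=Q$; concretely, a doubly stochastic map sending $P\mathcal{M}_dP$ into $Q\mathcal{M}_dQ$ with $\rank P = \rank Q$ must restrict to a doubly stochastic map between these corners, and the complementary corners are also preserved, so the off-diagonal blocks vanish.

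The second step is the induction: having found a nontrivial projection $P$ with $\mathcal{T}(P\mathcal{M}_dP)\subseteq P\mathcal{M}_dP$ and (by the complementary argument) $\mathcal{T}((\id-P)\mathcal{M}_d(\id-P))\subseteq (\id-P)\mathcal{M}_d(\id-P)$, I would conjugate by a unitary that block-diagonalises with respect to $\ran P \oplus \ran(\id - P)$ and observe that the restriction of $\mathcal{T}$ to each corner $P\mathcal{M}_dP\cong \mathcal{M}_{\rank P}$ is again a doubly stochastic positive map on a smaller matrix algebra (unitality and trace-preservation of the restriction follow because $\mathcal{T}(P)=P$ and $\mathcal{T}^*(P)=P$). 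Iterating on each corner — the dimension strictly decreases, so the process terminates — produces the orthogonal family $\{P_i\}_i$ with $\sum_i P_i = \id$, $P_iP_j = \delta_{ij}P_i$, and $U\mathcal{T}(\cdot)U^{\dagger}$ block-diagonal with respect to the $\ran P_i$. By construction each block is a doubly stochastic positive map that admits no further nontrivial invariant subprojection of the above type, which is exactly the statement that it is fully indecomposable (Definition \ref{dfn:irredindec} applied within $P_i\mathcal{M}_dP_i$; note one must check that a projection $R\leq P_i$ with $\mathcal{T}(R\mathcal{M}_dR)\subseteq R'\mathcal{M}_dR'$ for some equal-rank $R'\leq P_i$ would contradict minimality, again using the $P=Q$ argument from the first step).

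The main obstacle I anticipate is the first step — rigorously promoting the equal-rank ``into a possibly different corner'' invariance of full indecomposability to the ``into the same corner'' invariance needed for a genuine block decomposition. For matrices this is the content of the passage from full indecomposability to the block-triangular/block-diagonal normal form and uses the Frobenius–König theorem; in the operator case the substitute is the rigidity forced jointly by $\mathcal{T}(\id)=\id$ and $\mathcal{T}^{*}(\id)=\id$, which I sketched above via the chain of inequalities becoming equalities. Care is needed because $Q$ need not a priori be expressible in the same eigenbasis as $P$; the resolution is that once $\mathcal{T}(P)=Q$ with $\rank P=\rank Q$ and, by the same reasoning applied to $\mathcal{T}^*$, $\mathcal{T}^*(Q)=P$, the pair $(P,Q)$ together with their complements splits $\mathcal{M}_d$ into four blocks with the two off-diagonal ones annihilated, and a single unitary simultaneously aligning $\ran P$ with $\ran Q$ (possible since they have equal dimension) realises the claimed block structure. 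A secondary, purely bookkeeping obstacle is verifying that the restricted maps are honestly maps $\mathcal{M}_{\rank P_i}\to\mathcal{M}_{\rank P_i}$ that inherit positivity, unitality and trace-preservation — routine but needs the observation that $\mathcal{T}(P_i)=P_i$ makes $\mathcal{T}|_{P_i\mathcal{M}_dP_i}$ well-defined and unital, and $\mathcal{T}^*(P_i)=P_i$ gives trace-preservation.
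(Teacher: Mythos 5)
Your proposal is correct and follows essentially the same route as the paper's proof: both use the characterisation $\tr(\mathcal{T}(P)(\id-Q))=0$ from Proposition \ref{prop:fully}, exploit unitality and trace preservation to force the complementary reduction $\mathcal{T}((\id-P)\mathcal{M}_d(\id-P))\subseteq(\id-Q)\mathcal{M}_d(\id-Q)$, align $P$ and $Q$ by a unitary of the form permitted by their equal ranks, and iterate on the corners until each block is fully indecomposable, with termination guaranteed by strictly decreasing rank. Your derivation of $\mathcal{T}(P)=Q$ via the chain of trace (in)equalities is just a slightly more explicit version of the paper's computation.
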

\begin{proof}
Note that for an arbitrary unitary $U>0$ the maps $\mathcal{T}(U(\cdot)U^{\dagger})$ and $U\mathcal{T}(\cdot)U^{\dagger}$ are still doubly-stochastic. Let $P,Q$ be a nontrivial Hermitian projector decomposing $\mathcal{T}$, i.e.~$\tr(\mathcal{T}(P)(\id-Q))=0$ by proposition \ref{prop:fully} (if no such projector exists, we are finished). Then we have:
\begin{align*}
	0&=\tr(\mathcal{T}(P)(\id-Q))=\tr(P\mathcal{T}^*(\id-Q)) \\
	&=\tr(P)-\tr(P\mathcal{T}^*(Q))=\tr(Q)-\tr(Q\mathcal{T}(P)) \\
	&=\tr(Q\mathcal{T}(\id-P))
\end{align*}
where we used that $\mathcal{T}$ is doubly-stochastic in the second and last equality and in between we only used the cyclicity and linearity of the trace as well as the fact that $P$ and $Q$ have equal rank and thus their traces equal. This means that if $P$ reduces $\mathcal{T}$, then also $\id-Q$ reduces $\mathcal{T}$, i.e.
\begin{align*}
	&\mathcal{T}(P\mathcal{M}_nP)\subset Q\mathcal{M}_nQ \\
	\Rightarrow \quad &\mathcal{T}((\id-P)\mathcal{M}_n(\id-P))\subset (\id-Q)\mathcal{M}_n(\id-Q)
\end{align*}
Since the two projections $P,Q$ are of the same rank, there exists a unitary matrix $U$ such that $Q=UPU^{\dagger}$. This implies that $\mathcal{T}^{\prime}(\cdot)=U\mathcal{T}(\cdot)U^{\dagger}$ is reducible by $P$ and $(\id-P)$, which implies that $\mathcal{T}^{\prime}$ is a direct sum of maps defined on $P\mathcal{M}_nP$ and $(\id-P)\mathcal{M}_n(\id-P)$. 

We obtain these maps by setting 
\begin{align*}
	\mathcal{T}^{\prime}_1:=\mathcal{T}^{\prime}(P\cdot P)|_{P\mathcal{M}_nP} \\
	\mathcal{T}^{\prime}_2:=\mathcal{T}((\id-P)\cdot (\id-P))|_{(\id-P)\mathcal{M}_n(\id-P)}.
\end{align*}
By construction, $P,(\id-P)$ are the identities on the respective subspaces and the maps are therefore doubly stochastic, i.e.~$\mathcal{T}^{\prime}_1(\id_{P\mathcal{M}_nP})=\mathcal{T}(P)=P=\id_{P\mathcal{M}_nP}$ (and for $\mathcal{T}^{\prime}_2$ equivalently).

If the restricted maps are not fully indecomposable, we can iterate the procedure, thereby going over to $\mathcal{T}^{\prime\prime}(\cdot)=(U_1\oplus U_2)\mathcal{T}^{\prime}(\cdot)(U_1\oplus U_2)$ and so forth, which will terminate after finitely many steps, since the ranks of the projections involved have to decrease, thus giving a map $\tilde{\mathcal{T}}(\cdot)=\tilde{U}\mathcal{T}(\cdot)\tilde{U}^{\dagger}$, which admits the stated decomposition.
\end{proof}

\section{Gurvits' proof of scaling and approximate scaling} \label{app:gurvitsproof}
This appendix provides the details of Gurvits' approach, hence it does not contain original material. For easier readability, we repeat all Lemmata.

\subsection{Approximate scalability}
We need a way to study scalability:
\begin{dfn}
Let $C_1,C_2\in\mathcal{M}_n$ and $\mathcal{E}:\mathcal{M}_n\to\mathcal{M}_n$ a positive, linear map. Then we define a \emph{locally scalable functional} to be a map $\varphi\in\overline{\mathcal{C}^d}^*$ such that
\begin{align}
	\varphi(C_1\mathcal{E}(C_2^{\dagger}\cdot C_2)C_1^{\dagger})=\detm(C_1C_1^{\dagger})\detm(C_2C_2^{\dagger})\varphi(\mathcal{E}) \label{eqn:locscale}
\end{align}
A locally scalable functional will be called \emph{bounded}, if $|\varphi(\mathcal{E})|\leq f(\tr(\mathcal{E}(\id)))$ for some function $f$.
\end{dfn}

Locally bounded functionals are the right tools to study scalability:
\begin{prop} \label{prop:locbdd}
Let $\mathcal{E}:\mathcal{M}_n\to\mathcal{M}_n$ be a positive, linear map. Given a bounded locally scalable functional $\varphi$ such that $\varphi(\mathcal{E})\neq 0$, the Sinkhorn-iteration procedure converges:
\begin{align}
	\operatorname{DS}(\mathcal{E}_n)\to 0 \quad n\to \infty
\end{align}
\end{prop}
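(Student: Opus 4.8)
The plan is to exploit the scaling relation \eqref{eqn:locscale} exactly as in the classical case (Observation \ref{obs:ascent} and its analogue): since each Sinkhorn step is itself a congruence of the form $\mathcal{E}\mapsto C_1\mathcal{E}(C_2^{\dagger}\cdot C_2)C_1^{\dagger}$ with $C_1=\mathcal{E}(\id)^{-1/2}$ or $C_2 = \mathcal{E}^*(\id)^{-1/2}$, I would first track how $\varphi$ evolves under the iteration. Writing out \eqref{eqn:iterate1}--\eqref{eqn:iterate2}, the odd step uses $C_1 = \mathcal{E}_{2i}(\id)^{-1/2}$, $C_2=\id$, so $\varphi(\mathcal{E}_{2i+1}) = \detm(\mathcal{E}_{2i}(\id))^{-1}\varphi(\mathcal{E}_{2i})$; the even step gives $\varphi(\mathcal{E}_{2i+2}) = \detm(\mathcal{E}_{2i+1}^*(\id))^{-1}\varphi(\mathcal{E}_{2i+1})$. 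Since both $\mathcal{E}_{2i}$ is trace-preserving and $\mathcal{E}_{2i+1}$ is unital (by construction, as noted after Algorithm \ref{alg:rasqm}), we have $\tr(\mathcal{E}_{2i}(\id)) = \tr(\id) = n$ and likewise $\tr(\mathcal{E}_{2i+1}^*(\id)) = n$; so by the AM--GM inequality $\detm(\mathcal{E}_{2i}(\id)) \leq (\tr \mathcal{E}_{2i}(\id)/n)^n = 1$, with equality iff $\mathcal{E}_{2i}(\id)=\id$, and similarly for the even step. Hence $|\varphi(\mathcal{E}_n)|$ is nondecreasing along the iteration.

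Second, I would use boundedness of $\varphi$: $|\varphi(\mathcal{E}_n)| \leq f(\tr(\mathcal{E}_n(\id)))$, and since the $\tr(\mathcal{E}_n(\id))$ alternate between $n$ (for even $n$) and $\tr(\mathcal{E}_n(\id))$ which equals $\tr(\mathcal{E}_{n-1}(\mathcal{E}_{n-1}^*(\id)^{1/2}\cdot\mathcal{E}_{n-1}^*(\id)^{1/2})(\id)) = \tr(\mathcal{E}_{n-1}^*(\id))=n$ for the odd-to-even transition as well — in fact a short computation shows $\tr(\mathcal{E}_n(\id))$ stays bounded (indeed close to $n$) throughout — the sequence $|\varphi(\mathcal{E}_n)|$ is bounded above. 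Being nondecreasing and bounded, it converges, so the successive ratios $\detm(\mathcal{E}_{2i}(\id))$ and $\detm(\mathcal{E}_{2i+1}^*(\id))$ both tend to $1$. Combined with $\tr(\mathcal{E}_{2i}(\id)) = n$ fixed, the equality case of AM--GM forces $\mathcal{E}_{2i}(\id) \to \id$; explicitly, if a PSD matrix $\rho$ has $\tr\rho = n$ and $\detm\rho\to 1$, then its eigenvalues $\lambda_1,\dots,\lambda_n$ satisfy $\sum\lambda_j = n$, $\prod \lambda_j\to 1$, which by a compactness/continuity argument forces $\lambda_j\to 1$ for all $j$, i.e.\ $\|\rho - \id\|\to 0$. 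The same gives $\mathcal{E}_{2i+1}^*(\id)\to\id$. Since $\mathcal{E}_{2i}$ is trace-preserving (so $\mathcal{E}_{2i}^*(\id)=\id$ automatically) and $\mathcal{E}_{2i+1}$ is unital (so $\mathcal{E}_{2i+1}(\id)=\id$ automatically), we conclude $\operatorname{DS}(\mathcal{E}_n)\to 0$.

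The main obstacle I anticipate is making the "eigenvalues converge to $1$" step quantitative and uniform, rather than just along a subsequence: one needs that $\detm\rho \to 1$ together with $\tr\rho$ bounded actually controls $\tr((\rho-\id)^2)$. This follows because on the compact set $\{\rho\geq 0 : \tr\rho \leq M\}$ the continuous function $\rho\mapsto \detm\rho$ attains its maximum $1$ only on $\{\rho = \id\}$ when restricted to $\tr\rho = n$, but off the slice one must argue via the strict concavity of $\log\detm$; a clean way is the inequality $\log\detm\rho \leq \tr(\rho - \id) - \tfrac{1}{2\,\|\rho\|^{-1}\vee 1}\tr((\rho-\id)^2)$ or simply $n - \detm\rho \geq c\,\tr((\rho-\id)^2)$ for $\rho$ in the relevant compact region, with $c>0$. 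A second, more bookkeeping-heavy point is verifying that $\tr(\mathcal{E}_n(\id))$ genuinely stays in a fixed compact interval for all $n$ (not just that it equals $n$ on the subsequence where it is forced by trace-preservation/unitality); this is where one invokes that a bounded locally scalable functional with $\varphi(\mathcal{E})\neq 0$ exists, since the nondecreasing bounded quantity $|\varphi(\mathcal{E}_n)|$ feeds back to bound the traces via $f$. Once these two analytic points are nailed down, the convergence $\operatorname{DS}(\mathcal{E}_n)\to 0$ is immediate, and $\mathcal{E}$ is $\varepsilon$-scalable for every $\varepsilon>0$ by taking $n$ large and reading off $X = X_n$, $Y = Y_n$ from the accumulated congruences.
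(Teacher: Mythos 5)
Your proposal follows essentially the same route as the paper's proof: the scaling relation gives $\varphi(\mathcal{E}_{i+1})=a(i)\varphi(\mathcal{E}_i)$ with $|a(i)|\geq 1$ by AM--GM on the eigenvalues of the (trace-$n$) marginals, boundedness of $\varphi$ plus $\varphi(\mathcal{E})\neq 0$ forces the ratios to $1$, and the quantitative step bounding $\tr((\rho-\id)^2)$ by $1-\detm\rho$ via strict concavity of the logarithm on $[0,n]$ is exactly the paper's $\ln x\leq(x-1)-\alpha(x-1)^2$ argument. The two analytic points you flag as remaining work are handled in the paper precisely as you sketch (and the trace bookkeeping is trivial since $\tr(\mathcal{E}_n(\id))=n$ identically, by trace-preservation for even $n$ and unitality for odd $n$), so the proof is correct as proposed.
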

\begin{proof} We follow \cite{gur04}. Recall the definitions of the Sinkhorn iteration in equations (\ref{eqn:iterate1})-(\ref{eqn:iterate2}). Because of property (\ref{eqn:locscale}), we have 
\begin{align*}
	\varphi(\mathcal{E}_{i+1})&=a(i)\varphi(\mathcal{E}_i) \\
	a(i)&=\begin{cases} 
		\detm (\mathcal{E}_i^*(\id))^{-1} & \mathrm{if~} i\mathrm{~odd} \\
		\detm (\mathcal{E}_i(\id))^{-1} & \mathrm{if~} i\mathrm{~even} 
	\end{cases}
\end{align*}
Let $i$ be even. Note that $\mathcal{E}_i$ is trace-preserving for $i$ even, hence $\tr(\mathcal{E}_i(\id))=n$. Let $s_j^{(i)}$ be the singular values of $\mathcal{E}_i(\id)$ and observe:
\begin{align}
	|\detm (\mathcal{E}_i(\id))|=\prod_{j=1}^ns_j^{(i)}\leq \frac{1}{n} \sum_{j=1}^n s_j^{(i)} = \frac{1}{n}\tr(\mathcal{E}_i(\id))=1
\end{align}
using the arithmetic-geometric mean inequality (AGM). 
Similarly, for $i$ odd, $\mathcal{E}_i$ is unital, hence $\tr(\mathcal{E}^*_i(\id))=n$ and we can use the AGM inequality again to obtain that $a(i)\geq 0$ for all $i\geq 0$ and therefore
\begin{align*}
	|\varphi(\mathcal{E}_{i+1})|\geq |\varphi(\mathcal{E}_i)|
\end{align*}
and thus, as $\varphi$ was assumed to be bounded, $|\varphi(\mathcal{E}_i)|$ converges to some value $c\leq f(\tr(\mathcal{E}(\id)))$. 

It remains to prove that for $|\varphi(\mathcal{E})|\neq 0$, $\mathrm{DS}(\mathcal{E}_i)$ converges to zero for $i\to\infty$. The idea is of course that if $|\varphi(\mathcal{E})|\neq 0$, then $|a(i)|$ converges to one and thus $\mathcal{E}_i(\id)$ and $\mathcal{E}_i^*(\id)$ converge to $\id$. 

To make this more formal, since $|\varphi(\mathcal{E})|$ converges, for all $\varepsilon>0$ there exists $N\in\mathbb{N}$ such that for all $d\geq N$:
\begin{align*}
	||\varphi(\mathcal{E}_d)|-|\varphi(\mathcal{E}_{d+1})|&\leq \varepsilon \\
	\Leftrightarrow\quad \left| |\varphi(\mathcal{E}_d)|-\frac{1}{|a(i)|}|\varphi(\mathcal{E}_d)|\right|&\leq \varepsilon \\
	\Leftrightarrow\quad |a_i|&\geq \frac{1}{1+ \varepsilon |\varphi(\mathcal{E}_d)|^{-1}}\geq \frac{1}{1+ \varepsilon |\varphi(\mathcal{E})|^{-1}}
\end{align*}
where we used that $|\varphi(\mathcal{E}_d)|$ increases monotonically in the last inequality. 

Let us now only consider $i$ even. Then we have just seen that
\begin{align*}
	\frac{1}{1+ \varepsilon |\varphi(\mathcal{E})|^{-1}}\leq \detm(T_i(\id))\leq 1
\end{align*}
hence, for $i\geq N$ even, we have:
\begin{align*}
	\mathrm{DS}(\mathcal{E}_i)=\tr((\mathcal{E}_i(\id)-\id)^2)=\sum_{j=1}^n (s_j^{(i)}-1)^2
\end{align*}
where the $s_j^{(i)}$ are the singular values of $\mathcal{E}_i(\id)$. If we can upper bound the last quantity by $\tilde{\varepsilon}(\varepsilon)$, we are done. This is an exercise in using logarithms:

Since $\mathcal{E}_i$ is trace-preserving as $i$ is even, $s_j^{(i)}\leq d$ for all $i$. If we set $\alpha:=\frac{(n-1)-\ln (n)}{(n-1)^2}$, then by strict concavity of the logarithm, 
\begin{align*}
	\ln(x)\leq (x-1)-\alpha(x-1)^2 \qquad x\leq d
\end{align*}
since $\ln(d)=(x-1)-\alpha(x-1)^2$ and $\ln(1)=0$. But then:
\begin{align*}
	0&\leq \sum_{j=1}^n (s_j^{(i)}-1)^2\leq \sum_{j=1}^n \left( \frac{s_j^{(i)}-1}{\alpha}-\frac{\ln(s_j^{(i)})}{\alpha}\right) \\
	&=-\sum_{i=1}^n \frac{\ln(s_j^{(i)})}{\alpha} \\
	&=-\frac{1}{\alpha}\ln(\prod_{i=1}^n s_j^{(i)}) \leq -\frac{1}{\alpha}\ln(1-\varepsilon)\\
	&\leq \frac{\varepsilon}{\alpha}
\end{align*}
where we used that $\sum_{j=1}^n s_j^{(i)}=\tr(\mathcal{E}_i(\id))=n$. But $\frac{\varepsilon}{\alpha}\to 0$ for $i\to\infty$.

Exchanging $\mathcal{E}_i$ with $\mathcal{E}_i^*$ gives the same reasoning for odd $i$. In total, we get that for any $\varepsilon>0$ exists an $N\in \mathbb{N}$ such that for all $d\geq n$
\begin{align*}
	0\leq \mathrm{DS}(\mathcal{E}_d)\leq \frac{\varepsilon}{\alpha}
\end{align*}
hence $\mathcal{DS}(\mathcal{E}_i)\to 0$ for $i\to \infty$.
\end{proof}

\begin{lem} \label{lem:caplocbdd}
$\operatorname{Cap}$ is a bounded locally scalable functional.
\end{lem}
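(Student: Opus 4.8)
The plan is to verify directly the three requirements in the definition of a \emph{bounded locally scalable functional} for $\varphi=\operatorname{Cap}$: (i) that $\operatorname{Cap}$ is a nonnegative functional on the cone of positive maps, so that it belongs to $\overline{\mathcal{C}^d}^*$ in the relevant sense; (ii) the scaling identity (\ref{eqn:locscale}); and (iii) the boundedness estimate $|\operatorname{Cap}(\mathcal{E})|\le f(\tr(\mathcal{E}(\id)))$ for a suitable $f$. Only step (ii) requires a genuine (but routine) computation.

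Step (i) is immediate: if $\mathcal{E}$ is positive and $X>0$ then $\mathcal{E}(X)\ge 0$, hence $\detm(\mathcal{E}(X))\ge 0$, and therefore $\operatorname{Cap}(\mathcal{E})$, being an infimum of nonnegative reals, is a nonnegative real number (possibly $0$). Step (iii) is also short: the choice $X=\id$ is feasible in the infimum defining $\operatorname{Cap}(\mathcal{E})$, so $\operatorname{Cap}(\mathcal{E})\le \detm(\mathcal{E}(\id))$; and if $s_1,\dots,s_n$ denote the (nonnegative) eigenvalues of $\mathcal{E}(\id)$, the arithmetic--geometric mean inequality gives $\detm(\mathcal{E}(\id))=\prod_j s_j\le\big(\tfrac1n\sum_j s_j\big)^n=\big(\tr(\mathcal{E}(\id))/n\big)^n$. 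Thus $\operatorname{Cap}$ is bounded with $f(t)=(t/n)^n$.

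For step (ii) I would take $C_1,C_2$ invertible, which is the only case relevant to the scaling iteration (if some $C_i$ is singular, both sides of (\ref{eqn:locscale}) vanish). Write $\mathcal{E}'(\cdot):=C_1\mathcal{E}(C_2^{\dagger}\cdot C_2)C_1^{\dagger}$. For $X>0$ with $\detm X=1$,
\[
\detm(\mathcal{E}'(X))=\detm(C_1C_1^{\dagger})\,\detm\big(\mathcal{E}(C_2^{\dagger}XC_2)\big).
\]
The map $X\mapsto C_2^{\dagger}XC_2$ is a bijection of the positive definite cone onto itself, and it multiplies the determinant by $\detm(C_2^{\dagger}C_2)=\detm(C_2C_2^{\dagger})$; hence $Y:=C_2^{\dagger}XC_2$ ranges over $\{Y>0:\detm Y=\detm(C_2C_2^{\dagger})\}$ as $X$ ranges over $\{X>0:\detm X=1\}$. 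Writing $Y=\detm(C_2C_2^{\dagger})^{1/n}Z$ with $\detm Z=1$, and using linearity of $\mathcal{E}$ together with $\detm(\lambda M)=\lambda^n\detm M$ on $\mathcal{M}_n$, one gets $\detm(\mathcal{E}(Y))=\detm(C_2C_2^{\dagger})\,\detm(\mathcal{E}(Z))$. Taking the infimum over $Z$ yields
\[
\operatorname{Cap}(\mathcal{E}')=\detm(C_1C_1^{\dagger})\,\detm(C_2C_2^{\dagger})\,\operatorname{Cap}(\mathcal{E}),
\]
which is exactly (\ref{eqn:locscale}). Combining (i)--(iii) shows $\operatorname{Cap}$ is a bounded locally scalable functional.

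I do not expect a real obstacle here; the one place needing care is the bookkeeping in step (ii) — tracking the determinant factors through the substitution and, crucially, using homogeneity of the determinant to renormalise the constraint $\detm Y=\detm(C_2C_2^{\dagger})$ back to $\detm Z=1$ so that the infimum again has the form defining $\operatorname{Cap}(\mathcal{E})$.
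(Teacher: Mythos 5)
Your proposal is correct and follows essentially the same route as the paper: the scaling identity (\ref{eqn:locscale}) via the substitution $Y=C_2^{\dagger}XC_2$ and homogeneity of $\detm$ and $\mathcal{E}$ to renormalise the constraint back to determinant one, and boundedness via the feasible point $X=\id$ together with the arithmetic--geometric mean inequality. Your bound $\operatorname{Cap}(\mathcal{E})\leq\left(\tr(\mathcal{E}(\id))/n\right)^{n}$ is in fact the correct form of the AGM estimate (the exponent $\tfrac{1}{n}$ printed in the paper is a typo), and your side remark that both sides of (\ref{eqn:locscale}) vanish for singular $C_i$ is true and makes the reduction to invertible $C_1,C_2$ harmless.
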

\begin{proof}
Note that for
\begin{align*}
 \inf&\{\detm (C_2^{\dagger}\mathcal{E}(C_1XC_1^{\dagger})C_2)|X>0, \detm (X)=1\} \\
	&=\inf\{\detm(C_2^{\dagger})\detm(C_2)\detm(\mathcal{E}(C_1XC_1^{\dagger}))|X>0, \detm(X)=1\} \\
	&=\detm(C_2^{\dagger}C_2)\inf\{\detm(\mathcal{E}(C_1XC_1^{\dagger}))|X>0, \detm(X)=1\} \\
	&=\detm(C_2^{\dagger}C_2)\inf\{\detm(\mathcal{E}(\tilde{X}))|X>0, \detm(\tilde{X})=\detm(C_1)\detm(C_1^{\dagger})\detm(X), \detm(X)=1\} \\
	&=\detm(C_2^{\dagger}C_2)\detm(C_1^{\dagger}C_1)\inf\{\detm(\mathcal{E}(\tilde{X}))|\tilde{X}>0, \detm(\tilde{X})=1\}
\end{align*}
hence $\operatorname{Cap}$ is a locally scalable functional. Via the AGM inequality, we have
\begin{align*}
	0\leq \operatorname{Cap}(\mathcal{E})\leq \detm(\mathcal{E}(\id))\leq \left(\frac{\tr(\mathcal{E}(\id))}{n}\right)^{\frac{1}{n}}
\end{align*}
hence $\operatorname{Cap}$ is bounded.
\end{proof}

This gives a proof of Lemma \ref{lem:approxscaling}.

\begin{lem}[Lemma \ref{lem:tuplecap} of the main text]
Let $\mathcal{E}:\mathcal{M}_n\to\mathcal{M}_n$ be a positive, linear map and $U\in U(n)$ a fixed unitary. Then defining
\begin{align*}
	\operatorname{Cap}(\mathbf{A}_{\mathcal{E},U}):=\inf \left\{\detm \left(\sum_i \mathcal{E}(u_iu_i^{\dagger})\gamma_i\right) | \gamma_i>0, \prod_{i=1}^n\gamma_i=1\right\}
\end{align*}
where $u_i$ are once again the rows of $U$, we have the following properties:
\begin{enumerate}
	\item Using the mixed discriminant $M$, we have
	\begin{align*}
		M(\mathbf{A}_{\mathcal{E},U})\leq \operatorname{Cap}(\mathbf{A}_{\mathcal{E},U})\leq \frac{n^n}{n!}M(\mathbf{A}_{\mathcal{E},U})
	\end{align*}
	\item $\inf\limits_{U\in U(n)} \operatorname{Cap}(\mathbf{A}_{\mathcal{E},U})=\operatorname{Cap}(\mathcal{E})$
\end{enumerate}
\end{lem}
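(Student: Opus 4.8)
The plan is to establish the two assertions independently: the sandwich inequality is a statement about the polynomial $\gamma\mapsto\detm(\sum_i\gamma_iA_i)$ and I would reduce it to a known capacity estimate for polynomials with nonnegative coefficients, while the identity $\inf_U\operatorname{Cap}(\mathbf{A}_{\mathcal{E},U})=\operatorname{Cap}(\mathcal{E})$ is a routine consequence of the spectral theorem.

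For the first part, write $A_i:=\mathcal{E}(u_iu_i^{\dagger})\geq0$ and $p(\gamma):=\detm\bigl(\sum_{i=1}^n\gamma_iA_i\bigr)$, a homogeneous polynomial of degree $n$ in the $n$ variables $\gamma_1,\dots,\gamma_n$. Its coefficients are mixed discriminants of positive semidefinite matrices, hence nonnegative, and by the defining formula for $M$ the coefficient of the squarefree monomial $\gamma_1\cdots\gamma_n$ equals $M(\mathbf{A}_{\mathcal{E},U})$. Thus on the surface $\prod_i\gamma_i=1$ one has $p(\gamma)\geq M(\mathbf{A}_{\mathcal{E},U})\,\gamma_1\cdots\gamma_n=M(\mathbf{A}_{\mathcal{E},U})$, and taking the infimum gives the lower bound $\operatorname{Cap}(\mathbf{A}_{\mathcal{E},U})\geq M(\mathbf{A}_{\mathcal{E},U})$ at once. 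The upper bound $\operatorname{Cap}(\mathbf{A}_{\mathcal{E},U})\leq\tfrac{n^n}{n!}M(\mathbf{A}_{\mathcal{E},U})$ is the one substantial point. I would derive it from Gurvits' general inequality
\begin{align*}
\partial_{\gamma_1}\cdots\partial_{\gamma_n}\,q\ \geq\ \frac{n!}{n^n}\,\inf\bigl\{\,q(\gamma)\ \mid\ \gamma_i>0,\ \textstyle\prod_i\gamma_i=1\,\bigr\},
\end{align*}
valid for every homogeneous degree-$n$ polynomial $q$ in $n$ variables with nonnegative coefficients (the cases $q=\gamma_1\cdots\gamma_n$ and $q=(\sum_i\gamma_i)^n$ show the constant is sharp). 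This is proved by induction on $n$: passing from $q$ to the coefficient of $\gamma_n^1$ — a degree-$(n-1)$ polynomial in $\gamma_1,\dots,\gamma_{n-1}$ with nonnegative coefficients whose $\gamma_1\cdots\gamma_{n-1}$-coefficient is the $\gamma_1\cdots\gamma_n$-coefficient of $q$ — the capacity drops by a factor at least $\bigl(\tfrac{n-1}{n}\bigr)^{n-1}$, and these factors telescope to $n!/n^n$. The single-step estimate is a one-variable optimisation argument that is genuinely delicate, and I would present it following \cite{gur02a} rather than reprove it; this is the only place where real work is hidden.

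For the second part I would argue by two inclusions. Given $U\in U(n)$ and $\gamma_i>0$ with $\prod_i\gamma_i=1$, the matrix $X:=\sum_i\gamma_iu_iu_i^{\dagger}$ is positive definite with $\detm(X)=\prod_i\gamma_i=1$ since the $u_i$ are orthonormal, so $\detm(\mathcal{E}(X))=\detm\bigl(\sum_i\gamma_iA_i\bigr)\geq\operatorname{Cap}(\mathcal{E})$; taking infima over $\gamma$ and then over $U$ gives $\inf_U\operatorname{Cap}(\mathbf{A}_{\mathcal{E},U})\geq\operatorname{Cap}(\mathcal{E})$. Conversely, any $X>0$ with $\detm(X)=1$ admits a spectral decomposition $X=\sum_i\lambda_iu_iu_i^{\dagger}$ with orthonormal eigenvectors $u_i$ and $\prod_i\lambda_i=\detm(X)=1$; assembling the $u_i$ into a unitary $U$ yields $\detm(\mathcal{E}(X))=\detm\bigl(\sum_i\lambda_i\mathcal{E}(u_iu_i^{\dagger})\bigr)\geq\operatorname{Cap}(\mathbf{A}_{\mathcal{E},U})\geq\inf_U\operatorname{Cap}(\mathbf{A}_{\mathcal{E},U})$, and taking the infimum over $X$ gives the reverse inequality. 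Combining the two proves the identity. The expository weight of the whole argument thus sits entirely on Gurvits' inductive capacity bound; everything else is bookkeeping with determinants and eigendecompositions.
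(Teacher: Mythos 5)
Your part 2 is correct and in substance the same as the paper's, only cleaner: the paper reaches the identity $\inf_U\operatorname{Cap}(\mathbf{A}_{\mathcal{E},U})=\operatorname{Cap}(\mathcal{E})$ by extracting diagonal subsequences of approximate minimisers, whereas you simply observe that $(U,\gamma)\mapsto\sum_i\gamma_iu_iu_i^{\dagger}$ puts the two feasible sets in bijective correspondence, so the two infima are infima of the same set of values. Your lower bound in part 1 is also fine, and goes further than the paper, which defers the whole of part 1 to \cite{gur02a}: every coefficient $t_r$ of $\detm\bigl(\sum_i\gamma_iA_i\bigr)$ is a (normalised) mixed discriminant of a positive semidefinite tuple, hence nonnegative, and the squarefree coefficient is exactly $M(\mathbf{A}_{\mathcal{E},U})$, so discarding all other monomials on the surface $\prod_i\gamma_i=1$ gives $\operatorname{Cap}(\mathbf{A}_{\mathcal{E},U})\geq M(\mathbf{A}_{\mathcal{E},U})$.

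The gap is in the upper bound. The inequality $\partial_{\gamma_1}\cdots\partial_{\gamma_n}q\geq\frac{n!}{n^n}\operatorname{Cap}(q)$ is \emph{false} for general homogeneous degree-$n$ polynomials with nonnegative coefficients: for $q(\gamma_1,\gamma_2)=\gamma_1^2+\gamma_2^2$ the left-hand side is $0$ while $\operatorname{Cap}(q)=\inf_{t>0}(t^2+t^{-2})=2$, so the claimed bound would read $0\geq 1$. Gurvits' theorem requires $q$ to be H-stable (for our purposes: a determinantal polynomial with positive semidefinite coefficient matrices), and your inductive sketch breaks at precisely this point --- the one-variable estimate behind the factor $\bigl(\frac{k-1}{k}\bigr)^{k-1}$ holds for \emph{real-rooted} univariate polynomials with nonnegative coefficients, not arbitrary ones ($R(t)=1+t^2$ is again a counterexample, with $R'(0)=0$ but $\inf_{t>0}R(t)/t=2$), and one must additionally argue that extracting the coefficient of $\gamma_n$ preserves stability so the induction can continue. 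Your application is rescued because $\detm\bigl(\sum_i\gamma_i\mathcal{E}(u_iu_i^{\dagger})\bigr)$ is such a determinantal polynomial and hence stable, but that hypothesis has to be stated and invoked; it is exactly where the substance of \cite{gur02a} lies, not merely in the ``delicate one-variable optimisation'' you defer.
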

\begin{proof}
The first part of the lemma is one of the main results of \cite{gur02a}. Since the proof is long due to many technicalities, we leave it out here.

The second part gives the relation between the two capacities.  Let $\{X_d\}_d$ with $\detm (X_d)=1$ and $X_d>0$ be such that $\detm (\mathcal{E}(X_d))\to \operatorname{Cap}(\mathcal{E})$, $d\to\infty$. Then there exist unitaries $U_d\in U(n)$ such that $U_dX_nU_d^{\dagger}$ is diagonal with diagonal entries $\lambda_i^{(d)}$. By construction,
\begin{align*}
	\detm(\sum_{1\leq i\leq n} \mathcal{E}((u_d)_i(u_d)_i^{\dagger})\lambda_i^{(d)})=\detm (\mathcal{E}(X_d))
\end{align*}
where $(u_d)$ are again the columns of $U_d$. Hence
\begin{align*}
	\inf_{U\in U(n)} \operatorname{Cap}(\mathbf{A}_{\mathcal{E},U})\leq \operatorname{Cap}(\mathcal{E})
\end{align*}

Likewise, we can construct a sequence of $U_d$ such that $\operatorname{Cap}(\mathbf{A}_{\mathcal{E},U_d})$ converges to the infimum and we can construct a sequence $(\gamma_{(k)}^{(d)})_k$ with $(\gamma_{(k)}^{(d)})_i>0$ for each $U_d$ such that 
\begin{align*}
	\detm\left(\sum_{i=1}^n \mathcal{E}((u_d)_i(u_d)_i^{\dagger})(\gamma_{(k)}^{(d)})_i\right)\to \operatorname{Cap}(\mathbf{A}_{\mathcal{E},U_d}) \qquad \mathrm{for~}k\to \infty
\end{align*}
Taking the diagonal sequence $\gamma_{(d)}^{(d)}$ we obtain a sequence converging to $\inf \operatorname{Cap}(\mathbf{A}_{\mathcal{E},U})$. Finally, define $X_k=U_k\diag(\gamma^{(k)}_{(k)})U_k^{\dagger}$, then $X_k>0$ and 
\begin{align*}
	\detm(\mathcal{E}(X_k))=	\detm\left(\sum_{i=1}^n \mathcal{E}((u_d)_i(u_d)_i^{\dagger})(\gamma_{(k)}^{(d)})_i\right)
\end{align*}
and hence
\begin{align*}
	\operatorname{Cap}(\mathcal{E})\leq \inf_{U\in U(n)} \operatorname{Cap}(\mathbf{A}_{\mathcal{E},U})
\end{align*}
after taking the limit $k\to \infty$.
\end{proof}

Finally, we can write down the Operator Sinkhorn theorem (Theorem \ref{thm:approxsinkquant} in the main text):
\begin{thm}[Approximate Operator Sinkhorn Theorem, \cite{gur04} Theorem 4.6]
Let $\mathcal{E}:\mathcal{M}_n\to\mathcal{M}_n$ be a positive, linear map. Then $\mathcal{E}$ is $\varepsilon$-scalable for all $\varepsilon>0$ iff $\mathcal{E}$ is rank non-decreasing. 
\end{thm}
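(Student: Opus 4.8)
The plan is to prove the two directions of the equivalence separately, leaning heavily on the capacity functional and the lemmas already assembled in Appendix \ref{app:gurvitsproof}.

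\textbf{Direction 1: rank non-decreasing $\Rightarrow$ $\varepsilon$-scalable.}
First I would recall from Proposition \ref{prop:ranknon} that $\mathcal{E}$ rank non-decreasing is equivalent to $M(\mathbf{A}_{\mathcal{E},U})>0$ for every unitary $U\in U(n)$. Next I would invoke Lemma \ref{lem:tuplecap}: the bound $M(\mathbf{A}_{\mathcal{E},U})\leq \operatorname{Cap}(\mathbf{A}_{\mathcal{E},U})$ together with $\inf_{U\in U(n)}\operatorname{Cap}(\mathbf{A}_{\mathcal{E},U})=\operatorname{Cap}(\mathcal{E})$. The only subtlety here is that a positive infimum of each $M(\mathbf{A}_{\mathcal{E},U})$ over $U$ need not a priori be bounded away from zero, so I would use the continuity of $U\mapsto M(\mathbf{A}_{\mathcal{E},U})$ and compactness of $U(n)$ to conclude $\inf_U M(\mathbf{A}_{\mathcal{E},U})>0$, hence $\operatorname{Cap}(\mathcal{E})>0$. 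With $\operatorname{Cap}(\mathcal{E})>0$ in hand, Lemma \ref{lem:caplocbdd} says $\operatorname{Cap}$ is a bounded locally scalable functional, so Proposition \ref{prop:locbdd} (equivalently Lemma \ref{lem:approxscaling}) gives $\operatorname{DS}(\mathcal{E}_n)\to 0$ along the RAS iteration of Algorithm \ref{alg:rasqm}. Each $\mathcal{E}_n$ is of the form $C_1\mathcal{E}(C_2^\dagger\cdot C_2)C_1^\dagger$, so for any $\varepsilon>0$ choosing $n$ large enough that $\operatorname{DS}(\mathcal{E}_n)\leq\varepsilon^2$ exhibits the required scaling; thus $\mathcal{E}$ is $\varepsilon$-scalable for every $\varepsilon>0$.

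\textbf{Direction 2: $\varepsilon$-scalable $\Rightarrow$ rank non-decreasing.}
Here I would argue by contradiction using two stability facts. First, rank non-decreasing is invariant under the scalings $\mathcal{E}\mapsto Y^\dagger\mathcal{E}(X\cdot X^\dagger)Y$ with $X,Y$ of full rank — this is exactly item (v) $\Leftrightarrow$ (i) of Proposition \ref{prop:ranknon}. Second, a map sufficiently close (in $\operatorname{DS}$, or equivalently in any norm on the finite-dimensional space of linear maps) to a doubly stochastic map must itself be rank non-decreasing: a doubly stochastic (hence unital, trace-preserving) positive map is rank non-decreasing, and I would show that $\operatorname{rank}(\mathcal{T}(A))\geq\operatorname{rank}(A)$ for all $A\geq 0$ is an open condition. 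Concretely, if $\mathcal{T}$ is doubly stochastic then for $A\geq 0$ singular with support projection $P$, unitality forces $\operatorname{tr}(\mathcal{T}(\id-P))=\operatorname{tr}(\id-P)$, and together with positivity one checks $\mathcal{T}(P)$ cannot be supported inside a subspace of dimension $\operatorname{rank}(P)$ — using Proposition \ref{prop:fully}(v) for the strict part and a limiting/perturbation argument to get the non-strict $\geq$ to persist under small perturbations. Combining: if $\mathcal{E}$ is $\varepsilon$-scalable for all $\varepsilon>0$, then for small enough $\varepsilon$ some scaling $\mathcal{E}'$ of $\mathcal{E}$ is within the neighbourhood of doubly stochastic maps on which rank non-decreasing holds, so $\mathcal{E}'$ is rank non-decreasing, and by scaling-invariance so is $\mathcal{E}$.

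\textbf{Anticipated main obstacle.}
The routine parts — the capacity manipulations and the RAS convergence — are already packaged in the cited lemmas. The delicate point is the openness claim in Direction 2: showing that ``$\operatorname{rank}(\mathcal{T}(A))\geq\operatorname{rank}(A)$ for all $A\geq 0$'' is preserved under a sufficiently small perturbation of a doubly stochastic $\mathcal{T}$. Since rank is only lower semicontinuous, one cannot argue naively; the clean route is to pass through the decoherence operators and mixed discriminants (Proposition \ref{prop:ranknon}(iv)): $M(\mathbf{A}_{\mathcal{T},U})$ is jointly continuous in $\mathcal{T}$ and $U$, strictly positive for all $U$ when $\mathcal{T}$ is doubly stochastic, hence bounded below on the compact $U(n)$, and therefore remains positive for all nearby maps. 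This reduces the openness to an honest compactness-plus-continuity statement, which I would flag as the one place requiring genuine (if short) care; everything else is bookkeeping with the established machinery.
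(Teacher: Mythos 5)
Your first direction follows the paper's proof essentially verbatim: rank non-decreasing gives $M(\mathbf{A}_{\mathcal{E},U})>0$ for every $U$ via Proposition \ref{prop:ranknon}, hence $\operatorname{Cap}(\mathcal{E})>0$ by Lemma \ref{lem:tuplecap} together with continuity in $U$ and compactness of $U(n)$, and then Lemma \ref{lem:caplocbdd} and Proposition \ref{prop:locbdd} give $\operatorname{DS}(\mathcal{E}_i)\to 0$ along Algorithm \ref{alg:rasqm}. The converse is where you diverge, and where there is a genuine gap. You pass from ``some scaling $\mathcal{E}'$ of $\mathcal{E}$ has $\operatorname{DS}(\mathcal{E}')\le\varepsilon^2$'' to ``$\mathcal{E}'$ lies in a norm-neighbourhood of the doubly stochastic maps'', calling the two conditions equivalent. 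They are not: $\operatorname{DS}$ only sees the marginals $\mathcal{E}'(\id)$ and $\mathcal{E}'^{*}(\id)$, so it is not a distance on the space of maps, and your openness argument (continuity of $(\mathcal{T},U)\mapsto M(\mathbf{A}_{\mathcal{T},U})$ plus compactness of $U(n)$) only produces a norm-ball around each individual doubly stochastic map. What you actually need is the containment $\{\mathcal{T}\ \text{positive}\mid\operatorname{DS}(\mathcal{T})\le\delta\}\subseteq\{\text{rank non-decreasing}\}$ for some $\delta>0$. This is true, and can be recovered by one further compactness step that your writeup omits: positivity plus $\operatorname{DS}\le\delta$ bounds $\mathcal{T}(\id)$ and hence $\|\mathcal{T}\|$, so the sublevel set is compact and shrinks onto the compact set of doubly stochastic maps, which sits inside the open set of rank non-decreasing maps. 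As written, that step — the crux of the whole direction — is asserted rather than proved.

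The paper sidesteps all of this with a short quantitative computation. Normalising so that $\mathcal{E}'(\id)=\id$ and $\mathcal{E}'^{*}(\id)=\id+F$ with $\tr(F^2)\le 1/n$, one finds $\tr(A_i)=1+\delta_i$ with $\sum_i|\delta_i|^2\le\tr(F^2)\le 1/n$ for the tuple $\mathbf{A}_{\mathcal{E}',U}$. If $\mathcal{E}'$ were not rank non-decreasing, Proposition \ref{prop:ranknon} would give a $U$ and a $k$ with $H=\sum_{i=1}^k A_i$ of rank at most $k-1$, forcing $\tr(H)\le k-1$ since $0\le H\le\id$; but $\tr(H)=k+\sum_{i=1}^k\delta_i\ge k-\sqrt{k/n}>k-1$ by Cauchy--Schwarz, a contradiction. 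This yields the explicit threshold $\operatorname{DS}<1/n$ with no topological apparatus and transfers back to $\mathcal{E}$ by the scaling-invariance you correctly cite. I would either adopt that computation or explicitly insert the compactness argument above; without one of the two, Direction 2 does not close.
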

\begin{proof}
We mostly need to combine the lemmas. By lemma \ref{lem:caplocbdd}, the capacity is a bounded, locally scalable functional, which implies by proposition \ref{prop:locbdd} that $\operatorname{DS}(\mathcal{E}_i)$ converges, if $\operatorname{Cap}(\mathcal{E})>0$. Now, by lemma \ref{lem:tuplecap}, 
\begin{align*}
	\operatorname{Cap}(\mathcal{E})=\inf\{\operatorname{Cap}(\mathbf{A}_{\mathcal{E},U})|U\in U(n)\}
\end{align*}
Since $U(n)$ is compact, it suffices to show that for every $U$, $\operatorname{Cap}(\mathbf{A}_{\mathcal{E},U})>0$. Again, by lemma \ref{lem:tuplecap}, 
\begin{align*}
	\operatorname{Cap}(\mathbf{A}_{\mathcal{E},U})\geq M(\mathbf{A}_{\mathcal{E},U})
\end{align*}
but $M(\mathbf{A}_{\mathcal{E},U})>0$ for every $U$ if and only if $\mathcal{E}$ is rank non-decreasing by proposition \ref{prop:ranknon}. Hence, $\operatorname{DS}(\mathcal{E}_i)$ converges for rank non-decreasing maps.

Now suppose that $\mathcal{E}$ is a positive map such that in the Sinkhorn iteration, $\mathrm{DS}(\mathcal{E}_i)$ converges. Then, for some $i\in\mathbb{N}$, $\mathrm{DS}(\mathcal{E}_i)<\frac{1}{n}$. We claim that then $\mathcal{E}_i$ is rank non-decreasing and by consequence, also $\mathcal{E}$ is rank non-decreasing via proposition \ref{prop:ranknon}.

To see this, assume $\mathcal{E}(\id)=\id$ and $\mathcal{E}^*(\id)=\id+E$, where $E$ is Hermitian and $\tr(E^2)\leq 1/n$. We can do this, because this is exactly what $\mathcal{E}_i$ looks like for $i$ big enough such that $\mathrm{DS}(\mathcal{E}_i)<\frac{1}{n}$ and $i$ is odd. Given a matrix $U\in U(n)$ and the corresponding $A:=\mathbf{A}_{\mathcal{E},U}$, we have that
\begin{align*}
	\sum_{i=1}^n A_i=\mathcal{E}(\id)=\id
\end{align*}
Likewise, for every $i$:
\begin{align}
	\tr(A_i)=\tr(A_i\id)=\tr(u_iu_i^{\dagger}T^*(\id))=1+\tr(u_iu_i^{\dagger}E)=:1+\delta_i \label{eqn:proof5}
\end{align}
But by assumption,
\begin{align}
	\sum_{i=1}^n |\delta_i|^2 & \leq \sum_{i,j=1}^n |\tr(u_iu_j^{\dagger}E)|^2 \nonumber \\
	&=\sum_{i,j=1}^n \langle u_i|E|u_j\rangle\langle u_j|E^{\dagger}|u_i\rangle \nonumber \\
	&=\tr(E^2)\leq \frac{1}{n} \label{eqn:proof6}
\end{align}
Now, suppose that $\mathcal{E}$ is not rank non-decreasing. Then, by proposition \ref{prop:fullyindec} (vii), there is a $U$ such that $\mathbf{A}_{\mathcal{E},U}$ fulfills
\begin{align*}
	\rank \left(\sum_{i=1}^k A_i\right)<k
\end{align*}
for some $0<k<n$. Note that, since $\mathcal{E}$ is positive, $A_i\geq 0$, hence $H:=\sum_{i=1}^k A_i$ fulfills $0\leq H\leq \id$. As $\rank(H)\leq k-1$, we have $\tr(H)\leq k-1$. From equation (\ref{eqn:proof5}), we obtain 
\begin{align*}
	\tr(H)=\sum_{i=1}^k A_i=k+\sum_{i=1} k \delta_i
\end{align*}
Using equation (\ref{eqn:proof6}), by the Cauchy Schwarz inequality, 
\begin{align*}
	\sum_{i=1}^k|\delta_i|\leq \sqrt{k/n}<1
\end{align*}
which contradicts $\tr(H)\leq k-1$, hence $\mathcal{E}$ must be rank non-decreasing.
\end{proof}

\subsection{Exact scalability}
\begin{lem}[Lemma \ref{lem:gurscaleequiv} of the main text]
Let $\mathcal{E}:\mathcal{M}_d\to\mathcal{M}_d$ be a positive map. Then $\mathcal{E}$ is scalable to a doubly-stochastic map if and only if $\operatorname{Cap}(\mathcal{E})>0$ and the capacity can be achieved.
\end{lem}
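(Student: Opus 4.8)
The plan is to prove the two implications separately, using that $\operatorname{Cap}$ is a bounded locally scalable functional (Lemma~\ref{lem:caplocbdd}) and the fixed-point verification underlying Lemma~\ref{lem:sinkhornderivative}.

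\emph{Capacity attained $\Rightarrow$ scalable.} Suppose $\operatorname{Cap}(\mathcal{E})>0$ and let $C>0$ with $\det C=1$ realise it, i.e.\ $\det\mathcal{E}(C)=\operatorname{Cap}(\mathcal{E})>0$; since $\mathcal{E}(C)\ge 0$ has nonzero determinant, $\mathcal{E}(C)>0$, so $X\mapsto\log\det\mathcal{E}(X)$ is smooth near $C$ on the smooth codimension-one manifold $\{X>0:\det X=1\}$, where it attains its minimum at the interior point $C$. First I would compute the differential: from $d\log\det M=\tr(M^{-1}\,dM)$ and $d\mathcal{E}(X)=\mathcal{E}(dX)$ one gets $\nabla_X\log\det\mathcal{E}(X)=\mathcal{E}^{*}(\mathcal{E}(X)^{-1})$, while the constraint has gradient $X^{-1}$. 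The Lagrange condition at $C$ thus reads $\mathcal{E}^{*}(\mathcal{E}(C)^{-1})=\mu\,C^{-1}$ for some $\mu\in\mathbb{R}$; pairing with $C$ and using $\tr(\mathcal{E}(C)\mathcal{E}(C)^{-1})=n$ forces $\mu=1$, i.e.\ $\mathcal{E}^{*}(\mathcal{E}(C)^{-1})=C^{-1}$. Setting $\rho:=C$, $\sigma:=\mathcal{E}(C)$, $X:=\rho^{1/2}$, $Y:=\sigma^{1/2}$, the short computation from the proof of Lemma~\ref{lem:sinkhornderivative} (which for this step only needs $\rho,\sigma>0$) shows $Y^{-1}\mathcal{E}(X\cdot X^{\dagger})Y^{-\dagger}$ is unital and trace-preserving, so $\mathcal{E}$ is scalable.

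\emph{Scalable $\Rightarrow$ capacity positive and attained.} Suppose $\Phi:=Y^{\dagger}\mathcal{E}(X\cdot X^{\dagger})Y$ is doubly stochastic for invertible $X,Y$. By local scalability of $\operatorname{Cap}$, $\operatorname{Cap}(\Phi)=\det(XX^{\dagger})\det(YY^{\dagger})\operatorname{Cap}(\mathcal{E})$, and the same bijection of feasible points (normalised to determinant one) transports attainment of one infimum to the other; so it suffices to show $\operatorname{Cap}(\Phi)=1$, attained at $\id$. Taking $X=\id$ gives $\operatorname{Cap}(\Phi)\le\det\Phi(\id)=1$. For the reverse inequality I would invoke that $\log$ is operator concave together with the operator Jensen inequality for unital positive linear maps: for $Z>0$, $\log\Phi(Z)\ge\Phi(\log Z)$, hence $\log\det\Phi(Z)=\tr\log\Phi(Z)\ge\tr\Phi(\log Z)=\tr\bigl((\log Z)\Phi^{*}(\id)\bigr)=\tr\log Z=\log\det Z$, using $\Phi^{*}(\id)=\id$ (trace-preservation). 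Therefore $\det\Phi(Z)\ge\det Z=1$ whenever $\det Z=1$, so $\operatorname{Cap}(\Phi)=1$ attained at $\id$, and transporting back, $\operatorname{Cap}(\mathcal{E})=\bigl(\det(XX^{\dagger})\det(YY^{\dagger})\bigr)^{-1}>0$ and is attained.

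\emph{Main obstacle.} The forward direction is essentially a Lagrange-multiplier computation plus the verification from Lemma~\ref{lem:sinkhornderivative}; the only points needing care are smoothness of the constraint manifold and that the minimiser is interior, both immediate. The delicate step is the converse, namely the determinant monotonicity $\det\Phi(Z)\ge\det Z$ for a \emph{merely} positive (not necessarily completely positive) doubly stochastic map, which I would get from the Hansen--Pedersen operator Jensen inequality; alternatively one can argue that doubly stochastic maps are rank non-decreasing, so $\operatorname{Cap}(\Phi)>0$ by Proposition~\ref{prop:ranknon} and Lemma~\ref{lem:tuplecap}, and then establish attainment separately, but the Jensen route delivers both at once. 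It is worth stressing that the hypothesis ``$\operatorname{Cap}$ is attained'' is precisely what separates exact from approximate scalability: rank non-decreasingness alone yields $\operatorname{Cap}(\mathcal{E})>0$ but not its attainment.
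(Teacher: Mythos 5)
Your proof is correct and follows essentially the same route as the paper: the forward direction is the same Lagrange-multiplier computation feeding into Lemma \ref{lem:sinkhornderivative} (your explicit determination of $\mu=1$ by pairing with $C$ is in fact cleaner than the paper's treatment of the multiplier), and the converse rests on the same determinant-monotonicity $\det\Phi(Z)\geq\det Z$ for a doubly stochastic positive map. The one genuine divergence is how that monotonicity is obtained: the paper derives it from the fact that the spectrum of $\Phi(Z)$ is majorised by that of $Z$ for unital trace-preserving positive maps, whereas you use the Choi--Davis--Jensen inequality $\log\Phi(Z)\geq\Phi(\log Z)$ (valid for merely positive unital maps, as you need) followed by the trace and $\Phi^{*}(\id)=\id$; the two arguments are of comparable depth and deliver the same inequality, so this is a matter of taste rather than of substance.
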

\begin{proof}
Suppose there exists $C>0$ with $\det(\mathcal{E}(C))=\operatorname{Cap}(\mathcal{E})$. The Lagrangian of the capacity is
\begin{align*}
	\mathcal{L}(X):=\ln(\detm(\mathcal{E}(X)))+\lambda \ln(\detm(X))
\end{align*}
with the Lagrangian multiplier $\lambda\in\mathbb{R}$. Therefore, the minimum fulfills
\begin{align}
	\nabla \ln(\detm(\mathcal{E}(X)))|_{X=C}=(-\lambda)\nabla \ln(\det(X))|_{X=C} \label{eqn:minimumcond} 
\end{align}
We claim that the conditions are equivalent to
\begin{align}
	\mathcal{E}^*((\mathcal{E}(C))^{-1})^{-1}=C^{-1} \label{eqn:differentiate}
\end{align}
Let $E_{ij}$ be the usual matrix unit, then
\begin{align*}
	(\nabla \ln(\detm(\mathcal{E}(X)))|_{X=C})_{jk}&=\frac{\partial}{\partial E_{jk}} \ln \left(\sum_{\sigma\in S_n} \operatorname{sgn}(\sigma) \prod_{i=1}^n \mathcal{E}(C)_{i\sigma(i)}\right) \\
	&=\frac{1}{\detm(\mathcal{E}(C))}\sum_{\sigma\in S_n} \operatorname{sgn}(\sigma) \frac{\partial}{\partial E_{jk}}\prod_{i=1}^n\mathcal{E}(C)_{i\sigma(i)}.
\end{align*}
Noting that
\begin{align*}
	\partial E_{jk}\mathcal{E}(C)_{i\sigma(i)} = \tr(E_{\sigma(i)i}\frac{\partial \mathcal{E}(C)}{\partial E_{jk}}) =\tr(\mathcal{E}^*(E_{\sigma(i)i})E_{jk})=\mathcal{E}^*(E_{\sigma(i)i})_{jk}
\end{align*}
we have
\begin{align*}
	(\nabla \ln(&\detm(\mathcal{E}(X)))|_{X=C})_{jk}=\frac{1}{\detm(\mathcal{E}(C))}\sum_{\sigma\in S_n} \operatorname{sgn}(\sigma)\sum_{l=1}^n\mathcal{E}^*(E_{\sigma(l)l})_{jk}\prod_{i\neq l} \mathcal{E}(C)_{i\sigma(i)} \\
	&=\mathcal{E}^*\left(\frac{1}{\detm(\mathcal{E}(C))}\sum_{\sigma\in S_n} \operatorname{sgn}(\sigma)\sum_{l=1}^nE_{\sigma(l)l}\prod_{i\neq l} \mathcal{E}(C)_{i\sigma(i)}\right)_{jk} \\
		&=\mathcal{E}^*\left(\frac{1}{\detm(\mathcal{E}(C))}\sum_{m,n=1}^n\left(\sum_{\sigma(m)=n\in S_n} \operatorname{sgn}(\sigma)(-1)^{n-m}\prod_{i\neq m} \mathcal{E}(C)_{i\sigma(i)}\right)E_{mn}\right)_{jk} \\
		&=\mathcal{E}^*(\mathcal{E}(C)^{-1})_{jk}
\end{align*}
where in the last step we use Cramer's rule. For $\mathcal{E}=\operatorname{id}$ we obtain the right hand side of equation (\ref{eqn:differentiate}) from equation (\ref{eqn:minimumcond}), hence follows the claim. It now follows from Lemma \ref{lem:sinkhornderivative} that any $C$ fulfilling Equation (\ref{eqn:differentiate}) defines a scaling. 

Conversely, suppose $\tilde{\mathcal{E}}(\cdot)=C_1\mathcal{E}(C_2^{\dagger}C_2)C_1^{\dagger}$ is a doubly stochastic map. Since $\tilde{\mathcal{E}}$ is unital, the eigenvalues of $\tilde{\mathcal{E}}(X)$ are majorized by the eigenvalues of $X$ (cf. \cite{wol12} Theorem 8.8). Majorization stays invariant under strictly increasing functions (cf. \cite{bha96}, Chapter 1), hence we have ($\lambda_i$ being the eigenvalues of $X$ and $\lambda_i^{\tilde{\mathcal{E}}}$ the eigenvalues of $\tilde{\mathcal{E}}(X)$):
\begin{align*}
	\sum_i -\ln(\lambda_i^{\tilde{\mathcal{E}}})\leq \sum_i -\ln(\lambda_i) 
\end{align*}
which is equivalent to $\detm(\tilde{\mathcal{E}}(X))\geq \detm(X)$. Hence, a doubly stochastic map is in particular determinant increasing. Obviously, equality is attained at $X=\id$. But then:
\begin{align}
	\detm (\mathcal{E}(X))&=|\detm (C_1)|^{-2}|\detm(C_2)|^{-2}\detm (\tilde{\mathcal{E}}(X)) \\
		&\geq |\detm (C_1)|^{-2}|\detm(C_2)|^{-2}\detm (X). \label{eqn:proof1}
\end{align}
A quick calculation shows that $X=C_2^{\dagger}C_2/\detm(C_2^{\dagger}C_2)^{1/n}$ attains equality in Equation (\ref{eqn:proof1}). This then necessarily minimises the capacity.
\end{proof}

\begin{lem}[Lemma \ref{lem:convexgurvits} of the main text]
Let $\mathcal{E}:\mathcal{M}_n\to\mathcal{M}_n$ be a positive, linear map and given $U\in U(n)$, let $A=\mathbf{A}_{\mathcal{E},U}$. Then 
\begin{enumerate}
	\item $f_A$ is convex on $\mathbb{R}^n$.
	\item If $\mathcal{E}$ is fully indecomposable, then $f_A$ is strictly convex on $\{\xi=(\xi_1,\ldots, \xi_n)\in\mathbb{R}^n|\sum_i \xi_i=0\}$.
\end{enumerate}
\end{lem}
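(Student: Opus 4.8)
The plan is to establish part 1 by a direct computation of the Hessian of $f_A$ along lines, and then to bootstrap this to strict convexity on the hyperplane $\{\sum_i\xi_i=0\}$ by showing that a degenerate direction of the Hessian would force an orthogonal block decomposition of $\id$ whose block ranks are incompatible with full indecomposability. This route bypasses the mixed‑discriminant identities of \cite{bap89b}; the only structural input needed is the rank characterisation in Proposition \ref{prop:fully}(vii).

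First I would reduce to one variable. Fix $\xi,\eta\in\mathbb{R}^n$, put $C_i(t):=e^{\xi_i+t\eta_i}A_i\ge 0$, $M(t):=\sum_i C_i(t)$ and $\phi(t):=f_A(\xi+t\eta)=\ln\det M(t)$. (If $\mathcal{E}(\id)=\sum_iA_i$ is singular, then $M(t)$ is singular for every $t$, so $f_A\equiv-\infty$ and there is nothing to prove; hence assume $\mathcal{E}(\id)>0$, which holds automatically when $\mathcal{E}$ is fully indecomposable by Proposition \ref{prop:fully}.) Applying $\frac{d}{dt}\ln\det M=\tr(M^{-1}M')$ twice,
\[
\phi''(t)=\tr(M^{-1}M'')-\tr\big((M^{-1}M')^2\big),
\]
with $M'=\sum_i\eta_iC_i$, $M''=\sum_i\eta_i^2C_i$. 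Introducing $D_i:=M^{-1/2}C_iM^{-1/2}\ge 0$, so that $\sum_iD_i=\id$, cyclicity of the trace gives $\tr(M^{-1}M'')=\sum_i\eta_i^2\tr(D_i)$ and $\tr((M^{-1}M')^2)=\tr\big((\sum_i\eta_iD_i)^2\big)$, hence $\phi''(t)=\sum_i\eta_i^2\tr(D_i)-\tr\big((\sum_i\eta_iD_i)^2\big)$. Writing $\sum_i\eta_i^2\tr(D_i)=\sum_{i,j}\eta_i^2\tr(D_iD_j)$ (using $\sum_jD_j=\id$) and symmetrising the double sum yields the manifestly nonnegative form
\[
\phi''(t)=\tfrac12\sum_{i,j}(\eta_i-\eta_j)^2\,\tr(D_iD_j)\ \ge\ 0,
\]
since $\tr(D_iD_j)=\tr(D_i^{1/2}D_jD_i^{1/2})\ge 0$. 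Convexity along every line gives part 1.

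For part 2, assume $\mathcal{E}$ is fully indecomposable, and take $\eta\neq 0$ with $\sum_i\eta_i=0$; I must show $\phi''(0)>0$. Suppose instead $\phi''(0)=0$. Then the displayed formula forces $\tr(D_iD_j)=0$, hence $D_iD_j=0$, for every pair with $\eta_i\neq\eta_j$. Group the indices according to the distinct values taken by the $\eta_i$ into blocks $S_1,\dots,S_m$; since $\eta\neq 0$ and $\sum_i\eta_i=0$ we have $m\ge 2$, and each $S_\ell$ is a proper nonempty subset of $\{1,\dots,n\}$. The Hermitian operators $P_\ell:=\sum_{i\in S_\ell}D_i\ge 0$ satisfy $P_\ell P_{\ell'}=0$ for $\ell\neq\ell'$ and $\sum_\ell P_\ell=\id$; multiplying the latter by $P_\ell$ gives $P_\ell^2=P_\ell$, so the $P_\ell$ are mutually orthogonal projections summing to $\id$, whence $\sum_\ell\rank P_\ell=n$. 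On the other hand $\rank P_\ell=\rank\big(\sum_{i\in S_\ell}C_i(0)\big)=\rank\big(\sum_{i\in S_\ell}A_i\big)$ — conjugation by the invertible $M^{-1/2}$ preserves rank, and $\ker\big(\sum_{i\in S}c_iA_i\big)=\bigcap_{i\in S}\ker A_i$ for any positive coefficients $c_i$ and positive semidefinite $A_i$, so the exponential weights do not affect the rank. By Proposition \ref{prop:fully}(vii) applied to each proper nonempty $S_\ell$, $\rank\big(\sum_{i\in S_\ell}A_i\big)>|S_\ell|$. Summing over $\ell$ gives $n=\sum_\ell\rank P_\ell>\sum_\ell|S_\ell|=n$, a contradiction. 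Hence $\phi''>0$ along every admissible direction, and strict convexity of $f_A$ on $\{\sum_i\xi_i=0\}$ follows.

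I expect the genuinely delicate points to be purely bookkeeping: carefully verifying the two trace identities (the cyclicity manipulations reducing $\tr(M^{-1}C_iM^{-1}C_j)$ to $\tr(D_iD_j)$), and the rank invariance $\rank(\sum_{i\in S}e^{\xi_i+t\eta_i}A_i)=\rank(\sum_{i\in S}A_i)$, which is exactly where positivity of the $A_i$ is used. The conceptual core — that a vanishing direction of the Hessian corresponds to a decomposition of $\id$ into orthogonal projections whose ranks are each forced by full indecomposability to exceed the corresponding block size, and hence to overshoot $n$ in total — is the analytic translation of full indecomposability, and once it is in place the contradiction is immediate. A remark could note that Gurvits' original argument runs through mixed discriminants and \cite{bap89b}, but that the line‑by‑line Hessian computation above gives a self‑contained alternative.
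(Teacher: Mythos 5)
Your proof is correct, and it takes a genuinely different route from the paper's. The paper follows Gurvits/Bapat: it expands $\detm(e^{\xi_1}A_1+\ldots+e^{\xi_n}A_n)=\sum_{r\in P_n}t_re^{(\xi,r)}$ with mixed-discriminant coefficients (citing \cite{bap89b}), gets convexity from the log-sum-exp structure, and gets strict convexity on the hyperplane from the positivity of the coefficients $t_{r_{ij}}=\tfrac12 M(A^{ij})$ --- i.e.\ from $\overline{M}(\mathbf{A}_{\mathcal{E},U})>0$, which is Proposition \ref{prop:fully}(viii) --- together with an explicit verification that $\sum_{i\neq j,k\neq l}(r_{ij}-r_{kl})(r_{ij}-r_{kl})^{tr}$ is a positive multiple of the projection onto the hyperplane. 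You instead differentiate $\ln\detm$ twice along a line, arrive at the sum-of-squares identity $\phi''=\tfrac12\sum_{i,j}(\eta_i-\eta_j)^2\tr(D_iD_j)$ with $D_i=M^{-1/2}C_iM^{-1/2}$, and convert a degenerate direction into a resolution of the identity by orthogonal projections $P_\ell$ whose ranks, by the rank characterisation (vii) of full indecomposability, must each strictly exceed $|S_\ell|$ and hence sum to more than $n$ --- a contradiction. All the individual steps check out: the trace manipulations, the implication $\tr(D_iD_j)=0\Rightarrow D_iD_j=0$ for positive semidefinite operators, the idempotence of the $P_\ell$, and the coefficient-independence of $\rank(\sum_{i\in S}c_iA_i)$ for $c_i>0$ (since the kernel is $\bigcap_{i\in S}\kernel A_i$). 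What your version buys is self-containedness: it bypasses \cite{bap89b} and the mixed discriminant entirely, and uses only Proposition \ref{prop:fully}(vii) rather than (viii). What the paper's version buys is a quantitative lower bound on the Hessian in terms of $\overline{M}(\mathbf{A}_{\mathcal{E},U})$ (your argument yields strict positivity but no explicit modulus of convexity), and the mixed-discriminant apparatus is in any case needed elsewhere in Gurvits' development, e.g.\ in Lemma \ref{lem:tuplecap} and in the compactness step of Lemma \ref{lem:quantumgurvits}. For the lemma as stated, your argument is complete.
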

\begin{proof}
We follow the proof of \cite{gur02a}. Given a tuple $A$ of positive definite matrices, one can show (\cite{bap89b}):
\begin{align}
	\detm (e^{\xi_1}A_1+\ldots+e^{\xi_n}A_n)=\sum_{r\in P_n} t_r e^{(\xi,r)} \label{eqn:proof3}
\end{align}
where $(\cdot,\cdot)$ denotes the general inner product, $P_n$ is the set of $n$-tuples of integers $r_i\geq 0$ such that $\sum_i r_i=n$ and 
\begin{align}
	t_r:=\frac{1}{r_1!\ldots r_n!}M(\overbrace{A_1,\ldots,A_1}^{r_1},\ldots,\overbrace{A_n,\ldots,A_n}^{r_n}) \label{eqn:proof4}
\end{align}

This implies that we can rewrite $f_A$:
\begin{align*}
	f_A(\xi_1,\ldots, \xi_n)=\ln \detm(e^{\xi_1}A_1+\ldots+e^{\xi_n})=\ln\left(\sum_{r\in P_n} t_re^{(\xi,r)}\right)
\end{align*}
It is well known that for positive matrices this is a convex function, but let us follow the proof of \cite{gur02a} here. 

Let $f=:\ln g$. We need to prove that $\nabla^2 f$, the Hessian, is positive (semi)definite. By definition, $\nabla^2f=\frac{1}{g^2}(g(\nabla^2g)-(\nabla g)(\nabla g)^{tr})$, hence it is sufficient that $g(\nabla^2g)\geq (\nabla g)(\nabla g)^{tr}$. 

Note that for any $v\in\mathbb{R}^n$ we have $\nabla e^{(\xi,v)} =e^{(\xi,v)}\cdot v$ and $\nabla^2e^{(\xi,v)}=e^{(\xi,v)}vv^{tr}$, where $vv^{\tr}$ is positive definite. Therefore:
\begin{align*}
	g(\nabla^2g)-(\nabla g)(\nabla g)^{tr}&=\sum_{r\in P_n} t_re^{(\xi,r)}\cdot \sum_{s\in P_n} t_re^{(\xi,s)}ss^{tr}-\sum_{r,s\in P_n} t_rt_se^{(\xi,r+s)}rs^{tr} \\
	&=\frac{1}{2} \sum_{r,s\in P_n} t_rt_se^{(\xi,r+s)}(r-s)(r-s)^{tr} \geq 0
\end{align*}
hence the Hessian of $f$ is positive semi-definite and therefore $f$ is convex. 

Now, assume that $\mathcal{E}$ is fully indecomposable, hence the tuple $A:=\mathbf{A}_{\mathcal{E},U}$ fulfills proposition \ref{prop:fullyindec} (vii) and (viii) for all $U\in U(n)$. In particular, if $A^{ij}$ is the tuple $A$ with the $j$-th entry being replaced by the $i$-th. entry. Then $M(A^{ij})>0$ in particular. Note that $M(A^{ij})=2t_{r_{ij}}$ by equation (\ref{eqn:proof4}), where
\begin{align*}
	(r_{ij})_k:=\begin{cases} 2 & k=i \\ 0 &k=j \\ 1\\ \mathrm{else} \end{cases}.
\end{align*}
Then, 
\begin{align*}
	\nabla^2f &\geq \frac{1}{g^2} \sum_{r,s\in P_n} t_rt_se^{(\xi,r+s)}(r-s)(r-s)^{tr} \\
	&\geq \frac{1}{8g^2} \sum_{i\neq j, k\neq l} M(A^{ij})M(A^{kl})e^{(\xi,r_{ij}+s_{kl})}(r_{ij}-r_{kl})(r_{ij}-r_{kl})^{tr} \\
	&\geq \frac{cM^2}{8g^2} \sum_{i\neq j, k\neq l} (r_{ij}-r_{kl})(r_{ij}-r_{kl})^{tr}
\end{align*}
where $c:=\min_{i\neq j\neq k\neq l} e^{(\xi,r_{ij}-r_{kl})}$ and $M:=\min_{i\neq j} M(A^{ij})>0$ by proposition \ref{prop:fullyindec} (viii). 

We only need to consider $\sum_{i\neq j, k\neq l} (r_{ij}-r_{kl})(r_{ij}-r_{kl})^{tr}=:S$ and show that this is a positive definite matrix on the hyperplane $H$. Using the usual matrix units $E_{mn}$ we can write:
\begin{align*}
	S:=\sum_{i\neq j\neq k\neq l} (E_{ii}+E_{jj}+E_{kk}+E_{ll}+2(E_{il}+E_{jk}-E_{ik}-E_{jl}-E_{kl}))
\end{align*}
We find that $S_{ii}=(n-1)(n-2)(n-3)$, since only the first four summands contribute to the diagonal terms. For the off-diagonal terms, note that in $2(E_{il}+E_{jk}-E_{ik}-E_{jl}-E_{kl})$, all unordererd combinations of $i,j,k,l$ occur, twice with a positive sign and four times with a negative. Hence we obtain $(n-2)\cdot (n-3)$ terms with either $E_{ij}$ or $E_{ji}$ that are not cancelled by other terms and therefore $S_{ij}=-(n-2)(n-3)$. In short:
\begin{align*}
	S=(n-1)(n-2)(n-3)\begin{pmatrix}{} 
		1 & \frac{1}{n-1} &\ldots &\frac{1}{n-1} \\
		\frac{1}{n-1} & 1 &\ldots &\frac{1}{n-1} \\
		\vdots &  & \ddots & \vdots \\
		\frac{1}{n-1} & \frac{1}{n-1} & \ldots & 1 
		\end{pmatrix}
\end{align*}
Note that the image of $S$ is just the hyperplane $H$ and it is easy to see that $S$ is a multiple of the projection onto the hyperplane $S$. Therefore, $\nabla^2f$ is strictly convex on $H$.
\end{proof}

Finally, we obtain the theorem:
\begin{lem}[Lemma \ref{lem:quantumgurvits} of the main text]
Let $\mathcal{E}:\mathcal{M}_n\to\mathcal{M}_n$ be a positive, linear map. If $\mathcal{E}$ is fully indecomposable, there exists a unique scaling of $\mathcal{E}$ to a doubly stochastic map.
\end{lem}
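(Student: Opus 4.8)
The plan is to deduce the statement from Lemma~\ref{lem:gurscaleequiv}, which reduces everything to showing that $\operatorname{Cap}(\mathcal{E})>0$ \emph{and} that the infimum in its definition is attained. The positivity of the capacity is the easy half: if $\mathcal{E}$ is fully indecomposable then it is rank non-decreasing by Proposition~\ref{prop:fully}, so $M(\mathbf{A}_{\mathcal{E},U})>0$ for every $U\in U(n)$ by Proposition~\ref{prop:ranknon}; since $U(n)$ is compact and $U\mapsto M(\mathbf{A}_{\mathcal{E},U})$ is continuous, $\inf_U M(\mathbf{A}_{\mathcal{E},U})>0$, and then Lemma~\ref{lem:tuplecap} gives $\operatorname{Cap}(\mathcal{E})=\inf_{U}\operatorname{Cap}(\mathbf{A}_{\mathcal{E},U})\geq\inf_U M(\mathbf{A}_{\mathcal{E},U})>0$.

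For attainment I would argue as follows. Given $X>0$ with $\det X=1$, diagonalise $X=U\operatorname{diag}(\lambda_1,\dots,\lambda_n)U^{\dagger}$; then $\det(\mathcal{E}(X))=\det(\sum_i\lambda_i\,\mathcal{E}(u_iu_i^{\dagger}))=f_A(\log\lambda_1,\dots,\log\lambda_n)$ with $A=\mathbf{A}_{\mathcal{E},U}$, and the vector $(\log\lambda_i)_i$ lies on the hyperplane $H=\{\xi\in\mathbb{R}^n:\sum_i\xi_i=0\}$ because $\det X=1$. By Lemma~\ref{lem:convexgurvits}, $f_A$ is convex on $\mathbb{R}^n$ and \emph{strictly} convex on $H$ whenever $\mathcal{E}$ is fully indecomposable. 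The missing ingredient is coercivity: I would show $f_A(\xi)\to+\infty$ as $\|\xi\|\to\infty$ within $H$. Using the expansion $\det(\sum_i e^{\xi_i}A_i)=\sum_{r\in P_n}t_r e^{(\xi,r)}$ of equation~(\ref{eqn:proof3}), with $t_r$ the normalised mixed discriminants of equation~(\ref{eqn:proof4}), property~(vii)/(viii) of Proposition~\ref{prop:fully} (the strict rank inequalities $\operatorname{rank}(\sum_{i\in S}A_i)>|S|$ for $\emptyset\neq S\subsetneq\{1,\dots,n\}$, equivalently $\overline{M}(\mathbf{A}_{\mathcal{E},U})>0$) guarantees that enough of the coefficients $t_r$ are strictly positive so that in every unbounded direction of $H$ some exponent $(\xi,r)$ grows; a convex, coercive function on a linear subspace attains its minimum, and strict convexity makes the minimiser $\xi^{*}\in H$ unique. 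To pass from fixed $U$ to the genuine infimum over all $X$, I would take a minimising sequence $X_k=U_kD_kU_k^{\dagger}$ with $\det X_k=1$, extract $U_k\to U$ along a subsequence by compactness of $U(n)$, and use the coercivity just established to bound the eigenvalue vectors $D_k$ away from $0$ and $\infty$; the limit $C>0$ then satisfies $\det(\mathcal{E}(C))=\operatorname{Cap}(\mathcal{E})$.

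With Lemma~\ref{lem:gurscaleequiv} this already yields a doubly stochastic scaling $\mathcal{E}'(\cdot)=Y^{\dagger}\mathcal{E}(X\cdot X^{\dagger})Y$, and by Lemma~\ref{lem:sinkhornderivative} one may take $X=\sqrt{C}$ and $Y=\sqrt{\mathcal{E}(C)}$, both positive definite. For uniqueness I would observe that a scaling with positive definite $X,Y$ is the same datum as a fixed point $XX^{\dagger}$ of the Menon operator $\mathbf{T}_{\mathrm{pos}}$ (Lemma~\ref{lem:sinkhornderivative}); since a fully indecomposable map and its adjoint send the open cone $\mathcal{C}^n$ into itself (Proposition~\ref{prop:fully}) and are primitive, $\mathbf{T}_{\mathrm{pos}}$ has a unique fixed point up to scalar, so $XX^{\dagger}$—and hence $YY^{\dagger}=\mathcal{E}(XX^{\dagger})$—is determined up to the obvious rescaling $X\mapsto tX$, $Y\mapsto t^{-1}Y$; equivalently, uniqueness can be read off directly from the strict convexity of $f_A$ on $H$. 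Either route shows the scaled map $\mathcal{E}'$ is unique.

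I expect the coercivity/compactness step to be the main obstacle. Mere rank non-decreasingness of $\mathcal{E}$ already gives $\operatorname{Cap}(\mathcal{E})>0$ but does not prevent a minimising sequence from degenerating toward the boundary of the cone; it is precisely the strict rank inequalities of full indecomposability that force $\det(\mathcal{E}(X_k))\to\infty$ as the spread of the eigenvalues of $X_k$ blows up. Turning this into a clean estimate requires the mixed-discriminant expansion of the determinant together with a combinatorial argument—ultimately resting on the results of \cite{bap89b}—about which coefficients $t_r$ are nonzero; the remainder of the argument is bookkeeping with Lemmas~\ref{lem:gurscaleequiv}, \ref{lem:tuplecap}, \ref{lem:convexgurvits} and \ref{lem:sinkhornderivative}.
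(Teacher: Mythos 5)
Your outline follows the paper's own route almost exactly: positivity of $\operatorname{Cap}(\mathcal{E})$ via Lemma \ref{lem:tuplecap}, Proposition \ref{prop:ranknon} and compactness of $U(n)$; attainment via coercivity of the convex function $f_A$ on the hyperplane $H=\{\sum_i\xi_i=0\}$; uniqueness via the strict convexity supplied by Lemma \ref{lem:convexgurvits}. The one place where you stop short is precisely where the paper does its real work: you assert that full indecomposability forces ``enough of the coefficients $t_r$ to be strictly positive so that in every unbounded direction of $H$ some exponent $(\xi,r)$ grows,'' but you do not exhibit which coefficients or why they suffice. The paper closes this concretely: for each pair $i\neq j$ take the exponent vector $r_{ij}$ with $(r_{ij})_i=2$, $(r_{ij})_j=0$ and all other entries $1$; its coefficient in the expansion (\ref{eqn:proof3}) is $t_{r_{ij}}=\tfrac12 M(A^{ij})\geq\tfrac12\overline{M}(\mathbf{A}_{\mathcal{E},U})>0$ by Proposition \ref{prop:fully}(viii), and on $H$ one has $(\xi,r_{ij})=\xi_i-\xi_j$. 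Dropping all other (nonnegative) terms gives
\begin{align*}
f_A(\xi)\;\geq\;\ln\Bigl(\tfrac12\,\overline{M}\Bigr)+\max_{i\neq j}(\xi_i-\xi_j)
=\ln\Bigl(\tfrac12\,\overline{M}\Bigr)+\ln\frac{\gamma_1}{\gamma_n},
\end{align*}
so any $X$ with $\det(\mathcal{E}(X))\leq\det(\mathcal{E}(\id))$ has eigenvalue ratio $\gamma_1/\gamma_n\leq 2\det(\mathcal{E}(\id))/\overline{M}$, uniformly in $U$ (minimising $\overline{M}$ over the compact $U(n)$). This confines the minimisation to a compact subset of $\{\det X=1,\ X>0\}$ and yields attainment; your diagonal-subsequence argument over $U_k$ is then unnecessary. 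Note that no coercivity statement of this kind can follow from rank non-decreasingness alone, as you correctly suspect.

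One further caution: your first uniqueness route, via ``the Menon operator has a unique fixed point because fully indecomposable maps are primitive,'' is not justified by anything in the paper. Uniqueness of the fixed point of the nonlinear operator $\mathbf{T}_{\mathrm{pos}}$ is obtained there only for positivity improving maps, through the Hilbert-metric contraction of Section \ref{sec:convstab}; primitivity of the linear map $\mathcal{E}$ does not by itself transfer to the nonlinear composition with matrix inversion. Your second route, reading uniqueness off the strict convexity of $f_A$ on $H$, is the paper's argument and is the one you should keep.
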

\begin{proof}
Recall that one can show (\cite{bap89b}):
\begin{align}
	\detm (e^{\xi_1}A_1+\ldots+e^{\xi_n}A_n)=\sum_{r\in P_n} t_r e^{(\xi,r)} \label{eqn:proof2}
\end{align}
where $(\cdot,\cdot)$ denotes the general inner product, $P_n$ is the set of $n$-tuples of integers $r_i\geq 0$ such that $\sum_i r_i=n$ and $t_r:=\frac{1}{r_1!\ldots r_n!}M(\overbrace{A_1,\ldots,A_1}^{r_1},\ldots,\overbrace{A_n,\ldots,A_n}^{r_n})$. 

Suppose $X\geq 0$, $\detm(X)=1$ and $\mathcal{E}$ is fully indecomposable. Let $U\in U(n)$ diagonalize $X$ with eigenvalues $\gamma_i=e^{\xi_i}$. Assume the eigenvalues are ordered $\gamma_1\geq \ldots\geq \gamma_n$. Observe that then $\detm(\mathcal{E}(X))\leq \detm(\mathcal{E}(\id))$ is equivalent to say that $f_A(\xi)\leq f_A(0)$, where $A=\mathbf{A}_{\mathcal{E},U}$. We know:
\begin{align*}
	\detm (A_1+\ldots + A_n)&\geq \detm(\gamma_1A_1+\ldots +\gamma_n A_n) \\
		&=\sum_{r\in P_n}t_re^{(\xi,r)}
		\geq \frac{1}{2} \sum_{i\neq j} M_{ij} e^{(\xi,r_{ij})}
\end{align*}
where we use that certainly for all $i\neq j\in\{1,\ldots,n\}$, $r_{ij}:=$ with $r_k=1$ for all $k\neq i,j$ and $r_i=2$, $r_j=0$ is a valid $n$-tuple where the coefficient $t_r=\frac{1}{2}M^{ij}$. Since all the terms in the sum of equation \ref{eqn:proof2} are positive, we can just leave out all other $r$. By definition, $\overline{M}(\mathcal{E})\leq M^{ij}$ for every $A$, hence:
\begin{align*}
	\detm (A_1+\ldots + A_n)&\geq \frac{1}{2}\overline{M}(\mathcal{E})\sum_{i\neq j} e^{(\xi,r_{ij})} \\
	&\geq \frac{1}{2} \overline{M}(\mathcal{E})e^{\max_{i\neq j} (\xi_i-\xi_j)} \\
	&\geq \frac{1}{2} \overline{M}\frac{\gamma_1}{\gamma_N}
\end{align*}
where we used that $(\xi,r_{ij})=\sum_{k\neq j} \xi_k + \xi_i=\xi_i-\xi_j$ since $\sum_i \xi_i=0$. 
Since $\detm(A_1+\ldots+\ldots)=\detm(\mathcal{E}(\id))$, we have
\begin{align*}
	\frac{\gamma_1}{\gamma_n}\leq \frac{2\detm (T(\id))}{M(A)}\leq \frac{2\detm(\mathcal{E}(\id))}{\overline{M}}<\infty
\end{align*}
from the lemma above. But then, the infimum must be attained on the compact subset $\{\detm(X)=1 | \gamma_1\leq \frac{2\detm(\mathcal{E}(X))}{\overline{M}}\}$. Therefore, also for the capacity $\operatorname{Cap}(\mathcal{E})$ the infimum can be considered on a compact subset of $\{\detm(X)=1\}$ and is then attained. Uniqueness is ensured by the strict convexity of $f_A$. 
\end{proof}
\end{document}